\DeclareMathOperator{\Coker}{Coker}
\DeclareMathOperator*{\colim}{colim}
\DeclareMathOperator{\hocolim}{hocolim}
\DeclareMathOperator{\Hom}{Hom}
\DeclareMathOperator{\RHom}{\mathrm{R}\!\Hom}
\DeclareMathOperator{\End}{End}
\DeclareMathOperator{\sEnd}{\underline{\End}}
\DeclareMathOperator{\Ext}{Ext}
\DeclareMathOperator{\pd}{proj.dim}
\DeclareMathOperator{\gd}{gl.dim}
\DeclareMathOperator{\md}{mod}
\DeclareMathOperator{\smd}{\underline{\md}}
\DeclareMathOperator{\injsmd}{\overline{\md}}
\DeclareMathOperator{\Md}{Mod}
\DeclareMathOperator{\proj}{proj}
\DeclareMathOperator{\inj}{inj}
\DeclareMathOperator{\thick}{thick}
\DeclareMathOperator{\per}{per}
\DeclareMathOperator{\ind}{ind}
\DeclareMathOperator{\add}{add}
\DeclareMathOperator{\CM}{CM}
\DeclareMathOperator{\sCM}{\underline{\CM}}
\DeclareMathOperator{\cone}{cone}
\DeclareMathOperator{\SL}{SL}
\DeclareMathOperator{\GL}{GL}
\DeclareMathOperator{\diag}{diag}
\def\lotimes{\otimes^\mathrm{L}}
\def\A{\mathscr{A}}
\def\B{\mathscr{B}}
\def\C{\mathscr{C}}
\def\D{\mathscr{D}}
\def\T{\mathscr{T}}
\def\U{\mathscr{U}}
\def\V{\mathscr{V}}
\def\L{\Lambda}
\def\G{\Gamma}
\def\a{\alpha}
\def\s{\sigma}
\def\Z{\mathbb{Z}}
\def\discoprod{\displaystyle\coprod}
\def\op{\mathrm{op}}
\def\rsimeq{\rotatebox{-90}{$\simeq$}}
\newtheorem{Thm}{Theorem}[section]
\newtheorem{Lem}[Thm]{Lemma}
\newtheorem{Prop}[Thm]{Proposition}
\newtheorem{Cor}[Thm]{Corollary}
\newtheorem{Prop-Def}[Thm]{Proposition-Definition}
\newtheorem{Thm-Def}[Thm]{Theorem-Definition}
\theoremstyle{definition}
\newtheorem{Def}[Thm]{Definition}
\newtheorem{Ex}[Thm]{Example}
\newtheorem{Con}[Thm]{Construction}
\theoremstyle{remark}
\newtheorem{Rem}[Thm]{Remark}
\title{Morita theorem for hereditary Calabi-Yau categories}
\author{Norihiro Hanihara}
\thanks{This work is supported by JSPS KAKENHI Grant Number JP19J21165}
\subjclass[2010]{18E30, 16E35, 16E45, 16G70}
\keywords{Calabi-Yau category, cluster tilting object, cluster category, hereditary algebra, Auslander--Reiten $(d+2)$-angle, Calabi-Yau reduction}
\address{Graduate School of Mathematics, Nagoya University, Furocho, Chikusa-ku, Nagoya, 464-8602, Japan}
\email{m17034e@math.nagoya-u.ac.jp}
\begin{document}
\begin{abstract}
We give a structure theorem for Calabi-Yau triangulated category with a hereditary cluster tilting object. We prove that an algebraic $d$-Calabi-Yau triangulated category with a $d$-cluster tilting object $T$ such that its shifted sum $T\oplus\cdots\oplus T[-(d-2)]$ has hereditary endomorphism algebra $H$ is triangle equivalent to the orbit category $\D^b(\md H)/\tau^{-1/(d-1)}[1]$ of the derived category of $H$ for a naturally defined $(d-1)$-st root $\tau^{1/(d-1)}$ of the AR translation, provided $H$ is of non-Dynkin type. We also show that hereditaryness of $H$ follows from that of $T$ when $d=3$, that of $T\oplus T[-1]$ when $d=4$, and similarly from a smaller endomorphism algebra for higher dimensions under vanishing of some negative self-extensions of $T$. Our result therefore generalizes the established theorems by Keller--Reiten and Keller--Murfet--Van den Bergh. Furthermore, we show that enhancements of such triangulated categories are unique. Finally we apply our results to Calabi-Yau reductions of a higher cluster category of a finite dimensional algebra and of the singularity category of an invariant subring. 
\end{abstract}

\maketitle
\setcounter{tocdepth}{1}
\tableofcontents
\section{Introduction}
Calabi-Yau (CY) triangulated categories have been of great importance in various areas of mathematics. A triangulated category $\T$ is $d$-CY if it has finite dimensional morphism spaces over a field $k$, and the $d$-th suspension $[d]$ is the Serre functor on $\T$, that is, we have functorial isomorphisms
\[ \T(A,B)\simeq D\T(B,A[d]) \]
for all $A,B\in\T$, where $D=\Hom_k(-,k)$. In representation theory, CY triangulated categories with cluster tilting objects are of particular interest \cite{Ke08}. One of the foundations of recent developments on this subject was given in the context of categorification of Fomin--Zelevinsky's cluster algebras \cite{CA1} by Buan--Marsh--Reineke--Reiten--Todorov \cite{BMRRT}. Generalizing their construction, the {\it $d$-cluster category} of a hereditary algebra $H$ is the orbit category
\[ \D^b(\md H)/\tau^{-1}[d-1]. \]
It is a $d$-CY triangulated category \cite{Ke05} equipped with a $d$-cluster tilting object \cite{IYo}.

The subject of this paper is Morita-type theorem for CY categories. Most classically, Morita theory gives a characterization of module categories among abelian categories in terms of projective generators \cite{Ga}. Similarly, its version for triangulated categories specifies derived categories in terms of tilting objects \cite{Ke94}. We investigate its analogue for CY triangulated categories, which attempts to characterize cluster categories in terms of cluster tilting objects.

The only known results on such Morita-type theorems are the following two, due to Keller--Reiten and Keller--Murfet--Van den Bergh.
\begin{Thm}\label{known}
Let $\T$ be an algebraic $d$-CY triangulated category with a $d$-cluster tilting object $T$.
\begin{enumerate}
\item\cite{KRac} Suppose $\End_\T(T)=kQ$ for some acyclic quiver $Q$, and $\T(T,T[-i])=0$ for $0< i<d-1$. Then there exists a triangle equivalence $\T\simeq\D^b(\md kQ)/\tau^{-1}[d-1]$.
\item\cite{KMV} Suppose $d=2n+1\geq3$, $\End_\T(T)=k$, and $\T(T,T[-i])=0$ for $0<i<n$. Put $\dim_k\Hom_\T(T,T[-n])=m$. Then there exists a triangle equivalence $\T\simeq\D^b(\md kQ_m)/\tau^{-1/2}[n]$ for the $m$-Kronecker quiver $Q_m\colon\xymatrix{\circ\ar@3[r]&\circ}$ with $m$ arrows, and a naturally defined square root $\tau^{1/2}$ of the AR translation.
\end{enumerate}
\end{Thm}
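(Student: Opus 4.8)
Both assertions fit a single template, which I now outline; it recovers \cite{KRac} for (1) and \cite{KMV} for (2), and the plan is in each case to realize $\T$ from the derived endomorphism algebra of $T$ and then to recognize that algebra explicitly. Since $\T$ is algebraic, I would fix a dg enhancement, lift $T$ to an object of it, and form the dg algebra $B=\REnd(T)$, whose cohomology is $H^i(B)\simeq\T(T,T[i])$. The hypotheses then read: $H^i(B)=0$ for $0<i<d$ by the $d$-cluster tilting property; $H^i(B)=0$ for $-(d-1)<i<0$ by the assumed vanishing of negative self-extensions; $H^0(B)=kQ$ in case (1), while $H^0(B)=k$ and $\dim_k H^{-n}(B)=m$ in case (2). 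Moreover the Serre duality $\T(X,Y)\simeq D\T(Y,X[d])$ produces perfect pairings $H^i(B)\simeq DH^{d-i}(B)$, which---using that the enhancement can be taken smooth, as in Van den Bergh's appendix to \cite{KRac}---I would promote to the statement that $B$ is quasi-isomorphic to a bimodule $(d+1)$-Calabi-Yau dg algebra.

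The heart of the matter is to identify $B$ up to quasi-isomorphism, compatibly with this Calabi-Yau structure. In case (1) the algebra $H^0(B)=kQ$ is hereditary and $Q$ is acyclic, so I expect all higher $A_\infty$-products and potential-type deformation classes on $B$ to be forced to vanish, yielding a quasi-isomorphism between $B$ and the $(d+1)$-Calabi-Yau completion $\Pi_{d+1}(kQ)$ of $kQ$, that is, the Ginzburg-type dg algebra of $Q$ with zero potential. In case (2) the algebra $H^0(B)=k$ is too small for $B$ to be a Calabi-Yau completion of $k$; instead the Calabi-Yau pairing together with the cup product on $H^*(B)$ rigidifies $B$, and I would identify $B$---now carrying the extra $\Z/2$-symmetry coming from the reflection of the $m$-Kronecker quiver $Q_m$ that interchanges its two vertices---with the orbit dg algebra of $\Pi_{d+1}(kQ_m)$ under that symmetry.

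Next I would descend to the orbit category. By the work of Keller, Amiot and Guo on (generalized) cluster categories, $\per\Pi_{d+1}(kQ)/\Db(\Pi_{d+1}(kQ))$ is an algebraic $d$-Calabi-Yau triangulated category carrying a canonical $d$-cluster tilting object whose derived endomorphism algebra is $\Pi_{d+1}(kQ)$, and by Keller's orbit-category (triangulated hull) theorem it is triangle equivalent to $\Db(\md kQ)/\tau^{-1}[d-1]$; in case (2), passing to the residual $\Z/2$-quotient refines this to $\Db(\md kQ_m)/\tau^{-1/2}[n]$, where $\tau^{1/2}$ is the BGP reflection functor at one vertex of $Q_m$---an autoequivalence of $\Db(\md kQ_m)$ precisely because that reflection carries $Q_m$ back to itself---normalized so that $(\tau^{1/2})^2\simeq\tau$. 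Finally, an algebraic $d$-Calabi-Yau triangulated category with a $d$-cluster tilting object is determined up to triangle equivalence by the derived endomorphism algebra of that object together with its Calabi-Yau structure; since I would have matched these data for $\T$ with those of the model above, it follows that $\T\simeq\Db(\md kQ)/\tau^{-1}[d-1]$, respectively $\T\simeq\Db(\md kQ_m)/\tau^{-1/2}[n]$.

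The main obstacle is the identification of $B$ in the second step: a priori $H^*(B)$ can be nonzero in infinitely many degrees and $B$ need not be formal, so one must exploit the hereditariness of $\End_\T(T)$---acyclicity of $Q$ in (1), semisimplicity of $k$ in (2)---to annihilate all higher $A_\infty$ structure and pin $B$ down on the nose; this rigidity statement is the technical core of \cite{KRac,KMV}. A secondary difficulty, peculiar to (2), is to make the half-translation $\tau^{1/2}$ and the accompanying $\Z/2$-action precise and to verify their compatibility with the Calabi-Yau structure, which is exactly where the non-degeneracy and the symmetry type of the induced form on $H^{-n}(B)$ are used.
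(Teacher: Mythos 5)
Note first that Theorem \ref{known} is not proved in this paper; it is a citation of \cite{KRac,KMV}, which the paper later recovers (for representation-infinite type) from its own main theorem (see the discussion after \ref{maincor} and the corollaries \ref{kr}, \ref{kmv}). So you are sketching a version of the original proofs, and there is a concrete gap. You claim a quasi-isomorphism between $B=\REnd_\T(T)$ and the $(d+1)$-Calabi--Yau completion $\bPi_{d+1}(kQ)$. This cannot hold: $\bPi_{d+1}(kQ)$ is concentrated in non-positive cohomological degrees (for $Q$ non-Dynkin, in degree $0$ alone), whereas the $d$-CY pairing gives $H^i(B)\simeq DH^{d-i}(B)$, so $H^d(B)\simeq DkQ\neq0$ and $H^*(B)$ is unbounded in both directions. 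The correct statement, which you cannot skip, is that a suitable \emph{truncation} of $B$ is the homologically smooth dg algebra whose cluster category equals $\T$; and the passage from $\T$ to $\C(\Pi)$ for that truncation $\Pi$ is itself a nontrivial recognition theorem (this is exactly \ref{str} in the present paper, compare also \cite{KY20}). Your concluding appeal---that an algebraic $d$-CY triangulated category with a $d$-cluster tilting object is determined by the derived endomorphism dg algebra of that object together with its CY structure---is precisely the content of that recognition theorem, not a black box one may quote. A secondary concern for (2): your description of $\tau^{1/2}$ as a BGP reflection functor is speculative; both \cite{KMV} and the present paper realize it as $-\lotimes_{H^\prime}V$ for a specific $(H^\prime,H^\prime)$-bimodule $V$, and the fact that its square is $\tau$ requires proof (see \ref{AR}).

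For comparison, the paper's own route to re-deriving \ref{known} is genuinely different and avoids $A_\infty$ rigidity altogether. It replaces $T$ by the shifted sum $X=T\oplus T[-1]\oplus\cdots\oplus T[-(d-2)]$, shows that under the stated vanishing $H=\End_\T(X)$ is a product of $(d-1)$ copies of $kQ$ (\ref{half}), that the graded algebra $S=\bigoplus_{i\geq0}\T(X,X[-i])$ equals the tensor algebra $T_HU$ of the bimodule $U=\T(X,X[-1])$ (\ref{alg}), and that $S$ is homologically smooth of dimension $\leq2$ (\ref{dim2}) and hence intrinsically formal by the Hochschild criterion of Kadeishvili (\ref{formal}). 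This fixes the truncated dg algebra $\Pi=\G^{\leq0}$ up to quasi-isomorphism with no $A_\infty$ computation at all, after which \ref{str} and the orbit-category manipulation \ref{adj} yield $\T\simeq\D^b(\md kQ)/\tau^{-1}[d-1]$. The rigidity problem that you correctly identify as the technical core of \cite{KRac,KMV} is thus dissolved rather than solved, which is the point of the paper's approach.
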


This paper is devoted to prove the following result which encompasses both of the above two cases, providing a general Morita-type theorem for CY categories arising from representation-{\it infinite} hereditary algebras. Recall that a hereditary algebra is {\it $1$-representation infinite} if any of its ring indecomposable summand is representation-infinite. Also we denote by $J_\L$ the Jacobson radical of a ring $\L$.
\begin{Thm}[=\ref{root}]\label{main}
Let $d\geq2$ and let $\T$ be an algebraic $d$-CY triangulated category with a $d$-cluster tilting object $T$. Assume that $H=\End_\T(T\oplus T[-1]\oplus\cdots\oplus T[-(d-2)])$ is $1$-representation infinite and that $H/J_H$ is separable over $k$. Then there exists a triangle equivalence
\[ \T\simeq\D^b(\md H)/\tau^{-1/(d-1)}[1] \]
for a naturally defined $(d-1)$-st root $\tau^{1/(d-1)}$ of the AR translation.
\end{Thm}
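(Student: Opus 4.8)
The plan is to reduce the statement to a comparison of differential graded (dg) algebras and to use the hereditariness of $H$ to force the relevant $A_\infty$-structure to be rigid.

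\emph{Setup via enhancements.} Since $\T$ is algebraic, fix a dg enhancement $\mathcal{E}$. A $d$-cluster tilting object classically generates $\T$, hence so does the expansion $\widetilde T=T\oplus T[-1]\oplus\cdots\oplus T[-(d-2)]$; put $\widetilde\Gamma=\REnd_{\mathcal{E}}(\widetilde T)$, so that $H^0(\widetilde\Gamma)=H$ and $\T\simeq\per\widetilde\Gamma$. The $\Hom$-finiteness of $\T$ together with the $d$-CY duality $\T(A,B)\simeq D\T(B,A[d])$ endows $\widetilde\Gamma$ with a canonical $d$-CY structure inducing the Serre duality of $\per\widetilde\Gamma$; here "algebraic" is what guarantees that the enhancement, and this structure, are essentially unique.

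\emph{The candidate.} Using that $H$ is $1$-representation infinite and $H/J_H$ is separable over $k$, I would construct the root $\tau^{1/(d-1)}$ of the AR translation of $\D^b(\md H)$ from the $(d-1)$-layered shape of $H=\End_\T(\widetilde T)$ (the quiver of $H$ is $d-1$ copies of the quiver of $\End_\T(T)$ linked by the bimodules $\T(T,T[-i])$, $1\le i\le d-2$) together with the reflection/Coxeter description of $\tau$, and then check that $\rho:=\tau^{-1/(d-1)}[1]$ satisfies $\rho^{\,d-1}=\tau^{-1}[d-1]=\nu^{-1}[d]$, the standard $d$-cluster-category autoequivalence. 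Because $H$ is non-Dynkin, the orbit $\C:=\D^b(\md H)/\rho$ is triangulated (Keller's theorem on triangulated orbit categories, which needs exactly this spread-out situation), it is $d$-CY, it carries the $d$-cluster tilting object $T_\C$ (the image of $H$) with $\End_\C(\widetilde{T_\C})\cong H$, and it is algebraic with $\C\simeq\per\Gamma_\rho$ for $\Gamma_\rho=\REnd(\widetilde{T_\C})$, which carries its own canonical $d$-CY structure. The Auslander--Reiten $(d+2)$-angles of $T_\C$ in $\C$ (and of $T$ in $\T$) provide the combinatorial backbone for these verifications.

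\emph{The comparison.} It remains to produce a CY-compatible quasi-isomorphism $\widetilde\Gamma\simeq\Gamma_\rho$; then $\T\simeq\per\widetilde\Gamma\simeq\per\Gamma_\rho\simeq\C$. Both dg algebras have $H^0=H$, and using the rigidity $\T(T,T[i])=0$ for $0<i<d$ together with the $d$-CY duality one computes their cohomology algebras $\Ext^*$ purely in terms of $H$, its bimodule pieces, and their $k$-duals; they agree. One then argues that the $A_\infty$-algebra $\Ext^*_\T(\widetilde T,\widetilde T)$ is intrinsically formal \emph{as a $d$-CY $A_\infty$-algebra}: since $\gd H\le 1$, the Hochschild cohomology that could support nontrivial higher multiplications vanishes outside the single top operation, and that operation is itself prescribed by the CY pairing, so $\widetilde\Gamma$ with its CY structure is determined by the pair $(H,\text{CY class})$. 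The identical analysis identifies $\Gamma_\rho$ with the same data, whence $\widetilde\Gamma\simeq\Gamma_\rho$ and the theorem follows.

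\emph{Main obstacle.} The crux is the rigidity in the third step. The dg algebra $\REnd_\T(\widetilde T)$ is \emph{not} connective --- it has cohomology in degree $d$, coming from $D\End_\T(T)$ via Serre duality --- so the familiar principle "connective plus finite global dimension implies formal" does not apply; one must genuinely use the $d$-CY structure to control the top part while the hypothesis $\gd H\le 1$ kills the intermediate obstructions. This is precisely where hereditariness is load-bearing and where the earlier theorems of Keller--Reiten and Keller--Murfet--Van den Bergh must be generalized. A secondary, technical point is the construction of the root $\rho$ and the verification that $\D^b(\md H)/\rho$ is a triangulated category realizable as $\per$ of a dg algebra with a compatible CY structure, where the non-Dynkin hypothesis and the separability of $H/J_H$ enter.
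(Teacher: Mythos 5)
Your approach diverges fundamentally from the paper's, and the central step of your plan contains a genuine gap.

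\textbf{Comparison of strategy.} You propose to keep the full, non-connective derived endomorphism dg algebra $\widetilde\Gamma=\REnd(\widetilde T)$ (with cohomology ranging from negative degrees up to degree $d$) and to show it is intrinsically formal \emph{as a $d$-CY $A_\infty$-algebra}, so that $\widetilde\Gamma$ is pinned down by $(H,\text{CY class})$, and then match it against $\Gamma_\rho$ computed from the candidate orbit category. This is a generalization of the Keller--Murfet--Van den Bergh recognition strategy. The paper instead passes to the \emph{truncation} $\Pi=\widetilde\Gamma^{\leq0}$, which is connective, and shows its cohomology $S=\bigoplus_{i\geq0}\T(X,X[-i])$ (with $X=\widetilde T$) is a graded \emph{tensor algebra} $S\simeq T_H U$ for $U=\T(X,X[-1])$ (Proposition 4.9). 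Because $H$ is hereditary and $U$ is a projective $H$-module (on each side), one gets $\pd_{S^e}S\leq2$ (Lemma 4.11), which yields \emph{ordinary} intrinsic formality by the Kadeishvili/Hochschild-cohomology criterion (Proposition 4.12) --- no CY structure is needed. The passage from $\per\Pi$ back to $\T=\per\widetilde\Gamma$ is then handled not by comparing dg algebras directly, but by the general realization theorem (Theorem 3.4): since $\Pi$ is homologically smooth and condition (b) holds for the cluster-tilting summand $T$, $\T\simeq\C(\Pi)=\per\Pi/\D^b(\Pi)$, and $\C(T_H U)$ is identified as the orbit category via Proposition 3.1. Your route would construct the orbit category first and compare dg enhancements; the paper's route constructs the orbit category \emph{as output} of the cluster-category machinery.

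\textbf{The gap.} Your assertion that ``since $\gd H\le1$, the Hochschild cohomology that could support nontrivial higher multiplications vanishes outside the single top operation, and that operation is itself prescribed by the CY pairing'' is not substantiated and does not follow from $\gd H\leq1$ alone. The graded cohomology algebra of $\widetilde\Gamma$ is
\[
\cdots \oplus U^{\otimes_H 2}\oplus U\oplus H \oplus 0\oplus\cdots\oplus 0 \oplus D(U^{\otimes_H(d-2)})\oplus\cdots\oplus DU\oplus DH
\]
spread across degrees $\cdots,-2,-1,0,1,\ldots,d-2,\ldots,d-1,d$. For this two-sided, non-connective algebra the relevant Hochschild cohomology groups do not automatically vanish in the range needed to conclude rigidity of the $A_\infty$-structure; the degree shifts interact nontrivially with the bimodule structure of $U$ and its duals. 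In KMV the analysis is manageable precisely because $\End_\T(T)=k$ reduces everything to symmetric algebras of a single vector space; for a general $1$-representation-infinite $H$ this is a substantially harder computation that you neither carry out nor reduce to a known result. The paper's truncation to $\Pi=\widetilde\Gamma^{\leq0}$ is exactly the device that makes the formality question tractable: $S=T_HU$ lives in non-positive degrees only, and the tensor-algebra structure forces $\pd_{S^e}S\leq2$, whence intrinsic formality is automatic. Without either this truncation or a worked-out CY-formality argument for the full $\Ext$-algebra, your proof does not close.

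A secondary imprecision: you describe the quiver of $H$ as ``$d-1$ copies of the quiver of $\End_\T(T)$ linked by the bimodules $\T(T,T[-i])$'' and would extract the root $\rho$ from this ``layered shape''. The paper instead constructs $\rho$ concretely as $-\lotimes_H U$ for the explicit bimodule $U=\T(X,X[-1])$, and proves $U^{\lotimes_H(d-1)}\simeq\Ext^1_H(DH,H)$ (Proposition 4.10) --- a computation you would still need.
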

We also give a certain converse in Section \ref{comb}. We show that if $H$ has a $(d-1)$-st root of $\tau$, then there exists a projective $H$-module $P$ such that $P\oplus\tau^{-1/(d-1)}P\oplus\cdots\oplus \tau^{-(d-2)/(d-1)}P\simeq H$, justifying our choice of $H$.

Our $(d-1)$-st root $\tau^{-1/(d-1)}$ is defined by $-\lotimes_HU$ for the $(H,H)$-bimodule $U=\T(X,X[-1])$ where $X=T\oplus\cdots\oplus T[-(d-2)]$. We show that the $(d-1)$-fold derived tensor product of $U$ is isomorphic to $\Ext^1_H(DH,H)$ (\ref{AR}), thus one can write $-\lotimes_HU=\tau^{-1/(d-1)}$. A square root of the AR translation for generalized Kronecker quivers appears in \cite{KMV}. More general roots of (higher) AR translations for various algebras are studied in \cite{ha3}.

The main theorem recovers both of the known results in \ref{known} for the infinite type.
When $d=2$ this immediately gives Keller--Reiten's recognition theorem \ref{known}(1) for the case $Q$ is of non-Dynkin type. Also when $d=3$ and $\End_\T(T)=k$, then $H$ is the path algebra of a generalized Kronecker quiver, thus we deduce Keller--Murfet--Van den Bergh's theorem \ref{known}(2) for $m\geq2$. We can also recover the versions for arbitrary $d$ in \ref{known}, despite the seemingly different assumption and conclusion; see the second paragraph after \ref{maincor}.

Even if $H$ is of Dynkin type we have a partial result. For arbitrary hereditary $H$ there is an additive equivalence (\ref{st})
\[ \T/[T[1]\oplus T\oplus\cdots\oplus T[-(d-2)]]\simeq\smd H, \]
which gives a classification $\ind\T\simeq\ind(\md H)\sqcup\ind(\add T[1])$ of objects of $\T$, as \ref{main} implies for non-Dynkin cases. Here $\ind\C$ means the set of isomorphism classes of indecomposable objects in an additive category $\C$. In particular, the triangulated category $\T$ has finitely many indecomposable objects up to isomorphism when $H$ is of Dynkin type. For such a triangulated category its structure is deeply known, see for example \cite{XZ,Am07,Ke18,ha,Mu20}.

Furthermore, we remark that our proof of the main result in fact gives the uniqueness of enhancements for triangulated categories as in \ref{main}, in particular for the (usual) cluster category $\D^b(\md kQ)/\tau^{-1}[d-1]$ of a quiver $Q$. See \ref{enh} for the definition of enhancements and a precise statement. We refer to \cite{LO,Ant,Mu20,CNS}, and references therein, for various established results on uniqueness of enhancements.

\bigskip
Although the assumption in \ref{main} that $H=\End_\T(T\oplus\cdots\oplus T[-(d-2)])$ is hereditary looks strong, it in fact follows from the same property for a smaller algebra, in particular, from that of $T$ when $d=3$. More generally, we have the following sufficient condition for $H$ to be hereditary using vanishing of some negative self-extensions of $T$.
\begin{Thm}\label{hered}
Let $\T$ be a $d$-CY triangulated category with a $d$-cluster tilting object $T$.
\begin{enumerate}
	\item{\rm (=\ref{half})} Suppose $\T(T,T[-i])=0$ for $0<i<d/2$ and $H^\prime=\End_\T(T)$ is hereditary. Then we have $\T(T,T[-i])=0$ for $0< i<d-1$, thus $\End_\T(T\oplus\cdots\oplus T[-(d-2)])$ is the direct product of $(d-1)$ copies of $H^\prime$, hence is hereditary.
	\item{\rm (=\ref{oddcy})} Suppose $d=2n+1\geq3$, $\T(T,T[-i])=0$ for $0<i<n$, and $\End_\T(T)$ is hereditary. Then $\End_\T(T\oplus T[-n])$ is hereditary, and $\End_\T(T\oplus\cdots\oplus T[-(2n-1)])$ is the direct product of $n$ copies of it, hence is hereditary.
	\item{\rm (=\ref{evency})} Suppose $d=2n+2\geq4$, $\T(T,T[-i])=0$ for $0<i<n$, and $\End_\T(T\oplus T[-n])$ is hereditary. Then $H=\End_\T(T\oplus\cdots\oplus T[-2n])$ is also hereditary.
\end{enumerate}
\end{Thm}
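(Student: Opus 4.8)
All three parts follow from a single analysis of the negative self-extension bimodules $U_k:=\T(T,T[-k])$ over $\Lambda:=\End_\T(T)$. Put $X=T\oplus T[-1]\oplus\cdots\oplus T[-(d-2)]$. Since $\Hom_\T(T[-i],T[-j])=\T(T,T[i-j])$ and $0<|i-j|\le d-2<d$, the $d$-cluster-tilting vanishing $\T(T,T[\ell])=0$ $(0<\ell<d)$ exhibits $H=\End_\T(X)$ as a triangular matrix algebra over $\Lambda$: diagonal blocks $\Lambda$, one family of off-diagonal blocks zero, the others the $U_k$ $(1\le k\le d-2)$, with block multiplication given by the composition maps $U_k\otimes_\Lambda U_{k'}\to U_{k+k'}$. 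Likewise $\End_\T(T\oplus T[-n])=\left(\begin{smallmatrix}\Lambda&0\\U_n&\Lambda\end{smallmatrix}\right)$, so in part (3) the hypothesis already forces, by the heredity criterion for triangular matrix algebras, that $\Lambda$ is hereditary and $U_n$ is projective over $\Lambda\otimes_k\Lambda^{\mathrm{op}}$. We may assume $\Lambda$ connected. Everything thus reduces to controlling the $U_k$ and the maps among them.

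The crux is a \emph{propagation lemma}: if $\Lambda$ is hereditary and $U_k=0$ for $1\le k\le c-1$ --- with $c=\lceil d/2\rceil$ in part (1) and $c=n$ in parts (2),(3) --- then $U_k=0$ for all $k\le d-2$ outside the ``balanced'' range $c\le k\le d-1-c$, the graded algebra $\bigoplus_{k\ge0}U_k$ is generated over $\Lambda$ in degrees $\{0\}$ in part (1) and $\{0,n\}$ in parts (2),(3), and the bimodule $U_n$ is projective over $\Lambda\otimes_k\Lambda^{\mathrm{op}}$. I would prove this from the Auslander--Reiten $(d+2)$-angles afforded by $d$-cluster tilting: each indecomposable summand of $T$ sits in a $(d+2)$-angle with all terms in $\add T$, and applying $\T(T,-)$ turns it into a long exact sequence of $\Lambda$-modules whose length is governed by $d$; heredity of $\Lambda$ (projective dimension $\le1$) collapses this sequence and pins each $U_k$ to a cohomological slot which, combined with the assumed vanishing for $k<c$ and Calabi--Yau duality $U_k\cong D\,\T(T,T[d+k])$, allows nonvanishing only in the balanced range, while the degeneration of the $(d+2)$-angle yields the generation statement; reading the same $(d+2)$-angle through Calabi--Yau duality realizes $U_n$ as a $\Hom$-space between a projective and an injective $\Lambda$-module, giving its bimodule projectivity.

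Granting the lemma, the conclusions are bookkeeping. In part (1) the balanced range is empty, so all $U_k$ $(1\le k\le d-2)$ vanish and $H\cong\Lambda\times\cdots\times\Lambda$ ($d-1$ copies) is hereditary. In part (2) $(d=2n+1)$ the balanced range is $\{n\}$, so only $U_n$ survives within $1\le k\le 2n-1$; the $2n$ shifts of $T$ split into the $n$ pairs $\{T[-j],T[-(n+j)]\}$ $(0\le j\le n-1)$, whence $H$ is the product of $n$ copies of $\tilde H:=\left(\begin{smallmatrix}\Lambda&0\\U_n&\Lambda\end{smallmatrix}\right)=\End_\T(T\oplus T[-n])$, hereditary because $\Lambda$ is and $U_n$ is a projective bimodule; in particular $\End_\T(T\oplus T[-n])$ is hereditary. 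In part (3) $(d=2n+2)$ the balanced range is $\{n,n+1\}$, but the generation statement forces $\bigoplus_k U_k$ to be concentrated in multiples of $n$, so within $1\le k\le 2n$ only $U_n$ and $U_{2n}\cong U_n\otimes_\Lambda U_n$ survive, the latter filling the single corner $\Hom_\T(T,T[-2n])$; splitting the shifts of $T$ accordingly writes $H$ as the product of $(n-1)$ copies of $\End_\T(T\oplus T[-n])$ with the one algebra $\left(\begin{smallmatrix}\Lambda&0&0\\U_n&\Lambda&0\\U_n\otimes_\Lambda U_n&U_n&\Lambda\end{smallmatrix}\right)=\End_\T(T\oplus T[-n]\oplus T[-2n])$, and all of these are hereditary since $\Lambda$ is hereditary and $U_n$ is a projective bimodule. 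Hence $H$ is hereditary.

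The main obstacle is the propagation lemma: squeezing out of the Auslander--Reiten $(d+2)$-angles, under heredity of the small endomorphism algebra, the exact vanishing of $U_k$ away from the balanced indices, the degree-generation of $\bigoplus_k U_k$, and the bimodule projectivity of $U_n$ --- with parts (2),(3), where one self-extension genuinely survives and must be carried symmetrically under Calabi--Yau duality, the most delicate. The reductions above are purely formal once that is in hand.
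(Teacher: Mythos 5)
Your proposal hinges on an unproven ``propagation lemma'' that bundles together (a) the precise vanishing ranges of $U_k=\T(T,T[-k])$, (b) degree-generation of $\bigoplus_k U_k$ over $\Lambda=\End_\T(T)$, and (c) projectivity of $U_n$ as a $\Lambda$-bimodule. Claim (c) is essentially equivalent to the heredity of $\End_\T(T\oplus T[-n])$, which is precisely what part (2) asserts — so you are assuming the crux of the theorem. The one-line justification (``realizes $U_n$ as a $\Hom$-space between a projective and an injective'') does not produce a bimodule structure, let alone projectivity, and the paper's actual proof of \ref{oddcy} is a delicate induction on the vertices of $Q$ showing that sink maps at the shifted projectives are monomorphisms; nothing in your sketch replaces it.

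Claims (a)--(b) fail for part (3). In the $(2n+2)$-CY case the paper proves only $\T(T,T[-j])=0$ for $n+1<j<2n$ (Lemma \ref{nex2}); $\T(T,T[-(n+1)])$ is generically nonzero, and \ref{quiver2} exhibits arrows of shift $-(n+1)$ in the quiver of $H$. This contradicts your assertion that $U_k$ is concentrated in multiples of $n$ and invalidates the proposed splitting of $H$ into $(n-1)$ copies of $\End_\T(T\oplus T[-n])$ plus a single $3\times3$ triangular block. In fact the paper remarks after \ref{new} that $H$ is \emph{connected} in general for part (3), so no such product decomposition exists, and the identification $\T(T,T[-2n])\simeq U_n\otimes_\Lambda U_n$ of the corner entry is unestablished. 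Part (1) and the pairing $T[-j]\leftrightarrow T[-(n+j)]$ in part (2) do mirror the paper's Propositions \ref{half} and \ref{oddcy}(2), but without an actual proof of the propagation lemma (and with the part (3) decomposition being structurally wrong), the argument has a genuine gap where the theorem has its content.
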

Note that we are allowing $\T(T,T[-n])$ to survive in (2) and (3) while it is supposed to vanish in (1). One can thus view (1) as a `degenerate' version of (2) and (3). We also explicitly describe the quiver of $\End_\T(T\oplus\cdots\oplus T[-(d-2)])$ for (2) and (3) in terms of AR sequences in $\add T$, see \ref{quiver} and \ref{quiver2}.

Combining \ref{main} and \ref{hered}, together with an interpretation of $(d-1)$-th root of the AR translation for (1) and (2) which we explain below, we obtain the following version of \ref{main}. 
\begin{Cor}\label{maincor}
Let $\T$ be an algebraic $d$-CY triangulated category with a $d$-cluster tilting object $T$.
\begin{enumerate}
	\item{\rm (=\ref{kr})} Suppose $H^\prime=\End_\T(T)$ is hereditary and $\T(T,T[-i])=0$ for $0<i<d/2$. Then there exists a triangle equivalence
	\[ \T\simeq\D^b(\md H^\prime)/\tau^{-1}[d-1] \]
	when $H^\prime$ is $1$-representation infinite and $H^\prime/J_{H^\prime}$ is separable over $k$.
	\item{\rm (=\ref{kmv})} Suppose $d=2n+1\geq3$, $\End_\T(T)$ is hereditary, and $\T(T,T[-i])=0$ for $0<i<n$. Then $H^\prime=\End_\T(T\oplus T[-n])$ is hereditary, and there exists a triangle equivalence
	\[ \T\simeq\D^b(\md H^\prime)/\tau^{-1/2}[n] \]
	when $H^\prime$ is $1$-representation infinite and $H^\prime/J_{H^\prime}$ is separable over $k$.
	\item{\rm (=\ref{new})} Suppose $d=2n+2\geq4$, $\T(T,T[-i])=0$ for $0<i<n$, and $\End_\T(T\oplus T[-n])$ is hereditary. Then $H=\End_\T(T\oplus\cdots\oplus T[-2n])$ is hereditary, and there exists a triangle equivalence
	\[ \T\simeq\D^b(\md H)/\tau^{-1/(2n+1)}[1] \]
	when $H$ is $1$-representation infinite and $H/J_H$ is separable over $k$.
\end{enumerate}
\end{Cor}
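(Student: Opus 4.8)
The plan is to derive all three statements from \ref{hered} and \ref{main}, the only genuinely new ingredient being, in parts (1) and (2), a rewriting of the orbit category furnished by \ref{main} for a product algebra as an orbit category of a single hereditary factor. Part (3) requires no such rewriting: its hypotheses are exactly those of \ref{hered}(3), which gives that $H=\End_\T(T\oplus\cdots\oplus T[-2n])$ is hereditary, and since $d-2=2n$ this is precisely the algebra appearing in \ref{main}; as $H$ is assumed $1$-representation infinite with $H/J_H$ separable over $k$, \ref{main} applies verbatim and yields $\T\simeq\D^b(\md H)/\tau^{-1/(2n+1)}[1]$, the assertion.

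For part (1), \ref{hered}(1) upgrades the hypothesis to $\T(T,T[-i])=0$ for all $0<i<d-1$ and identifies $H:=\End_\T(X)$, $X=T\oplus\cdots\oplus T[-(d-2)]$, with the product $(H')^{d-1}$. A finite product of $1$-representation infinite algebras is again $1$-representation infinite, and $H/J_H\cong(H'/J_{H'})^{d-1}$ is separable over $k$, so \ref{main} gives $\T\simeq\D^b(\md H)/\tau^{-1/(d-1)}[1]$. Under the decomposition $\D^b(\md H)\simeq\prod_{j=0}^{d-2}\D^b(\md H')$ along the summands $T[-j]$, the $d$-cluster tilting condition together with the extended vanishing forces the bimodule $U=\T(X,X[-1])$ to have, up to the identification $\End_\T(T[-k])\cong H'$, only the blocks $\cong H'$ linking consecutive summands $T[-j],T[-(j+1)]$ for $0\le j\le d-3$ together with a single corner block $\T(T,T[-(d-1)])$ linking $T[-(d-2)]$ and $T$. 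Hence $\tau^{-1/(d-1)}[1]=-\lotimes_HU[1]$ permutes the $d-1$ factors in a single $(d-1)$-cycle, and traversing the whole cycle restricts on each factor to $-\lotimes_{H'}\T(T,T[-(d-1)])$, shifted by $[d-1]$; since \ref{AR} identifies $U^{\lotimes(d-1)}$ with $\Ext^1_H(DH,H)=\tau^{-1}$, this composite equals $\tau^{-1}_{H'}[d-1]$, i.e. $\T(T,T[-(d-1)])\cong\Ext^1_{H'}(DH',H')$ as $(H',H')$-bimodules. The routine fact that the orbit category of $\prod_{j\in\Z/m}\C$ by an autoequivalence $\Phi$ permuting the factors in a single $m$-cycle is equivalent to the orbit category of $\C$ by $\Phi^m$ restricted to a factor (compare the analogous reduction in \cite{KMV}) then gives $\D^b(\md H)/\tau^{-1/(d-1)}[1]\simeq\D^b(\md H')/\tau^{-1}[d-1]$.

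Part (2) is the same argument with $d=2n+1$. Here \ref{hered}(2) gives that $H'=\End_\T(T\oplus T[-n])$ is hereditary and that $H'':=\End_\T(T\oplus\cdots\oplus T[-(2n-1)])\cong(H')^n$; since $d-2=2n-1$ this $H''$ is the algebra of \ref{main}, and as above it is $1$-representation infinite with $H''/J_{H''}$ separable, so \ref{main} yields $\T\simeq\D^b(\md H'')/\tau^{-1/(2n)}[1]$. Now $\D^b(\md H'')\simeq\prod_{j=0}^{n-1}\D^b(\md H')$, the autoequivalence $\Phi=\tau^{-1/(2n)}[1]$ permutes the $n$ factors in a single $n$-cycle, and since $\Phi^{2n}=-\lotimes_{H''}\Ext^1_{H''}(DH'',H'')[2n]$ restricts on a factor to $\tau^{-1}_{H'}[2n]$, one turn around the cycle $\Phi^{n}$ restricts to a square root $\tau^{-1/2}_{H'}[n]$ of $\tau^{-1}_{H'}[2n]$; after checking that this is the naturally defined square root, the same orbit-category reduction gives $\D^b(\md H'')/\tau^{-1/(2n)}[1]\simeq\D^b(\md H')/\tau^{-1/2}[n]$.

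The main obstacle is the bookkeeping in the last steps of parts (1) and (2): one must pin down the matrix form of the bimodule $U$ over the product algebra precisely enough to see that $\tau^{-1/(d-1)}[1]$ genuinely acts on $\D^b(\md H)$ as a single cyclic permutation of the factors — which is exactly the point where the vanishing supplied by \ref{hered} is needed, to kill the off-cyclic blocks of $U$ — and then, using the isomorphism $U^{\lotimes(d-1)}\cong\Ext^1_H(DH,H)$ of \ref{AR} and the Calabi--Yau duality of $\T$, to identify the once-around composite with (a square root of) the AR translation of the single factor, matching the naturally defined root. The remaining ingredient, the equivalence between an orbit category of a cyclic product and the orbit category of one factor by an iterated autoequivalence, is standard but notation-heavy.
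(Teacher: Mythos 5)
Your proposal is correct and follows essentially the same route as the paper: use \ref{hered}(1)--(3) to reduce to the hypotheses of \ref{main}, then (in parts (1) and (2)) rewrite the bimodule $U$ over the product algebra $H\cong(H')^{d-1}$ (resp.\ $(H')^n$) in cyclic block-matrix form, identify the once-around composite with $\Ext^1_{H'}(DH',H')$ (resp.\ a square root of it) via \ref{AR}, and invoke the reduction of the orbit category of a cyclic product to the orbit category of a single factor. The one thing you dismiss as a ``routine fact, standard but notation-heavy'' is precisely Proposition \ref{adj}, which the paper takes care to prove as a quasi-equivalence of \emph{dg} orbit categories so that it descends to the canonical triangulated hulls appearing in \ref{cl} and \ref{root}; your appeal to the analogous reduction in \cite{KMV} covers the same ground, but making that step a genuine dg-level equivalence is not entirely cosmetic.
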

The above (1) shows that we in fact need only half of the vanishings of negative extensions in \ref{known}(1). Also (2) reduces to Keller--Murfet--Van den Bergh's theorem \ref{known}(2) when $\End_\T(T)=k$ as well as relaxes the assumption on the vanishing of negative extensions in Keller--Reiten's theorem \ref{known}(1). This \ref{maincor}(2) is thus a common generalization of \ref{known}(1)(2). Let us summarize the implications diagrammatically.
\[ \xymatrix@!C=30mm{
	*+[F]{\text{\ref{maincor}(2)}}\ar@{=>}[d]_{\End_\T(T)=k}\ar@{=>}[r]^-{\T(T,T[-n])=0}&*+[F]{\text{\ref{maincor}(1)}}\ar@{=>}[d]&*+[F]{\text{\ref{maincor}(3)}}\ar@{=>}[l]_-{\T(T,T[-n])=0}\\
	*+[F]{\text{\cite{KMV}: \ref{known}(2)}}&*+[F]{\text{\cite{KRac}: \ref{known}(1)}}& } \]

Now we briefly explain how to deduce the corollaries in \ref{maincor} from \ref{main} and the respective results in \ref{hered}. It depends on an interpretation of the $(d-1)$-st root $\tau^{1/(d-1)}$.
Under the assumption of \ref{maincor}(1), the algebra $H=\End_\T(T\oplus\cdots\oplus T[-(d-2)])$ in \ref{main} is the direct product of $(d-1)$ copies of $\End_\T(T)=:H^\prime$ by \ref{hered}(1). It turns out, under the identification $\D^b(\md H)=\D^b(\md H^\prime)\times\cdots\times\D^b(\md H^\prime)$, that our $(d-1)$-st root is given just by $\tau^{1/(d-1)}\colon(L_1,\ldots,L_{d-1})\mapsto(L_2,\ldots,L_{d-1},\tau^\prime L_1)$ using the AR translation $\tau^\prime$ of $H^\prime$. This yields an equivalence of orbit categories (\ref{adj})
\[ \D^b(\md H)/\tau^{-1/(d-1)}[1]\simeq\D^b(\md H^\prime)/{\tau^\prime}^{-1}[d-1], \]
which gives \ref{maincor}(1). A similar interpretation of $2n$-th root of $H$ in terms of a square root for $H^\prime$ yields \ref{maincor}(2). Note that in \ref{maincor}(3) the $(2n+1)$-st root cannot be made easier in general.

\bigskip
Let us now mention one intermediate result toward proving \ref{main} which is a general consideration on realizing a triangulated category as a cluster category. It is based on a description of cluster categories in terms of differential graded (dg) algebras \cite{Ke06}. Recall that a dg algebra $\Pi$ is {\it homologically smooth} if $\Pi$ is perfect as a bimodule. In this case we have that $\D^b(\Pi)$, the derived category of dg $\Pi$-modules of finite dimensional total cohomology, is contained in the perfect derived category $\per\Pi$. For a homologically smooth dg algebra $\Pi$, we call the Verdier quotient
\[ \C(\Pi):=\per\Pi/\D^b(\Pi) \]
the {\it cluster category} of $\Pi$. The theory on such kind of categories was established by Amiot \cite{Am09} and generalized in \cite{Guo} for CY dg algebras \cite{G,Ke11}, giving rise to CY triangulated categories with cluster tilting objects. We refer to \cite{AMY,ART,AIR,AOim,AOac,AOce,BT,FM,ha3,IQ,IO13,IYa1,KY16,KY18,KY20,Ke11,Ki2,KQ,Pl,Pr,TV} for further studies on this subject. Although the cluster category in the above general sense is not CY nor have cluster tilting objects, those for certain non-CY dg algebras will be important for us. 

Let $\T$ be a triangulated category and $X\in\T$ satisfying the following.
\begin{enumerate}
\renewcommand{\labelenumi}{(\alph{enumi})}
\item $\T$ is $\Hom$-finite over $k$ and $\thick_\T X=\T$.
\item If $\T(X,Y[i])=0$ for $i\ll0$ then $Y=0$.
\item There exists an enhancement $\A$ of $\T$ such that the truncated derived endomorphism algebra $\Pi:=\RHom_\A(X,X)^{\leq0}$ is homologically smooth.
\end{enumerate}
For example if $X\in\T$ is a $d$-cluster tilting object for some $d\geq1$, then the above \ref{ct} indeed holds (see \ref{mot}). In this situation we can realize our triangulated category $\T$ as a cluster category of $\Pi$ in \ref{sm}.
\begin{Thm}[=\ref{str}]
Let $\T$ be a triangulated category satisfying (a), (b), and (c) above. Then $\T$ is triangle equivalent to the cluster category of $\Pi$.
\end{Thm}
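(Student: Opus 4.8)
The plan is to realize $\T$ as the perfect derived category of a dg algebra attached to $X$, to recognise $\Pi$ as its non-positive truncation, and then to present $\C(\Pi)$ as a Verdier quotient of that perfect derived category. Concretely, let $\A$ be as in (c) and put $E:=\RHom_\A(X,X)$, a dg algebra with $H^n(E)=\T(X,X[n])$. Since $\thick_\T X=\T$ by (a), Keller's dg Morita theorem \cite{Ke94} gives a triangle equivalence $\T\xrightarrow{\ \sim\ }\per E$ carrying $X$ to $E$. The non-positively graded part $\Pi=\RHom_\A(X,X)^{\leq0}=\tau^{\leq0}E$ is a dg subalgebra of $E$, concentrated in non-positive degrees, with $H^0(\Pi)=\End_\T(X)$ finite dimensional by (a); by (c) it is homologically smooth, so the full subcategory $\D^b(\Pi)$ of dg $\Pi$-modules with finite-dimensional total cohomology lies in $\per\Pi$ and $\C(\Pi)=\per\Pi/\D^b(\Pi)$ is defined. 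Using the canonical dg algebra surjection $\Pi\twoheadrightarrow H^0(\Pi)$, the standard t-structure on $\D(\Pi)$ has heart $\md H^0(\Pi)$ and restricts to a bounded t-structure on $\D^b(\Pi)$; this is the tool for all the cohomological bookkeeping below.

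The comparison functor will be $F:=-\lotimes_\Pi E\colon\per\Pi\to\per E\simeq\T$, which sends $\Pi$ to $E$, hence to $X$. The crux is to show that $F$ annihilates $\D^b(\Pi)$, so that it descends to a functor $\bar F\colon\C(\Pi)\to\T$. Since $\D^b(\Pi)$ is generated by the simple $H^0(\Pi)$-modules (each of which is perfect over $\Pi$ by homological smoothness), and $F$ carries perfect $\Pi$-complexes to perfect $E$-complexes, it suffices to see that $S\lotimes_\Pi E=0$ for every simple $S$; as $S\lotimes_\Pi E$ then lies in $\per E\simeq\T$, hypothesis (b) reduces this to the vanishing of its cohomology in all sufficiently negative degrees. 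I would extract the latter from the triangle $S\to S\lotimes_\Pi E\to S\lotimes_\Pi(E/\Pi)$, whose third term is concentrated in positive degrees, together with homological smoothness of $\Pi$: the latter controls $\RHom_\Pi(S,\Pi)$ — through a Van den Bergh-type duality between $\per\Pi$ and $\D^b(\Pi)$ — and hence pins down the negative-degree behaviour of $S\lotimes_\Pi E$.

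Granting that $\bar F$ is defined, it remains to check it is a triangle equivalence. Essential surjectivity is immediate since $\bar F(\bar\Pi)=X$ and $\thick_\T X=\T$, after passing to idempotent completions if necessary. For full faithfulness, since $\bar\Pi$ generates $\C(\Pi)$ it is enough to show that $F$ induces an isomorphism $\Hom_{\C(\Pi)}(\bar\Pi,\bar\Pi[n])\xrightarrow{\ \sim\ }\Hom_\T(X,X[n])$ for every $n$. One computes the left-hand side as the filtered colimit $\colim_p\Hom_{\D(\Pi)}(\tau_{\leq-p}\Pi,\Pi[n])$ — the truncations $\tau_{\leq-p}\Pi\to\Pi$ have cones in $\D^b(\Pi)$, each $H^i(\Pi)=\T(X,X[i])$ being finite dimensional, and they form a cofinal family of roofs; homological smoothness makes this colimit stabilise, so each Hom-space is finite dimensional, and hypothesis (b) is what forces the resulting comparison map to $H^n(E)=\T(X,X[n])$ to be bijective, rather than merely an isomorphism up to a finite-dimensional discrepancy. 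Tracking the algebra structure along the same lines completes the proof.

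The step I expect to be the main obstacle is the claim that $F$ kills $\D^b(\Pi)$ — equivalently, that $E$ is already $\D^b(\Pi)$-local as a $\Pi$-module, equivalently that applying the cluster construction to the truncation $\Pi$ recovers $E$ on the nose and compatibly with the multiplicative structure. This is exactly where hypothesis (b) is indispensable: without it — for instance for $E=k[\varepsilon]$ with $|\varepsilon|=1$, where $\Pi=k$ and $\C(\Pi)=0$ — the truncation discards too much and the conclusion fails. A convenient way to organise all the cohomological estimates, following Amiot's analysis of cluster categories \cite{Am09}, is the ``fundamental domain'' description of $\C(\Pi)$ inside $\per\Pi$ afforded by the standard t-structure, which reduces the vanishing and stabilisation statements to $\Hom$-computations in and around the heart $\md H^0(\Pi)$.
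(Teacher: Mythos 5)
Your overall strategy matches the paper's: identify $\T\simeq\per\G$ for $\G=\RHom_\A(X,X)$, truncate to $\Pi=\G^{\leq0}$, and show that $-\lotimes_\Pi\G$ descends to an equivalence $\C(\Pi)\to\per\G$. Your first step (that the functor annihilates $\D^b(\Pi)$) is in the right spirit, but the appeal to a Van den Bergh-type duality controlling $\RHom_\Pi(S,\Pi)$ is unnecessary and a distraction: with $W=\G/\Pi$ concentrated in strictly positive degrees, for any $L\in\D^b(\Pi)$ the triangle $L\to L\lotimes_\Pi\G\to L\lotimes_\Pi W$ immediately shows $L\lotimes_\Pi\G$ is bounded below (since $L$ is bounded and $L\lotimes_\Pi W\in\thick_{\D(\Pi)}W$), while smoothness puts it in $\per\G$; then (b) kills it outright. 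That is the whole content of the paper's Lemma \ref{below}.

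The genuine gap is in full faithfulness. You propose to compute $\Hom_{\C(\Pi)}(\bar\Pi,\bar\Pi[n])$ as $\colim_p\Hom_{\D(\Pi)}(\tau_{\leq-p}\Pi,\Pi[n])$ and then assert that hypothesis (b) ``forces'' this to equal $H^n(\G)=\T(X,X[n])$. But (b) has already been spent on annihilating $\D^b(\Pi)$ and plays no role in identifying the colimit; you give no argument that the colimit actually computes $H^n(\G)$, and this is not obvious --- especially for $n>0$, where $H^n(\Pi)=0$ while $H^n(\G)\neq0$, so the identification cannot be read off from the cohomology of $\Pi$ alone. What one needs is a cohomological-degree comparison between $\Hom(\tau_{\leq-p}\Pi,\Pi[n])$ and $\Hom(\tau_{\leq-p}\Pi,\G[n])$ using $\Pi\to\G\to W$ and the vanishing of $\Hom_{\D(\Pi)}(\tau_{\leq-p}\Pi,W[\ast])$ for $p\gg0$, together with compatibility with multiplication --- none of which you carry out. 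The paper takes a different and complete route: it does not compute $\Hom$-spaces in $\C(\Pi)$ abstractly, but instead proves injectivity and surjectivity of $\Hom_{\C(\Pi)}(L,M)\to\Hom_{\D(\G)}(L\lotimes_\Pi\G,M\lotimes_\Pi\G)$ directly for arbitrary $L,M\in\per\Pi$, using the adjunction isomorphism onto $\Hom_{\D(\Pi)}(L,M\lotimes_\Pi\G)$ and the homotopy-colimit lemma (\ref{hc}) applied to truncations $\G^{\leq p}$ of the bimodule $\G$, not of $\Pi$. Your proposed route via $\tau_{\leq-p}\Pi$ could plausibly be pushed through, but as written the crucial identification is asserted, not proved, and the reason you give for it is the wrong one.
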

Our main result \ref{main} is obtained as an application of this observation together with a description of $\C(\Pi)$ as an orbit category of a derived category (see \ref{cl}) and our method is thus quite different from those in \ref{known}. It should be noted that a similar result on realizing a triangulated category as a cluster category is obtained in \cite{KY20}, in the setting where $\T$ is CY and $X\in\T$ is cluster tilting. It would be interesting to investigate the relationship of these results. We refer also to \cite{Tab} for another result based on a different model.

\subsection*{Acknowledgement}
The author would like to thank his supervisor Osamu Iyama for many helpful discussions. He is also grateful to Bernhard Keller for stimulating comments and discussions.

\section{Hereditaryness of shifted sum of cluster tilting objects}
Let $\T$ be a $d$-CY triangulated category with a $d$-cluster tilting object $T$. While the object $T$, if $\End_\T(T)$ is hereditary, alone can recover the category $\T$ when $d=2$, the same does not hold in higher dimensions. Our generator for larger $d$ is the shifted sum $X=T\oplus\cdots\oplus T[-(d-2)]$ of $T$ which, if $\End_\T(X)$ is hereditary, turns out to be essential to recover the triangulated category $\T$, see \ref{root}.

The aim of this section is to give a sufficient condition for $\End_\T(X)$ to be hereditary in terms of much smaller endomorphism algebra under some vanishing of negative self-extensions of $T$ (\ref{oddcy} and \ref{evency}). Moreover we explicitly describe the quiver of $\End_\T(X)$ in each case (\ref{quiver} and \ref{quiver2}).

\subsection{Rigid objects with hereditary endomorphism algebras}\label{gen}
\newcommand{\Sa}[1]{S_a^{(#1)}}
\newcommand{\Ua}[1]{U_a^{(#1)}}
\newcommand{\Aa}[1]{A_a^{(#1)}}
\newcommand{\Ba}[1]{B_a^{(#1)}}
Some part of our discussion does not depend on the setup of cluster tilting object in a CY triangulated category. Let $\T$ be a triangulated category and $T\in\T$. For each indecomposable summand $T_a$ of $T$, define the objects $\Aa{i}$ and $\Sa{i}$ by the triangles
\[ \xymatrix@R=1mm{
	\Sa{1}\ar[r]&\Aa{0}\ar[r]^-{a_0}&T_a\ar[r]^-{c_0}&\Sa{1}[1] \\
	\Sa{i+1}\ar[r]&\Aa{i}\ar[r]^-{a_i}&\Sa{i}\ar[r]^-{\delta_i}&\Sa{i+1}[1] &\text{for } i\geq1 } \]
with $a_0$ the sink map in $\add T$, and $a_i$ the minimal right $(\add T)$-approximation for each $i\geq1$. We depict these triangles as a complex below.
\[ \xymatrix@R=2mm@!C=5mm{
	\cdots\ar[dr]\ar[rr]&&\Aa{2}\ar[dr]_-{a_2}\ar[rr]^-{f_2}&&\Aa{1}\ar[dr]_-{a_1}\ar[rr]^-{f_1}&&\Aa{0}\ar[dr]^-{a_0}&\\
	&\cdots\ar[ur]&&\Sa{2}\ar[ur]&&\Sa{1}\ar[ur]&&T_a } \]
Let $c_i\colon T_a[-i]\to\Sa{i+1}[1]$ be the composite of the connecting morphisms, precisely it is given by
\[ \xymatrix@C=12mm{c_i\colon T_a[-i]\ar[r]&\Sa{1}[-(i-1)]\ar[r]^-{\delta_1[-(i-1)]}&\Sa{2}[-(i-2)]\ar[r]&\cdots\ar[r]&\Sa{i}\ar[r]^-{\delta_i}&\Sa{i+1}[1] }, \]
and thus satisfies $c_i={\delta_i}\circ (c_{i-1}[-1])$.

An important observation is that for each $m\geq1$ we can determine the quiver of the endomorphism algebra of the shifted sum $T\oplus T[-1]\oplus\cdots\oplus T[-m]$ from the complex above when a smaller algebra $\End_\T(T\oplus\cdots\oplus T[-(m-1)])$ is hereditary. 
\begin{Prop}\label{induction}
Let $m\geq1$ and $T\in\T$ an $(m+1)$-rigid object such that $\End_\T(T\oplus\cdots\oplus T[-(m-1)])$ is hereditary. Then there exists an octahedral
\[ \xymatrix@R=4mm@C=10mm{
\Sa{m}[-1]\ar@{=}[d]\ar[r]^-{-\delta_m[-1]}&\Sa{m+1}\ar[r]\ar[d]&\Aa{m}\ar[d]\ar[r]^-{a_m}&\Sa{m}\ar@{=}[d]\\
\Sa{m}[-1]\ar[r]&\discoprod_{i=0}^{m-1}\Aa{i}[-(m-i)]\ar[d]\ar[r]^-{p_{m}}&T_a[-m]\ar[r]^-{c_{m-1}[-1]}\ar[d]&\Sa{m}\\
&\bullet\ar@{=}[r]&\bullet& } \]
with $p_{m}$ the sink map in $\add(T[-1]\oplus\cdots\oplus T[-m])$. Moreover, the triangle
\[ \xymatrix{ \Sa{m+1}\ar[r]&\discoprod_{i=0}^m\Aa{i}[-(m-i)]\ar[r]^-{q_m}&T_a[-m]\ar[r]^-{c_m}&\Sa{m+1}[1] } \]
obtained from the above homotopy cartesian square gives the sink map $q_m$ at $T_a[-m]$ in $\add(T\oplus\cdots\oplus T[-m])$.
\end{Prop}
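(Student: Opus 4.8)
The plan is to argue by induction on $m\ge1$, with essentially the whole content concentrated in one vanishing, $\Aa{m}=0$, after which the asserted octahedron degenerates and everything else follows formally. To set up the induction I would take, by convention, $(\star_0)$ to be the defining triangle $\Sa{1}\to\Aa{0}\xrightarrow{a_0}T_a\xrightarrow{c_0}\Sa{1}[1]$ with $q_0:=a_0$ the sink map in $\add T$, and for $m\ge2$ invoke the Proposition at level $m-1$ to obtain the triangle
\[ (\star_{m-1})\colon\quad\Sa{m}\to\discoprod_{i=0}^{m-1}\Aa{i}[-(m-1-i)]\xrightarrow{q_{m-1}}T_a[-(m-1)]\xrightarrow{c_{m-1}}\Sa{m}[1] \]
with $q_{m-1}$ the sink map at $T_a[-(m-1)]$ in $\add(T\oplus\cdots\oplus T[-(m-1)])$. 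Invoking level $m-1$ is legitimate: $T$ is $m$-rigid because it is $(m+1)$-rigid, and $\End_\T(T\oplus\cdots\oplus T[-(m-2)])$ is an idempotent subalgebra of the hereditary algebra $\Gamma:=\End_\T(T\oplus\cdots\oplus T[-(m-1)])$, hence again hereditary.

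The key step is to prove $\Aa{m}=0$. I would set $X=T\oplus\cdots\oplus T[-(m-1)]$ and apply $\T(X,-)$ to $(\star_{m-1})$. By $(m+1)$-rigidity one has $\T(X,T_a[-m])=0$. Under the projectivization equivalence $\T(X,-)\colon\add X\xrightarrow{\sim}\proj\Gamma$ the sink map $q_{m-1}$ becomes a projective cover of $\rad P$, where $P=\T(X,T_a[-(m-1)])$; since $\Gamma$ is hereditary $\rad P$ is projective, so this projective cover is an isomorphism and $\T(X,q_{m-1})$ is injective. The long exact sequence of $(\star_{m-1})$ then forces $\T(X,\Sa{m})=0$, whence $\T(T,\Sa{m})=0$, so $\Aa{m}$ — the minimal right $\add T$-approximation of $\Sa{m}$ — vanishes. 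In particular the map $\delta_m$ in $\Sa{m+1}\to\Aa{m}\to\Sa{m}\xrightarrow{\delta_m}\Sa{m+1}[1]$ becomes an isomorphism, and so does $-\delta_m[-1]\colon\Sa{m}[-1]\to\Sa{m+1}$.

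With $\Aa{m}=0$ the octahedron is easy to assemble. The top row $\Sa{m}[-1]\xrightarrow{-\delta_m[-1]}\Sa{m+1}\to0\xrightarrow{a_m}\Sa{m}$ is a triangle because $-\delta_m[-1]$ is invertible; the bottom row is $(\star_{m-1})$ shifted by $[-1]$, so $p_m:=q_{m-1}[-1]$ is the sink map at $T_a[-m]$ in $\add(T[-1]\oplus\cdots\oplus T[-m])$; and precomposing the first map of the shifted $(\star_{m-1})$ with $(-\delta_m[-1])^{-1}$ — together with the zero map $\Aa{m}=0\to T_a[-m]$ and the two identities — gives a morphism of triangles invertible on the outer terms (the middle square commutes by consecutivity in $(\star_{m-1})$). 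The standard base-change property for a morphism of triangles that is invertible on the two outer terms then makes the middle square homotopy cartesian with isomorphic cones of the vertical arrows, which is precisely the displayed diagram; its homotopy cartesian square yields $\Sa{m+1}\to\bigl(\discoprod_{i=0}^{m-1}\Aa{i}[-(m-i)]\bigr)\oplus\Aa{m}\to T_a[-m]\to\Sa{m+1}[1]$, i.e. (as $\Aa{m}=0$) the triangle $\Sa{m+1}\to\discoprod_{i=0}^{m}\Aa{i}[-(m-i)]\xrightarrow{q_m}T_a[-m]\xrightarrow{c_m}\Sa{m+1}[1]$ with $q_m=p_m$, the connecting morphism being $c_m$ by the relation $c_m=\delta_m\circ(c_{m-1}[-1])$. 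Finally $q_m=p_m$ stays the sink map after enlarging $\add(T[-1]\oplus\cdots\oplus T[-m])$ by $\add T$, since $\T(T,T_a[-m])=0$ by $(m+1)$-rigidity kills every morphism from a summand of $T$ into $T_a[-m]$, while right minimality of $q_m$ is intrinsic.

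The main obstacle is the vanishing $\Aa{m}=0$: this is the only place hereditariness of $\End_\T(T\oplus\cdots\oplus T[-(m-1)])$ is genuinely used, through the principle that a projective cover of a projective module is an isomorphism. Two points need care along the way: transporting the hypothesis to the smaller algebra to run the induction (relying on the standard but not quite obvious fact that idempotent subalgebras of hereditary algebras are hereditary), and correctly matching the connecting morphism of the homotopy cartesian square with $c_m$ via $c_m=\delta_m\circ(c_{m-1}[-1])$.
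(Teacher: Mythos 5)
There is a fatal error at the heart of the argument: the vanishing you invoke does not hold. You claim that $(m+1)$-rigidity gives $\T(X,T_a[-m])=0$ with $X=T\oplus\cdots\oplus T[-(m-1)]$ (and later, similarly, that $\T(T,T_a[-m])=0$). Unwinding, $\T(X,T_a[-m])=\bigoplus_{i=1}^{m}\T(T,T_a[-i])$, which is a sum of \emph{negative} self-extensions of $T$. But $(m+1)$-rigidity is the vanishing $\T(T,T[i])=0$ in the \emph{positive} range $0<i\le m$, and says nothing about $\T(T,T[-i])$; it looks like you silently swapped the two arguments of $\T(-,-)$, since it is $\T(T_a[-m],X)$, not $\T(X,T_a[-m])$, that rigidity kills. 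The negative extensions are exactly what this section of the paper is tracking, and they do not vanish in general — if they did, the entire development of Sections 2.3 and 2.4 would be vacuous. Concretely, when the proposition is applied with $m=n$ in the proof of Theorem \ref{oddcy}, the object $\Aa{n}$ you claim is zero is the $A_a$ of Proposition \ref{AR2n+3}, and Lemma \ref{sproj} identifies $\T(T,A_a)$ with $\T(T,S_a[-n])$, a projective module that is nonzero as soon as $\T(T,T[-n])\ne0$. So $\Aa{m}\ne0$ in general, the degenerate octahedron does not exist, and the rest of your proof (which is a formality resting on this vanishing) collapses with it.

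What your long-exact-sequence computation on $(\star_{m-1})$ actually yields, once one knows $\T(X,q_{m-1})$ is injective by hereditariness, is only that the middle map $\T(X,\Sa{m})\to\T(X,\coprod_i\Aa{i}[-(m-1-i)])$ is \emph{zero}, hence that $c_{m-1}[-1]$ induces a \emph{surjection} $\T(X,T_a[-m])\twoheadrightarrow\T(X,\Sa{m})$ — not that the target vanishes. This surjectivity is exactly the auxiliary statement the paper's proof introduces and propagates (its (a)$_l$), alternating with the sink-map statement (b)$_l$: it is what permits lifting the right $\add T$-approximation $a_m\colon\Aa{m}\to\Sa{m}$ to a map into $T_a[-m]$ and assembling a genuine, non-degenerate octahedron. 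With $\Aa{m}$ possibly nonzero, right-almost-splitness of $q_m$ is no longer automatic (the paper handles a radical map $T\to T_a[-m]$ by lifting $T\to\Sa{m}$ through $\Aa{m}$ and factoring the discrepancy through the cocone), and right-minimality requires the correctly-directed use of $(m+1)$-rigidity, namely $\T(T[-j],T)=0$ for $1\le j\le m$. So the plan needs to be rebuilt around the surjectivity statement rather than the false vanishing.
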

\begin{proof}
	Consider the following statements.
	\begin{enumerate}
	\renewcommand{\labelenumi}{(\alph{enumi})$_l$}
	\renewcommand{\theenumi}{\alph{enumi}}
	\item\label{app} The morphism $c_{l-1}[-1]\colon T_a[-l]\to\Sa{l}$ induces a surjection $\T(T,T_a[-l])\to\T(T,\Sa{l})$.
	\item\label{msink} The conclusions of the proposition for $m=l$.
	\end{enumerate}
	\newcommand{\refm}[2]{(\ref{#1})$_{#2}$}
	We prove that \refm{msink}{l-1} and \refm{app}{l} implies \refm{msink}{l} when $l\leq m$, and \refm{msink}{l} gives \refm{app}{l+1} when $l\leq m-1$.	Starting with the sink map
	\[ \xymatrix@C=10mm{\Sa{1}\ar[r]&\Aa{0}\ar[r]^-{q_0:=a_0}&T_a\ar[r]&\Sa{1}[1] } \]
	which can be viewed as \refm{msink}{0}, this will prove our result \refm{msink}{m} by induction.
	
	Suppose first \refm{msink}{l} and $l\leq m-1$. Put $\widetilde{T}=T\oplus\cdots\oplus T[-l]$ and consider the exact sequence
	\[ \xymatrix{ \T(\widetilde{T},T_a[-l-1])\ar[r]^-{c_l[-1]}&\T(\widetilde{T},\Sa{l+1})\ar[r]&\T(\widetilde{T},\coprod_{i=0}^l\Aa{i}[-(l-i)])\ar[r]&\T(\widetilde{T},T_a[-l]) } \]
	obtained from the triangle in \refm{msink}{l}. Since $\End_\T(\widetilde{T})$ is hereditary the sink map in $\add\widetilde{T}$ is a monomorphism, which is to say that the last map in the above exact sequence is injective.	Therefore the first map, in particular its direct summand $\T(T,T_a[-l-1])\to\T(T,\Sa{l+1})$, is surjective. This shows \refm{app}{l+1}.
	
	Suppose next \refm{msink}{l-1} and \refm{app}{l}. Shifting the triangle in \refm{msink}{l-1} by $[-1]$ we have the triangle in the second row in the diagram below. By \refm{msink}{l-1} the map $q_l[-1]$ is the sink map in $\add(T[-1]\oplus\cdots\oplus T[-l])$. We compare it with the triangle in the first row. By \refm{app}{l}, we can lift $a_l\colon \Aa{l}\to\Sa{l}$ to $T_a[-l]$, which can be completed to a desired octahedral. This gives the first part of \refm{msink}{l}.
	\[ \xymatrix@R=4mm@C=10mm{
		\Sa{l}[-1]\ar@{=}[d]\ar[r]^-{-\delta_l[-1]}&\Sa{l+1}\ar[r]\ar@{-->}[d]&\Aa{l}\ar@{-->}[d]\ar[r]^-{a_l}&\Sa{l}\ar@{=}[d]\\
		\Sa{l}[-1]\ar[r]&\discoprod_{i=0}^{l-1}\Aa{i}[-(l-i)]\ar[d]\ar[r]^-{q_{l-1}[-1]}&T_a[-l]\ar[d]\ar[r]^-{c_{l-1}[-1]}&\Sa{l}\\
		&\bullet\ar@{=}[r]&\bullet& }\]
	We have to prove that in the triangle
	\[ \xymatrix{ \Sa{l+1}\ar[r]&\discoprod_{i=0}^l\Aa{i}[-(l-i)]\ar[r]^-{q_l}&T_a[-l]\ar[r]^-{c_l}&\Sa{l+1}[1] } \]
	given by the above octahedral, the middle map is the sink map at $T_a[-l]$ in $\add(T\oplus\cdots\oplus T[-l])$.
	
	We first show that the map is right almost split. Since the map $q_l[-1]$ in second row of the octahedral is the sink map at $T_a[-l]$ in $\add(T[-1]\oplus\cdots\oplus T[-l])$, any radical map $T[-i]\to T_a[-l]$ with $0<i\leq l$ factors through $\coprod_{i=0}^{l-1}\Aa{i}[-(l-i)]$. It remains to consider $\varphi\colon T\to T_a[-l]$.
	\[ \xymatrix@C=8mm@R=0.2mm{
		&T\ar@/^10pt/[drrr]\ar[rddd]_-\varphi\ar@{-->}[dr]\ar@{-->}[dddl]&&&\\
		&&\Aa{l}\ar[dd]\ar[rr]_-{a_l}&&\Sa{l}\ar@{=}[dd]\\ \\
		\discoprod_{i=0}^{l-1}\Aa{i}[-(l-i)]\ar[rr]&&T_a[-l]\ar[rr]&&\Sa{l} } \]
	Since $a_l\colon\Aa{l}\to\Sa{l}$ is a right $(\add T)$-approximation, the morphism $T\xrightarrow{\varphi}T_a[-l]\to\Sa{l}$ can be lifted to $\Aa{l}$. Then the difference of the two maps in the triangle formed by $T$, $\Aa{l}$, and $T_a[-l]$ vanishes under $T_a[-l]\to\Sa{l}$, thus it factors through $\coprod_{i=0}^{l-1}\Aa{i}[-(l-i)]$. We conclude that $\varphi$ factors through $\coprod_{i=0}^{l}\Aa{i}[-(l-i)]$. This proves that $q_l$ is right almost split.
	
	We next show $q_l$ is right minimal. For this we prove that $\Sa{l+1}\xrightarrow{\left( \begin{smallmatrix}r_1\\r_2\end{smallmatrix}\right) }\coprod_{i=0}^l\Aa{i}[-(l-i)]=\Aa{l}\oplus\coprod_{i=0}^{l-1}\Aa{i}[-(l-i)]$ is a radical map. Since $a_l$ is right minimal, the summand $r_1$ is certainly a radical map. It remains to consider $r_2\colon\Sa{l+1}\to\coprod_{i=0}^{l-1}\Aa{i}[-(l-i)]$. If it is not a radical map, then there is a non-zero direct summand $M$ of $\coprod_{i=0}^{l-1}\Aa{i}[-(l-i)]$ which is mapped to $0$ under the vertical map $\coprod_{i=0}^{l-1}\Aa{i}[-(l-i)]\to\bullet$ in the octahedral. Then the composite $M\subset\coprod_{i=0}^{l-1}\Aa{i}[-(l-i)]\xrightarrow{q_{l-1}[-1]}T_a[-l]\to\bullet$ is $0$, so $M\to T_a[-l]$ factors through $\Aa{l}$. Now since $T$ is $(m+1)$-rigid we have $\T(M,\Aa{l})=0$, thus the restriction of $q_{l-1}[-1]\colon\coprod_{i=0}^{l-1}\Aa{i}[-(l-i)]\to T_a[-l]$ to $M$ is $0$. This contradicts right minimality of $q_{l-1}$. Therefore $r_2$ is a radical map.
\end{proof}
Let us note some consequences of this inductive construction of sink maps.
\begin{Lem}\label{tech}
Let $m\geq1$ and $T\in\T$ an $m$-rigid object such that $\End_\T(T\oplus\cdots\oplus T[-(m-1)])$ is hereditary.
\begin{enumerate}
	\item\label{msproj} The map $a_i\colon\Aa{i}\to\Sa{i}$ induces an injection $\T(T,\Aa{i})\hookrightarrow\T(T,\Sa{i})$ for each $0\leq i\leq m-1$, hence an isomorphism for $1\leq i\leq m-1$. 
	\item\label{mf0} The maps $f_i\colon\Aa{i}\to\Aa{i-1}$ are $0$ for all $1\leq i\leq m$.
\end{enumerate}
\end{Lem}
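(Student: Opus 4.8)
The plan is to reduce both statements to a single surjectivity assertion about the iterated connecting morphisms and then read everything off. The first step I would carry out is the observation that hereditariness propagates downward: for each $0\le k\le m-1$ the algebra $\G_k:=\End_\T(T\oplus\cdots\oplus T[-k])$ is hereditary. Arguing by descent from $k=m-1$, the $m$-rigidity of $T$ gives $\T(T[-k],T[-j])=\T(T,T[k-j])=0$ for all $0\le j<k\le m-1$, so there are no nonzero morphisms from $T[-k]$ into $T\oplus\cdots\oplus T[-(k-1)]$; thus $\G_k$ is triangular with corner block $\G_{k-1}$, and a corner block arising from a triangular decomposition of a hereditary algebra is again hereditary. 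Concretely, writing $e$ for the idempotent cutting out the corner, one has $e\G_k(1-e)=0$ (or $(1-e)\G_k e=0$), hence $e\G_k=e\G_k e$ as one-sided modules, so the exact functor $e(-)$ carries $\G_k$ to a free module and therefore preserves projectives, forcing $\gd\G_{k-1}\le\gd\G_k\le1$. In particular $\End_\T(T)$ is hereditary.

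Next I would show that $\T(T,c_i[-1])\colon\T(T,T_a[-i-1])\to\T(T,\Sa{i+1})$ is surjective for every $0\le i\le m-1$. For $i=0$: under the equivalence $\T(T,-)\colon\add T\xrightarrow{\sim}\proj\End_\T(T)$ the sink map $a_0$ corresponds to the inclusion $\rad P_a\hookrightarrow P_a$, which is a monomorphism because $\End_\T(T)$ is hereditary, so $\T(T,a_0)$ is injective and rotating the triangle $\Sa{1}\to\Aa{0}\xrightarrow{a_0}T_a\xrightarrow{c_0}\Sa{1}[1]$ gives the claim. For $1\le i\le m-1$ I would invoke Proposition~\ref{induction} with its ``$m$'' taken to be $i$, which is legitimate since $T$ is then $(i+1)$-rigid and $\G_{i-1}$ is hereditary by the first step: it produces the triangle
\[ \Sa{i+1}\longrightarrow\coprod_{j=0}^{i}\Aa{j}[-(i-j)]\xrightarrow{q_i}T_a[-i]\xrightarrow{c_i}\Sa{i+1}[1] \]
in which $q_i$ is the sink map at $T_a[-i]$ in $\add(T\oplus\cdots\oplus T[-i])$. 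Since $\G_i$ is hereditary, this sink map becomes a monomorphism after applying $\T(T\oplus\cdots\oplus T[-i],-)$ (again because $\rad P\hookrightarrow P$ is mono over a hereditary algebra), so its direct summand $\T(T,q_i)$ is injective, and rotating the displayed triangle yields the surjectivity of $\T(T,c_i[-1])$.

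Finally I would harvest the two conclusions. For $i\ge1$, using $c_i=\delta_i\circ(c_{i-1}[-1])$ the morphism $c_i[-1]$ factors through $\Sa{i}[-1]\xrightarrow{\delta_i[-1]}\Sa{i+1}$, so surjectivity of $\T(T,c_i[-1])$ passes to $\T(T,P_{i+1})$ where $P_{i+1}:=\delta_i[-1]$; for $i=0$ set $P_1:=c_0[-1]$, so that $\T(T,P_1)$ is surjective by the previous step. With the convention $\Sa{0}:=T_a$, the morphism $P_{i+1}$ is the one preceding $v_i$ in the rotated triangle $\Sa{i}[-1]\xrightarrow{P_{i+1}}\Sa{i+1}\xrightarrow{v_i}\Aa{i}\xrightarrow{a_i}\Sa{i}$, where $v_i$ denotes the first morphism of the defining triangle of $\Sa{i+1}$; applying $\T(T,-)$ and using exactness, surjectivity of $\T(T,P_{i+1})$ is equivalent to $\T(T,v_i)=0$, which is precisely injectivity of $\T(T,a_i)$. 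This proves part (1); for $1\le i\le m-1$ it is moreover an isomorphism, since there $a_i$ is a right $(\add T)$-approximation and hence $\T(T,a_i)$ is also surjective. For part (2), by construction of the complex $f_{i+1}=v_i\circ a_{i+1}$; as $\Aa{i+1}\in\add T$, the surjectivity of $\T(T,P_{i+1})$ lets $a_{i+1}$ factor through $P_{i+1}$, and since $v_i\circ P_{i+1}=0$ we conclude $f_{i+1}=0$. Letting $i$ range over $0,\dots,m-1$ gives $f_j=0$ for all $1\le j\le m$.

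I expect the only genuinely delicate point to be the downward propagation of hereditariness in the first step — verifying that the vanishing of the relevant negative self-extensions really puts $\End_\T(T\oplus\cdots\oplus T[-k])$ into triangular form with the smaller endomorphism algebra as a corner block, and recalling that such corner blocks of hereditary algebras remain hereditary. Once Proposition~\ref{induction} is available in the form above, the remainder is a routine diagram chase through the octahedra of Proposition~\ref{induction} and the defining triangles of the $\Sa{i}$.
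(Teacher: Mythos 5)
Your proof is correct and takes essentially the same route as the paper's: both apply Proposition~\ref{induction} to produce the sink map at $T_a[-i]$ in $\add(T\oplus\cdots\oplus T[-i])$, use hereditariness of the endomorphism ring to make that sink map a monomorphism under $\T(T,-)$, and read off both conclusions from the exact sequence attached to the rotated triangle — your surjectivity of $\T(T,P_{i+1})$ is exactly the paper's vanishing of the component $\T(T,\Sa{m})\to\T(T,\Aa{m-1})$, and your factorization of $a_{i+1}$ through $P_{i+1}$ is the paper's substitution of $\Aa{m}$ for $T$. The only difference is presentational: the paper reduces at the outset to the single top case $i=m-1$ (resp.\ $i=m$), relegating the downward propagation of hereditariness to the one-line remark ``the assumptions for $m$ imply the same for smaller $m$,'' while you iterate through all $i$ and spell out that propagation via the corner-block argument — a reasonable expansion, since the paper's remark does silently rely on it.
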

\begin{proof}
	Since the assumptions on $T$ for $m$ implies the same for any smaller $m$, it is enough to prove the assertions for the largest possible $i$, that is, $i=m-1$ for (\ref{msproj}) and $i=m$ for (\ref{mf0}). We can apply \ref{induction} for $m-1$, so there is a triangle
	\[ \xymatrix{ \Sa{m}\ar[r]&\discoprod_{i=0}^{m-1}\Aa{i}[-(m-1-i)]\ar[r]&T_a[-(m-1)]\ar[r]&\Sa{m}[1] }, \]
	in which the middle map is the sink map in $\add\widetilde{T}$, where $\widetilde{T}=T\oplus\cdots\oplus T[-(m-1)]$. Applying $\T(\widetilde{T},-)$ we have an exact sequence
	\[ \xymatrix{ \T(\widetilde{T},T_a[-m])\ar[r]&\T(\widetilde{T},\Sa{m})\ar[r]&\T(\widetilde{T},\coprod_{i=0}^{m-1}\Aa{i}[-(m-1-i)])\ar[r]&\T(\widetilde{T},T_a[-(m-1)]) }. \]
	Since $\End_\T(\widetilde{T})$ is hereditary, any sink map in $\add\widetilde{T}$ is a monomorphism, thus the last map in the above exact sequence is injective. Then the middle map, in particular its direct summand
	\[ \xymatrix{ \T(T,\Sa{m})\ar[r]&\T(T,\Aa{m-1}) } \]
	is $0$. We conclude that $\T(T,\Aa{m-1})\to\T(T,\Sa{m-1})$ is injective, and also $f_m\colon\Aa{m}\to\Aa{m-1}$ is $0$ by substituting $T=\Aa{m}$ and considering the image of $a_m\in\T(\Aa{m},\Sa{m})$. 
\end{proof}

Dually we define the objects $\Ba{i}\in\add T$ and $\Ua{i}\in\T$ by the sequence of triangles below.
\[ \xymatrix@R=2mm@!C=5mm{
	&\Ba{0}\ar[dr]\ar[rr]^-{g_1}&&\Ba{1}\ar[dr]\ar[rr]^-{g_2}&&\Ba{2}\ar[dr]\ar[rr]&&\cdots\\
	T_a\ar[ur]^-{b_0}&&\Ua{1}\ar[ur]_-{b_1}&&\Ua{2}\ar[ur]_-{b_2}&&\cdots\ar[ur]&\quad,} \]
where $b_0$ the source map in $\add T$, and $b_i$ the minimal left $(\add T)$-approximation for each $i\geq1$. We state without proof the following dual results.
\begin{Prop}\label{induction2}
Let $m\geq1$ and $T\in\T$ an $(m+1)$-rigid object such that $\End_\T(T\oplus\cdots\oplus T[m-1])$ is hereditary. Then there exists an octahedral
\[ \xymatrix@R=4mm@C=10mm{
	&\bullet\ar[d]\ar@{=}[r]&\bullet\ar[d]&\\
	\Ua{m}\ar@{=}[d]\ar[r]^-{}&T_a[m]\ar[r]^-{p_m}\ar[d]&\discoprod_{i=0}^{m-1}\Ba{i}[m-i]\ar[d]\ar[r]^-{}&\Ua{m}[1]\ar@{=}[d]\\
	\Ua{m}\ar[r]&\Ba{m}\ar[r]^-{}&\Ua{m+1}\ar[r]&\Ua{m}[1] } \]
with $p_{m}$ the source map in $\add(T[1]\oplus\cdots\oplus T[m])$. Moreover, the triangle
\[ \xymatrix{ \Ua{m+1}[-1]\ar[r]&T_a[m]\ar[r]^-{q_m}&\discoprod_{i=0}^m\Ba{i}[m-i]\ar[r]&\Ua{m+1} } \]
obtained from the above homotopy cartesian square gives the source map $q_m$ at $T_a[m]$ in $\add(T\oplus\cdots\oplus T[m])$.
\end{Prop}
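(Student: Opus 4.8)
The plan is to deduce Proposition \ref{induction2} from Proposition \ref{induction} by passing to the opposite category. Recall that if $(\T,[1])$ is triangulated then so is $(\T^{\op},[-1])$, where $[-1]$ denotes the quasi-inverse of $[1]$, and that a diagram $Z\to Y\to X\to Z[1]$ is a triangle in $\T$ if and only if the diagram $X\to Y\to Z\to X[-1]$ obtained by reversing every morphism is a triangle in $\T^{\op}$. Under this correspondence sink maps and source maps are interchanged, minimal right $(\add T)$-approximations become minimal left $(\add T)$-approximations, right minimal morphisms become left minimal ones, and octahedra are carried to octahedra (drawn upside down).

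First I would check that the hypotheses transport. One has $\T^{\op}(T,T[i])=\T(T[-i],T)=\T(T,T[i])$, where the first shift is that of $\T^{\op}$ and the others that of $\T$, so $(m+1)$-rigidity of $T$ is the same in $\T^{\op}$ as in $\T$. Likewise the object $T\oplus T[-1]\oplus\cdots\oplus T[-(m-1)]$ formed in $\T^{\op}$ is, in the notation of $\T$, the object $T\oplus T[1]\oplus\cdots\oplus T[m-1]$, and $\End_{\T^{\op}}(-)=\End_\T(-)^{\op}$ is hereditary if and only if $\End_\T(-)$ is; hence the hypothesis of Proposition \ref{induction2} is exactly that of Proposition \ref{induction} applied to $T\in\T^{\op}$.

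Next I would match the two constructions. In $\T^{\op}$ the defining triangles of $\Aa{i}$ and $\Sa{i}$ — with $a_0$ the sink map in $\add T$ and the $a_i$ the minimal right $(\add T)$-approximations — reverse, in $\T$, to the defining triangles of $\Ba{i}$ and $\Ua{i}$ in the statement, with $b_0$ the source map and the $b_i$ the minimal left $(\add T)$-approximations; the composite connecting morphisms $c_i$ of Proposition \ref{induction} become the corresponding composites along the $\Ua{\bullet}$-tower. Applying Proposition \ref{induction} to $T\in\T^{\op}$ produces the octahedron there, and reversing all morphisms and replacing every shift $[k]$ by $[-k]$ turns it into the octahedron asserted here, with $p_m$ now the source map in $\add(T[1]\oplus\cdots\oplus T[m])$. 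Similarly the triangle $\Sa{m+1}\to\coprod_{i=0}^m\Aa{i}[-(m-i)]\to T_a[-m]\to\Sa{m+1}[1]$ in $\T^{\op}$ reverses and rotates to the triangle $\Ua{m+1}[-1]\to T_a[m]\to\coprod_{i=0}^m\Ba{i}[m-i]\to\Ua{m+1}$ in $\T$, and the fact that its middle map is the sink map in $\add(T\oplus\cdots\oplus T[-m])$ of $\T^{\op}$ is precisely the assertion that it is the source map in $\add(T\oplus\cdots\oplus T[m])$ of $\T$.

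The only genuine work is the bookkeeping in this last step: one must verify that under the reversal the shifts $[-(m-i)]$ become $[m-i]$, that the connecting map $-\delta_m[-1]$ in the octahedron of Proposition \ref{induction} lands in the correctly-signed slot of the dual octahedron, and that the common object at the two ``$\bullet$'' vertices ends up on the correct side of the diagram. This is routine but deserves to be written out once, to confirm that the two octahedra really correspond. Alternatively one could repeat the inductive proof of Proposition \ref{induction} verbatim in the dual situation, exchanging ``sink'' with ``source'' and ``right'' with ``left'' throughout; the duality argument is merely the economical way to package this.
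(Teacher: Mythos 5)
Your proposal is correct and matches the paper's intent exactly: the paper states \ref{induction2} ``without proof'' as a dual result, and passing to the opposite category $(\T^{\op},[-1])$ is precisely the mechanism that justifies this. Your bookkeeping — checking that $(m+1)$-rigidity, hereditariness of $\End_\T(T\oplus\cdots\oplus T[m-1])$, and the defining towers of $\Ba{i},\Ua{i}$ all transport correctly — is what a careful reader must supply, and you have done so accurately.
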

\begin{Lem}\label{tech2}
Let $m\geq1$ and $T\in\T$ an $m$-rigid object such that $\End_\T(T\oplus\cdots\oplus T[m-1])$ is hereditary.
\begin{enumerate}
	\item\label{muinj} The map $b_i\colon\Ua{i}\to\Ba{i}$ induces an injection $\T(\Ba{i},T)\hookrightarrow\T(\Ua{i},T)$ for each $0\leq i\leq m-1$, hence an isomorphism for $1\leq i\leq m-1$. 
	\item\label{mg0} The maps $g_i\colon\Ba{i-1}\to\Ba{i}$ are $0$ for all $1\leq i\leq m$.
\end{enumerate}
\end{Lem}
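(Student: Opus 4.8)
This lemma is the formal dual of Lemma~\ref{tech}, so the plan is to run its proof verbatim with \ref{induction2} in place of \ref{induction}; equivalently, one may apply \ref{tech} in the opposite triangulated category $\T^{\op}$, under which $[1]$ becomes $[-1]$, $\End_\T(M)$ becomes $\End_\T(M)^{\op}$ (hereditary precisely when $\End_\T(M)$ is), and the data $(\Ba{i},\Ua{i},b_i,g_i)$ for $T\in\T$ becomes the data $(\Aa{i},\Sa{i},a_i,f_i)$ for $T\in\T^{\op}$. The one external input needed is the dual of the one used in \ref{tech}: if $\End_\T(\widetilde T)$ is hereditary then every source map in $\add\widetilde T$ is an epimorphism. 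This follows from \ref{tech}'s input applied to the hereditary algebra $\End_\T(\widetilde T)^{\op}$ together with the contravariant equivalence $\T(-,\widetilde T)\colon\add\widetilde T\to\proj\End_\T(\widetilde T)^{\op}$.

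As in \ref{tech}, the hypotheses for $m$ imply those for every smaller value, so it suffices to prove (1) for $i=m-1$ and (2) for $i=m$; we treat both at once by applying \ref{induction2} with $m$ replaced by $m-1$ and $\widetilde T=T\oplus\cdots\oplus T[m-1]$ (the case $m=1$ being simply the defining source map $T_a\xrightarrow{b_0}\Ba{0}$ in $\add T$). This yields the triangle
\[ \Ua{m}[-1]\longrightarrow T_a[m-1]\xrightarrow{\ q_{m-1}\ }\discoprod_{i=0}^{m-1}\Ba{i}[m-1-i]\longrightarrow\Ua{m}, \]
in which $q_{m-1}$ is the source map at $T_a[m-1]$ in $\add\widetilde T$ and whose $\Ba{m-1}$-component $\Ba{m-1}\to\Ua{m}$ agrees, by the accompanying octahedral, with the map in the defining triangle $\Ua{m-1}\xrightarrow{b_{m-1}}\Ba{m-1}\to\Ua{m}\to\Ua{m-1}[1]$. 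Applying $\T(-,\widetilde T)$, the map $q_{m-1}^{*}$ is injective because $q_{m-1}$ is an epimorphism in $\add\widetilde T$ and $\widetilde T\in\add\widetilde T$; hence the preceding arrow $\T(\Ua{m},\widetilde T)\to\T(\discoprod_{i=0}^{m-1}\Ba{i}[m-1-i],\widetilde T)$ is zero, and passing to the summand $\Ba{m-1}$ of the source and $T$ of the target shows that the map $\T(\Ua{m},T)\to\T(\Ba{m-1},T)$ is zero.

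From here I would conclude exactly as in \ref{tech}. The long exact sequence of $\T(-,T)$ on the defining triangle $\Ua{m}[-1]\to\Ua{m-1}\xrightarrow{b_{m-1}}\Ba{m-1}\to\Ua{m}$ now shows $b_{m-1}^{*}\colon\T(\Ba{m-1},T)\to\T(\Ua{m-1},T)$ is injective, which is (1) for $i=m-1$; for $1\le i\le m-1$ this map is also surjective because $b_i$ is a left $(\add T)$-approximation, hence bijective. For (2), substituting $T=\Ba{m}$ into the vanishing map and tracing the image of $b_m\in\T(\Ua{m},\Ba{m})$ gives $g_m=\bigl(\Ba{m-1}\to\Ua{m}\xrightarrow{b_m}\Ba{m}\bigr)=0$. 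No essential difficulty arises: the only point requiring care is the systematic interchange of $[1]$ with $[-1]$, monomorphisms with epimorphisms, sink maps with source maps, and the variances of the Hom functors when transporting the argument of \ref{tech}.
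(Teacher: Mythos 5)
Your proof is correct and carries out exactly the dualization the paper intends: the paper explicitly states Lemma~\ref{tech2} (together with Proposition~\ref{induction2}) ``without proof'' as the formal dual of Lemma~\ref{tech}, and your argument is a faithful transcription of the proof of \ref{tech} under the dictionary $[1]\leftrightarrow[-1]$, sink map $\leftrightarrow$ source map, $\T(\widetilde T,-)\leftrightarrow\T(-,\widetilde T)$, with the one external input (source maps in $\add\widetilde T$ become injective under $\T(-,\widetilde T)$ when $\End_\T(\widetilde T)$ is hereditary) correctly identified as the dual of the paper's hereditary-implies-sink-maps-are-monomorphisms step via $\End_\T(\widetilde T)^{\op}$. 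The reduction to $i=m-1$ and $i=m$, the use of $\ref{induction2}$ with $m$ replaced by $m-1$ (plus the $m=1$ base case read off the defining triangle), and the derivation of $g_m=0$ by evaluating the vanishing map at $T=\Ba{m}$ all match the corresponding steps of the proof of \ref{tech}, so there is no gap.
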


We end this section with a technical lemma.
\begin{Lem}\label{np}
Let $T\in\T$ be a rigid object with $\End_\T(T)$ hereditary. Then the $\End_\T(T)$-module $\T(T,\Ua{1})$ has no non-zero projective summands.
\end{Lem}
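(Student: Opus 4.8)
The plan is to push the statement through the cohomological functor $\T(T,-)$ and reduce it to the elementary fact that the cokernel of a left minimal morphism between projective modules has no non-zero projective summand. Write $\L=\End_\T(T)$ and recall that $\T(T,-)\colon\add T\to\proj\L$ is an equivalence sending the indecomposable summand $T_a$ to the indecomposable projective $\T(T,T_a)$.

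First I would apply $\T(T,-)$ to the defining triangle $T_a\xrightarrow{b_0}\Ba{0}\to\Ua{1}\to T_a[1]$. Since $T$ is rigid we have $\T(T,T_a[1])=0$, so its long exact sequence truncates to $\T(T,T_a)\xrightarrow{\T(T,b_0)}\T(T,\Ba{0})\to\T(T,\Ua{1})\to0$; in other words $\T(T,\Ua{1})\cong\Coker\T(T,b_0)$ as $\L$-modules. As $b_0$ is by definition the source map in $\add T$ at $T_a$ and source maps are preserved by the equivalence $\T(T,-)$, the morphism $\T(T,b_0)$ is the source map in $\proj\L$ at $\T(T,T_a)$; in particular it is left minimal.

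It therefore remains to prove: if $\varphi\colon P\to Q$ is a left minimal morphism between projective $\L$-modules, then $\Coker\varphi$ has no non-zero projective summand. I would argue by contradiction. Suppose $\Coker\varphi=M\oplus P'$ with $P'$ projective and non-zero, let $c\colon Q\to\Coker\varphi$ be the canonical surjection, and let $\rho\colon Q\xrightarrow{c}\Coker\varphi\to P'$ be its composite with the projection. Then $\rho$ is an epimorphism onto a projective, hence splits; choosing $s$ with $\rho s=1_{P'}$ yields an idempotent $e=s\rho\in\End_\L(Q)$ whose image is isomorphic to $P'$, and $c\varphi=0$ forces $\rho\varphi=0$, hence $e\varphi=0$, hence $(1-e)\varphi=\varphi$. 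Since $1-e$ annihilates the non-zero direct summand $s(P')$ of $Q$, it is not an automorphism, contradicting the left minimality of $\varphi$. Applying this with $\varphi=\T(T,b_0)$ completes the argument. I do not expect a serious obstacle: the only step needing a word is that $\T(T,b_0)$ remains left minimal, which is immediate from $\T(T,-)$ being an equivalence of additive categories, and the real content is the idempotent computation above (which, incidentally, uses only the rigidity of $T$, not the hereditariness of $\L$).
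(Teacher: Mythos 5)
Your proof is correct and follows the same line as the paper's: apply $\T(T,-)$ to the defining triangle, use rigidity of $T$ to truncate the long exact sequence, and use the fact that the source map yields a left minimal morphism $\T(T,T_a)\to\T(T,\Ba{0})$ between projectives. The paper states the final step ("left minimal, hence no projective summand in the cokernel") without proof; you supply the standard idempotent argument, which is a welcome bit of explicitness rather than a different route. Your parenthetical remark is also correct: neither your proof nor the paper's visibly uses hereditariness of $\End_\T(T)$, so that hypothesis is not needed for this particular lemma (it is included because the lemma is invoked in a context where it holds anyway).
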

\begin{proof}
	Applying $\T(T,-)$ to the defining triangle of $\Ua{1}$ gives an exact sequence
	\[ \xymatrix{ \T(T,T_a)\ar[r]&\T(T,\Ba{0})\ar[r]& \T(T,\Ua{1})\ar[r]&\T(T,T_a[1])=0}. \]
	Since $T_a\to\Ba{0}$ is a source map, the first map in the above sequence is left minimal, hence there cannot be a projective summand in $\T(T,U_a[1])$.
\end{proof}

\subsection{AR sequences}\label{ARseq}
A crucial ingredient of the proof of our main results for this section is AR sequences in a cluster tilting subcategory in a triangulated category. Let us first recall its notion and the fundamental existence theorem.
\begin{Thm}[\cite{IYo}]
	Let $\T$ be a $k$-linear, $\Hom$-finite, idempotent-complete triangulated category with a Serre functor $\nu$, and let $\C$ be a $d$-cluster tilting subcategory. Then for any $C\in\C$ there exists a sequence, unique up to isomorphism of complexes,
	\[ \xymatrix@!C=1mm@!R=2mm{
		&C_1\ar[dr]|-{g_1}\ar[rr]&&C_2\ar[r]&\cdots\ar[r]&C_{i-1}\ar[rr]\ar[dr]|-{g_{i-1}}&&C_i\ar[dr]|-{g_i}\ar[rr]&&C_{i+1}\ar[r]&\cdots\ar[r]&C_{d-1}\ar[dr]|-{g_{d-1}}\ar[rr]&&C_d\ar[dr]|-{g_d}&\\
		C_0\ar[ur]|-{f_0}&&Y_1\ar[ur]|-{f_1}&&&&Y_{i-1}\ar[ur]|-{f_{i-1}}&&Y_i\ar[ur]|-{f_i}&&&&Y_{d-1}\ar[ur]|-{f_{d-1}}&&C_{d+1} } \]
	with $C_0=C$ (resp. $C_{d+1}=C$) and all $C_i\in\C$ such that
	\begin{itemize}
		\item each $Y_{i-1}\xrightarrow{f_{i-1}} C_i\xrightarrow{g_i} Y_i$, $1\leq i\leq d$ is a part of a triangle $Y_{i-1}\to C_i\to Y_i\to Y_{i-1}[1]$ in $\T$, where we understand $Y_0=C_0$ and $Y_d=C_{d+1}$,
		\item $f_0\colon C_0\to C_1$ is a source map and $g_d\colon C_{d}\to C_{d+1}$ is a sink map in $\C$,
		\item each $C_{i}\to C_{i+1}$ is a radical map.
	\end{itemize}
	We call the above sequence an {\rm AR $(d+2)$-angle} in $\C$. Moreover, it satisfies the following.
	\begin{enumerate}
		\item For each $1\leq i\leq d-1$, the morphism $f_i$ is a minimal left $\C$-approximation and $g_i$ is a minimal right $\C$-approximation.
		\item $C_{d+1}=\nu_d^{-1} C_{0}$ for $\nu_d=\nu\circ[-d]$.
	\end{enumerate}
\end{Thm}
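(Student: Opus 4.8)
The plan is to build the sequence inductively from the left, using that $\T$ is Krull--Schmidt (being $\Hom$-finite and idempotent complete) and that, by Reiten--Van den Bergh, the Serre functor furnishes Auslander--Reiten triangles in $\T$; together with the functorial finiteness of $\C$ this gives source maps and sink maps in $\C$. I would put $C_0=C$, take $f_0\colon C_0\to C_1$ to be the source map of $C_0$ in $\C$, and complete it to a triangle $C_0\xrightarrow{f_0}C_1\xrightarrow{g_1}Y_1\to C_0[1]$. Then I would iterate: given a triangle $Y_{i-1}\xrightarrow{f_{i-1}}C_i\xrightarrow{g_i}Y_i\to Y_{i-1}[1]$ with $C_i\in\C$, take a minimal left $\C$-approximation $f_i\colon Y_i\to C_{i+1}$ and complete it to $Y_i\xrightarrow{f_i}C_{i+1}\xrightarrow{g_{i+1}}Y_{i+1}\to Y_i[1]$. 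The points to be checked are: (i) each connecting morphism $g_i\colon C_i\to Y_i$ is automatically a minimal right $\C$-approximation, so the construction is symmetric and could equally be run from the right; (ii) after exactly $d$ steps, $Y_d$ again lies in $\C$ and $g_d\colon C_d\to Y_d$ is a sink map in $\C$; (iii) $Y_d\simeq\nu_d^{-1}C_0$; and (iv) all the composites $C_i\to C_{i+1}$ are radical, and the whole sequence is unique up to isomorphism of complexes.

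For (i) and (iv) I expect a careful but essentially formal manipulation of triangles. For (i) one runs an induction on $i$ proving a vanishing of the form $\T(C',Y_i[j])=0$ for every $C'\in\C$ and every $j$ in a range shrinking with $i$, fed by the defining property $\T(\C,\C[j])=0$ for $0<j<d$ (and, in negative degrees, by Serre duality); applying $\T(C',-)$ to the triangle $Y_{i-1}\to C_i\xrightarrow{g_i}Y_i\to Y_{i-1}[1]$ then shows $\T(C',C_i)\to\T(C',Y_i)$ is surjective, i.e. that $g_i$ is a right $\C$-approximation, with minimality inherited from that of the $f_j$. Radicality of $C_i\to C_{i+1}$ follows from right/left minimality of the successive approximations and of the source map, and uniqueness is then automatic, since source maps, sink maps, and minimal approximations are each unique up to isomorphism and hence every step of the construction is forced by $C$.

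The real content is (ii) and (iii), where the integer $d$ and the Calabi--Yau-type rigidity genuinely enter. For (ii) I would combine the positive-degree vanishing coming out of the left construction with the negative-degree vanishing coming out of the symmetric construction from the right (starting at the sink map of $C$ and taking minimal right $\C$-approximations), passing between the two via Serre duality $\T(C',Y_d[j])\cong D\T(Y_d,\nu C'[-j])$, to conclude $\T(C',Y_d[j])=0$ for all $0<|j|<d$; hence $Y_d\in\C$, and then the approximation property forces $g_d$ to be the sink map at $Y_d$ in $\C$. For (iii) one splices the $d$ triangles to exhibit $Y_d$ in a triangle $C_0[d]\to Y_d\to K\to C_0[d+1]$ with $K$ a complex of objects of $\C$, and uses Serre duality to identify $Y_d$ with the higher Auslander--Reiten translate $\nu_d^{-1}C_0$, which one then renames $C_{d+1}$; the ``resp.'' form of the statement, with $C_{d+1}=C$, is exactly the output of the symmetric construction run from the right. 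I expect this wrap-around --- that precisely $d$ iterations, neither fewer nor more, bring the process back into $\C$, and that the object returned is the higher AR translate --- to be the main obstacle; everything before it is bookkeeping with triangles and approximations.
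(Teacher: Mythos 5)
The paper itself does not prove this theorem — it is stated with a citation to Iyama--Yoshino [IYo] and used as a black box — so there is no in-paper argument to compare against; I am assessing your sketch against the approach in the cited source. Your overall plan is the right one (and is essentially that of [IYo]): build the tower from the source map by completing triangles and taking minimal left $\C$-approximations, check that the connecting maps $g_i$ are automatically minimal right approximations (using that in a Krull--Schmidt triangulated category one side of a triangle is left minimal iff the next is right minimal), derive radicality and uniqueness from minimality, and identify the final cone with $\nu_d^{-1}C_0$.

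Where the sketch has a genuine gap is exactly the part you flag as the main obstacle: that after $d$ steps the cone $Y_d$ lands back in $\C$ and equals $\nu_d^{-1}C_0$. The positive-degree vanishing you describe only yields $\T(\C, Y_i[j]) = 0$ for $0 < j < d-i$, which is vacuous at $i=d$: the $d$-cluster tilting vanishing is used up before the last step, so it does not by itself place $Y_d$ in $\C$. To supply the remaining degrees you propose converting $\T(C', Y_d[j])$ via Serre duality into $D\T(Y_d, \nu C'[-j])$ and feeding it with "the negative-degree vanishing coming out of the symmetric construction from the right." But the right-hand tower starts from a sink map and produces cocones $Y_i'$ that are a priori different objects from your $Y_i$; the vanishings it supplies concern $\T(Y_i',\C[\,\cdot\,])$ rather than $\T(Y_d,\C[\,\cdot\,])$, and transferring them to $Y_d$ amounts to identifying the two towers — which is precisely the assertion $C_{d+1}=\nu_d^{-1}C_0$ that you are trying to prove, so as written the argument presupposes its own conclusion. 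The real proof has to match the two towers step by step, making essential use of the fact that $f_0$ is \emph{left almost split} (not merely a minimal left approximation) together with Serre duality on the source/sink maps; that extra information is what rescues the vanishing at the last step. Two smaller slips in the same vicinity: the composite of the connecting maps $Y_i\to Y_{i-1}[1]$ yields a morphism $Y_d\to C_0[d]$, not $C_0[d]\to Y_d$ as in your splicing, and "a triangle $C_0[d]\to Y_d\to K$ with $K$ a complex of objects of $\C$" is not a well-formed statement in a triangulated category — one would need a Postnikov system/filtered object, which itself requires care to set up.
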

In particular if $\T$ is $d$-CY the AR $(d+2)$-angles have the same end terms, say $C$. In this case we call it the {\it AR $(d+2)$-angle at $C$}.

Now assume that $\T$ is $d$-CY and $T\in\T$ is $d$-cluster tilting such that $\End_\T(T)$ is hereditary. We denote by $T_a$ the indecomposable direct summand of $T$ corresponding to the vertex $a$ of the quiver $Q$ of $\End_\T(T)$. For each vertex $a$, define the objects $S_a, U_a\in\T$ by the triangles
\[ \xymatrix@R=1mm{
	\discoprod_{b\to a}T_b\ar[r]&T_a\ar[r]& S_a\ar[r]&\discoprod_{b\to a}T_b \\
	U_a\ar[r]&T_a\ar[r]&\discoprod_{a\to b}T_b\ar[r]&U_a[1], } \]
where the sum $\coprod_{b\to a}$ (resp. $\coprod_{a\to b}$) runs over all the arrows ending (resp. starting) at $a$, giving a sink map (resp. source map) at $T_a$ in $\add T$. Note that $S_a[-1]=\Sa{1}$ and $U_a[1]=\Ua{1}$ in Section \ref{gen}.

We note an easy observation on these sink and source maps.
\begin{Lem}\label{rex}
Suppose $\T(T,T[-i])=0$ for $0<i<n$. Then for each $0<i\leq n$, there are exact sequences
\[ \xymatrix@R=1mm{
	\T(T,\coprod_{b\to a}T_b[-i])\ar[r]&\T(T,T_a[-i])\ar[r]&\T(T,S_a[-i])\ar[r]& 0 \\
	\T(\coprod_{a\to b}T_b[i],T)\ar[r]&\T(T_a[i],T)\ar[r]&\T(U_a[i],T)\ar[r]& 0. } \]
In particular, $\T(T,S_a[-i])=0$ and $\T(U_a[i],T)=0$ for each vertex $a$ and $0<i<n$.
\end{Lem}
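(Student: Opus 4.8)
The plan is to apply the homological functors $\T(T,-)$ and $\T(-,T)$ to the defining triangles of $S_a$ and $U_a$, suitably shifted, and to read off the claimed exactness from the resulting long exact sequences; the only point that is not purely formal will be the vanishing of one connecting morphism, and there the case $i=1$ is where hereditariness of $\End_\T(T)$ enters.

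For the first sequence I would shift the triangle $\coprod_{b\to a}T_b\to T_a\to S_a\to\bigl(\coprod_{b\to a}T_b\bigr)[1]$ by $[-i]$ and apply $\T(T,-)$. Exactness at $\T(T,T_a[-i])$ holds automatically for any triangle, so it remains to show that $\T(T,T_a[-i])\to\T(T,S_a[-i])$ is surjective, equivalently that the ensuing connecting map $\T(T,S_a[-i])\to\T\bigl(T,\coprod_{b\to a}T_b[-(i-1)]\bigr)$ vanishes. If $2\leq i\leq n$ then $0<i-1<n$, so this target is already $0$ by hypothesis. If $i=1$, the connecting map is followed in the long exact sequence by the map $\T\bigl(T,\coprod_{b\to a}T_b\bigr)\to\T(T,T_a)$ induced by the sink map at $T_a$ in $\add T$; under the equivalence $\T(T,-)\colon\add T\xrightarrow{\sim}\proj\End_\T(T)$ this becomes the sink map at the indecomposable projective $P_a=\T(T,T_a)$, which---since $\End_\T(T)$ is hereditary, so that $\rad P_a$ is again projective---is the inclusion $\rad P_a\hookrightarrow P_a$ of projective modules and in particular a monomorphism. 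Hence the connecting map is zero, which establishes the first exact sequence for all $0<i\leq n$.

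The second sequence I would treat by the dual argument: apply the contravariant functor $\T(-,T)$ to the triangle $U_a\to T_a\to\coprod_{a\to b}T_b\to U_a[1]$ shifted by $[i]$; for $2\leq i\leq n$ the relevant connecting target $\T\bigl(\coprod_{a\to b}T_b,T[-(i-1)]\bigr)$ vanishes by hypothesis, while for $i=1$ the source map at $T_a$ in $\add T$ becomes, under $\T(-,T)\colon\add T\xrightarrow{\sim}\proj\bigl(\End_\T(T)^{\op}\bigr)$, the inclusion of a radical into an indecomposable projective over the hereditary algebra $\End_\T(T)^{\op}$, hence again a monomorphism. Finally, the displayed ``in particular'' clause is immediate: for $0<i<n$ the term $\T(T,T_a[-i])$, respectively $\T(T_a[i],T)\cong\T(T_a,T[-i])$, is a direct summand of $\T(T,T[-i])=0$, so the surjections just obtained force $\T(T,S_a[-i])=0$ and $\T(U_a[i],T)=0$. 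I do not anticipate any serious obstacle; the one step that requires genuine input is the $i=1$ instance of the connecting-map vanishing, which uses precisely that the sink map, respectively source map, in $\add T$ is a monomorphism---equivalently, that $\End_\T(T)$ is hereditary.
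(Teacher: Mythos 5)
Your proposal is correct and takes essentially the same approach as the paper: apply $\T(T,-)$ (resp.\ $\T(-,T)$) to the defining triangle of $S_a$ (resp.\ $U_a$) shifted by $[-i]$, note that the connecting map vanishes for $2\le i\le n$ by the hypothesis $\T(T,T[-(i-1)])=0$, and for $i=1$ use that the sink map $\coprod_{b\to a}T_b\to T_a$ becomes a monomorphism in $\proj\End_\T(T)$ because $\End_\T(T)$ is hereditary. The paper writes out only the $S_a$ case and leaves the $U_a$ case to duality, exactly as you do.
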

\begin{proof}
	We only prove the statement for $S_a$. By the defining triangle for $S_a$ and vanishing of negative self-extensions of $T$, we immediately have the exact sequence for $1<i\leq n$. When $i=1$, consider the exact sequence
	\[ \xymatrix{ \T(T,\coprod_{b\to a}T_b[-1])\ar[r]&\T(T,T_a[-1])\ar[r]&\T(T,S_a[-1])\ar[r]&\T(T,\coprod_{b\to a}T_b)\ar[r]&\T(T,T_a) }, \]
	in which the last map is injective since it is the sink map in $\add T$ and $\End_\T(T)$ is hereditary. This proves our assertion.
\end{proof}
Let us also mention that vanishing of negative extensions up to a half of $d$ automatically yields the vanishing for the other half, which allows us to weaken the vanishing assumption in Keller--Reiten's theorem \ref{known}(1).
\begin{Prop}\label{half}
Suppose $\T(T,T[-i])=0$ for $0<i\leq (d-1)/2$. Then $\T(T,T[-i])=0$ for $0<i<d-1$.
\end{Prop}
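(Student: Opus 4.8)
The statement is vacuous for $d\le 3$, so assume $d\ge4$ and set $m=\lfloor(d-1)/2\rfloor$, so that by hypothesis $\T(T,T[-l])=0$ for $1\le l\le m$. The plan is to enlarge this vanishing range one step at a time: assuming $\T(T,T[-l])=0$ for $1\le l\le p$ with $m\le p\le d-3$, I would show $\T(T,T[-(p+1)])=0$; iterating from $p=m$ to $p=d-3$ then gives the assertion since $m\le d-3$.

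The first step is a reduction via \ref{rex} and Nakayama's lemma. Applying \ref{rex} with $n=p+1$ yields, for each vertex $a$ of the quiver of $\G=\End_\T(T)$, an exact sequence $\T(T,\discoprod_{b\to a}T_b[-(p+1)])\to\T(T,T_a[-(p+1)])\to\T(T,S_a[-(p+1)])\to 0$. Since $\G$ is hereditary, the sink map $\discoprod_{b\to a}T_b\to T_a$ induces, under $\T(T,-)$, the inclusion $\rad P_a\hookrightarrow P_a$, so summing over $a$ identifies the image of the left-hand maps with $J_\G\cdot M$, where $M:=\T(T,T[-(p+1)])$; hence $M/J_\G M\cong\bigoplus_a\T(T,S_a[-(p+1)])$. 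Thus $M=0$ will follow from Nakayama once we know $\T(T,S_a[-(p+1)])=0$ for every $a$.

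The heart of the matter is this vanishing, and here one brings in the $d$-Calabi--Yau property and the AR $(d+2)$-angle at $T_a$. Writing that $(d+2)$-angle as $T_a=Y_0\to C_1\to Y_1\to\cdots\to Y_{d-1}\to C_d\to Y_d=T_a$ with all $C_i\in\add T$, one identifies (by comparing minimal right $\add T$-approximations) $Y_{d-j}\cong\Sa{j}$ and, dually, $Y_j$ with the iterated cosyzygy object $\Ua{j}$ of \S\ref{gen}, so that $S_a[-(p+1)]=Y_{d-1}[-p]$. Feeding $\T(T,-)$ through the triangles $Y_{i-1}\to C_i\to Y_i\to Y_{i-1}[1]$ and using $\T(T,C_i[j])=0$ for $1\le j\le d-1$ together with the inductive vanishing $\T(T,T[-l])=0$ for $1\le l\le p$, one propagates the computation of $\T(T,Y_{d-1}[-p])$ inward from the $S$-end; symmetrically, using the Serre functor $[d]$ to pass between $\T(T,-)$ and $\T(-,T[d])$, one propagates from the $Y_0=T_a$-end, where one uses $\T(T,T_a[j])=0$ for $1\le j\le d-1$. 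Because $p+1\le d-2$, the two propagations overlap, and confronting them on the common middle term is what forces $\T(T,S_a[-(p+1)])=0$.

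The main obstacle is precisely this last step: organizing the double propagation through the AR $(d+2)$-angle and verifying that the two chains of isomorphisms and surjections — one fed by negative self-extensions of $T$ (vanishing up to $p$), the other by positive ones (vanishing in the cluster-tilting range $[1,d-1]$) — actually pin the middle term down to zero, rather than merely to a subquotient of a projective $\G$-module. It is exactly the hypothesis that the negative extensions vanish up to half of $d$ that makes the two ends of the AR $(d+2)$-angle reach far enough to meet; the $d$-Calabi--Yau structure is what makes the $S$-side and $U$-side of the argument symmetric, and hereditariness of $\G$ is used both in \ref{rex} and in controlling the relevant $\add T$-sequences.
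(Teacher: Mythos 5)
Your proposal identifies the right ingredients — Lemma~\ref{rex}, the AR $(d+2)$-angle at each $T_a$, the Calabi--Yau symmetry matching the $S_a$- and $U_a$-ends, and hereditariness of $\End_\T(T)$ — and your Nakayama reduction of $\T(T,T[-(p+1)])=0$ to $\T(T,S_a[-(p+1)])=0$ is a legitimate substitute for the paper's induction on the vertices of the quiver. But the heart of the argument, which you label ``the main obstacle,'' is left as a sketch of a sketch, and as written it has problems.

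First, the step-by-step induction on $p$ is unnecessary and obscures what is actually happening. Already at the first value $p=m=\lfloor(d-1)/2\rfloor$, Lemma~\ref{rex} gives $\T(T,S_a[-i])=0$ for $1\le i\le m$, hence $\Sa{i}=S_a[-i]$ and the middle term $C_{d-i}$ of the AR $(d+2)$-angle vanishes for $1\le i\le m$; dually $\T(U_a[i],T)=0$ for $1\le i\le m$ kills $C_{i+1}$ for $1\le i\le m$. These two ranges, $\{d-m,\dots,d-1\}$ and $\{2,\dots,m+1\}$, already exhaust $\{2,\dots,d-1\}$ because $2m\ge d-2$. So \emph{all} middle terms vanish at once, which immediately forces $\T(T,S_a[-i])=0$ for all $0<i<d-1$. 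There is nothing left to bootstrap; the paper's proof is exactly this single pass. Second, your description of the propagation from the $S$-end is circular: chasing the triangles $Y_{i-1}\to C_i\to Y_i$ with $\T(T,-)$ only produces $\T(T,Y_{d-1}[-p])\hookrightarrow\T(T,Y_{d-1-p})$, and under the identifications $Y_{d-1-p}=\Sa{p+1}=S_a[-(p+1)]$, so this is an injection of the group into itself — it proves nothing by itself. What actually forces the vanishing is that $C_{d-p-1}$, viewed as the minimal \emph{left} $\add T$-approximation of $\Ua{d-p-2}$, is $0$ because $\T(U_a[d-p-2],T)=0$; that is, you must genuinely invoke the $U$-side vanishing on the same middle term, not merely propagate a Hom group toward it. Third, the condition you cite for the two ranges to ``overlap,'' namely $p+1\le d-2$, is the wrong inequality — that is only the upper bound ensuring the claim is nontrivial; the overlap requires $p\ge(d-2)/2$, which is what the hypothesis $\T(T,T[-i])=0$ for $0<i\le(d-1)/2$ supplies. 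In short, you have correctly located the mechanism (the $(d+2)$-angle is squeezed from both ends) but have not turned it into a proof, whereas the paper's argument closes the gap in one step by showing all middle terms $T_a^{(1)},\dots,T_a^{(d-2)}$ vanish simultaneously.
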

\begin{proof}
	Consider the AR $(d+2)$-angle at each indecomposable direct summand $T_a$ of $T$.
	\[ \xymatrix@!C=1mm@!R=2mm{
		&\discoprod_{a\to b}T_b\ar[dr]\ar[rr]&&T_a^{(d-2)}\ar[dr]\ar[rr]&&\cdots\ar[dr]\ar[rr]&&T_a^{(1)}\ar[dr]\ar[rr]&&\discoprod_{b\to a}T_b\ar[dr]&\\
		T_a\ar[ur]&&U_a[1]\ar[ur]&&\ar[ur]&&\ar[ur]&&S_a[-1]\ar[ur]&&T_a } \]
	By the assertion for $S_a$ in \ref{rex}, the middle terms $T_a^{(i)}$ are $0$ for $1\leq i\leq (d-1)/2$. Also from that for $U_a$, we have $T_a^{(d-1-i)}=0$ for $1\leq i\leq(d-1)/2$. Therefore all the middle terms $T_a^{(1)},\ldots,T_a^{(d-2)}$ are $0$, hence $\T(T,S_a[-i])=0$ for $0<i<d-1$.
	Now we prove $\T(T,T_a[-i])=0$ for $0<i<d-1$ by induction on the vertices of $Q$, precisely, induction on the maximal length of path ending at the vertex $a$. If $a$ is a source, then $T_a=S_a$ and we are done. Applying $\T(T,-)$ to the rightmost triangle, we have an exact sequence
	\[ \xymatrix{ \T(T,\coprod_{b\to a}T_b[-i])\ar[r]& \T(T,T_a[-i])\ar[r]& \T(T,S_a[-i])}, \]
	in which the right term is $0$ by the former claim, and the left term is $0$ by the induction hypothesis. We therefore conclude that $\T(T,T_a[-i])=0$.
\end{proof}
\subsection{Odd CY triangulated category with a cluster titling object}
We apply our general observations of Section \ref{gen} to the setting of a CY triangulated category and a cluster tilting object. Let $n\geq1$ and let $\T$ be a $(2n+1)$-CY triangulated category with a $(2n+1)$-cluster tilting object $T$ such that $\End_\T(T)$ is hereditary, and $\Hom_\T(T,T[-i])=0$ for $0<i<n$. 
Our proof shows that we can detect the structure of $H=\End_\T(T\oplus\cdots\oplus T[-(2n-1)])$ from the AR $(2n+3)$-angles in $\add T$. The following observation is therefore fundamental.
\begin{Prop}\label{AR2n+3}
The AR $(2n+3)$-angle at $T_a$ is of the following form for some $A_a\in\add T$.
\[	\xymatrix@!C=1mm@!R=2mm{
	&\discoprod_{a\to b}T_b\ar[dr]\ar[rr]&&0\ar[r]&\cdots\ar[r]&0\ar[rr]\ar[dr]&&A_a\ar[dr]\ar[rr]&&0\ar[r]&\cdots\ar[r]&0\ar[dr]\ar[rr]&&\discoprod_{b\to a}T_b\ar[dr]&\\
	T_a\ar[ur]&&U_a[1]\ar[ur]&&&&U_a[n]\ar[ur]&&S_a[-n]\ar[ur]&&&&S_a[-1]\ar[ur]&&T_a\,, }\]
with all the omitted middle terms $0$.
\end{Prop}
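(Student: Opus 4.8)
The plan is to read the structure straight off the AR $(2n+3)$-angle at $T_a$ (which exists by \cite{IYo}), using two inputs only: the intermediate morphisms of an AR $(d+2)$-angle are minimal $(\add T)$-approximations, and $\T(U_a[i],T)=0=\T(T,S_a[-i])$ for $0<i<n$ by \ref{rex}. Write $\C=\add T$ and denote the AR $(2n+3)$-angle at $T_a$ by a chain $T_a=Y_0\to C_1\to Y_1\to\cdots\to C_{2n+1}\to Y_{2n+1}=T_a$ with triangles $Y_{i-1}\to C_i\xrightarrow{g_i}Y_i\to Y_{i-1}[1]$ for $1\le i\le 2n+1$, and let $f_i\colon Y_i\to C_{i+1}$ be the remaining morphisms. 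Since $C_0\to C_1$ is the source map and $C_{2n+1}\to C_{2n+2}$ the sink map at $T_a$ in $\C$, we get $C_1=\discoprod_{a\to b}T_b$ and $C_{2n+1}=\discoprod_{b\to a}T_b$; comparing the triangle $T_a\to C_1\to Y_1\to T_a[1]$ with the rotated defining triangle of $U_a$, and $Y_{2n}\to C_{2n+1}\to T_a\to Y_{2n}[1]$ with that of $S_a$, gives $Y_1\simeq U_a[1]$ and $Y_{2n}\simeq S_a[-1]$.

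Next I would run two symmetric inductions. On the left: if $Y_i\simeq U_a[i]$ with $1\le i\le n-1$, then $\T(U_a[i],T)=0$ by \ref{rex}, so every morphism from $Y_i$ to an object of $\C$ vanishes; hence the minimal left $\C$-approximation of $Y_i$ is $0$, and since $f_i$ is such an approximation (note $i\le 2n=d-1$) we get $C_{i+1}=0$, whereupon the triangle $Y_i\to 0\to Y_{i+1}\to Y_i[1]$ forces $Y_{i+1}\simeq U_a[i+1]$. Starting from $Y_1\simeq U_a[1]$ this produces $C_2=\cdots=C_n=0$ and $Y_i\simeq U_a[i]$ for $1\le i\le n$. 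Dually, using $\T(T,S_a[-i])=0$ for $0<i<n$ and that $g_i$ is a minimal right $\C$-approximation, an induction descending from $Y_{2n}\simeq S_a[-1]$ yields $C_{n+2}=\cdots=C_{2n}=0$ and $Y_i\simeq S_a[-(2n+1-i)]$ for $n+1\le i\le 2n$; in particular $Y_{n+1}\simeq S_a[-n]$. The only middle term not forced to vanish is therefore $C_{n+1}\in\C=\add T$, which I would rename $A_a$, and the one surviving nontrivial triangle is $U_a[n]\to A_a\to S_a[-n]\to U_a[n+1]$. This is exactly the asserted form (for $n=1$ both inductions are empty and only the identifications $Y_1\simeq U_a[1]$, $Y_2\simeq S_a[-1]$ are needed).

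The step I expect to require the most care is the index bookkeeping: at each application I must confirm that $f_i$ (resp. $g_i$) is among the morphisms $1\le i\le d-1$ for which \cite{IYo} guarantees minimality, and, more essentially, I must check that the two chains of vanishing middle terms stop precisely at positions $n$ and $n+2$. This is governed by the sharp range $0<i<n$ in \ref{rex}: neither induction can be pushed one further step (which would also kill $C_{n+1}$), since $\T(U_a[n],T)$ and $\T(T,S_a[-n])$ need not vanish — and in any case a vanishing $C_{n+1}$ would collapse the AR angle below its prescribed length $2n+3$. Once this is handled, everything else is immediate: having pinned down $Y_i$, the vanishing of the adjacent $C$ and the suspension of the adjacent $Y$ drop out of rotating a triangle with a zero term.
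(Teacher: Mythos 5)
Your proof is correct and follows essentially the same route as the paper: identify the two end triangles (source map at $T_a$ giving $Y_1\simeq U_a[1]$ via the defining triangle of $U_a$, sink map at $T_a$ giving $Y_{2n}\simeq S_a[-1]$ via that of $S_a$), then use the vanishings $\T(U_a[i],T)=0$ and $\T(T,S_a[-i])=0$ for $0<i<n$ from \ref{rex} together with minimality of the approximations $f_i,g_i$ to force the intermediate middle terms to be zero, propagating the identifications of the $Y_i$ by rotation of triangles with a zero term. The only minor quibble is the aside that ``a vanishing $C_{n+1}$ would collapse the AR angle below its prescribed length'': this is not actually a constraint, since a zero middle term is permitted (and indeed $A_a=0$ is allowed by the statement); the real reason the inductions stop is, as you correctly identified, that \ref{rex} only gives vanishing in the strict range $0<i<n$.
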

\begin{proof}
	We know that the sink map in $\add T$ at $T_a$ is $\coprod_{b\to a}T_b\to T_a$, so we have the rightmost triangle. Then we have $\T(T,S_a[-i])=0$ for $0<i<n$ by \ref{rex}, which gives the triangles on the right half. We dually get the triangles on the left half.
\end{proof}
We will refer to these AR $(2n+3)$-angles as (AR).

The symmetry of AR sequences has the following consequences. Compare the first one with \ref{tech}(\ref{msproj}).
\begin{Lem}\label{sproj}
The map $A_a\to S_a[-n]$ in the middle triangle in (AR) induces an isomorphism $\T(T,A_a)\simeq\T(T,S_a[-n])$. In particular, the $\End_\T(T)$-module $\T(T,S_a[-n])$ is projective.
\end{Lem}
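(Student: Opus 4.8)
The statement has two parts: the isomorphism $\T(T,A_a)\simeq\T(T,S_a[-n])$, and the consequent projectivity of $\T(T,S_a[-n])$ as an $\End_\T(T)$-module. I will first treat the isomorphism. In the AR $(2n+3)$-angle at $T_a$ described in \ref{AR2n+3}, the morphism $A_a\to S_a[-n]$ sits in the triangle $U_a[n]\to A_a\to S_a[-n]\to U_a[n+1]$ (the middle triangle, with $Y_n=A_a$ in the notation there, using that the surrounding terms $T_a^{(i)}$ vanish). Applying $\T(T,-)$ gives the exact sequence
\[ \T(T,U_a[n])\to\T(T,A_a)\to\T(T,S_a[-n])\to\T(T,U_a[n+1]). \]
So it suffices to show $\T(T,U_a[n])=0=\T(T,U_a[n+1])$. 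For the first, since $\T$ is $(2n+1)$-CY we have $\T(T,U_a[n])\simeq D\,\T(U_a[n],T[2n+1])=D\,\T(U_a,T[n+1])=D\,\T(U_a[-(n+1)],T)$; since $0<n+1$ is not obviously in the vanishing range, I instead use $\T(T,U_a[n])\simeq D\T(U_a[n-(2n+1)],T)=D\T(U_a[-(n+1)],T)$, and by the dual of \ref{rex} (the statement $\T(U_a[i],T)=0$ for $0<i<n$) together with the triangle defining $U_a$ this should vanish — more carefully, I would run the dual of the argument in \ref{rex}/\ref{AR2n+3} on the left half of the AR angle, which already forces the terms $U_a[i]$ to interact only as shown; the cleanest route is to read off directly from the left half of (AR) that $\T(T,U_a[i])=0$ for $0<i\le n$ by applying $\T(T,-)$ to the left-hand triangles and inducting, using $\T(T,T[-j])=0$ for $0<j<n$ and CY duality to convert positive shifts. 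For $\T(T,U_a[n+1])$: by CY duality $\T(T,U_a[n+1])\simeq D\T(U_a[n+1],T[2n+1])=D\T(U_a[-n],T)$, and $\T(U_a[-n],T)=D\T(T,U_a[-n+2n+1])=D\T(T,U_a[n+1])$ is circular, so instead I use the defining triangle $U_a\to T_a\to\coprod_{a\to b}T_b\to U_a[1]$ shifted by $[n]$: $\T(T,T_a[n])\to\T(T,\coprod T_b[n])\to\T(T,U_a[n+1])\to\T(T,T_a[n+1])$, and by CY duality each $\T(T,T_c[j])\simeq D\T(T_c,T[2n+1-j])=D\T(T_c[-(2n+1-j)],T)$ vanishes when $0<2n+1-j<n$, i.e. $n+1<j<2n+1$; for $j=n$ and $j=n+1$ I fall back on the sink-map monomorphism coming from hereditariness of $\End_\T(T)$ exactly as in the proof of \ref{rex}.

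The main obstacle is organizing these duality computations so that the only inputs are (i) $\T(T,T[-i])=0$ for $0<i<n$, (ii) the $(2n+1)$-CY property, and (iii) hereditariness of $\End_\T(T)$ via \ref{rex}, without accidentally invoking something in the forbidden range $i\in\{n,n+1,\dots\}$ where self-extensions of $T$ may genuinely be nonzero. The symmetry $\nu_d^{-1}$ built into the AR $(2n+3)$-angle is what makes the two ends match, and I expect the slick proof to exploit this directly: the middle triangle's outer terms $U_a[n]$ and $S_a[-n]$ are swapped under the Serre functor up to shift, which is precisely why $\T(T,-)$ kills the extra terms. Concretely, I would argue that applying $\T(T,-)$ to the entire left half of (AR) and using $\T(T,S_a[-i])=0$ for $0<i<n$ (\ref{rex}) on the right half, the long exact sequences collapse to give $\T(T,U_a[i])=0$ for $0<i\le n$ and $\T(T,U_a[n+1])$ surjecting onto $0$ from $\T(T,\coprod_{a\to b}T_b[n])$ which itself vanishes by the same collapse — this is the bookkeeping step and the place where care is needed.

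For the projectivity claim: once $\T(T,A_a)\simeq\T(T,S_a[-n])$ is established and $A_a\in\add T$, the module $\T(T,A_a)$ is by definition a projective $\End_\T(T)$-module (it is a direct summand of a finite sum of copies of $\T(T,T)=\End_\T(T)$). Transporting across the isomorphism gives that $\T(T,S_a[-n])$ is projective, which is exactly the assertion. This part is immediate and requires no further work; it is worth contrasting with \ref{tech}(\ref{msproj}), where the analogous map $a_i\colon A_a^{(i)}\to S_a^{(i)}$ was only shown to be injective on $\T(T,-)$, whereas here the CY symmetry upgrades injectivity to an isomorphism, which is the substance of the lemma.
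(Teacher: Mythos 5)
Your overall setup is right: you apply $\T(T,-)$ to the middle triangle $U_a[n]\to A_a\to S_a[-n]\to U_a[n+1]$ and try to kill the two outer terms. But your core vanishing claim $\T(T,U_a[n])=0$ is \emph{false when $n=1$}, and this is exactly the boundary case the paper has to treat separately. For $n=1$ the defining triangle $U_a\to T_a\to\coprod_{a\to b}T_b\to U_a[1]$ gives, after applying $\T(T,-)$,
\[ \T(T,T_a)\to\T(T,\textstyle\coprod_{a\to b}T_b)\to\T(T,U_a[1])\to\T(T,T_a[1])=0, \]
so $\T(T,U_a[1])$ is the cokernel of the source-map inclusion $\T(T,T_a)\hookrightarrow\T(T,\coprod_{a\to b}T_b)$, which is generally nonzero. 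Your sketched induction ``using $\T(T,T[-j])=0$ for $0<j<n$'' supplies no information when $n=1$ because the range is empty, and your CY duality computation $\T(T,U_a[n])\simeq D\T(U_a,T[n+1])$ hits the term $\T(\coprod T_b,T[n+2])$ which only vanishes for $n>1$. The paper's proof fixes this by proving something weaker and sufficient for $n=1$: not that $\T(T,U_a[1])=0$, but that the \emph{map} $\T(T,U_a[1])\to\T(T,A_a)$ is zero. This uses the structural fact $g=0$ (the map $\coprod_{a\to b}T_b\to A_a$ in the AR $5$-angle, which is $0$ by Lemma \ref{tech2}(\ref{mg0}) for $m=1$), combined with surjectivity of $\T(T,\coprod_{a\to b}T_b)\to\T(T,U_a[1])$. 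You have no substitute for this.

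A second, more minor issue: you work hard to show $\T(T,U_a[n+1])=0$ in order to get surjectivity, but surjectivity of $\T(T,A_a)\to\T(T,S_a[-n])$ is immediate from the fact that $A_a\to S_a[-n]$ is a right $(\add T)$-approximation (this is built into the AR $(d+2)$-angle, see \ref{IYo}). The paper opens with exactly this one-line observation. Your duality calculations for this part also go in circles, as you yourself note. And while $\T(T,U_a[n+1])=0$ does hold for all $n\geq1$ (direct from the shifted defining triangle and $(2n+1)$-rigidity), deriving it is wasted effort.

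Finally, your concluding projectivity argument is correct and matches the paper's, and your remark contrasting this lemma with \ref{tech}(\ref{msproj}) is apt: the CY symmetry is indeed what upgrades injectivity to an isomorphism.
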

\begin{proof}
	The map is always surjective since $A_a\to S_a[-n]$ is a right $(\add T)$-approximation. If $n>1$, we see $\T(T,U_a[n])=0$ by the leftmost triangle in (AR), which gives injectivity. Now assume $n=1$. In this case (AR) has the form 
	\[ \xymatrix@!C=3mm@!R=2mm{
		&\discoprod_{a\to b}T_b\ar[dr]\ar[rr]^-g&&A_a\ar[dr]\ar[rr]&&\discoprod_{b\to a}T_a\ar[dr]&\\
		T_a\ar[ur]&&U_a[1]\ar[ur]&&S_a[-1]\ar[ur]&&T_a, } \]
	in which we have $g=0$ by \ref{tech2}(\ref{mg0}) for $m=1$. Then applying $\T(T,-)$ to $g=0$ yields a $0$-map $\T(T,\coprod_{a\to b}T_b)\to\T(T,U_a[1])\to\T(T,A_a)$, with the first map being surjective. Therefore, the second map is $0$, hence applying $\T(T,-)$ to the middle triangle yields an isomorphism $\T(T,A_a)\xrightarrow{\simeq}\T(T,S_a[-1])$.
\end{proof}

\begin{Lem}\label{nex}
We have $\T(T,T[-j])=0$ for $n<j<2n$.
\end{Lem}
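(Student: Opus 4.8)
The plan is to exploit the symmetry in the AR $(2n+3)$-angle (AR) established in \ref{AR2n+3}, together with the already-known vanishing $\T(T,T[-i])=0$ for $0<i<n$. First I would observe that applying $\T(T,-)$ to the defining triangle $U_a\to T_a\to\coprod_{a\to b}T_b\to U_a[1]$ and its shifts, and using $\T(U_a[i],T)=0$ for $0<i<n$ from \ref{rex} together with the CY duality $\T(U_a[i],T)\simeq D\T(T,U_a[i-(2n+1)])=D\T(T,U_a[i+1][-(2n+2)])$, translates vanishing of negative extensions of $T$ into vanishing of positive extensions of $U_a$ with $T$, and vice versa. The key point is that the middle triangle in (AR) reads $U_a[n]\to A_a\to S_a[-n]\to U_a[n+1]$, so $U_a[n+1]\simeq\cone(A_a\to S_a[-n])$, and $S_a[-n]$ relates to $T_a[-n]$ via the chain of triangles built from $S_a[-1],\dots,S_a[-n]$ appearing on the right half of (AR).

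The key steps, in order, are: (i) use the right half of (AR) to show $\T(T,S_a[-j])$ is controlled by $\T(T,T_a[-j])$ and $\T(T,S_a[-j+1])$ via the triangles $S_a[-i]\to T_a^{(1)}[?]\to\cdots$ — more precisely, since all middle terms between $S_a[-1]$ and $S_a[-n]$ are zero, consecutive $S_a[-i]$ and $S_a[-(i+1)]$ are linked by triangles whose third term lies in $\add T$ (shifted), so $\T(T,S_a[-i])$ and $\T(T,S_a[-(i+1)][1])=\T(T,S_a[-i])$ type identifications propagate the vanishing; (ii) combine with \ref{sproj}, which gives $\T(T,S_a[-n])\simeq\T(T,A_a)$ is projective, hence the map $A_a\to S_a[-n]$ is the relevant approximation; (iii) apply $\T(T,-)$ to the middle triangle and to the triangles on the left half to get, for $n<j<2n$, an exact sequence sandwiching $\T(T,T_a[-j])$ between terms of the form $\T(T,(\add T)[-j'])$ with $j'$ in the already-vanishing range $0<j'<n$ or in a range handled by CY duality; (iv) induct on the vertices of $Q$ ordered by maximal path length ending at $a$, exactly as in the proof of \ref{half}, using the sink-map triangle $\coprod_{b\to a}T_b\to T_a\to S_a[-1]\to$ to reduce $\T(T,T_a[-j])=0$ to $\T(T,S_a[-j])=0$ plus the inductive hypothesis.

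Concretely, for $n<j<2n$ write $j=n+k$ with $0<k<n$. From the left half of (AR), the triangles starting from $U_a[1]$ up to $U_a[n]$ (with zero middle terms in between) give $U_a[n]\simeq U_a[1][n-1]$ shifted appropriately, and applying $\T(T,-[-j])$ together with \ref{rex} and CY duality $\T(T,U_a[i][-j])\simeq D\T(U_a[i][-j],T[2n+1])=D\T(U_a[i],T[2n+1+j])$ shows these are zero in the desired range. Similarly the right half handles $S_a$. Then the middle triangle $U_a[n]\to A_a\to S_a[-n]\to U_a[n+1]$, with $A_a\in\add T$ and the identification $\T(T,A_a[-j])$ lying in the vanishing range $0<j<n$ when... — here one must be careful, since $j>n$, so $\T(T,A_a[-j])$ need not vanish a priori; instead I would use the CY property to rewrite $\T(T,A_a[-j])\simeq D\T(A_a[-j],T[2n+1])=D\T(A_a,T[2n+1+j])$ and note $2n+1+j>3n+1>0$, so this is positive-degree Hom into $T$ from an object of $\add T$, which vanishes since... — actually the cleanest route is the duality $\T(T,T_a[-j])\simeq D\T(T,T_a[-(2n+1-j)])$ directly from the $(2n+1)$-CY property, and $n<j<2n \iff 1<2n+1-j<n+1$, i.e. $0<2n+1-j\le n$; the values $2n+1-j$ with $2\le 2n+1-j\le n$ fall in the already-established vanishing range $0<i<n$, and the single boundary value $2n+1-j=n$ (i.e. $j=n+1$) is exactly the case needing \ref{sproj}, wait — $j=n+1$ gives $2n+1-j=n$, but we do allow $\T(T,T[-n])$ to survive. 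So the genuine content is precisely showing that the surviving $\T(T,T_a[-n])$ does not force $\T(T,T_a[-(n+1))])$ to be nonzero.

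Therefore the actual argument is shorter than the above digression suggests: by $(2n+1)$-CY duality $\T(T,T[-j])\simeq D\,\T(T,T[-(2n+1-j)])$, and for $n<j<2n$ we have $1<2n+1-j<n+1$, hence $0<2n+1-j\le n$. For $2n+1-j<n$ this already vanishes by hypothesis; the only case left is $2n+1-j=n$, i.e. $j=n+1$, which does not occur when $n<j<2n$ forces $j\ge n+1$ — so in fact $j=n+1$ is included, and here $2n+1-j=n$ gives $\T(T,T[-(n+1)])\simeq D\,\T(T,T[-n])$, which need not be zero. The main obstacle is thus this single degree $j=n+1$: one must show directly that $\T(T,T_a[-(n+1)])=0$. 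I expect to resolve it by applying $\T(T,-)$ to the middle triangle $U_a[n]\to A_a\to S_a[-n]\to U_a[n+1]$ of (AR) and using \ref{sproj} (which says $\T(T,A_a)\xrightarrow{\sim}\T(T,S_a[-n])$, so the connecting map $\T(T,S_a[-n])\to\T(T,U_a[n+1])$ is zero, forcing $\T(T,U_a[n+1])$ to inject into $\T(T,A_a[1])=0$), hence $\T(T,U_a[n+1])=0$; then the left-half triangles of (AR) propagate this down to $\T(T,U_a[1][n])=0$ and the defining triangle of $U_a$ together with the vanishing already recorded in \ref{rex} yields $\T(T,T_a[-(n+1)])=0$, with the vertex induction of \ref{half} covering the dependence on $\coprod_{a\to b}T_b$.
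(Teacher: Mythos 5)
Your reduction to the single degree $j=n+1$ rests on the claimed duality $\T(T,T[-j])\simeq D\,\T(T,T[-(2n+1-j)])$, and this identity is false with the paper's convention. The $(2n+1)$-CY property as stated in the paper is $\T(A,B)\simeq D\T(B,A[2n+1])$, so taking $A=T$, $B=T[-j]$ gives
\[
\T(T,T[-j])\ \simeq\ D\T(T[-j],T[2n+1])\ =\ D\T(T,T[2n+1+j]),
\]
which is a high \emph{positive}-degree extension group, not another negative one. (The identity you wrote would require the Serre functor to be $[-(2n+1)]$ rather than $[2n+1]$, or equivalently to confuse the two arguments of $\T(-,-)$.) The same sign error appears earlier where you write $\T(U_a[i],T)\simeq D\T(T,U_a[i-(2n+1)])$. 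Consequently the degrees $n+2\leq j\leq 2n-1$ are \emph{not} handled by your argument, and the whole reduction collapses: you must prove the vanishing separately for every $j$ in the range $n<j<2n$, not only for $j=n+1$.

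Even the argument you sketch for $j=n+1$ does not close. Using \ref{sproj} to deduce $\T(T,U_a[n+1])=0$ is correct, but this is not the quantity one needs: the relevant object for the vertex induction is $\T(T,S_a[-(n+1)])$ (so that the sink-map triangle $\coprod_{b\to a}T_b[-(n+1)]\to T_a[-(n+1)]\to S_a[-(n+1)]$ sandwiches $\T(T,T_a[-(n+1)])$ between two vanishing terms). Your ``propagation'' step is circular, since $U_a[1][n]=U_a[n+1]$, and the claim that the defining triangle of $U_a$ plus \ref{rex} yields $\T(T,T_a[-(n+1)])=0$ is left unjustified and I do not see how to make it work. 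The paper's route is simpler and uniform in $j$: write $j=n+i$ with $0<i<n$, shift the middle triangle of (AR) by $[-i]$ to get $U_a[n-i]\to A_a[-i]\to S_a[-j]\to U_a[n-i+1]$, apply $\T(T,-)$, and note that $\T(T,A_a[-i])=0$ (since $A_a\in\add T$ and $0<i<n$) while $\T(T,U_a[n-i+1])=0$ follows from the leftmost triangle of (AR) together with $(2n+1)$-cluster tilting (because $2\leq n-i+1\leq n<2n+1$). This gives $\T(T,S_a[-j])=0$ directly, and then the induction on vertices as in \ref{half} finishes the proof; no CY duality and no use of \ref{sproj} is needed here.
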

\begin{proof}
We first show $\T(T,S_a[-j])=0$ for all vertices $a$ and $n<j<2n$. Put $j=n+i$ so that $0<i<n$. Applying $\T(T,-)$ to the middle triangle in (AR) we have an exact sequence
\[ \xymatrix{ \T(T,A_a[-i])\ar[r]& \T(T,S_a[-n-i])\ar[r]& \T(T,U_a[n-i+1]) }, \]
but $\T(T,A_a[-i])=0$ since $A_a\in\add T$, and the leftmost triangle yields $\T(T,U_a[n-i+1])=0$, hence $\T(T,S_a[-n-i])=0$.

Applying $\T(T,-)$ to the rightmost triangle, we have an exact sequence
\[ \xymatrix{ \T(T,\coprod_{b\to a}T_b[-j])\ar[r]& \T(T,T_a[-j])\ar[r]& \T(T,S_a[-j])}. \]
We therefore see by induction on the vertices of the quiver of $\End_\T(T)$ that $\T(T,T_a[-j])=0$ (cf. proof of \ref{half}).
\end{proof}

Now we are ready to prove the first main result of this section.
\begin{Thm}\label{oddcy}
Let $n\geq1$ and $\T$ be a $(2n+1)$-CY triangulated category with a $(2n+1)$-cluster tilting object $T$. Suppose that $\End_\T(T)$ is hereditary and $\Hom_\T(T,T[-i])=0$ for $0<i<n$.
\begin{enumerate}
	\item The algebra $H^\prime=\End_\T(T\oplus T[-n])$ is hereditary.
	\item The algebra $H=\End_\T(T\oplus\cdots\oplus T[-(2n-1)])$ is a direct product of $n$ copies of $H^\prime$, thus is hereditary.
\end{enumerate}
\end{Thm}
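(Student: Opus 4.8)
The plan is to establish (1) first and then read off (2) from it together with a short $\Hom$-vanishing. Put $\Lambda=\End_\T(T)$, hereditary by hypothesis. As $T$ is $(2n+1)$-cluster tilting and $0<n<2n+1$, we have $\T(T,T[n])=0$, so $H^\prime=\End_\T(T\oplus T[-n])$ is the triangular matrix algebra $\left(\begin{smallmatrix}\Lambda&0\\ M&\Lambda\end{smallmatrix}\right)$ over $\Lambda$ with the $(\Lambda,\Lambda)$-bimodule $M=\T(T,T[-n])$. To get $\gd H^\prime\le 1$ it suffices to check that the radical of each indecomposable projective $H^\prime$-module is projective, and these split into two families indexed by the summands $T_a$ and $T_a[-n]$ of $T\oplus T[-n]$. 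For $T_a$ this is immediate: every radical map $T[-n]\to T_a$ lies in $\T(T[-n],T_a)=\T(T,T_a[n])=0$, so the sink map at $T_a$ in $\add(T\oplus T[-n])$ is the sink map $\coprod_{b\to a}T_b\to T_a$ in $\add T$, and $\T(T\oplus T[-n],-)$ turns it into a monomorphism (its $T$-component is injective since $\Lambda$ is hereditary, its $T[-n]$-component lands in $\T(T,T_a[n])=0$).

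The content of (1) is the family $P_{T_a[-n]}$, where I would identify the sink map at $T_a[-n]$ by applying \ref{induction} with $m=n$. The hypothesis $\T(T,T[-i])=0$ for $0<i<n$ makes $\End_\T(T\oplus\cdots\oplus T[-(n-1)])$ a product of $n$ copies of $\Lambda$, so \ref{induction} applies; and the objects there degenerate drastically: \ref{rex} gives $\T(T,S_a[-i])=0$ for $0<i<n$, forcing $\Sa{i}=S_a[-i]$ and $\Aa{i}=0$ for $1\le i\le n-1$, while $\Aa{n}=A_a$ and $\Sa{n+1}=U_a[n]$ by \ref{sproj} and the middle triangle $U_a[n]\to A_a\to S_a[-n]\to U_a[n+1]$ of the AR $(2n+3)$-angle. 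Hence \ref{induction} supplies a triangle $U_a[n]\xrightarrow{j}\coprod_{b\to a}T_b[-n]\oplus A_a\to T_a[-n]\to U_a[n+1]$ whose middle map is the sink map at $T_a[-n]$ in $\add(T\oplus\cdots\oplus T[-n])$, hence also in $\add(T\oplus T[-n])$ as its source already lies there. So $\rad_{H^\prime}P_{T_a[-n]}$ is projective once $j$ vanishes after applying $\T(T\oplus T[-n],-)$.

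Now $\T(T\oplus T[-n],U_a[n])=\T(T,U_a[n])\oplus\T(T,U_a[2n])$, and from the defining triangle $U_a\to T_a\to\coprod_{a\to b}T_b\to U_a[1]$ together with $(2n+1)$-cluster tilting one gets $\T(T,U_a[i])=0$ for $2\le i\le 2n$; so this group vanishes outright when $n\ge2$, proving (1) there. When $n=1$ the summand $\T(T,U_a^{(1)})$ can survive, and one must check separately that $j$ annihilates it. Its $A_a$-component is, up to isomorphism, the map $U_a[n]\to A_a$ of the middle triangle, which is killed by $\T(T,-)$ because $\T(T,A_a)\to\T(T,S_a[-n])$ is an isomorphism (\ref{sproj}); its $\coprod_{b\to a}T_b[-n]$-component is handled by combining \ref{np} --- $\T(T,U_a^{(1)})$ has no projective $\Lambda$-summand --- with the surjections $\T(T,T_b[-1])\twoheadrightarrow\T(T,A_b)$ onto projective $\Lambda$-modules coming from the AR angles, which force the composite to vanish. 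I expect this last point --- controlling $j$ when the negative self-extension $\T(T,U_a^{(1)})$ does not vanish --- to be the main obstacle; everything else is bookkeeping with the structure results above, which also makes the quiver of $H^\prime$ explicit.

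Finally, for (2): every morphism space of $\T$ between summands $T[-i]$ and $T[-j]$ with $i,j\in\{0,\dots,2n-1\}$ is $\T(T,T[\ell])$ with $|\ell|\le 2n-1$. Grouping the shifts $T,T[-1],\dots,T[-(2n-1)]$ into the $n$ blocks $\add(T[-r]\oplus T[-r-n])$, $0\le r\le n-1$, each block is isomorphic to $H^\prime$ via the shift $[r]$, while every morphism of $\T$ connecting two distinct blocks is a $\T(T,T[\ell])$ with either $0<\ell<2n+1$ (zero by cluster tilting) or $-n<\ell<0$ (zero by hypothesis) or $-2n<\ell<-n$ (zero by \ref{nex}). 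Therefore $H\cong(H^\prime)^n$, which is hereditary by (1).
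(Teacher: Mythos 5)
Your plan follows the paper's proof almost exactly: reduce (1) to showing sink maps in $\add(T\oplus T[-n])$ are monomorphisms, handle the sink map at $T_a$ trivially, identify the sink map at $T_a[-n]$ via \ref{induction} with $m=n$ (and your degeneration of the $\Aa{i},\Sa{i}$ to $A_a$, $S_a[-i]$, $U_a[n]$ is correct), then show $\T(T\oplus T[-n],-)$ kills the connecting map. Your treatment of the $\T(T[-n],-)$-component is in fact a little slicker than the paper's: you verify $\T(T,U_a[2n])=0$ outright (correct, since $\T(T,U_a[i])=0$ for $2\le i\le 2n$ by cluster tilting), whereas the paper instead invokes that the sink map in $\add T[-n]$ is a monomorphism. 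Your part (2) is exactly the paper's argument, using \ref{nex} plus the hypotheses to zero out all cross-block $\Hom$s; the case analysis checks out.

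The gap is the $n=1$ case of the $\coprod_{b\to a}T_b[-1]$-component, which you correctly flag as the hard point but do not actually close. The surjection $\T(T,T_b[-1])\twoheadrightarrow\T(T,A_b)$ onto a projective $\Lambda$-module does not by itself force the composite $\T(T,U_a[1])\to\T(T,T_b[-1])$ to vanish: it only shows that the composite lands in $\ker\bigl(\T(T,T_b[-1])\to\T(T,A_b)\bigr)$, and that kernel need not be zero. To use \ref{np} (no projective summand in $\T(T,U_a[1])$ and $\Lambda$ hereditary, so any map to a \emph{projective} is zero) you need $\T(T,T_b[-1])$ itself to be projective, not merely to have a projective quotient. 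That projectivity is a nontrivial statement, and the paper establishes it by a simultaneous induction on the vertices of the quiver of $\End_\T(T)$, proving \emph{both} (i) the vanishing of the map $\T(T,U_a[1])\to\T(T,\coprod_{b\to a}T_b[-1])$ \emph{and} (ii) the projectivity of $\T(T,T_a[-1])$. Concretely: at a source $a$ one has $T_a=S_a$ so (ii) is \ref{sproj} and (i) is vacuous; at a general $a$, (i) follows from \ref{np} plus the induction hypothesis (ii) at predecessors, and then (i) combined with the left square of the octahedron from \ref{induction} forces the connecting map $\T(T,S_a[-2])\to\T(T,\coprod_{b\to a}T_b[-1])$ to vanish, giving a short exact sequence $0\to\T(T,\coprod_{b\to a}T_b[-1])\to\T(T,T_a[-1])\to\T(T,S_a[-1])\to 0$ with projective ends, hence projective middle, which is (ii). Without this double induction your argument stalls at the very point you identify as the main obstacle, so you should spell it out.
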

\begin{proof}
	(2) follows from (1) and \ref{nex}, so we prove (1). For this we show that any sink map in $\proj H^\prime=\add(T\oplus T[-n])$ is a monomorphism. This is clear for the sink maps at objects in $\add T$ since there are no non-zero morphisms from $\add T[-n]$ to $\add T$, so sink maps in $\add T$ give sink maps in $\add(T\oplus T[-n])$. We consider the sink map at $T_a[-n]$. Applying \ref{induction} for $m=n$ there exists a commutative diagram of triangles
	\begin{equation}\label{eqsink}
	\xymatrix@R=6mm{
		S_a[-n-1]\ar@{=}[d]\ar[r]&U_a[n]\ar[r]\ar[d]&A_a\ar[r]\ar[d]&S_a[-n]\ar@{=}[d]\\
		S_a[-n-1]\ar[r]&\discoprod_{b\to a}T_b[-n]\ar[r]&T_a[-n]\ar[r]&S_a[-n] }
	\end{equation}
	giving a triangle
	\[ \xymatrix{U_a[n]\ar[r]&A_a\oplus \discoprod_{b\to a}T_b[-n]\ar[r]& T_a[-n]\ar[r]& U_a[n+1] }, \]
	in which the middle map is the sink map in $\add(T\oplus\cdots\oplus T[-n])$. Since it has terms in $\add(T\oplus T[-n])$ it is the one in $\add(T\oplus T[-n])$. Applying $\T(T\oplus T[-n],-)$ gives an exact sequence
	\[ \xymatrix{\T(T\oplus T[-n],U_a[n])\ar[r]&\T(T\oplus T[-n],A_a\oplus\coprod_{b\to a}T_b[-n])\ar[r]& \T(T\oplus T[-n],T_a[-n]) }, \]
	in which we want to show that the first map is $0$. It is clear for the summand $\T(T[-n],-)$ since the above sequence reduces to
	\[ \xymatrix{\T(T[-n],U_a[n])\ar[r]&\T(T[-n],\coprod_{b\to a}T_b[-n])\ar[r]& \T(T[-n],T_a[-n]) }, \]
	and the second map, being the sink map in $\add T[-n]$, is a monomorphism. Also we see that $\T(T,U_a[n])\to\T(T,A_a)$ is $0$ by the sequence
	\[ \xymatrix{\T(T,U_a[n])\ar[r]&\T(T,A_a)\ar[r]& \T(T,S_a[-n]) }, \]
	in which the second map is injective (in fact an isomorphism) by \ref{sproj}. It remains to show that the map
	\[ \xymatrix{\T(T,U_a[n])\ar[r]& \T(T,\coprod_{b\to a}T_b[-n]) } \]
	is $0$.	If $n>1$ then $\T(T,U_a[n])=0$ and we have the assertion.
	Now assume $n=1$. We prove the following statements by induction on the vertices of $Q$, which will complete the proof by (i).
	\begin{enumerate}
		\renewcommand{\labelenumi}{(\roman{enumi})}
		\item The map $\T(T,U_a[1])\to\T(T,\coprod_{b\to a}T_b[-1])$ is $0$.
		\item The $\End_\T(T)$-module $\T(T,T_a[-1])$ is projective.
	\end{enumerate}
	If $a$ is a source, then we have $\coprod_{b\to a}T_b=0$ thus (i), and $T_a=S_a$ thus (ii) by \ref{sproj}. Suppose now that $a$ is a general vertex of $Q$. By \ref{np}, the $\End_\T(T)$-module $\T(T,U_a[1])$ has no projective summands while $\T(T,\coprod_{b\to a}T_b[-1])$ is projective by induction hypothesis. Therefore the map in (i) has to be $0$. Then by the leftmost commutative square in (\ref{eqsink}), we have that $\T(T,S_a[-2])\to\T(T,\coprod_{b\to a}T_b[-1])$ is $0$, hence applying $\T(T,-)$ to the second row gives a short exact sequence below by \ref{rex}.
	\[ \xymatrix{0\ar[r]& \T(T,\coprod_{b\to a}T_b[-1])\ar[r]& \T(T,T_a[-1])\ar[r]& \T(T,S_a[-1])\ar[r]& 0 } \]
	This has projective end terms by induction hypothesis and \ref{sproj} respectively, so we conclude that the middle term $\T(T,T_a[-1])$ is also projective.
\end{proof}
We conclude our discussion by explicitly describing the quiver $\widetilde{Q}$ of $\End_\T(T\oplus T[-n])$. Suppose that $\End_\T(T)=kQ$ for an acyclic quiver $Q$. Note first that $T$ and $T[-n]$ have no common direct summand by $\T(T[-n],T)=0$, thus $\widetilde{Q}$ have two copies of $Q$ as a subquiver. We need to investigate the arrows from the subquiver for $T$ to the one for $T[-n]$. We identify the vertices of $\widetilde{Q}$ and the corresponding indecomposable summands of $T\oplus T[-n]$.
\begin{Prop}\label{quiver}
The following are equal.
\begin{enumerate}
	\renewcommand{\labelenumi}{(\alph{enumi})}
	\renewcommand{\theenumi}{(\alph{enumi})}
	\item\label{1} The number of arrows from $T_b$ to $T_a[-n]$.
	\item\label{2} The number of direct summands $T_b$ in $A_a$.
	\item\label{3} The number of arrows from $T_a$ to $T_b[-n]$.
	\item\label{4} The number of direct summands $T_a$ in $A_b$.
\end{enumerate}
\end{Prop}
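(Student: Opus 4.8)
The plan is to establish the chain of equalities \ref{1}$=$\ref{2}, \ref{2}$=$\ref{4}, \ref{4}$=$\ref{3} (the remaining equality \ref{1}$=$\ref{3} then follows automatically, and in fact \ref{1}$=$\ref{3} is exactly the symmetry we want to highlight). The equality \ref{1}$=$\ref{2} and, dually, \ref{3}$=$\ref{4} should be the easy part: by Proposition \ref{AR2n+3} the AR $(2n+3)$-angle at $T_a$ has middle term $A_a\in\add T$, and the sink map at $T_a[-n]$ in $\add(T\oplus T[-n])$ was computed in the proof of \ref{oddcy} (via \ref{induction} for $m=n$) to be $A_a\oplus\coprod_{b\to a}T_b[-n]\to T_a[-n]$. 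Since this sink map is minimal right almost split in $\add(T\oplus T[-n])$ and, as noted, no arrows go from $\add T[-n]$ into $\add T$, the number of arrows $T_b\to T_a[-n]$ in $\widetilde Q$ equals the multiplicity of $T_b$ as a summand of the radical of this sink map at $T_a[-n]$, which is precisely the multiplicity of $T_b$ in $A_a$ (the summand $\coprod_{b\to a}T_b[-n]$ lives in $\add T[-n]$ and contributes nothing to arrows out of $\add T$). This gives \ref{1}$=$\ref{2}. Applying the dual construction \ref{induction2} with the AR angle read from the other end gives the source map at $T_a[-n]$ as $T_a[-n]\to A_a\oplus\coprod_{a\to b}T_b[-n]$ (note the AR $(2n+3)$-angle at $T_a$ has the \emph{same} $A_a$ sitting in its unique nonzero middle term, whether approached from the left or the right), whence the number of arrows $T_a[-n]\to\,?$ inside the $T[-n]$-copy is governed by $\coprod_{a\to b}T_b$, while arrows \emph{into} $T_a[-n]$ from $\add T$ are again governed by $A_a$; combined with the corresponding statement at $T_b$ this yields \ref{3}$=$\ref{4}.

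The heart of the matter is the equality \ref{2}$=$\ref{4}, i.e.\ that the multiplicity of $T_b$ in $A_a$ equals the multiplicity of $T_a$ in $A_b$. This is a statement purely about the objects $A_a$ defined by the AR $(2n+3)$-angles in $\add T$, and it is where I expect the real work to be. The approach I would take is to identify $[T_b:A_a]$ (the multiplicity) with $\dim_{k}$ of a suitable $\Hom$-space and then exploit the $(2n+1)$-Calabi-Yau duality $\T(M,N)\simeq D\T(N,M[2n+1])$ together with \ref{sproj} and \ref{nex}. Concretely: since $A_a\in\add T$ and $\End_\T(T)=kQ$ is hereditary (so $\add T$ has a duality-free multiplicity count via tops/socles), the multiplicity of $T_b$ in $A_a$ is $\dim_k\Hom_\T(T_b, A_a)/(\text{radical})$; but using the triangle $U_a[n]\to A_a\to S_a[-n]\to U_a[n+1]$ and \ref{sproj}, which says $\T(T,A_a)\xrightarrow{\sim}\T(T,S_a[-n])$, one reduces the count of $T_b$ in $A_a$ to data about $\T(T_b,S_a[-n])$. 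Now invoke Serre duality: $\T(T_b,S_a[-n])\simeq D\,\T(S_a[-n],T_b[2n+1])=D\,\T(S_a,T_b[3n+1])$, and one rewrites the latter using the triangle defining $S_a$ (namely $\coprod_{c\to a}T_c\to T_a\to S_a\to$) together with the vanishing ranges $\T(T,T[-i])=0$ for $0<i<n$ and $n<i<2n$ (Lemma \ref{nex}) and the CY property to turn $\T(S_a,T_b[3n+1])$ back into something of the form $\T(T_b,S_b[-n])$-flavoured data symmetric in $a,b$. The symmetry between $a$ and $b$ will emerge once everything is expressed through the bimodule $\T(S,S[-?])$ or equivalently through $\Ext^1$ over $kQ$ combined with the CY pairing; the key identity to pin down is that the "middle term" $A_a$ of the AR $(2n+3)$-angle at $T_a$ represents, up to $\add T$-approximation, the functor $T_b\mapsto D\,\underline{\Hom}$ of the stable category, and such a representing object is self-dual in the multiplicity pairing by the very definition of AR sequences (the AR translation being an equivalence).

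A cleaner route to \ref{2}$=$\ref{4}, which I would try first, is to bypass explicit dimension-counting and instead argue directly from the \emph{uniqueness and duality of AR $(2n+3)$-angles}: the AR $(2n+3)$-angle at $T_a$ is, by the existence theorem quoted before \ref{AR2n+3}, unique up to isomorphism of complexes, and reversing all arrows in an AR $(d+2)$-angle at $C$ produces the AR $(d+2)$-angle at $\nu_d^{-1}C$ read from the other side; since $\T$ is $d$-CY we have $\nu_d=\mathrm{id}$, so the reversed complex of the AR angle at $T_a$ is again an AR angle with end term $T_a$, forcing a self-duality of the whole complex. Tracking which term $A_a$ occupies (the unique nonzero middle term, in position $n$ resp.\ $n+1$), this self-duality says precisely that $\Hom_\T(-,A_a)|_{\add T}$ and $\Hom_\T(A_a,-)|_{\add T}$ are related by the $k$-dual, so $[T_b:A_a]=\dim_k\Hom_\T(T_b,A_a)/\mathrm{rad}=\dim_k\Hom_\T(A_a,T_b)/\mathrm{rad}$; applying this symmetrically at $T_b$ and matching the common $\Hom$-space (using that the relevant maps into and out of $A_a$, $A_b$ are the minimal $\add T$-approximations $f_i$, $g_i$ from the existence theorem, which are simultaneously minimal left and minimal right approximations in the middle of the angle) identifies $[T_b:A_a]$ with $[T_a:A_b]$. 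The main obstacle I anticipate is making the "self-duality of the complex forces $A_a$ to be in symmetric position, hence the multiplicity pairing is symmetric" argument fully rigorous — in particular checking that reversing the AR $(2n+3)$-angle genuinely matches it term-by-term (not just up to a global shift or a reindexing that could move $A_a$), and that the approximation maps $A_a\to S_a[-n]$ and $U_a[n]\to A_a$ are indeed the minimal left/right $\add T$-approximations so that the multiplicity of $T_b$ in $A_a$ is read off correctly from either side. Once the positions are pinned down via \ref{AR2n+3} (which places $A_a$ in the exact middle, symmetric slot) the rest is bookkeeping.
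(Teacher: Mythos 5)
Your plan establishes (a)$=$(b) and (c)$=$(d) correctly, and these are indeed both instances of the sink-map computation from Proposition~\ref{induction} (applied at $T_a[-n]$ for the first, at $T_b[-n]$ for the second; you don't actually need the dual construction for (c)$=$(d) at all --- it is just (a)$=$(b) with $a$ and $b$ interchanged). The genuine gap is in how you handle the remaining link. You invoke \ref{induction2} and write down ``the source map at $T_a[-n]$ as $T_a[-n]\to A_a\oplus\coprod_{a\to b}T_b[-n]$'', but this is mislocated and in fact impossible: $A_a\in\add T$ and $\T(T[-n],T)=0$ for a $(2n+1)$-rigid $T$, so the $A_a$-component of any map out of $T_a[-n]$ would vanish. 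Proposition~\ref{induction2} (with $m=n$, using \ref{AR2n+3} to identify $B_a^{(n)}=A_a$) gives the source map at $T_a[n]$ in $\add(T\oplus T[n])$ as $T_a[n]\to A_a\oplus\coprod_{a\to b}T_b[n]$. Shifting by $[-n]$, the source map at $T_a$ in $\add(T\oplus T[-n])$ reads $T_a\to A_a[-n]\oplus\coprod_{a\to b}T_b$, which says directly that the number of arrows $T_a\to T_b[-n]$ equals the multiplicity of $T_b$ in $A_a$, i.e.\ (c)$=$(b). That, together with (a)$=$(b) and (c)$=$(d), closes the chain with no further work. This is exactly what the paper does.

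Because you misidentified what \ref{induction2} buys, you concluded that (b)$=$(d) still had to be proved separately and called it ``the heart of the matter,'' launching into Serre-duality and AR-angle-self-duality sketches. Both of those are unnecessary here, and neither is carried to completion: the Serre-duality route never actually produces a symmetric expression in $a$ and $b$ (it terminates with ``the symmetry ... will emerge''), and the self-duality route only gives information internal to the AR $(2n+3)$-angle at $T_a$ (palindromicity of that single complex), which does not by itself compare the angle at $T_a$ with the angle at $T_b$ --- precisely the comparison that (b)$=$(d) requires. You flag this difficulty yourself, and it is a real one; the resolution is simply that you don't need (b)$=$(d) as a standalone step once you have (c)$=$(b) from the source map, so the heavy machinery should be discarded and the shift corrected in the \ref{induction2} application.
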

\begin{proof}
	By \ref{induction} the map $A_a\oplus\coprod_{b\to a}T_b[-n] \to T_a[-n]$ is the sink map in $\add(T\oplus T[-n])$. This gives \ref{1}=\ref{2}, and similarly \ref{3}=\ref{4}. Dually, \ref{induction2} shows that the map $T_a[n]\to A_a\oplus\coprod_{a\to b}T_b[n]$ is the source map in $\add(T\oplus T[n])$, which shows \ref{3}=\ref{2}.
\end{proof}

\begin{Ex}
Let $Q$ be the quiver of linearly oriented type $A_3$ and $\T=\C_3(kQ)$ the $3$-cluster category of $Q$. We have its AR quiver as below, with a fundamental domain inside the dotted line. It has a $3$-cluster tilting object $T=T_1\oplus T_2\oplus T_3$ which is obtained by mutating the initial cluster tilting object $kQ\in\C_3(kQ)$ at the sink.
\[ \xymatrix@!R=1mm@!C=1mm{
	\circ\ar[dr]&&\circ\ar[dr]&\ar@{--}`u[r]`[rrrrrrrrrr]`_dl[ddrrrrrrrr][ddrrrrrrrr]&T_3[-1]\ar[dr]&&\circ\ar[dr]&&T_1[-1]\ar[dr]&&\circ\ar[dr]&&\circ\ar[dr]&&T_3[-1]\\
	&\circ\ar[ur]\ar[dr]&&T_2\ar[ur]\ar[dr]&&\circ\ar[ur]\ar[dr]&&\circ\ar[ur]\ar[dr]&&T_2[-1]\ar[ur]\ar[dr]&&\circ\ar[ur]\ar[dr]&&T_2\ar[ur]\ar[dr]&\\
	\circ\ar[ur]&&T_1\ar[ur]&&\circ\ar[ur]&&\circ\ar[ur]&&T_3\ar[ur]&&\circ\ar[ur]&\ar@{--}`d[l]`[llllllllll]`_ur[uullllllll][uullllllll]&T_1\ar[ur]&&\circ } \]
We see that $\End_\T(T)=kA_2\times k$, and $\End_\T(T\oplus T[-1])$ is presented by the quiver below; a direct product of $2$ path algebras of type $A_3$ with different orientations.
\[ \xymatrix@!C=8mm{
	T_1\ar[r]& T_2\ar[dr]& T_3\ar[dl]\\
	T_1[-1]\ar[r]& T_2[-1]& T_3[-1] } \]
\end{Ex}
We end this subsection with the following non-example, which shows that \ref{oddcy} fails for $4$-CY case, that is, even if $T\in\T$ is a $4$-cluster tilting such that $\End_\T(T)$ is hereditary, the shifted sum $T\oplus T[-1]\oplus T[-2]$ does not necessarily have hereditary endomorphism ring.
\begin{Ex}\label{nonex}
Let $Q$ be a quiver of type $A_3$, and $\T=\C_4(kQ)$ the $4$-cluster category of $Q$. Consider the $4$-cluster tilting object $T=T_1\oplus T_2\oplus T_3$ which is obtained by mutating $T_1\oplus T_2\oplus T_3[-1]$ at $T_3[-1]$.
\[ \xymatrix@!R=1mm@!C=1mm{
	&T_3[-2]\ar[dr]&\ar@{--}`u[r]`[rrrrrrrrrrrr][rrrrrrrrrrrr]&T_2\ar[dr]&&\circ\ar[dr]&&T_1[-2]\ar[dr]&&T_3\ar[dr]&&\circ\ar[dr]&&T_2[-1]\ar[dr]&\ar@{--}[ddrr]&T_1\ar[dr]&\\
	\circ\ar[ur]\ar[dr]&&\circ\ar[ur]\ar[dr]&&\circ\ar[ur]\ar[dr]&&\circ\ar[ur]\ar[dr]&&\circ\ar[ur]\ar[dr]&&\circ\ar[ur]\ar[dr]&&\circ\ar[ur]\ar[dr]&&\circ\ar[ur]\ar[dr]&&\circ\\
	\ar@{--}`d[r]`[rrrrrrrrrrrrrrrr][rrrrrrrrrrrrrrrr]\ar@{--}[uurr]&T_1\ar[ur]&&\circ\ar[ur]&&T_3[-1]\ar[ur]&&\circ\ar[ur]&&T_2[-2]\ar[ur]&&T_1[-1]\ar[ur]&&\circ\ar[ur]&&T_3[-2]\ar[ur]& } \]
The algebra $H=\End_\T(T\oplus T[-1]\oplus T[-2])$ is presented by the quiver
\[ \xymatrix@!C=8mm@R=7mm{
	T_1\ar[r]&T_2\ar[dr]&T_3\ar[dll]\\
	T_1[-1]\ar[r]&T_2[-1]\ar[dr]&T_3[-1]\ar[dll]\\
	T_1[-2]\ar[r]&T_2[-2]&T_3[-2] } \]
with relations ``any composite of arrows equals $0$''. It follows that $H$ has global dimension $4$.
\end{Ex}
\subsection{Even CY triangulated category with a cluster tilting object}\label{4cy4ct}
As we have just seen in \ref{nonex} above, hereditaryness of $T$ does not imply that of $T\oplus T[-1]\oplus\cdots\oplus T[-(d-2)]$ when $d\geq4$. Nevertheless we prove in this section that in dimension $4$, the endomorphism algebra of $T\oplus T[-1]\oplus T[-2]$ is hereditary as soon as that of $T\oplus T[-1]$ is (\ref{evency}). As before, we work in a higher dimensional setting with vanishing of some negative extensions.

Let $n\geq1$, and let $\T$ be a $(2n+2)$-CY triangulated category with a $(2n+2)$-cluster tilting object $T$ such that $\T(T,T[-i])=0$ for $0<i<n$, and $\End_\T(T\oplus T[-n])$ is hereditary. In this case $\End_\T(T)$ is also hereditary, whose quiver we denote by $Q$. We denote by $T_a$ the indecomposable direct summand of $T$ corresponding to the vertex $a$ of $Q$. As in the previous subsection, our starting point is a computation of AR sequences. Recall the definition of $S_a$ and $U_a$ from Section \ref{ARseq}.
\begin{Prop}\label{AR2n+4}
The AR $(2n+4)$-angle at $T_a$ is of the form below for some $A_a, B_a\in\add T$ and $Y_a\in\T$.
\[	\xymatrix@!C=1.4mm@R=3mm{
	&\discoprod_{a\to b}T_b\ar[dr]\ar[rr]&&0\ar[r]&\cdots\ar[r]&0\ar[dr]\ar[rr]&&B_a\ar[rr]^-f\ar[dr]&&A_a\ar[dr]\ar[rr]&&0\ar[r]&\cdots\ar[r]&0\ar[dr]\ar[rr]&&\discoprod_{b\to a}T_b\ar[dr]&\\
	T_a\ar[ur]&&U_a[1]\ar[ur]&&&&U_a[n]\ar[ur]\ar@{--}|{{\rm (b)}}[rr]&&Y_a\ar[ur]\ar@{--}|{{\rm (a)}}[rr]&&S_a[-n]\ar[ur]&&&&S_a[-1]\ar[ur]&&T_a}	\]
Moreover the middle map $f$ is $0$.
\end{Prop}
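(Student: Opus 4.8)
The plan is to first pin down the displayed shape of the AR $(2n+4)$-angle by the same reasoning as in \ref{AR2n+3}, and then to prove $f=0$, which is the only genuinely new point. For the shape: the source map $T_a\to\coprod_{a\to b}T_b$ and the sink map $\coprod_{b\to a}T_b\to T_a$ in $\add T$ give the leftmost and rightmost triangles, so $Y_1=U_a[1]$ and $Y_{2n+1}=S_a[-1]$; since in an AR $(d+2)$-angle the map $g_i$ (resp.\ $f_i$) is a minimal right (resp.\ left) $\add T$-approximation, a middle term $C_i$ is zero precisely when $\T(T,Y_i)=0$ (equivalently $\T(Y_{i-1},T)=0$), and \ref{rex} supplies exactly the vanishings $\T(T,S_a[-i])=0$ and $\T(U_a[i],T)=0$ for $0<i<n$. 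Feeding these in inductively from the two ends collapses the angle to the form claimed, with $B_a:=C_{n+1}$ the minimal left $\add T$-approximation of $U_a[n]$, $A_a:=C_{n+2}$ the minimal right $\add T$-approximation of $S_a[-n]$, and $Y_a:=Y_{n+1}$; the triangles (b) and (a) are the two triangles of the AR angle adjacent to $Y_{n+1}$, and $f$ is the zigzag composite $C_{n+1}\xrightarrow{g_{n+1}}Y_{n+1}\xrightarrow{f_{n+1}}C_{n+2}$.

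To prove $f=0$: since $B_a,A_a\in\add T$ and $\T(T,-)$ is faithful on $\add T$, it is enough to show $\T(T,f)=0$, and as $g_{n+1}$ is a minimal right $\add T$-approximation, $\T(T,g_{n+1})$ is surjective, so it suffices to prove $\T(T,f_{n+1})=0$, i.e.\ $\T(T,\delta)=0$ for the map $\delta\colon Y_a\to A_a$ in triangle (a). The idea is to realise $\delta$ inside the sink map at $T_a[-n]$. Using that $T$ is $(2n+2)$-rigid together with $\T(T,T[-i])=0$ for $0<i<n$, one checks that every $\Hom_\T$ between two distinct shifts among $T,T[-1],\dots,T[-n]$ vanishes except possibly $\Hom_\T(T,T[-n])$; hence $\End_\T(T\oplus\cdots\oplus T[-(n-1)])\cong\End_\T(T)^{\times n}$ and $\End_\T(T\oplus\cdots\oplus T[-n])\cong\End_\T(T\oplus T[-n])\times\End_\T(T)^{\times(n-1)}$, both hereditary (recall $\End_\T(T)$ is hereditary in the present setting). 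Now \ref{induction} applies with $m=n$; since $\Aa{i}=0$ for $1\le i\le n-1$ (by \ref{rex}), $\Aa{0}=\coprod_{b\to a}T_b$, $\Aa{n}\cong A_a$ and $\Sa{n+1}\cong Y_a$, it yields a triangle
\[ Y_a\xrightarrow{\ s\ }A_a\oplus\discoprod_{b\to a}T_b[-n]\xrightarrow{\ q_n\ }T_a[-n]\longrightarrow Y_a[1] \]
in which $q_n$ is the sink map at $T_a[-n]$ in $\add(T\oplus\cdots\oplus T[-n])$, and reading off the octahedron in \ref{induction} shows that the $A_a$-component of the Mayer--Vietoris map $s$ is exactly $\delta$.

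Hereditariness of $\End_\T(T\oplus\cdots\oplus T[-n])$ makes $q_n$ a monomorphism in $\proj\End_\T(T\oplus\cdots\oplus T[-n])$, so $\T(T\oplus\cdots\oplus T[-n],q_n)$ is injective, hence so is its restriction $\T(T,q_n)$; applying $\T(T,-)$ to the triangle above then forces $\T(T,s)=0$, in particular $\T(T,\delta)=0$, and therefore $\T(T,f)=\T(T,\delta)\circ\T(T,g_{n+1})=0$, i.e.\ $f=0$. I expect the main obstacle to be the circularity encountered on a first try: applying $\T(T,-)$ directly to triangles (a) and (b) gives only tautologies, and escaping it uses exactly the two points above --- that hereditariness of the two-term algebra $\End_\T(T\oplus T[-n])$ already propagates, under the given vanishing, to $\End_\T(T\oplus\cdots\oplus T[-n])$, and that $\delta$ occurs as a literal component of the Mayer--Vietoris map attached via \ref{induction} to the sink map at $T_a[-n]$.
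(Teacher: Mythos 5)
Your proof is correct and follows essentially the same route as the paper's. The only difference is that the paper disposes of $f=0$ by citing Lemma~\ref{tech}(2) with $m=n+1$ (after noting, as you do, that the hypotheses force $\End_\T(T\oplus\cdots\oplus T[-n])$ to be hereditary), whereas you re-derive the relevant instance of that lemma inline — applying \ref{induction} at $m=n$, identifying $\delta$ as a component of the cocone of the sink map at $T_a[-n]$, and using hereditariness to kill it — which is precisely the content of the paper's proof of \ref{tech}(2).
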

\begin{proof}
	We draw the AR sequence from the right. Clearly we have the rightmost triangle. By \ref{rex} we get the triangles on the right half up to $S_a[-n]$. Dually we can draw the left triangles to obtain the AR $(2n+2)$-angle above. Now, since $\End_\T(T\oplus T[-n])$ is hereditary we can apply \ref{tech}(\ref{mf0}) for $m=n+1$, which shows $f=0$.
\end{proof}
As before we will refer to these AR $(2n+4)$-angles as (AR). Similarly to the previous subsection, the symmetry of (AR) gives the following.
\begin{Lem}\label{nex2}
We have $\T(T,T_a[-j])=0$ for $n+1<j<2n$.
\end{Lem}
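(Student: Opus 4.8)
The plan is to mimic the argument for \ref{nex} in the odd case, using the AR $(2n+4)$-angle of \ref{AR2n+4} together with the vanishing $\T(T,T[-i])=0$ for $0<i<n$ and a propagation along the quiver $Q$ of $\End_\T(T)$. First I would establish the intermediate claim that $\T(T,S_a[-j])=0$ for $n+1<j<2n$ and all vertices $a$. To see this, fix such a $j$ and write $j=n+i$, so $1<i<n$. Applying $\T(T,-)$ to the triangle $U_a[n]\to Y_a\to S_a[-n]\to U_a[n+1]$ coming from the dotted piece (b) of (AR) gives an exact sequence
\[ \T(T,U_a[n-i+1])\to\T(T,Y_a[-i])\to\T(T,U_a[n+1-i])\to\cdots, \]
but more directly, applying $\T(T,-)$ to the triangle $Y_a\to S_a[-n]\to U_a[n+1]\to Y_a[1]$ shifted by $[-i]$ yields
\[ \T(T,Y_a[-i])\to\T(T,S_a[-n-i])\to\T(T,U_a[n+1-i]). \]
The left term vanishes because $Y_a$ fits in the triangle $B_a\to A_a\to Y_a[1]$ (shifted appropriately) with $f=0$, so $Y_a\simeq A_a\oplus B_a[1]$, hence $Y_a\in\add(T\oplus T[1])$ and $\T(T,Y_a[-i])=0$ for $1\le i<n$ by the hypothesis $\T(T,T[-i])=0$ together with $\T(T,T[1-i])=0$ (which also holds in that range). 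The right term $\T(T,U_a[n+1-i])$ vanishes since $1<i<n$ forces $2\le n+1-i\le n$, so $U_a[n+1-i]$ sits between the triangles on the far left half of (AR) whose middle terms are $0$; explicitly the leftmost triangles give $\T(T,U_a[\ell])=0$ for $2\le\ell\le n$. Hence $\T(T,S_a[-j])=0$.

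Next I would run the induction on vertices of $Q$, exactly as in the proofs of \ref{half} and \ref{nex}. Applying $\T(T,-)$ to the rightmost triangle $\coprod_{b\to a}T_b[-j]\to T_a[-j]\to S_a[-j]\to\coprod_{b\to a}T_b[-j+1]$ gives the exact sequence
\[ \T(T,\textstyle\coprod_{b\to a}T_b[-j])\to\T(T,T_a[-j])\to\T(T,S_a[-j]). \]
For $a$ a source, $T_a=S_a$ and we are done by the intermediate claim. For general $a$, the right term vanishes by the claim and the left term vanishes by the induction hypothesis (induction on the maximal length of a path in $Q$ ending at $a$), so $\T(T,T_a[-j])=0$. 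Since every indecomposable summand of $T$ is some $T_a$, this gives $\T(T,T[-j])=0$ for $n+1<j<2n$, which is the assertion.

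I expect the only genuine subtlety to be the bookkeeping on which middle terms of (AR) are zero and hence which shifts $U_a[\ell]$, $Y_a[-i]$, $S_a[-n-i]$ one may safely kill; in particular one must be careful that $Y_a$ decomposes as $A_a\oplus B_a[1]$ using $f=0$ from \ref{AR2n+4}, and that the range $n+1<j<2n$ (rather than $n<j<2n$, as one might hope by analogy with \ref{nex}) is exactly what the vanishing of $\T(T,Y_a[-i])$ permits — the value $j=n+1$ is excluded precisely because $\T(T,Y_a[-1])=\T(T,A_a[-1])\oplus\T(T,B_a)$ need not vanish, and indeed $\T(T,B_a)\ne0$ in general since $B_a\in\add T$. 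Apart from this indexing care the proof is a routine transcription of the odd-case argument.
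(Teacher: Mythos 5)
Your second half (the induction on vertices of $Q$ via the exact sequence $\T(T,\coprod_{b\to a}T_b[-j])\to\T(T,T_a[-j])\to\T(T,S_a[-j])$) is exactly the paper's argument. But the first half, establishing $\T(T,S_a[-j])=0$, has a genuine gap. You assert that $f=0$ gives a decomposition $Y_a\simeq A_a\oplus B_a[1]$; this does not follow. The map $f\colon B_a\to A_a$ in \ref{AR2n+4} is the \emph{composite} of $B_a\to Y_a$ (from triangle (b): $U_a[n]\to B_a\to Y_a\to U_a[n+1]$) and $Y_a\to A_a$ (from triangle (a): $Y_a\to A_a\to S_a[-n]\to Y_a[1]$). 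There is no triangle $B_a\xrightarrow{f}A_a\to Y_a[1]$, so the vanishing of $f$ does not split anything. Relatedly, your exact sequence $\T(T,Y_a[-i])\to\T(T,S_a[-n-i])\to\T(T,U_a[n+1-i])$ does not come from any triangle in (AR); there is no triangle with vertices $Y_a$, $S_a[-n]$, $U_a[n+1]$.

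The paper's proof instead applies $\T(T,-)$ to the two genuine triangles (a) and (b) separately. From (a) (shifted), one obtains
\[ 0=\T(T,A_a[-i])\to\T(T,S_a[-j])\to\T(T,Y_a[-i+1])\to\T(T,A_a[-i+1])=0, \]
with the end terms vanishing because $A_a\in\add T$ and $1<i<n$; note the shift on $Y_a$ is $[-i+1]$, not $[-i]$. From (b) one obtains
\[ 0=\T(T,B_a[-i+1])\to\T(T,Y_a[-i+1])\to\T(T,U_a[n+2-i])=0, \]
using $B_a\in\add T$ and the leftmost triangles of (AR). Combining these gives $\T(T,Y_a[-i+1])=0$ and hence $\T(T,S_a[-j])=0$. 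Incidentally, $f=0$ is not used at all in this lemma (it is needed later in \ref{ASY}); and the correct reason $j=n+1$ is excluded is that at $i=1$ the right end term becomes $\T(T,A_a)\neq0$, not the reason you gave involving the nonexistent splitting of $Y_a$.
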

\begin{proof}
	Put $j=n+i$ so that $1<i<n$. By the triangle (a) in (AR) we have an exact sequence
	\[ \xymatrix{ 0=\T(T,A_a[-i])\ar[r]& \T(T,S_a[-j])\ar[r]&\T(T,Y_a[-i+1])\ar[r]& \T(T,A_a[-i+1])=0 }, \]
	in which the two end terms are $0$ by vanishing of small negative self-extensions of $T$.
	Also by the triangle (b) in (AR) we have
	\[ \xymatrix{ 0=\T(T,B_a[-i+1])\ar[r]&\T(T,Y_a[-i+1])\ar[r]&\T(T,U_a[n+2-i])=0 }, \]
	where the leftmost term is $0$ by vanishing of small negative self-extensions of $T$, and so is the rightmost term by the leftmost triangle in (AR). We deduce by these exact sequences that $\T(T,S_a[-j])=\T(T,Y_a[-i+1])=0$.
	
	Now by the exact sequence
	\[ \xymatrix{\T(T,\coprod_{b\to a}T_b[-j])\ar[r]&\T(T,T_a[-j])\ar[r]&\T(T,S_a[-j]) }, \]
	we see inductively that $\T(T,T_a[-j])=0$.
\end{proof}

We need some more technical observations.
\begin{Lem}\label{YB}
We have $\T(T,Y_a[-n+1])=\begin{cases}\T(T,B_a) & (n=1)\\ 0& (n>1) \end{cases}$.
\end{Lem}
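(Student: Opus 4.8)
The plan is to apply $\T(T,-)$ to a shift of the triangle (b) of (AR), namely $U_a[n]\to B_a\to Y_a\to U_a[n+1]$, and to play the resulting exact sequence off against two auxiliary vanishings.

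First I would observe that $\T(T,U_a[2])=0$. Rotating and shifting the defining triangle $U_a\to T_a\to\coprod_{a\to b}T_b\to U_a[1]$ of $U_a$ gives a triangle $T_a[1]\to\coprod_{a\to b}T_b[1]\to U_a[2]\to T_a[2]$; since $T$ is $(2n+2)$-cluster tilting we have $\T(T,T[j])=0$ for $0<j<2n+2$, so in particular $\T(T,\coprod_{a\to b}T_b[1])=0$ and $\T(T,T_a[2])=0$, and applying $\T(T,-)$ forces $\T(T,U_a[2])=0$.

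Next, shifting (b) by $[-(n-1)]$ produces the triangle $U_a[1]\to B_a[-n+1]\to Y_a[-n+1]\to U_a[2]$, and applying $\T(T,-)$ together with $\T(T,U_a[2])=0$ yields an exact sequence
\[ \T(T,U_a[1])\xrightarrow{\ \alpha\ }\T(T,B_a[-n+1])\longrightarrow\T(T,Y_a[-n+1])\longrightarrow 0. \]
If $n>1$ this settles the claim immediately: $B_a\in\add T$ and $0<n-1<n$ give $\T(T,B_a[-n+1])=0$, hence $\T(T,Y_a[-n+1])=0$.

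The case $n=1$ is where the argument really bites. Here the sequence reads $\T(T,U_a[1])\xrightarrow{\alpha}\T(T,B_a)\to\T(T,Y_a)\to 0$, so I need only show $\alpha=0$, after which the map $B_a\to Y_a$ induces the asserted isomorphism $\T(T,B_a)\xrightarrow{\sim}\T(T,Y_a)$. Since $B_a\in\add T$, the module $\T(T,B_a)$ is projective over $\End_\T(T)$; as $\End_\T(T)$ is hereditary, its submodule $\Im\alpha$ is projective too, and then the short exact sequence $0\to\Ker\alpha\to\T(T,U_a[1])\to\Im\alpha\to 0$ splits, exhibiting $\Im\alpha$ as a projective summand of $\T(T,U_a[1])$. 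But $\T(T,U_a[1])$ has no nonzero projective summands by \ref{np} (which applies since $T$ is rigid and $\End_\T(T)$ is hereditary), so $\Im\alpha=0$. The only real subtlety is this $n=1$ case, which genuinely uses both \ref{np} and hereditariness of $\End_\T(T)$; for $n>1$ everything reduces to the single vanishing $\T(T,U_a[2])=0$ followed by a one-step diagram chase.
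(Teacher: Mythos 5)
Your proof is correct and follows the same route the paper intends (the paper's own proof is just "apply $\T(T,-)$ to triangle (b)"): you shift (b) by $[-(n-1)]$, observe $\T(T,U_a[2])=0$ so the sequence is right-exact, and then dispose of the first term. For $n>1$ this is immediate from $B_a\in\add T$ and the vanishing hypothesis, exactly as you say. For $n=1$ the paper leaves the vanishing of $\alpha\colon\T(T,U_a[1])\to\T(T,B_a)$ implicit, and here your argument is a correct completion but a slightly different one from what the author appears to have in mind: judging from the parallel $n=1$ step in the proof of \ref{sproj}, the paper would invoke \ref{tech2}(\ref{mg0}) with $m=1$ to get $g_1\colon\coprod_{a\to b}T_b\to B_a=0$, then use surjectivity of $\T(T,\coprod_{a\to b}T_b)\to\T(T,U_a[1])$ to force $\alpha=0$. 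You instead use \ref{np} (no nonzero projective summands in $\T(T,U_a[1])$) together with hereditariness of $\End_\T(T)$ to conclude $\Im\alpha=0$. Both arguments are valid and both ultimately trace back to left minimality of the source map $T_a\to\coprod_{a\to b}T_b$; your route is marginally more self-contained since it bypasses the $\Ba{i}$ bookkeeping of \ref{tech2}, at the cost of a short projective-module argument.
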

\begin{proof}
This is easily shown by applying $\T(T,-)$ to the triangle (b) in (AR).
\end{proof}

\begin{Lem}\label{ASY}
The triangle (a) in (AR) yields a short exact sequence
\[ \xymatrix{0\ar[r]& \T(T,A_a[-n])\ar[r]& \T(T,S_a[-2n])\ar[r]& \T(T,Y_a[-n+1])\ar[r]& 0}. \]
Consequently the $\End_\T(T)$-module $\T(T,S_a[-2n])$ is projective.
\end{Lem}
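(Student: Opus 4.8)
The plan is to apply $\T(T,-)$ to the triangle obtained from (a) by the shift $[-n]$, namely $Y_a[-n]\to A_a[-n]\to S_a[-2n]\to Y_a[-n+1]$. This yields the exact sequence
\[ \T(T,Y_a[-n])\xrightarrow{\ \alpha\ }\T(T,A_a[-n])\longrightarrow\T(T,S_a[-2n])\longrightarrow\T(T,Y_a[-n+1])\xrightarrow{\ \delta\ }\T(T,A_a[-n+1]), \]
and the asserted short exact sequence is precisely the middle part; so the whole point is to prove $\alpha=0$ and $\delta=0$, after which the projectivity claim will follow.

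Before that I would record that $\T(T,A_a[-n])$ is a projective $\End_\T(T)$-module. Since $A_a\in\add T$ we have $A_a[-n]\in\add(T[-n])\subseteq\add(T\oplus T[-n])$, so $\T(T\oplus T[-n],A_a[-n])$ is a projective $H'$-module; restricting along the idempotent $e\in H'$ corresponding to the summand $T$ (so that $eH'e=\End_\T(T)$) via the functor $M\mapsto eM$, which is a left adjoint and hence preserves projectivity, shows that $\T(T,A_a[-n])$, being the restriction of $\T(T\oplus T[-n],A_a[-n])$ along $e$, is projective. Likewise \ref{YB} gives $\T(T,Y_a[-n+1])=0$ when $n>1$ and $\T(T,Y_a[-n+1])=\T(T,B_a)$ when $n=1$, projective since $B_a\in\add T$; in the case $n=1$ this identification is induced by the map $B_a\to Y_a$ of triangle (b).

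For $\delta=0$: if $n>1$ the target $\T(T,A_a[-n+1])$ vanishes because $A_a\in\add T$ and $0<n-1<n$. If $n=1$, $\delta$ is induced by the map $Y_a\to A_a$ of (a); precomposing with the isomorphism $\T(T,B_a)\xrightarrow{\sim}\T(T,Y_a)$ above turns it into the map induced by the composite $B_a\to Y_a\to A_a$, which is the middle map $f$ of \ref{AR2n+4} and so is $0$, whence $\delta=0$. For $\alpha=0$: the composite $B_a\to Y_a\to A_a$ of the maps from (b) and (a) is $f=0$ by \ref{AR2n+4}, so, triangle (b) being $U_a[n]\to B_a\to Y_a\xrightarrow{h}U_a[n+1]$, the map $Y_a\to A_a$ factors through $h$; hence $\alpha$ factors through $\T(T,U_a[n+1][-n])=\T(T,U_a[1])$. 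By \ref{np} the $\End_\T(T)$-module $\T(T,U_a[1])$ has no nonzero projective summand, while $\T(T,A_a[-n])$ is projective, and over the hereditary algebra $\End_\T(T)$ a homomorphism from a module with no projective summand to a projective module must be zero, because its image is a submodule of a projective, hence projective, hence a direct summand of the source. Therefore $\alpha=0$. With $\alpha=0$ and $\delta=0$ the long exact sequence gives the short exact sequence, and, its rightmost term being projective, the sequence splits, so $\T(T,S_a[-2n])$ is projective.

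The step I expect to be the main obstacle is the vanishing of $\alpha$: it is not readable off the approximation properties in the AR $(2n+4)$-angle and genuinely needs the factorization supplied by \ref{AR2n+4} together with the projectivity of $\T(T,A_a[-n])$ — which in turn seems to force the idempotent-restriction observation — and then \ref{np} combined with the hereditary-homomorphism principle. The boundary case $n=1$ of the vanishing of $\delta$ is the other slightly delicate point and relies on \ref{YB}.
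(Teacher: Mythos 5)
Your argument follows essentially the same strategy as the paper: you take the long exact sequence obtained by applying $\T(T,-)$ to triangle (a) shifted by $[-n]$, and you show the two maps bordering the desired segment vanish. Your treatment of the right end (proving $\delta=0$ directly from \ref{YB} and $f=0$) is a small and arguably cleaner variation on the paper's argument, which instead shows the subsequent map $g^\prime\colon\T(T,A_a[-n+1])\to\T(T,S_a[-2n+1])$ is injective by citing \ref{sproj}. Your treatment of the left end (factoring $Y_a\to A_a$ through $U_a[n+1]$ using $f=0$ and triangle (b), then shifting, then invoking \ref{np}) is a direct repackaging of the paper's argument with cokernels of long exact sequences. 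Both close the argument by the same two facts: \ref{np} and projectivity of $\T(T,A_a[-n])$.

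However, your justification for projectivity of $\T(T,A_a[-n])$ as an $\End_\T(T)$-module is flawed and in fact circular. You assert that the corner restriction along $e\in H^\prime=\End_\T(T\oplus T[-n])$ ``is a left adjoint and hence preserves projectivity.'' A left adjoint preserves projectives only when its right adjoint $\Hom_{eH^\prime e}(H^\prime e,-)$ is exact, i.e. when $H^\prime e$ is projective over $eH^\prime e=\End_\T(T)$. Since $H^\prime e=eH^\prime e\oplus(1-e)H^\prime e$ and $(1-e)H^\prime e\cong\T(T,T[-n])$, this is \emph{precisely} the assertion that $\T(T,T[-n])$, and hence $\T(T,A_a[-n])$, is projective over $\End_\T(T)$; so the reasoning assumes what is to be shown. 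The correct justification, which the paper uses, is that $\End_\T(T\oplus T[-n])$ is \emph{hereditary}: since $\T(T[-n],T)=0$ by rigidity, $H^\prime$ is a triangular matrix algebra with off-diagonal bimodule $\T(T,T[-n])$, and hereditaryness of a triangular matrix algebra forces that bimodule to be projective over the corner $\End_\T(T)$. Once this fix is made, the rest of your proof goes through exactly as written.
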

\begin{proof}
	We apply $\T(T,-)$ to the triangle (a) in (AR). For any $n\geq1$ it yields an exact sequence
	\[ \xymatrix@R=2.5mm{
		\T(T,Y_a[-n])\ar[r]^-g& \T(T,A_a[-n])\ar[r]& \T(T,S_a[-2n])\ar[r]& \T(T,Y_a[-n+1]) \ar@{-}`r/2.5mm[d]`_l[dl][dl]\\
		&\ar@{-}[rr]&&\\
		&\T(T,A_a[-n+1])\ar[r]^-{g^\prime}\ar@{<-}`l[u]`_r[ur][ur]&\T(T,S_a[-2n+1])& } \]
	
	We first show right exactness. By \ref{YB} we only have to consider $n=1$. Then $g^\prime$ is an isomorphism by \ref{sproj}, which gives the result.
	
	We next show that the map $g$ is $0$, which yields left exactness. Applying $\T(T,-)$ to the middle map $0=f\colon B_a\to Y_a\to A_a$ in (AR), we have a zero map $\T(T,B_a[-n])\xrightarrow{h}\T(T,Y_a[-n])\xrightarrow{g}\T(T,A_a[-n])$. Now, applying $\T(T,-)$ to the triangle (b) in (AR) gives an exact sequence
	\[ \xymatrix{ \T(T,B_a[-n])\ar[r]^-h&\T(T,Y_a[-n])\ar[r]&\T(T,U_a[1])\ar[r]^-k&\T(T,B_a[-n+1])\ar[r]^-{k^\prime}&\T(T,Y_a[-n+1]) }, \]
	in which we have $k=0$. Indeed, if $n>1$ then $\T(T,B_a[-n+1])=0$, and if $n=1$ then $k^\prime$ is an isomorphism by \ref{YB}. It follows that $\T(T,U_a[1])=\Coker h$, so $g$ factors through $\T(T,U_a[1])$. But as $\T(T,U_a[1])$ has no non-zero projective summand by \ref{np} while $\T(T,A_a[-n])$ is projective since $\End_\T(T\oplus T[-n])$ is hereditary, this shows $g=0$ as desired.
	
	Finally, the $\End_\T(T)$-module $\T(T,Y_a[-n+1])$ is projective by \ref{YB} and so is $\T(T,A_a[-n])$ since $\End_\T(T\oplus T[-n])$ is hereditary, thus the remaining assertion follows.
\end{proof}
As a final preparation we need an observation on the behavior of sink maps.
\begin{Lem}\label{shift}
Let $n+1\leq j\leq 2n$. Then the sink map at $T_a[-j]$ in $\add(T\oplus\cdots\oplus T[-2n])$ is obtained from the one at $T[-n-1]$ by shifting by $[-(j-n-1)]$.
\end{Lem}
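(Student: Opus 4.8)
The plan is to reduce the statement to an explicit description of the sink map at $T_a[-(n+1)]$ obtained from \ref{induction}, and then to check that its $[-(j-n-1)]$-shift remains right minimal and right almost split in $\add(T\oplus\cdots\oplus T[-2n])$; since a right minimal right almost split morphism is unique up to isomorphism and is by definition the sink map, this suffices. The case $j=n+1$ is vacuous, so we may assume $n\geq 2$ and $n+1<j\leq 2n$. As a preliminary, note that $\End_\T(T\oplus\cdots\oplus T[-n])$ is hereditary: from $\T(T,T[-i])=0$ for $0<i<n$ together with $\T(T,T[i])=0$ for $0<i<2n+2$ (cluster tiltingness), the only morphisms between distinct shifts among $T,T[-1],\dots,T[-n]$ are those in $\T(T,T[-n])$, whence $\End_\T(T\oplus\cdots\oplus T[-n])\cong\End_\T(T\oplus T[-n])\times\End_\T(T)^{n-1}$, which is hereditary by hypothesis.

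Hence \ref{induction} applies with $m=n+1$ (also $T$ is $(n+2)$-rigid by cluster tiltingness). Comparing its output with the AR $(2n+4)$-angle \ref{AR2n+4}, one identifies $A_a^{(i)}=0$ for $1\leq i\leq n-1$, $A_a^{(n)}\cong A_a$, $A_a^{(n+1)}\cong B_a$, $S_a^{(n+1)}\cong Y_a$, $S_a^{(n+2)}\cong U_a[n]$, so the sink map $q$ at $T_a[-(n+1)]$ sits in a triangle
\[ U_a[n]\longrightarrow\coprod_{b\to a}T_b[-(n+1)]\oplus A_a[-1]\oplus B_a\xrightarrow{\ q\ }T_a[-(n+1)]\longrightarrow U_a[n+1], \]
where the restriction of $q$ to $\coprod_{b\to a}T_b[-(n+1)]\oplus A_a[-1]$ is the map $p_{n+1}$ of \ref{induction}, fitting in the triangle
\[ Y_a[-1]\longrightarrow\coprod_{b\to a}T_b[-(n+1)]\oplus A_a[-1]\xrightarrow{\ p_{n+1}\ }T_a[-(n+1)]\xrightarrow{\ c_n[-1]\ }Y_a. \]
Moreover a radical morphism $T[-i]\to T_a[-(n+1)]$ with $i>n+1$ lies in $\T(T,T_a[i-n-1])$ with $0<i-n-1<n<2n+2$, hence vanishes, so $q$ is already the sink map at $T_a[-(n+1)]$ in the larger $\add(T\oplus\cdots\oplus T[-2n])$.

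Now shift: $q[-(j-n-1)]$ is right minimal, its source $\coprod_{b\to a}T_b[-j]\oplus A_a[-(j-n)]\oplus B_a[-(j-n-1)]$ lies in $\add(T\oplus\cdots\oplus T[-2n])$ because $j\leq 2n$, and it is right almost split in $\add(T[-(j-n-1)]\oplus\cdots\oplus T[-j])$. It remains to factor radical morphisms $T[-i]\to T_a[-j]$ with $i\notin[j-n-1,j]$. If $i>j$ then $\T(T[-i],T_a[-j])=\T(T,T_a[i-j])$ with $0<i-j<n<2n+2$, so it vanishes; if $i<j-n-1$ then $\T(T[-i],T_a[-j])=\T(T,T_a[-(j-i)])$ with $n+1<j-i\leq j\leq 2n$, which vanishes by \ref{nex2} unless $j-i=2n$, i.e. $i=0$ and $j=2n$. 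In this remaining case, shifting the second triangle above by $[-(n-1)]$ gives
\[ Y_a[-n]\longrightarrow\coprod_{b\to a}T_b[-2n]\oplus A_a[-n]\xrightarrow{\ p_{n+1}[-(n-1)]\ }T_a[-2n]\longrightarrow Y_a[-n+1], \]
and $\T(T,Y_a[-n+1])=0$ by \ref{YB} (as $n>1$); hence any $\varphi\colon T\to T_a[-2n]$ becomes zero after composition with $T_a[-2n]\to Y_a[-n+1]$, so $\varphi$ factors through $p_{n+1}[-(n-1)]$, which is the restriction of $q[-(n-1)]=q[-(j-n-1)]$ to a direct summand of its source. Therefore $q[-(j-n-1)]$ is right minimal and right almost split in $\add(T\oplus\cdots\oplus T[-2n])$, hence is the sink map at $T_a[-j]$ there. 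I expect the boundary case $i=0$, $j=2n$ to be the only real obstacle: unlike in \ref{nex2}, $\T(T,T_a[-2n])$ need not vanish, and one has to see directly that such morphisms are absorbed by the $A_a[-n]$-summand of the shifted sink map, which is precisely where \ref{YB} and the octahedral of \ref{induction} enter; the rest is routine bookkeeping with AR $(2n+4)$-angles and the cluster tilting vanishing.
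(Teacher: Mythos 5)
Your proof is correct, and its overall architecture matches the paper's: apply \ref{induction} with $m=n+1$ to produce the sink map at $T_a[-(n+1)]$, observe by rigidity that it is already the sink map in the full $\add(T\oplus\cdots\oplus T[-2n])$, shift by $[-(j-n-1)]$, and then use the vanishing $\T(T,T[-i])=0$ for $i\in\{1,\dots,2n\}\setminus\{n,n+1,2n\}$ from \ref{nex2} to dispose of the extra radical morphisms, leaving only $j=2n$, $i=0$ to handle. Where you diverge is in closing that last case. The paper's argument is a one-liner: applying $\T(T,-)$ to the full sink-map triangle $U_a[1]\to\cdots\xrightarrow{v}T_a[-2n]\to U_a[2]$ and noting $\T(T,U_a[2])=0$ (which is immediate from rigidity and the defining triangle of $U_a$) shows $\T(T,v)$ is surjective. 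You instead restrict to the inner triangle $Y_a[-n]\to\coprod T_b[-2n]\oplus A_a[-n]\xrightarrow{p_{n+1}[-(n-1)]}T_a[-2n]\to Y_a[-n+1]$ arising from the octahedral of \ref{induction} and use $\T(T,Y_a[-n+1])=0$ from \ref{YB}, then note $p_{n+1}[-(n-1)]$ is a summand restriction of $v$. Both are valid; the paper's version avoids unpacking the octahedral and the call to \ref{YB}, so it is somewhat shorter, but your route has the advantage of making explicit where the factorization actually lands (the $\coprod T_b[-2n]\oplus A_a[-n]$ summand). Your added preliminary justifying that $\End_\T(T\oplus\cdots\oplus T[-n])$ is hereditary is a useful explicitation that the paper leaves tacit.
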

\begin{proof}
	Applying \ref{induction2} for $m=n+1$, the middle map in the triangle
	\[ \xymatrix{U_a[n]\ar[r]& B_a\oplus A_a[-1]\oplus\discoprod_{b\to a}T_b[-n-1]\ar[r]& T_a[-n-1]\ar[r]& U_a[n+1] } \]
	gives the sink map at $T_a[-n-1]$ in $\add(T\oplus\cdots\oplus T[-n-1])$. In view of rigidity of $T$, this is also the sink map in $\add(T\oplus\cdots\oplus T[-2n])$. We have to show that in the shifted triangle
	\[ \xymatrix{U_a[2n+1-j]\ar[r]& B_a[-(j-n-1)]\oplus A_a[-(j-n)]\oplus\discoprod_{b\to a}T_b[-j]\ar[r]^-v& T_a[-j]\ar[r]& U_a[2n+2-j] }, \]
	the middle map is the sink map in $\add(T\oplus\cdots\oplus T[-2n])$. Since $T$ is rigid and $\T(T,T[-i])=0$ for $i\in\{0,\ldots,2n\}\setminus\{0,n,n+1,2n\}$ by \ref{nex2}, this assertion is clear for $j<2n$. It remains to verify $j=2n$. It is enough to show $\T(T,v)$ is surjective, but this is clear by $\T(T,U_a[2])=0$.
\end{proof}
Now we are ready to prove the $4$-CY version, or more generally even CY version, of our main result of this section.
\begin{Thm}\label{evency}
Let $n\geq1$ and let $\T$ be a $(2n+2)$-CY triangulated category with a $(2n+2)$-cluster tilting object $T$. Suppose that $\T(T,T[-i])=0$ for $0<i<n$, and $\End_\T(T\oplus T[-n])$ is hereditary. Then $H=\End_\T(T\oplus\cdots\oplus T[-2n])$ is hereditary.
\end{Thm}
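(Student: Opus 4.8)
The plan is to show that every sink map in $\proj H=\add(T\oplus T[-1]\oplus\cdots\oplus T[-2n])$ is a monomorphism, which is equivalent to $H$ being hereditary. As in the proof of \ref{oddcy}, the sink maps at summands lying in $\add(T\oplus\cdots\oplus T[-n])$ present no problem: since $\End_\T(T\oplus T[-n])$ is hereditary and, by \ref{nex2}, $\T(T,T[-i])=0$ for $n+1<i<2n$, the algebra $\End_\T(T\oplus\cdots\oplus T[-n])$ is a direct product of $\End_\T(T\oplus T[-n])$ with copies of $\End_\T(T)$, hence hereditary; and sink maps in this subcategory remain sink maps in $\add(T\oplus\cdots\oplus T[-2n])$ because there are no nonzero morphisms from the higher shifts $\add(T[-j])$, $j>n$, into $\add(T\oplus\cdots\oplus T[-n])$ (again by the vanishings above together with rigidity of $T$). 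So the crux is the sink maps at $T_a[-j]$ for $n+1\leq j\leq 2n$.

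By \ref{shift}, all these sink maps are shifts of the single sink map at $T_a[-n-1]$, coming from the triangle
\[ \xymatrix{U_a[n]\ar[r]& B_a\oplus A_a[-1]\oplus\discoprod_{b\to a}T_b[-n-1]\ar[r]^-v& T_a[-n-1]\ar[r]& U_a[n+1] }. \]
So it suffices to prove that $v$ (and each of its shifts by $[-(j-n-1)]$) is a monomorphism, i.e. that applying $\T(T\oplus\cdots\oplus T[-2n],-)$ to the first map $U_a[n]\to B_a\oplus A_a[-1]\oplus\coprod_{b\to a}T_b[-n-1]$ yields the zero map. I would decompose the test object summand by summand. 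For the summand $\T(T[-n-1],-)$ the sequence reduces to the defining triangle of the sink map $\coprod_{b\to a}T_b[-n-1]\to T_a[-n-1]$ in $\add T[-n-1]$, which is monic, so that contribution vanishes; similarly the summands $\T(T[-j],-)$ for $j\notin\{n,n+1,2n\}$ contribute nothing since all relevant $\Hom$-groups vanish by \ref{nex2} and rigidity, and $\T(T[-2n],-)$ is handled by $\T(T,U_a[2n+1-(n+1)])=\T(T,U_a[n])$ applied appropriately together with $\T(T,U_a[2])=0$ as in \ref{shift}. The real content is the summand $\T(T,-)$, where I must show the three maps
\[ \T(T,U_a[n])\to\T(T,B_a),\quad \T(T,U_a[n])\to\T(T,A_a[-1]),\quad \T(T,U_a[n])\to\T(T,\discoprod_{b\to a}T_b[-n-1]) \]
are all zero.

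For $n>1$ all three are trivially zero because $\T(T,U_a[n])=0$ by the leftmost triangle in (AR). The hard case — and I expect this to be the main obstacle — is $n=1$, i.e. the original $4$-CY situation, where $\T(T,U_a[1])$ need not vanish. Here I would argue by induction on the vertices of $Q$ (ordered by longest path ending at $a$), proving simultaneously that the map $\T(T,U_a[1])\to\T(T,\coprod_{b\to a}T_b[-2])$ is zero and that $\T(T,T_a[-2])$ is a projective $\End_\T(T)$-module. The base case ($a$ a source) is immediate since $\coprod_{b\to a}T_b=0$ and $T_a=S_a$, using \ref{ASY} (and \ref{sproj}) for projectivity of $\T(T,S_a[-2])$. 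For the inductive step, the key leverage is \ref{np}: $\T(T,U_a[1])$ has no nonzero projective summand, whereas $\T(T,\coprod_{b\to a}T_b[-2])$ is projective by induction hypothesis, forcing that map to be zero; then chasing the relevant commutative squares and triangles in (AR) (and using \ref{ASY} and \ref{rex}) produces a short exact sequence $0\to\T(T,\coprod_{b\to a}T_b[-2])\to\T(T,T_a[-2])\to\T(T,S_a[-2])\to0$ with projective ends, so the middle is projective. The map into $\T(T,B_a)=\T(T,Y_a[-n+1])$ is controlled by \ref{YB} and the vanishing of $g$ from \ref{ASY}, and the map into $\T(T,A_a[-1])$ factors through $\T(T,U_a[1])\to\T(T,A_a[-1])\to\T(T,S_a[-2])$ where the second map is injective by \ref{ASY} while the composite is zero by the argument just given. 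Assembling these, all three maps vanish, $v$ is monic, every sink map in $\proj H$ is monic, and $H$ is hereditary.
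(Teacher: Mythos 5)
Your overall strategy is the same as the paper's: prove that every sink map in $\proj H=\add(T\oplus\cdots\oplus T[-2n])$ is monic, identify the relevant triangles via \ref{induction} and \ref{shift}, and lean on \ref{np} plus an induction on the vertices of $Q$ for the genuinely hard case. However, there is a substantive gap in how you locate that hard case.

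You claim that for $n>1$ everything is "trivially zero because $\T(T,U_a[n])=0$," reserving the induction on vertices exclusively for $n=1$. This misreads the shift bookkeeping. Testing the sink map at $T_a[-j]$ against $T[-i]$ requires $\T(T[j-n-1-i],u)=0$, so as $j$ ranges over $n+1\leq j\leq 2n$ and $i$ over $0\leq i\leq 2n$ the relevant test objects are $T[m]$ for $-(2n)\leq m\leq n-1$, not just $m\leq 0$. Your decomposition only covers $m\in[-(2n),0]$, and the missing range $m\in[1,n-1]$ is exactly where the non-trivial term appears: $m=n-1$ corresponds to testing the sink map at $T_a[-2n]$ against $T$, giving $\T(T[n-1],U_a[n])=\T(T,U_a[1])$, which does \emph{not} vanish for any $n$. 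So the induction on vertices is needed for \emph{every} $n\geq1$ (it is the $j=2n$ case), not just for $n=1$. The paper handles this by working from the shifted presentation $U_a[1]\to B_a[-n+1]\oplus A_a[-n]\oplus\coprod_{b\to a}T_b[-2n]\to T_a[-2n]$ coming from (\ref{eq2n}), and the induction proves projectivity of $\T(T,T_a[-2n])$ (not $\T(T,T_a[-2])$). As written, your argument fails to establish that the sink map at $T_a[-2n]$ is monic when $n>1$, so the conclusion that $H$ is hereditary does not follow.

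A secondary point: the short exact sequence you propose in the inductive step, $0\to\T(T,\coprod_{b\to a}T_b[-2])\to\T(T,T_a[-2])\to\T(T,S_a[-2])\to 0$, is not the sequence the paper extracts; the paper instead obtains $0\to\T(T,A_a[-n]\oplus\coprod_{b\to a}T_b[-2n])\to\T(T,T_a[-2n])\to\T(T,Y_a[-n+1])$ from the homotopy cartesian square in (\ref{eq2n}). Your variant uses $S_a$ in place of $Y_a$ and drops the $A_a$-summand; exactness of your sequence (in particular, vanishing of the connecting map into $\T(T,\coprod_{b\to a}T_b[-1])$) would need separate justification, and in any case it does not obviously extend from $n=1$ to general $n$. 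Replacing $[-2]$ by $[-2n]$ and switching to the paper's sequence from (\ref{eq2n}) would repair both issues.
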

\begin{proof}
	We show that for each $a\in Q_0$ and $0\leq j\leq 2n$ the sink map at $T_a[-j]$ in $\add(T\oplus\cdots\oplus T[-2n])$ is a monomorphism.
	
	By our assumptions this is clear for $0\leq j\leq n$. 
	
	Let $n+1\leq j<2n$, in particular $n>1$. By \ref{induction} and \ref{shift}, the map $v$ in the triangle
	\[ \xymatrix{U_a[n]\ar[r]^-u& B_a\oplus A_a[-1]\oplus\discoprod_{b\to a}T_b[-n-1]\ar[r]^-v& T_a[-n-1]\ar[r]& U_a[n+1] } \]
	gives the sink map at $T_a[-n-1]$ in $\add(T\oplus\cdots\oplus T[-2n])$, and the same holds for the sequence shifted by $[-(j-n-1)]$. We want to show $\T(T\oplus\cdots\oplus T[-2n],v[-(j-n-1)])$ is injective, or equivalently $\T(T\oplus\cdots\oplus T[-2n],u[-(j-n-1)])=0$. By vanishing of some negative and positive self-extension of $T$, it is enough to consider $j=n+1$, thus is reduced to showing $\T(T\oplus T[-1]\oplus T[-n-1],u)=0$. This follows from $\T(T\oplus T[-1]\oplus T[-n-1],U_a[n])=0$, where we used $n>1$ for $\T(T,U_a[n])=0$.
	
	Finally consider $j=2n$. By \ref{induction} and \ref{shift} we have the commutative diagram
	\begin{equation}\label{eq2n}
	\xymatrix@R=6mm{
		Y_a[-n]\ar[r]\ar@{=}[d]&U_a[1]\ar[r]\ar[d]& B_a[-n+1]\ar[r]\ar[d]& Y_a[-n+1]\ar@{=}[d]\\
		Y_a[-n]\ar[r]& A_a[-n]\oplus\discoprod_{b\to a}T_b[-2n]\ar[r]& T_a[-2n]\ar[r]& Y_a[-n+1]}
	\end{equation}
	giving the sink map $v$ at $T_a[-2n]$
	\[ \xymatrix{U_a[1]\ar[r]^-u& B_a[-n+1]\oplus A_a[-n]\oplus\discoprod_{b\to a}T_b[-2n]\ar[r]^-v& T_a[-2n]\ar[r]& U_a[2] }, \]
	in which we want to show $\T(T\oplus\cdots\oplus T[-2n],v)$ is injective. Since $\T(T[-1]\oplus\cdots\oplus T[-2n],U_a[1])=0$ by the leftmost triangle in (AR), it remains to prove that $\T(T,u)=0$. We prove by induction on the vertices of $Q$ the following statements, which will complete the proof.
	\begin{enumerate}
	\renewcommand{\labelenumi}{(\roman{enumi})}
	\item The map $\T(T,U_a[1])\to\T(T,B_a[-n+1]\oplus A_a[-n]\oplus\coprod_{b\to a}T_b[-2n])$ is $0$.
	\item The $\End_\T(T)$-module $\T(T,T_a[-2n])$ is projective.
	\end{enumerate}
	If $a$ is a source, then $\T(T,B_a[-n+1]\oplus A_a[-n])$ is projective while $\T(T,U_a[1])$ has no non-zero projective summand by \ref{np}, so we have (i). Also in this case we have $T_a=S_a$, so (ii) by \ref{ASY}. Now suppose that $a$ is a general vertex of $Q$. Then the $\End_\T(T)$-module $\T(T,B_a[-n+1]\oplus A_a[-n]\oplus\coprod_{b\to a}T_b[-2n])$ is projective by induction hypothesis, and $\T(T,U_a[1])$ does not have a non-zero projective summand by \ref{np}, which gives (i). Then the leftmost commutative square in (\ref{eq2n}) shows that the map $\T(T,Y_a[-n])\to\T(T,A_a[-n]\oplus\coprod_{b\to a}T_b[-2n])$ is $0$, hence we obtain an exact sequence
	\[ \xymatrix{0\ar[r]&\T(T,A_a[-n]\oplus\coprod_{b\to a}T_b[-2n])\ar[r]&\T(T,T_a[-2n])\ar[r]&\T(T,Y_a[-n+1]) }.\]
	Since the rightmost term is projective by \ref{YB}, and so is the leftmost term by induction hypothesis, we deduce that the middle term is also projective, which completes the induction step, hence gives our result.
\end{proof}
Finally let us describe the quiver of our hereditary algebra $H=\End_\T(T\oplus\cdots\oplus T[-2n])$. As in \ref{quiver} we can compute the quiver $\widetilde{Q}$ of $H$ from the AR $(2n+4)$-angles. Since $T$ is $(2n+2)$-rigid the quiver $\widetilde{Q}$ has $(2n+1)$ copies of $Q$ as a subquiver. We investigate the arrows from $T_a[-i]$ to $T_b[-j]$. By \ref{nex2} we know that there is an arrow $T_a[-i]\to T_b[-j]$ in $Q$ only if $j-i\in\{0,n,n+1,2n\}$.
\begin{Prop}\label{quiver2}
\begin{enumerate}
	\item Let $0\leq i\leq n$ and $0\leq i^\prime\leq n-1$. The following are equal.
	\begin{enumerate}
		\renewcommand{\theenumi}{}
		\item\label{-n} The number of arrows from $T_a[-i]$ to $T_b[-i-n]$.
		\item\label{A} The number of summands $T_a$ in $A_b$.
		\item\label{B} The number of summands $T_b$ in $B_a$.
		\item\label{-n-1} The number of arrows from $T_b[-i^\prime]$ to $T_a[-i^\prime-n-1]$.
	\end{enumerate}
	\item There is an arrow $T_a$ to $T_b[-2n]$ only if $n=1$, in which case the number of arrows is given in (\ref{-n-1}) above.
\end{enumerate}
\end{Prop}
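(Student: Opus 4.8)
The plan is to follow the pattern of \ref{quiver}: recover the quiver $\widetilde Q$ of $H$ by reading off, for each vertex, its sink map and its source map in $\add(T\oplus\cdots\oplus T[-2n])$ from the explicit triangles produced by \ref{induction}, \ref{induction2} and \ref{shift}. Almost all the triangles needed are already at hand from the proof of \ref{evency}: for $n+1\le j\le 2n$ the sink map at $T_a[-j]$ has source $B_a[-(j-n-1)]\oplus A_a[-(j-n)]\oplus\discoprod_{b\to a}T_b[-j]$, for $j=n$ it is $A_a\oplus\discoprod_{b\to a}T_b[-n]\to T_a[-n]$, and for $0\le j\le n-1$ it is the $\add T$-sink map inside the $j$-th copy of $Q$ (the copies $T,\ldots,T[-(n-1)]$ being pairwise $\Hom$-orthogonal, and orthogonal to everything of shift $\le-(n+1)$, by the vanishings $\T(T,T[-i])=0$ for $0<i<n$ and $\T(T,T[i])=0$ for $0<i<d$). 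A summand $T_c[-l]$ appearing in the source of the sink map at $T_b[-j]$ accounts, with its multiplicity, for exactly the arrows $T_c[-l]\to T_b[-j]$ of $\widetilde Q$; dually for source maps.

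First I would dispose of (2). By the diagram (\ref{eq2n}) the sink map at $T_a[-2n]$ has source $B_a[-(n-1)]\oplus A_a[-n]\oplus\discoprod_{b\to a}T_b[-2n]$. When $n>1$ none of these summands lies in $\add T$ (the shifts are $-(n-1)\le-1$, $-n$ and $-2n$), so there is no arrow from any $T_c$ into $T_a[-2n]$, hence no arrow $T_a\to T_b[-2n]$; when $n=1$ one has $2n=n+1$, so such an arrow is already one of the shift-$(n+1)$ arrows counted in (1). For (1) itself, the sink maps listed above yield the two ``vertical'' equalities at once: the summand of the sink map at $T_b[-(i+n)]$ lying in the copy $T[-i]$ (for $0\le i\le n$, the case $i=0$ using $j=n$) is $A_b[-i]$, so the number of arrows $T_a[-i]\to T_b[-i-n]$ equals the multiplicity of $T_a$ in $A_b$ and is in particular independent of $i$; and the summand of the sink map at $T_a[-(i'+n+1)]$ lying in the copy $T[-i']$ is $B_a[-i']$, so the number of arrows $T_b[-i']\to T_a[-i'-n-1]$ equals the multiplicity of $T_b$ in $B_a$.

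It remains to prove the ``cross'' equality, that $T_a$ occurs in $A_b$ with the same multiplicity as $T_b$ occurs in $B_a$; exactly as in the last line of the proof of \ref{quiver}, I would compute the number of arrows $T_a\to T_b[-n]$ a second way, via the \emph{source} map at $T_a$. I apply \ref{induction2} with $m=n+1$, which is legitimate since $\End_\T(T\oplus\cdots\oplus T[n])$ is hereditary --- up to opposite and the matching $T[i]\leftrightarrow T[-i]$ it is the product of $\End_\T(T\oplus T[-n])$ with $n-1$ copies of $\End_\T(T)$, the intermediate copies being $\Hom$-orthogonal to everything. Identifying the objects $\Ba{i}$ of Section \ref{gen} with the terms of the AR $(2n+4)$-angle (both recursions form minimal left $(\add T)$-approximations of the same objects $\Ua{i}$, so $\Ba{0}=\discoprod_{a\to b}T_b$, $\Ba{n}=B_a$, $\Ba{n+1}=A_a$ and $\Ba{i}=0$ for $1\le i\le n-1$), \ref{induction2} produces a source map at $T_a[n+1]$ whose summand in the copy $T[1]$ is $B_a[1]$; shifting by $[-(n+1)]$ shows that the arrows $T_a\to T_c[-n]$ are counted by the multiplicity of $T_c$ in $B_a$, and comparison with the first count of those same arrows closes the loop.

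The step needing the most care --- and the reason (2) must be proved first --- is verifying that the minimal approximations delivered by \ref{induction} and \ref{induction2} inside the smaller subcategories for which those results are stated (such as $\add(T\oplus\cdots\oplus T[-n])$ or $\add(T\oplus\cdots\oplus T[n])$) remain the minimal sink/source maps in the full $\add(T\oplus\cdots\oplus T[-2n])$. By rigidity of $T$ this amounts to ruling out arrows of $\widetilde Q$ across shift-differences outside $\{0,n,n+1,2n\}$, which is \ref{nex2}, plus ruling out shift-difference $2n$ when $n>1$, which is the $n>1$ part of (2). The rest is the bookkeeping --- as in \ref{quiver} --- of which shifted summand of $A_a$, $B_a$ or $\discoprod T_b$ lands in which copy of $Q$.
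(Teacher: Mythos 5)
Your proof is correct and follows essentially the same route as the paper: read off the quiver of $H$ from the sink and source maps produced by \ref{induction}, \ref{induction2}, and \ref{shift}, identifying the middle terms with the objects $A_a,B_a$ of the AR $(2n+4)$-angle of \ref{AR2n+4}. The only cosmetic difference is that for the cross-equality you pass through the source map at $T_a[n+1]$ (i.e., \ref{induction2} with $m=n+1$, requiring the extra check that $\End_\T(T\oplus\cdots\oplus T[n])$ is hereditary), whereas the paper applies \ref{induction2} with $m=n$ to $T_a$ directly to get (\ref{-n})=(\ref{B}), which is marginally shorter; both are valid.
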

\begin{proof}
	(1)  By \ref{induction} the values of (\ref{-n}) are equal for $i=0$ and $i=1$, and in view of \ref{shift} they are equal to the ones for all the other $i$. Therefore it is enough to consider the case $i=0$. Similarly by \ref{shift} it suffices to assume $i^\prime=0$. Applying \ref{induction} to $T_b$ and $m=n$ gives (\ref{-n})=(\ref{A}), and \ref{induction2} to $T_a$ and $m=n$ gives (\ref{-n})=(\ref{B}). Also, applying \ref{induction} to $T_a$ and $m=n+1$ gives (\ref{-n-1})=(\ref{B}).\\
	(2)  This follows from \ref{shift}.
\end{proof}
We give several examples of the quiver of $H$ for the case $\End_\T(T)=k$.
\begin{Ex}\label{star}
	Suppose that $\End_\T(T)=k$ and put $m=\dim_k\T(T,T[-n])$. Then the AR $(2n+4)$-angle is of the form
	\[	\xymatrix@!C=2mm@R=3mm{
		&0\ar[r]&\cdots\ar[r]&0\ar[dr]\ar[rr]&&T^m\ar[rr]\ar[dr]&&T^m\ar[dr]\ar[rr]&&0\ar[r]&\cdots\ar[r]&0\ar[dr]&\\
		T\ar[ur]&&&&T[n]\ar[ur]&&Y\ar[ur]&&T[-n]\ar[ur]&&&&T}	\]
	for some $Y\in\T$. We see that for $n=1,2,3$, the algebra $H$ is presented by the quiver below with $m$-fold arrows between the vertices.
	\[
	\xymatrix@R=2mm{
		\\ \\
		&T[-1]\ar@3[dr]\\
		T\ar@3[ur]\ar@3[rr]&&T[-2]
		\\ \\ \\&(n=1)& }\quad
	\xymatrix@R=2mm@!C=1mm{
		\\
		&&T\ar@3[dddl]\ar@3[dddr]&&\\
		T[-4]&&&&T[-1]\ar@3[ddlll]\ar@3[llll]\\
		\\
		&T[-3]&&T[-2]\ar@3[uulll]
		\\ \\ && (n=2)&& }\quad
	\xymatrix@R=2mm@!C=1mm{
		&&T\ar@3[rddddd]\ar@3[lddddd]&&\\
		T[-6]&&&&T[-1]\ar@3[ddddlll]\ar@3[ddllll]\\
		\\
		T[-5]&&&&T[-2]\ar@3[llll]\ar@3[uullll]\\
		\\
		&T[-4]&&T[-3]\ar@3[uuuulll]& \\
		&& (n=3)&& } \]
\end{Ex}
\section{Realizing triangulated categories as cluster categories}
Recall that the {\it cluster category} of a homologically smooth dg algebra $\Pi$ is the Verdier quotient
\[ \C(\Pi):=\per\Pi/\D^b(\Pi) \]
of the perfect derived category by the derived category of dg modules of finite dimensional total cohomology. We think about how a triangulated category can be realized as a cluster category in the above sense, which plays an important role in the proof of main results in this paper.

Let us note a preliminary result on cluster categories of a special class of dg algebras, namely (derived) tensor algebras. This gives a description the cluster category in terms of the triangulated hull of a derived category.
Let $A$ be a finite dimensional algebra of finite global dimension, and $X$ a two-sided tilting complex over $A$, thus $X$ is a complex of $(A,A)$-bimodules such that $F=-\lotimes_AX\colon\D^b(\md A)\to\D^b(\md A)$ gives an autoequivalence. We impose the following two conditions on $X$.
\begin{itemize}
	\item For each $L,M\in\D^b(\md A)$ we have $\Hom_{\D(A)}(L,F^iM)=0$ for all but finitely many $i\in\Z$.
	\item $X$ is concentrated in (cohomological) degree $\leq0$.
\end{itemize}
Replacing $X$ by a projective resolution over $A^e$, we put $\Pi=T_AX$, the tensor algebra of $X$ over $A$.
\begin{Prop}[{\cite[Section 7]{Ke05}, \cite[4.13]{Am09}, see also \cite[Section 8]{ha3}}]\label{cl}
	The dg algebra $\Pi$ is homologically smooth, and its cluster category $\C(\Pi)$ is equivalent to the canonical triangulated hull of $\D^b(\md A)/-\lotimes_AX$.
\end{Prop}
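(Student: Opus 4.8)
The plan is to prove the two assertions separately: homological smoothness of $\Pi$ by a direct bimodule computation, and the description of $\C(\Pi)$ by constructing a comparison functor and computing its effect on morphisms, following Keller and Amiot. For the first, recall that $X$ has been replaced by a bounded complex of projective $A$-bimodules, so $\Pi=T_AX=\bigoplus_{p\geq0}X^{\otimes_A p}$ is a bounded-above complex of projective modules on either $A$-side and all tensor products over $A$ below are automatically derived. The standard presentation of a tensor algebra gives a short exact sequence of dg $\Pi$-bimodules
\[ 0\longrightarrow \Pi\otimes_AX\otimes_A\Pi\longrightarrow \Pi\otimes_A\Pi\longrightarrow\Pi\longrightarrow0 \]
(first map $b\otimes x\otimes b'\mapsto bx\otimes b'-b\otimes xb'$, second the multiplication). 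Since $\gd A<\infty$ the algebra $A$, and hence $A^e$, is homologically smooth, so $A\in\per A^e$ and $\D^b(A^e)\subseteq\per A^e$. Applying $\Pi\otimes_A(-)\otimes_A\Pi$ to a finite projective $A^e$-resolution of $A$ shows $\Pi\otimes_A\Pi\in\per\Pi^e$, and since the two-sided tilting complex $X$ has finite-dimensional cohomology we get $X\in\per A^e$ and hence $\Pi\otimes_AX\otimes_A\Pi\in\per\Pi^e$. The displayed sequence is thus a triangle in $\D(\Pi^e)$ with perfect outer terms, so $\Pi\in\per\Pi^e$; in particular $\D^b(\Pi)\subseteq\per\Pi$, so the cluster category $\C(\Pi)=\per\Pi/\D^b(\Pi)$ is formed inside $\per\Pi$.

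\emph{The comparison functor.} Next I would construct a triangle functor $G\colon\D^b(\md A)\to\C(\Pi)$ as the composite $M\mapsto M\lotimes_A\Pi$ (landing in $\per\Pi$ since $M\in\per A$) followed by the Verdier localization $\per\Pi\to\C(\Pi)$, and show it factors through the orbit category $\D^b(\md A)/F$ and hence, by Keller's universal property, through its canonical triangulated hull $\H$ in the sense of \cite{Ke05}. The factorization uses the $\Z_{\geq0}$-grading of $\Pi$ by tensor powers: the augmentation ideal $\Pi_{\geq1}=\bigoplus_{p\geq1}X^{\otimes_A p}=X\otimes_A\Pi$ is a two-sided dg ideal with $\Pi/\Pi_{\geq1}=A$, so applying $M\lotimes_A(-)$ to $0\to X\otimes_A\Pi\to\Pi\to A\to0$ gives a triangle $(FM)\lotimes_A\Pi\to M\lotimes_A\Pi\to M$ in $\per\Pi$ whose third term, regarded as a $\Pi$-module via $\Pi\to A$, has finite-dimensional total cohomology and thus lies in $\D^b(\Pi)$; it follows that $(FM)\lotimes_A\Pi\xrightarrow{\simeq}M\lotimes_A\Pi$ in $\C(\Pi)$, naturally in $M$. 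Also $G(A)=\Pi$ generates $\C(\Pi)$, so the image of $G$ thick-generates.

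\emph{The morphism-space computation.} It remains to show that $G$ becomes fully faithful on $\H$; by a d\'evissage along distinguished triangles (every object is built from $A$) it is enough to treat $M=N=A$ and identify $\Hom_{\C(\Pi)}(\Pi,\Pi[n])$ with $\Hom_{\D^b(\md A)/F}(A,A[n])=\bigoplus_{i\in\Z}\Hom_{\D^b(\md A)}(A,F^iA[n])$. The truncations $\Pi_{>p}=X^{\otimes_A(p+1)}\otimes_A\Pi\subseteq\Pi$ form a cofinal system with $\Pi/\Pi_{>p}\in\D^b(\Pi)$, so $\Hom_{\C(\Pi)}(\Pi,\Pi[n])=\colim_p\Hom_{\per\Pi}(\Pi_{>p},\Pi[n])$; by the adjunction between extension and restriction of scalars along $A\to\Pi$ and invertibility of $X$,
\[ \RHom_\Pi\big(X^{\otimes_A(p+1)}\otimes_A\Pi,\ \Pi\big)=\RHom_A\big(X^{\otimes_A(p+1)},\Pi\big)=\bigoplus_{i\geq0}X^{\otimes_A(i-p-1)}, \]
whose $n$-th cohomology stabilizes, as $p\to\infty$, to $\bigoplus_{m\in\Z}H^n(X^{\otimes_A m})=\bigoplus_{m\in\Z}\Hom_{\D^b(\md A)}(A,F^mA[n])$; it is exactly here that the hypothesis ``$\Hom_{\D(A)}(L,F^iM)=0$ for all but finitely many $i$'' enters, guaranteeing that this sum is finite and the colimit stabilizes.

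\emph{The main difficulty.} The delicate point is to organize the preceding computation so that it is natural in both variables and compatible with the triangulated structure (and to verify that the $\Pi_{>p}$ are genuinely cofinal in the relevant category of fractions); once this is in place, $G$ is fully faithful with thick-generating image and induces, up to idempotent completion, an equivalence $\H\simeq\C(\Pi)$. This second half of the argument is precisely what is carried out in \cite[Section 7]{Ke05} and \cite[4.13]{Am09} (see also \cite[Section 8]{ha3}), so in practice I would phrase the statement in their setting and invoke those results; the only part genuinely proved from scratch is the tensor-algebra bimodule resolution establishing homological smoothness.
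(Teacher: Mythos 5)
The paper does not prove this proposition: it is stated as a citation to \cite[Section 7]{Ke05} and \cite[4.13]{Am09}, so there is no ``paper's own proof'' to compare against. Your reconstruction follows the standard argument from exactly those sources (bimodule presentation of the tensor algebra for smoothness, the comparison functor $M\mapsto M\lotimes_A\Pi$, devissage to the tensor-powers filtration for the morphism-space computation), and you explicitly acknowledge that the delicate cofinality/naturality part is what those references carry out; that is consistent with the paper's decision to cite rather than reprove.

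Two small imprecisions worth noting. First, ``since the two-sided tilting complex $X$ has finite-dimensional cohomology we get $X\in\per A^e$'' and ``$\gd A<\infty$\dots hence $A^e$ is homologically smooth'' both tacitly use that $A/J_A$ is separable over $k$ (so that $\gd A<\infty$ implies $\gd A^e<\infty$ and hence $\D^b(\md A^e)\subset\per A^e$); this is fine because separability is assumed in the paper's applications (\ref{root}), but it is a hypothesis, not automatic. Second, $\Hom_{\C(\Pi)}(\Pi,\Pi[n])=\colim_p\Hom_{\per\Pi}(\Pi_{>p},\Pi[n])$ requires the $\Pi_{>p}\hookrightarrow\Pi$ to be cofinal among all roofs in the Verdier quotient, which you flag as the main difficulty; this is not automatic and is one of the genuine points settled in \cite{Ke05,Am09}. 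With these caveats, the proposal is a correct sketch of the cited proof.
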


Now we turn to the main subject of this section. Let $\T$ be a $k$-linear, $\Hom$-finite triangulated category with an generator $X$, that is, $\thick X=\T$. Assuming that $\T$ is algebraic and taking a derived endomorphism algebra $\G$ of $X$, we have $\T=\per\G$, and we consider the following condition on the dg algebra $\G$.
\begin{enumerate}
\renewcommand{\labelenumi}{(\alph{enumi})}
\renewcommand{\theenumi}{\labelenumi}
	\item Each cohomology of $\G$ is finite dimensional.
	\item\label{ct} For each $Y\in\per\G$, $H^iY=0$ for $i\ll0$ implies $Y=0$.
	\item\label{sm} The truncation $\Pi:=\G^{\leq0}$ is homologically smooth.
\end{enumerate}
The following example is our principle one for the condition \ref{ct}.
\begin{Lem}\label{mot}
If $e\G\in\per\G$ is $d$-cluster tilting for some idempotent $e\in\G$ and $d\geq1$, then \ref{ct} above hold.
\end{Lem}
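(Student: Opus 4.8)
The plan is: first reduce to the case $e=1$ by a derived-Morita argument, then record the cohomological shape of $\G$, and finally show that a nonzero object of $\per\G$ must have cohomology unbounded below, the last point being what the $d$-cluster tilting structure is really for.

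First I would note that $e\G$ generates $\per\G$: for $Z\in\per\G$, iterating right $\add(e\G)$-approximation triangles $Z_{i+1}\to P_i\to Z_i\to Z_{i+1}[1]$ (with $P_i\in\add(e\G)$, $Z_0=Z$) produces $Z_{d-1}\in\add(e\G)[d-1]$ by the $d$-cluster tilting condition (cf. \cite{IYo}), whence $Z\in\thick(e\G)=\per\G$. Thus $\RHom_{\per\G}(e\G,-)$ is a triangle equivalence $\per\G\xrightarrow{\sim}\per\G'$ with $\G'=\REnd_{\per\G}(e\G)=e\G e$, carrying $e\G$ to $\G'$. Since $H^j(Ye)=\Hom_{\per\G}(e\G,Y[j])$ is a direct summand of $H^jY=\Hom_{\per\G}(\G,Y[j])$, and since $(1-e)\G\in\thick(e\G)$ lies in the extension closure of finitely many $e\G[i]$, one has $H^jY=0$ for $j\ll0$ if and only if $H^j(Ye)=0$ for $j\ll0$. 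Transporting along the equivalence, we may assume $e=1$, so $\G=\REnd_{\per\G}(M)$ where $M:=\G$ is the $d$-cluster tilting object.

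Now the $d$-cluster tilting condition for $M=\G$ gives $H^i\G=\Hom_{\per\G}(M,M[i])=0$ for $0<i<d$, while $H^0\G=\End_{\per\G}(M)$ is finite dimensional; observe that negative self-extensions $H^{-i}\G$ ($i>0$) need \emph{not} vanish. I would then invoke the dévissage $\per\G=\add\G\ast\add\G[1]\ast\cdots\ast\add\G[d-1]$ afforded by the same approximation triangles: every $Y\in\per\G$ is, up to a finite tower of triangles, assembled from finitely many copies of $\G[0],\G[1],\dots,\G[d-1]$. Feeding the cohomology long exact sequences of this tower into the hypothesis reduces the statement, after a finite induction on the length of the tower, to the single assertion that a nonzero object of $\per\G$ has $H^j\ne0$ for infinitely many $j\to-\infty$; indeed this is unavoidable, as the desired implication becomes vacuously false the moment some nonzero object of $\per\G$ has cohomology bounded below.

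The heart of the argument, and the step I expect to be the main obstacle, is precisely this last assertion, where the $d$-cluster tilting structure must be used essentially and not merely through generation. In the situations of interest $\per\G$ is Calabi--Yau, hence has a Serre functor, so the AR $(d+2)$-angles in $\add M$ are available; they organise the finitely many indecomposable summands of $M$ into a periodic configuration, and composing the connecting morphisms of the $d+1$ constituent triangles yields nonzero maps $M\to M[-j]$ for arbitrarily large $j$, so that $H^{-j}\G\ne0$ infinitely often. One must then check that these negative self-extensions persist in $H^\bullet Y$ in arbitrarily negative degrees for every nonzero $Y$ — the delicate point being to rule out cancellation in the long exact sequences (a priori a nonzero cone of a map between objects with unbounded-below cohomology could itself be bounded below), which is where the minimality of the approximation triangles and the explicit shape of $H^\bullet\G$ around degree $0$ enter. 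Assembling these pieces, any nonzero $Y$ has $H^jY\ne0$ for infinitely many $j\to-\infty$, which contradicts the hypothesis; hence the implication holds.
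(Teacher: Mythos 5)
There is a genuine gap. You never locate the short, direct argument that the lemma admits, and your attempt to substitute for it relies on hypotheses that the lemma does not grant.

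The paper's proof is elementary and needs none of your machinery. Suppose $H^iY=\Hom_{\D(\G)}(\G,Y[i])=0$ for $i\ll0$. Since $e\G$ is a direct summand of $\G$, also $\Hom(e\G,Y[i])=0$ for $i\ll0$. Choose $l$ so negative that $\Hom(e\G,Y[l+i])=0$ for all $0\leq i<d$. The vanishing for $0<i<d$, by the defining orthogonality characterization of the $d$-cluster tilting subcategory $\add(e\G)$, gives $Y[l]\in\add(e\G)$; the vanishing for $i=0$ then forces $Y[l]=0$, hence $Y=0$. That is the whole proof. The point you should have seen is that the $d$-cluster tilting condition does not merely provide generation or a d\'evissage — it \emph{identifies} the objects satisfying a length-$d$ window of $\Hom$-vanishing as lying in $\add(e\G)$, after which the last vanishing kills the object outright.

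By contrast, your ``heart of the argument'' — that any nonzero $Y\in\per\G$ has $H^jY\neq0$ for infinitely many $j\to-\infty$ — is a true consequence of the lemma, but you try to prove it by invoking a Serre functor, AR $(d+2)$-angles in $\add(e\G)$, and composites of connecting morphisms. None of these are available under the lemma's hypotheses: the statement concerns an arbitrary dg algebra $\G$ whose perfect derived category admits a $d$-cluster tilting object of the form $e\G$; there is no Calabi--Yau assumption. Even granting those extra hypotheses, you explicitly flag that you cannot rule out cancellation in the long exact sequences, i.e.\ you concede the argument is incomplete. Finally, the preliminary reduction to $e=1$ is unnecessary: one only ever needs that $\Hom(e\G,Y[i])$ is a direct summand of $H^iY$, which holds with no derived Morita equivalence at all, and the converse implication you labor to establish (that $H^j(Ye)$ bounded below implies $H^jY$ bounded below) plays no role in the correct proof.
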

\begin{proof}
	Suppose $e\G\in\per\G$ is $d$-cluster tilting and $\Hom_{\D(\G)}(\G,Y[\ll\!0])=0$. Then some $d$-successive extensions from $e\G$ to $Y$ vanish, so there exists $l\in\Z$ such that $\Hom_{\per\G}(e\G,Y[l][i])=0$ for $0\leq i<d$. The vanishing for $0<i<d$ shows $Y[l]\in\add e\G$ since $e\G\in\per\G$ is $d$-cluster tilting, and the vanishing for $i=0$ shows $Y[l]=0$.
\end{proof}
We prove that our triangulated category $\T=\per\G$ arises as the cluster category; in fact it is the cluster category of $\Pi$ which is very simply described as above.
\begin{Thm}\label{str}
Let $\G$ be a dg algebra satisfying the above (a), (b), and (c). Then the functor $-\lotimes_\Pi\G$ induces an equivalence
\[ \xymatrix{\C(\Pi)=\per\Pi/\D^b(\Pi)\ar[r]^-\simeq& \per\G}. \]
\end{Thm}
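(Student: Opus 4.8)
The plan is to show that the functor $-\lotimes_\Pi\G\colon\per\Pi\to\per\G$ kills $\D^b(\Pi)$, and then induces the desired equivalence on the Verdier quotient. First I would observe that $\Pi=\G^{\leq0}$ comes with a natural dg algebra morphism $\Pi\to\G$ (the truncation map), which makes $\G$ a dg $\Pi$-module and gives the functor $-\lotimes_\Pi\G$. Since $\Pi$ is homologically smooth by (c), it is known that $\D^b(\Pi)\subseteq\per\Pi$, so the Verdier quotient $\C(\Pi)=\per\Pi/\D^b(\Pi)$ makes sense. The first task is therefore to check $(-\lotimes_\Pi\G)(\D^b(\Pi))=0$ inside $\per\G$. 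A dg $\Pi$-module $N$ of finite-dimensional total cohomology is built from finitely many shifts of the simple quotients of $\Pi$ (supported in degree $0$); concretely, $H^0\Pi=H^0\G$, so the simples of $\Pi$ agree with those of $\G$ concentrated in degree $0$. For such a simple $S$, the object $S\lotimes_\Pi\G$ is a dg $\G$-module whose cohomology is concentrated in nonpositive degrees, and one computes $H^i(S\lotimes_\Pi\G)$ using the bar resolution: the negative cohomologies are governed by $H^{<0}\G$, which need not vanish, so $S\lotimes_\Pi\G$ itself is generally nonzero. Instead the right move is to use condition (b): I claim $S\lotimes_\Pi\G$ is zero \emph{in $\per\G$} is too strong, so actually the correct statement must be that $-\lotimes_\Pi\G$ factors through the quotient and the induced functor is the equivalence — let me reorganize.

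The cleaner route is the following. By Proposition \ref{cl}-type reasoning (or directly), $\Pi$ is homologically smooth, and since $H^0\Pi = H^0\G$ is finite dimensional and $\Pi$ is concentrated in degrees $\le 0$ with finite-dimensional cohomologies (from (a)), the standard theory of Amiot/Guo applies: $\per\Pi$ admits a bounded t-structure or at least $\D^b(\Pi)\subset\per\Pi$, and $\C(\Pi)$ is Hom-finite. The key structural input is that $\G$, regarded as a dg $\Pi$-module via $\Pi\to\G$, is a \emph{perfect} $\Pi$-module. This should follow because $\G$ is built from $\Pi$ by attaching cells in positive degrees — but positive-degree cells correspond to the cohomology $H^{>0}\G$ which is finite dimensional by (a), so $\G\in\per\Pi$ up to dealing with convergence, and in fact $\G$ lies in $\D^b(\Pi)$ is false; rather $\G\in\per\Pi$. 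Hence $-\lotimes_\Pi\G$ sends $\per\Pi$ to $\per\G$ and $\D^b(\Pi)$ to $\thick_{\per\G}(\G\lotimes_\Pi\D^b(\Pi))$. Now for $N\in\D^b(\Pi)$, the $\G$-module $N\lotimes_\Pi\G$ has finite-dimensional total cohomology (tensoring a finite-dimensional-cohomology $\Pi$-module with the perfect bimodule-ish object $\G$, using smoothness of $\Pi$), so it lies in the analogue $\D^b(\G)$ of bounded objects — but by condition (b), an object of $\per\G$ with cohomology bounded \emph{below} and finite-dimensional... I need to invoke (b) to conclude such objects are already zero in the relevant quotient. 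Actually (b) says: $Y\in\per\G$ with $H^iY=0$ for $i\ll0$ forces $Y=0$. So if $N\lotimes_\Pi\G\in\per\G$ has finite-dimensional, hence bounded, total cohomology, then directly $N\lotimes_\Pi\G=0$. That is the crux: $(-\lotimes_\Pi\G)(\D^b(\Pi))=0$, so the functor factors as $\bar F\colon\C(\Pi)\to\per\G$.

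For full faithfulness and essential surjectivity of $\bar F$: essential surjectivity is easy since $\G=\Pi\lotimes_\Pi\G$ generates $\per\G$ as a thick subcategory and $\bar F$ has triangulated image containing $\G$, so it is all of $\per\G$. For full faithfulness, I would compute $\Hom_{\C(\Pi)}(\Pi,\Pi[i])$ as the colimit $\colim_n\Hom_{\per\Pi}(\Pi_{\ge -n}, \Pi[i])$ over truncations (the standard description of Hom-sets in a Verdier quotient by $\D^b$), and compare with $\Hom_{\per\G}(\G,\G[i])=H^i\G$. The truncation $\Pi_{\ge -n}\to\Pi$ has cone in $\D^b(\Pi)$, and as $n\to\infty$ the maps stabilize; the resulting colimit should be exactly $H^i\G=H^i\Pi$ for $i\le 0$ together with the positive cohomologies $H^{>0}\G$ re-created by the quotient process (this is precisely the mechanism by which cluster categories recover negative/positive self-extensions). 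One must check the comparison map $\colim_n\Hom_{\per\Pi}(\Pi_{\ge-n},\Pi[i])\to H^i\G$ is an isomorphism for all $i$, using homological smoothness of $\Pi$ and finite-dimensionality of the cohomologies. Then since $\Pi$ generates $\C(\Pi)$ and $\G$ generates $\per\G$ and $\bar F(\Pi)=\G$, a standard dévissage (thick subcategory / five-lemma) argument upgrades the isomorphism on $\Hom(\Pi,\Pi[i])$ to full faithfulness on all of $\C(\Pi)$.

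\textbf{Main obstacle.} I expect the hardest step to be the Hom-computation: identifying $\Hom_{\C(\Pi)}(\Pi,\Pi[i])$ with $H^i\G$ for all $i\in\Z$, in particular showing that the Verdier localization at $\D^b(\Pi)$ exactly reconstructs the positive cohomology $H^{>0}\G$ that was truncated away in passing from $\G$ to $\Pi$. This requires controlling the tower of truncations $\Pi_{\ge -n}$, knowing that each truncation cone lies in $\D^b(\Pi)$ (which uses finite-dimensionality of cohomologies from (a)), and checking the colimit is computed by these, which is where homological smoothness (c) and condition (b) jointly enter. Everything else — factoring through the quotient, essential surjectivity, and the final dévissage — is comparatively routine once this Hom-isomorphism is in hand.
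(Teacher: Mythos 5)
Your first step (showing $-\lotimes_\Pi\G$ kills $\D^b(\Pi)$) reaches the right conclusion, but your justification wobbles: you claim $N\lotimes_\Pi\G$ has finite-dimensional total cohomology, which is neither obvious nor needed. The paper's argument is cleaner and uses strictly less: form the triangle $\Pi\to\G\to W\to\Pi[1]$ in $\D(\Pi^e)$; since $W$ lives in positive degrees, tensoring $N\in\D^b(\Pi)\subset\per\Pi$ (here smoothness of $\Pi$ enters) gives $N\to N\lotimes_\Pi\G\to N\lotimes_\Pi W$ with both outer terms cohomologically bounded below, hence $N\lotimes_\Pi\G$ is bounded below and vanishes by (b). You should use this rather than finite-dimensionality.

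The genuine gap is in your computation of $\Hom_{\C(\Pi)}(\Pi,\Pi[i])$. You propose to present the Verdier-quotient Hom as $\colim_n\Hom_{\per\Pi}(\Pi_{\geq-n},\Pi[i])$, with "$\Pi_{\geq-n}\to\Pi$ has cone in $\D^b(\Pi)$." But $\Pi$ is a negative dg algebra, and the (smart or brutal) truncation that discards degrees below $-n$ throws away the part of $\Pi$ that is generically unbounded and infinite-dimensional; the cone of a map between $\Pi$ and such a truncation is \emph{not} in $\D^b(\Pi)$, and for smart truncations the map even goes the wrong way. So the approximating system you use is not admissible for the calculus of fractions defining $\C(\Pi)$, and the colimit you want to compute never gets off the ground. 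The ingredient you are missing, and which the paper supplies, is the correct cofinal family: replace the \emph{target} $M$ by $M\lotimes_\Pi\G^{\leq p}$. Indeed $M\to M\lotimes_\Pi\G^{\leq p}$ has cone $M\lotimes_\Pi W^{\leq p}\in\D^b(\Pi)$ (since $W^{\leq p}$ is finite-dimensional), and by the homotopy colimit lemma (the paper's \ref{hc}) these approximations are cofinal among maps out of $M$ with bounded cone. With this system in hand, the injectivity and surjectivity of $\Hom_{\C(\Pi)}(L,M)\to\Hom_{\D(\G)}(L\lotimes_\Pi\G,M\lotimes_\Pi\G)$ is checked directly and uniformly in $L,M\in\per\Pi$, without needing your dévissage reduction to the generator (which would also work, but only once the correct approximation is identified).
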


 
We will temporarily have to work in big triangulated categories. Let $\D$ be a triangulated category with arbitrary (set-indexed) coproducts. Recall that the {\it homotopy colimit} of a sequence
\[ \xymatrix{ M_0\ar[r]^-{f_0}& M_1\ar[r]^-{f_1}&M_2\ar[r]^-{f_2}&\cdots } \]
in $\D$ is an object $M$ which fits into a triangle
\[ \xymatrix{ \discoprod_{i\geq0} M_i\ar[r]^-f& \discoprod_{i\geq0}M_i\ar[r]& M\ar[r]&\discoprod_{i\geq0}M_i[1] } \]
with $f$ having components $M_i\xrightarrow{\left( \begin{smallmatrix}1\\-f_i\end{smallmatrix}\right) } M_i\oplus M_{i+1}\hookrightarrow\coprod_{i\geq0}M_i$. Thus $M$ is uniquely determined up to (non-unique) isomorphism, which is denoted by $\hocolim_{i\geq0}M_i$.

We will use some easy computations on homotopy colimits. For a complex $M=(\cdots\to M^{i-1}\to M^i\to M^{i+1}\to\cdots)$ and $p\in\Z$, we denote by $M^{\leq p}$ the truncated complex, that is, the complex $(\cdots\to M^{p-1}\to Z^pM\to 0\to\cdots)$. We then have an ascending sequence $M^{\leq0}\to M^{\leq1}\to M^{\leq2}\to\cdots$.
\begin{Lem}\label{hc}
Let $\L$ be an arbitrary negative dg algebra.
\begin{enumerate}
	\item We have $M=\hocolim_{p\geq0}M^{\leq p}$ for all $M\in\D(\L)$.
	\item For each $L\in\per\L$ and a sequence $M_0\to M_1\to \cdots$ in $\D(\L)$, there exists a natural isomorphism
	\[ \colim_{p\geq0}\Hom_{\D(\L)}(L,M_p)\xrightarrow{\simeq}\Hom_{\D(\L)}(L,\hocolim_{p\geq0}M). \]
	\item For each $L\in\per\L$, $M\in\D(\L)$, and $N\in\D(\L^e)$, there is a natural isomorphism
	\[ \colim_{p\geq0}\Hom_{\D(\L)}(L,M\lotimes_\L N^{\leq p})\xrightarrow{\simeq}\Hom_{\D(\L)}(L,M\lotimes_\L N). \]
\end{enumerate}
\end{Lem}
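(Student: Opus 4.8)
The plan is to prove (1) by a direct cohomological computation, deduce (2) from compactness of perfect objects, and obtain (3) by applying the exact, coproduct-preserving functor $M\lotimes_\L-$ to the situation of (1) and then invoking (2). Throughout, the only inputs are that $H^i$ and $\Hom_{\D(\L)}(L,-)$ (for $L$ perfect) commute with coproducts, and that $1-\mathrm{shift}$ on an $\mathbb{N}$-indexed coproduct is a monomorphism with cokernel the colimit.

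For (1), I would first observe that $M^{\leq p}$ is a model for the canonical truncation $\tau^{\leq p}M$: its cohomology is $H^iM$ for $i\leq p$ and vanishes for $i>p$, and the transition maps $M^{\leq p}\hookrightarrow M^{\leq p+1}$ together with the inclusions $M^{\leq p}\hookrightarrow M$ assemble, by the (weak) universal property of the homotopy colimit, into a morphism $\varphi\colon\hocolim_{p\geq0}M^{\leq p}\to M$. To see $\varphi$ is invertible, apply the homological functors $H^i\colon\D(\L)\to\Md k$, which commute with arbitrary coproducts. Applying $H^i$ to the triangle defining the homotopy colimit and using that $1-\mathrm{shift}$ on $\bigoplus_pH^iM^{\leq p}$ is a (split) monomorphism with cokernel $\colim_pH^iM^{\leq p}$, one gets $H^i(\hocolim_pM^{\leq p})\cong\colim_pH^iM^{\leq p}$; since for $p\geq i$ the groups $H^iM^{\leq p}$ equal $H^iM$ with identity transition maps, this colimit is $H^iM$, so $\varphi$ induces an isomorphism on each $H^i$ and is therefore an isomorphism in $\D(\L)$.

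For (2), note that $\L$ is compact in $\D(\L)$ because $\Hom_{\D(\L)}(\L,-)\cong H^0(-)$ commutes with coproducts; since the compact objects form a thick subcategory and $\per\L=\thick_{\D(\L)}\L$, every $L\in\per\L$ is compact. Applying $\Hom_{\D(\L)}(L,-)$ to the triangle defining $\hocolim_pM_p$, using compactness to identify $\Hom_{\D(\L)}(L,\coprod_pM_p)$ with $\coprod_p\Hom_{\D(\L)}(L,M_p)$, and again using that $1-\mathrm{shift}$ is a monomorphism (on both the degree-$0$ and degree-$1$ copies, so that the connecting map vanishes), the long exact sequence collapses to the asserted natural isomorphism $\colim_p\Hom_{\D(\L)}(L,M_p)\xrightarrow{\simeq}\Hom_{\D(\L)}(L,\hocolim_pM_p)$.

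For (3), apply (1) to $N\in\D(\L^e)$ to get $N=\hocolim_pN^{\leq p}$. The functor $M\lotimes_\L-\colon\D(\L^e)\to\D(\L)$ is triangulated and commutes with coproducts, hence carries the homotopy-colimit triangle of $(N^{\leq p})_p$ to a homotopy-colimit triangle of $(M\lotimes_\L N^{\leq p})_p$, so $M\lotimes_\L N\cong\hocolim_p(M\lotimes_\L N^{\leq p})$; now apply (2) to the sequence $M_p=M\lotimes_\L N^{\leq p}$, with naturality inherited from (2). None of this is deep; the points that need the most care are the identification of $M^{\leq p}$ with $\tau^{\leq p}M$ together with the compatibility of the transition maps (so that $\varphi$ is well defined and a cohomology isomorphism), and the repeated bookkeeping with $1-\mathrm{shift}$. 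Negativity of $\L$ is not used for these three statements beyond being the standing hypothesis of the section.
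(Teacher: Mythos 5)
Your proof is correct and follows the same route the paper indicates: the paper disposes of (1) as ``easy,'' cites Neeman for (2), and obtains (3) from $M\lotimes_\L N=\hocolim_{p\geq0}(M\lotimes_\L N^{\leq p})$, which is exactly the $M\lotimes_\L-$-preserves-hocolim step you give. You have merely filled in the standard details (the cohomology check for (1), compactness of perfect objects for (2)); one tiny caveat is that negativity of $\L$ is in fact used, namely to make $M^{\leq p}$ a dg $\L$-submodule at all, but this does not affect the argument.
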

\begin{proof}
	(1) is easy and (2) is well-known \cite{Ne92}. Then (3) follows since $M\lotimes_\L N=\hocolim_{p\geq0}(M\lotimes_\L N^{\leq p})$.
\end{proof}

The following observation is crucial.
\begin{Lem}\label{below}
We have $L\lotimes_\Pi\G=0$ for any $L\in\D^b(\Pi)$.
\end{Lem}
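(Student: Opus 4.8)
The plan is to compute $L\lotimes_\Pi\G$ by first reducing to the case where $L$ is a finite-dimensional $\Pi$-module, namely $L=S$ a simple, and then showing $S\lotimes_\Pi\G=0$. The first reduction is standard: the derived category $\D^b(\Pi)$ of dg modules with finite-dimensional total cohomology is generated, as a thick subcategory of $\D(\Pi)$, by the simple $\Pi$-modules (equivalently, by the simple $H^0\Pi$-modules viewed as $\Pi$-modules via $\Pi\to H^0\Pi$, since $\Pi$ is negative). Since $-\lotimes_\Pi\G$ is a triangle functor, it suffices to prove the vanishing on each such simple. Here I would want to use that $\Pi=\G^{\leq0}$ is negative and connective, so $H^0\Pi$ is an ordinary finite-dimensional algebra and its simples pull back to $\Pi$-modules concentrated in degree $0$.

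The heart of the argument is to show $S\lotimes_\Pi\G=0$ for $S$ a simple $\Pi$-module. The key point should be an application of condition \ref{ct}: the object $Y:=S\lotimes_\Pi\G$ lies in $\per\G$ (since $S\in\D^b(\Pi)\subset\per\Pi$ and $-\lotimes_\Pi\G$ sends $\per\Pi$ to $\per\G$), and if I can verify that $H^iY=0$ for $i\ll0$, then (b) forces $Y=0$. So the plan reduces to a cohomological estimate: I must show that $S\lotimes_\Pi\G$ has bounded-below cohomology. I would compute $H^i(S\lotimes_\Pi\G)$ via a projective (or semifree) resolution of $S$ over $\Pi$ built degreewise; since $\Pi$ is negative, a minimal semifree resolution of $S$ has its generators in non-positive degrees, and tensoring with $\G$ — whose cohomology is bounded (each $H^i\G$ finite-dimensional, and $\G$ is connective up to the truncation relationship with $\Pi$) — one tracks that the resulting complex, while possibly unbounded above, is bounded below. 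Concretely I expect to use that $\Hom_{\D(\Pi)}(S,\G[i])$ or rather the relevant Tor-groups vanish for $i\ll0$ because of degree reasons: a generator of the resolution in degree $-n$ can only contribute to $H^i$ for $i\geq -n$ after tensoring, but low homological degree of $S$ over $\Pi$ is controlled.

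Actually, the cleanest route is probably the homotopy-colimit machinery just set up in \ref{hc}. Writing $\G=\hocolim_{p\geq0}\G^{\leq p}=\hocolim_p\Pi^{\leq p}$ — wait, more precisely using \ref{hc}(3) with $M=S$, $N=\G$ viewed appropriately — one gets $S\lotimes_\Pi\G=\hocolim_{p}(S\lotimes_\Pi\G^{\leq p})$, and each $S\lotimes_\Pi\G^{\leq p}$ is a finite complex to which one can apply the boundedness hypothesis; combined with \ref{hc}(2) one computes $\Hom_{\D(\Pi)}(L',S\lotimes_\Pi\G)$ as a colimit. The main obstacle I anticipate is precisely making the cohomological bound rigorous: showing $H^i(S\lotimes_\Pi\G)=0$ for $i\ll0$ requires carefully exploiting both that $\Pi$ is homologically smooth (so $S$ is a perfect $\Pi$-module? — no, not in general; rather $S\in\per\Pi$ because $\Pi$ has a perfect-module-category structure... actually $S$ lies in $\D^b(\Pi)$ which need not be inside $\per\Pi$ in general, but condition (c) homological smoothness of $\Pi$ may be what guarantees $\D^b(\Pi)\subseteq\per\Pi$) and the boundedness of $H^*\G$. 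So I would first nail down that smoothness of $\Pi$ gives $\D^b(\Pi)\subseteq\per\Pi$, hence $S$ is perfect over $\Pi$, hence has a finite semifree resolution with generators in a bounded range of degrees; then tensoring this finite complex with $\G$ (bounded cohomology) yields bounded cohomology for $S\lotimes_\Pi\G$, in particular bounded below; then (b) finishes. That smoothness-implies-$\D^b\subseteq\per$ step, together with the degree bookkeeping, is where I expect the real work to lie.
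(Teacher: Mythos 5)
Your high-level strategy is the same as the paper's: show that $M:=L\lotimes_\Pi\G$ is a perfect $\G$-module with bounded-below cohomology, and then condition (b) forces $M=0$. You correctly use homological smoothness of $\Pi$ to get $L\in\per\Pi$ (hence $M\in\per\G$). But the argument you sketch for bounded-below-ness contains a genuine error: you assert that $\G$ has bounded cohomology (``tensoring with $\G$ --- whose cohomology is bounded\dots''), and this is false. Condition (a) only says each $H^i\G$ is \emph{finite-dimensional}; in the principal examples $H^i\G=\T(X,X[i])$ is nonzero for infinitely many negative $i$ (e.g.\ for a $2$-cluster category of a representation-infinite quiver, $\T(T,T[i])\neq0$ for all $i\leq 0$). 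So tensoring a finite semifree resolution of $L$ (or of a simple $S$) with $\G$ does \emph{not} on its face produce a complex with bounded-below cohomology: the underlying graded module $\bigoplus_i e_i\G$ is genuinely unbounded below, and the cancellation that makes the cohomology bounded below is not visible from this description.

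The paper's fix is the triangle $\Pi\to\G\to W\to\Pi[1]$ in $\D(\Pi^e)$. Since $\Pi=\G^{\leq0}$, the cone $W$ has cohomology concentrated in strictly positive degrees, hence is bounded below. Applying $L\lotimes_\Pi-$ gives $L\to M\to L\lotimes_\Pi W\to L[1]$. Here $L\in\D^b(\Pi)$ is bounded, and $L\lotimes_\Pi W\in\thick_{\D(\Pi)}W$ is bounded below because $L\in\per\Pi$ means $L\lotimes_\Pi W$ is a finite iterated extension of shifted summands of the bounded-below object $W$. Hence $M$ is bounded below, and (b) kills it. This splitting of $\G$ into a bounded piece $\Pi$ (which tensors to the bounded $L$) and a positively-graded piece $W$ is the ingredient your proposal is missing. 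Your reduction to simples is harmless but unnecessary; the triangle argument works directly for any $L\in\D^b(\Pi)$.
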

\begin{proof}
	Consider the triangle
	\[ \xymatrix{ \Pi\ar[r]& \G\ar[r]& W \ar[r]& \Pi[1] } \]
	in $\D(\Pi^e)$ obtained from the inclusion $\Pi\to\G$. Since $\Pi$ is homologically smooth we have $L\in\per\Pi$, hence $M:=L\lotimes_\Pi\G\in\per\G$. We show that $M$ is bounded below, and the claim follows from our assumption \ref{ct}. Applying $L\lotimes_\Pi-$ to the above triangle yields a triangle $L\to M\to L\lotimes_\Pi W\to L[1]$, in which $L\in\D^b(\Pi)$ is bounded below, and $L\lotimes_\Pi W\in\thick_{\D(\Pi)}W$ is also bounded below. Therefore the middle term $M$ is bounded below.
\end{proof}

Now we are ready to prove our general structure theorem.
\begin{proof}[Proof of \ref{str}]
	First note that \ref{below} gives rise to a functor we want: The tensor product $-\lotimes_\Pi\G$ indeed induces a functor $\C(\Pi)\to\per\G$.	
	We have to show that the map $\Hom_{\C(\Pi)}(L,M)\to\Hom_{\D(\G)}(L\lotimes_\Pi\G,M\lotimes_\Pi\G)$ is bijective for each $L,M\in\per\Pi$. Note that the right-hand-side is isomorphic to $\Hom_{\D(\Pi)}(L,M\lotimes_\Pi\G)$ by adjunction, and then the map in question is induced by applying $\Hom_{\D(\Pi)}(L,-)$ to the natural map $M=M\lotimes_\Pi\Pi\to M\lotimes_\Pi\G$, that is, we have a commutative diagram
	\[ \xymatrix@R=6mm{
		\Hom_{\D(\Pi)}(L,M)\ar[r]\ar[d]&\Hom_{\D(\Pi)}(L,M\lotimes_\Pi\G)\ar@{=}[d]\\
		\Hom_{\C(\Pi)}(L,M)\ar[r]&\Hom_{\D(\G)}(L\lotimes_\Pi\G,M\lotimes_\Pi\G). } \]
	
	We first show the injectivity. Let $L\to M$ be a morphism in $\per\Pi$ such that the composite $L\to M\to M\lotimes_\Pi\G$ is $0$. Put $W=\cone(\Pi\to\G)$ as in \ref{below}. Then it factors through $M\lotimes_\Pi W[-1]$ as in the diagram below, thus through $M\lotimes_\Pi (W[-1])^{\leq p}$ for some $p\geq0$ by \ref{hc}(3). Now this is in $\D^b(\Pi)$ since $M\in\per\Pi$ and $(W[-1])^{\leq p}\in\D^b(\Pi)$. Therefore $L\to M$ is $0$ in $\C(\Pi)$.
	\[ \xymatrix@R=5mm{
		&L\ar[d]\ar@/_/@{-->}[dl]&&\\
		M\lotimes_\Pi W[-1]\ar[r]& M\ar[r]& M\lotimes_\Pi\G\ar[r]& M\lotimes_\Pi W } \]
	
	We next show the surjectivity. Let $f\colon L\to M\lotimes_\Pi\G$ be a morphism in $\D(\Pi)$. It factors through $M\lotimes_\Pi\G^{\leq p}$ for some $p\geq0$ by \ref{hc}(3). Then the canonical map $s\colon M\to M\lotimes_\Pi\G^{\leq p}$ has finite dimensional mapping cone $M\lotimes_\Pi W^{\leq p}$, so we have the diagram $L\xrightarrow{g} M\lotimes_\Pi\G^{\leq p}\xleftarrow{s} M$ in $\per\Pi$ which presents a morphism in $\C(\Pi)$. We verify that this is mapped to the original morphism in $\D(\G)$.
	\[ \xymatrix@R=6mm{
		L\ar[r]^-g\ar[d]&M\lotimes_\Pi\G^{\leq p}\ar[d]\ar@{-->}[dr]^-t&M\ar[d]\ar[l]_-s\\
		L\lotimes_\Pi\G\ar[r]&M\lotimes_\Pi\G^{\leq p}\lotimes_\Pi\G&M\lotimes_\Pi\G\ar[l]_-\simeq } \]
	Applying $-\lotimes_\Pi\G$ to $s^{-1}g\colon L\to M$ in $\C(\Pi)$ yields the second row in the diagram above, which is the morphism $s^{-1}g\otimes1\colon L\lotimes_\Pi\G\to M\lotimes_\Pi\G$ in $\per\G$. Under the adjunction it becomes $L\xrightarrow{g}M\lotimes_\Pi\G^{\leq p}\to M\lotimes_\Pi\G^{\leq p}\lotimes_\Pi\G\xleftarrow{\simeq}M\lotimes_\Pi\G$ in $\D(\Pi)$. On the other hand, the original morphism $f\colon L\to M\lotimes_\Pi\G$ is $L\xrightarrow{g}M\lotimes_\Pi\G^{\geq p}\xrightarrow{t}M\lotimes_\Pi\G$. Therefore it remains to show that the lower triangle in the right square in the above diagram is commutative. Clearly the upper triangle is commutative, so the difference of two maps $M\lotimes_\Pi\G^{\geq p}\xrightarrow{}M\lotimes_\Pi\G$ factors through $\cone s=M\lotimes_\Pi W^{\leq p}\in\D^b(\Pi)$. But there is no non-zero map in $\D(\Pi)$ from $X\in\D^b(\Pi)$ to $Y\in\D(\G)$. Indeed, we have $\Hom_{\D(\Pi)}(X,Y)=\Hom_{\D(\G)}(X\lotimes_\Pi\G,Y)$, which is $0$ by \ref{below}. This completes the proof of surjectivity.
		
	Finally we see that the functor $\C(\Pi)\to\per\G$ is clearly dense.
\end{proof}

Let us note some complementary observations which we use later. Recall that a morphism $A\to B$ of dg algebras is a {\it localization} (or a {\it homological epimorphism}) if the restriction functor $\D(B)\to\D(A)$ is fully faithful. This is equivalent to saying that a natural map $B\lotimes_AB\to B$ is a quasi-isomorphism.
\begin{Prop}\label{loc}
In the situation of \ref{str}, we have the following.
\begin{enumerate}
	\item The map $\Pi\to\G$ is a localization.
	\item Under the identification $\D(\G)\subset\D(\Pi)$, we have $\D(\G)=\{ X\in\D(\Pi) \mid \RHom_\Pi(X,\Pi)=0\}$.
\end{enumerate}
\end{Prop}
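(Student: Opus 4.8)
The plan is to derive both statements from the single object $W:=\cone(\Pi\to\G)\in\D(\Pi^e)$, which by (a) and the cohomology long exact sequence of $\Pi\to\G$ is quasi-isomorphic to $\tau^{\geq1}\G$, hence has finite dimensional cohomology concentrated in positive degrees, together with Lemma \ref{below}.

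For (1), I would use the criterion recalled just above the statement: it suffices to show the multiplication $\mu\colon\G\lotimes_\Pi\G\to\G$ is a quasi-isomorphism. Tensoring the triangle $\Pi\to\G\to W\to\Pi[1]$ by $\G$ over $\Pi$ (on the right) gives a triangle $\G\to\G\lotimes_\Pi\G\to W\lotimes_\Pi\G\to\G[1]$ whose first morphism is a section of $\mu$; hence $\mu$ is a quasi-isomorphism if and only if $W\lotimes_\Pi\G=0$. Now $W=\hocolim_p W^{\leq p}$ by \ref{hc}(1), with each good truncation $W^{\leq p}$ in $\D^b(\Pi)$ (finite dimensional cohomology in finitely many degrees), and since $-\lotimes_\Pi\G$ commutes with coproducts and triangles, $W\lotimes_\Pi\G=\hocolim_p(W^{\leq p}\lotimes_\Pi\G)$; each term vanishes by \ref{below}, so $W\lotimes_\Pi\G=0$.

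For (2): by (1) the restriction functor $\D(\G)\to\D(\Pi)$ is fully faithful with left adjoint $-\lotimes_\Pi\G$, so $X\in\D(\Pi)$ lies in its essential image exactly when the unit $X=X\lotimes_\Pi\Pi\to X\lotimes_\Pi\G$ is an isomorphism; the cone of this unit is $X\lotimes_\Pi W$, so the image is $\{X\mid X\lotimes_\Pi W=0\}$. Thus the content is the equivalence $X\lotimes_\Pi W=0\iff\RHom_\Pi(X,\Pi)=0$, for which my basic tool is $k$-duality: any $S\in\D^b(\Pi)$ has finite dimensional cohomology, so $S\simeq D(DS)$ with $DS\in\D^b(\Pi^\op)$ and therefore $\RHom_\Pi(X,S)\simeq D(X\lotimes_\Pi DS)$. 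For ``$\Leftarrow$'': $\RHom_\Pi(X,\Pi)=0$ forces $\RHom_\Pi(X,S)=0$, hence $X\lotimes_\Pi DS=0$, for every $S\in\per\Pi=\thick_{\D(\Pi)}\Pi$, in particular for $S\in\D^b(\Pi)$; equivalently $X\lotimes_\Pi T=0$ for all $T\in\D^b(\Pi^\op)$. Since each $W^{\leq p}$, having finite dimensional cohomology, lies in $\D^b(\Pi^\op)$ as a left $\Pi$-module, we get $X\lotimes_\Pi W^{\leq p}=0$ for all $p$, whence $X\lotimes_\Pi W=\hocolim_p(X\lotimes_\Pi W^{\leq p})=0$. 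For ``$\Rightarrow$'': here $X$ is the restriction of some $N\in\D(\G)$, so for $S\in\D^b(\Pi)$ I compute $\RHom_\Pi(X,S)\simeq D(X\lotimes_\Pi DS)\simeq D\big((N\lotimes_\G\G)\lotimes_\Pi DS\big)\simeq D\big(N\lotimes_\G(\G\lotimes_\Pi DS)\big)$, and $\G\lotimes_\Pi DS=0$ by \ref{below} applied to the left $\Pi$-module $DS$; hence $\RHom_\Pi(X,S)=0$ for all $S\in\D^b(\Pi)$. Finally, since $\Pi$ is bounded above with finite dimensional cohomology in each degree (by (a)), one has $\Pi=\holim_q\tau^{\geq-q}\Pi$ with $\tau^{\geq-q}\Pi\in\D^b(\Pi)$, so $\RHom_\Pi(X,\Pi)=\holim_q\RHom_\Pi(X,\tau^{\geq-q}\Pi)=0$.

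The step I expect to be most delicate is the bookkeeping in (2): the passages between $\lotimes_\Pi$ and $\RHom_\Pi$ via $k$-duality must be carried out with $W^{\leq p}$ (respectively $\tau^{\geq-q}\Pi$) regarded on the correct one of the two sides, and one needs Lemma \ref{below} not only for $\G$ but also for $\G^\op$; all the homotopy (co)limit interchanges above are licensed because $-\lotimes_\Pi\G$ and $\lotimes_\Pi$ preserve coproducts while $\RHom_\Pi(X,-)$ preserves products, together with \ref{hc}.
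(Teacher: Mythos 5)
Your proof is correct and follows essentially the same route as the paper: part (1) is verbatim, and part (2) uses the same triangle $\Pi\to\G\to W$, $k$-duality, truncations and \ref{below}, just organized a bit differently (the paper reduces the inclusion $\D(\G)\subset\{\cdot\}$ to the single object $\G$ by a localizing-subcategory argument and dualizes $W$ to $DW$, while you argue with a general restricted $X$ and dualize the bounded test objects $S$, reconstructing $\Pi$ as $\holim_q\tau^{\geq-q}\Pi$). The left/right subtlety you flag — needing \ref{below} also with $\G$ on the left, against objects of $\D^b(\Pi^\op)$ — is genuinely there and is present just as much in the paper's own proof, which invokes $\G\lotimes_\Pi(D\Pi)^{\leq p}=0$ without comment; in the intended CY/cluster-tilting applications the hypotheses are left-right symmetric so this is harmless, but you are right to notice it.
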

\begin{proof}
	(1)  We prove that the multiplication map $\G\lotimes_\Pi\G\to\G$ is an isomorphism. Applying $-\lotimes_\Pi\G$ to the triangle $\Pi\to\G\to W\to \Pi[1]$ obtained from $\Pi\to\G$, we have a triangle
	\[ \xymatrix{
		\G\ar[r]&\G\lotimes_\Pi\G\ar[r]&W\lotimes_\Pi\G\ar[r]&\G[1] }. \]
	Since $W$ is a homotopy colimit of the finite dimensional $\Pi$-modules $W^{\leq p}$ by \ref{hc}(1), we have $W\lotimes_\Pi\G=0$ by \ref{below}, thus the first map is an isomorphism. Being its left inverse, the multiplication map is also an isomorphism.
	
	(2)  The inclusion $\subset$ is equivalent to saying $\RHom_\Pi(\G,\Pi)=0$. Note that this is the dual of $\G\lotimes_\Pi D\Pi$, which equals $\hocolim_{p\geq0}\G\lotimes_\Pi(D\Pi)^{\leq p}$ by \ref{hc}(3), thus is $0$ by \ref{below}. Next we show the converse inclusion. Let $X\in\D(\Pi)$ such that $\RHom_\Pi(X,\Pi)=0$. Applying $X\lotimes_\Pi-$ to $\Pi\to\G\to W\to\Pi[1]$, we have a triangle $X\to X\lotimes_\Pi\G\to X\lotimes_\Pi W\to X[1]$ in $\D(\Pi)$. Therefore we have to show $X\lotimes_\Pi W=0$, which is equivalent to the vanishing of its dual $\RHom_\Pi(X,DW)$. Since $\RHom_\Pi(X,\Pi)=0$ and $\Pi$ is smooth, we have $\RHom_\Pi(X,L)=0$ for all $L\in\D^b(\Pi)$, and consequently $\RHom_\Pi(X,L)=0$ for all $L\in\D(\Pi)$ which can be written as a homotopy limit of objects in $\D^b(\Pi)$. On the other hand, $W=\hocolim_{p\geq0}W^{\leq p}$ by \ref{hc}(1), so $DW$ is a homotopy limit of finite dimensional $\Pi$-modules. It follows that $\RHom_\Pi(X,DW)=0$, as desired.
\end{proof}

\section{Proof of main theorems}
We apply the result from previous section to prove our main result \ref{main} of this paper which is an explicit structure theorem for CY categories with cluster tilting objects. It will be proved in \ref{root} in this section.

Let us give a sketch of the proof. Let $\T$ be an algebraic $d$-CY triangulated category with $d\geq2$. Suppose that there exists a $d$-cluster tilting object $T\in\T$. Put
\[ X=T\oplus T[-1]\oplus\cdots\oplus T[-(d-2)]. \]
Since $X$ has a $d$-cluster tilting object $T$ as a direct summand, we can apply \ref{str}. For this we need to know its truncated derived endomorphism algebra $\Pi$. It turns out that its cohomology which is given by
\[ S=\discoprod_{i\geq0}\T(X,X[-i]), \]
is intrinsically formal (\ref{dim2}, \ref{formal}), that is, any dg algebra with cohomology $S$ is formal, thus $\Pi$ is nothing but $S$ regarded as a dg algebra with trivial differentials. We are therefore reduced to study the algebra $S$. For this we consider the functor
\[ \xymatrix{ F=\T(X,-)\colon \T\ar[r]& \md H } \]
to the category of modules over $H=\End_\T(X)$ and compare $\T$ with it. An important observation is that any object in $\T$ has a $2$-term resolution by objects from $\add X$ (\ref{last}), by which we show that this functor is close enough to being an equivalence; in fact it is full (\ref{full}) and induces an equivalence between stable categories (\ref{st}). Using this functor $F$, we prove in \ref{alg} that there is an isomorphism
\[ S=T_H\T(X,X[-1]) \]
of graded algebras, where the right-hand-side is the tensor algebra of an $(H,H)$-bimodule $\T(X,X[-1])$. In view of \ref{cl}, this description of $S$ as a tensor algebra allows us to write the cluster category $\C(\Pi)$ as an orbit category of $\D^b(\md H)$.

\subsection{The structure theorem}
The first and an important step toward \ref{root} is the observation that any objects in $\T$ has a $2$-term resolution by $X=T\oplus\cdots\oplus T[-(d-2)]$. For subcategories $\U, \V\subset\T$, we denote by $\U\ast\V$ the full subcategory of $\T$ formed by $A\in\T$ such that there exists a triangle $U\to A\to V\to U[1]$ with $U\in\U$ and $V\in\V$. This operation $\ast$ is associative by the octahedral axiom. For objects $U, V\in\T$, we will simply write $U\ast V$ for $\add U\ast\add V$. Also we denote by $\U\vee\V$ the smallest additive subcategory containing $\U$ and $\V$, and similarly $U\vee V$ for $\add U\vee\add V$.
\begin{Prop}\label{last}
	Let $\T$ be a triangulated category and $T\in\T$ a $d$-rigid object. Put
	\begin{equation*}
		\begin{aligned}
			X&=T\oplus\cdots\oplus T[-(d-3)]\oplus T[-(d-2)],\\
			Y&=T\oplus\cdots\oplus T[-(d-3)],
		\end{aligned}
	\end{equation*}
	and suppose that $\End_\T(Y)$ is hereditary. Then we have
	\[ T[-(d-2)]\ast\cdots\ast T\ast T[1]=X\ast X[1]. \]
	In particular, if $T\in\T$ is $d$-cluster tilting, then $\T=X\ast X[1]$.
\end{Prop}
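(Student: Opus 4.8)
\emph{Proof outline.} The plan is to establish the two inclusions of the displayed equality separately --- the reverse one being the substantial part --- and then to deduce the ``in particular'' from the cluster tilting case of the Iyama--Yoshino theory of AR $(d+2)$-angles, which gives $\T=\add T\ast\add T[1]\ast\cdots\ast\add T[d-1]$, equivalently $\T=T[-(d-2)]\ast\cdots\ast T\ast T[1]$ after a shift (as $\T$ is shift-invariant), whenever $\add T$ is $d$-cluster tilting.

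For the inclusion $X\ast X[1]\subseteq T[-(d-2)]\ast\cdots\ast T\ast T[1]$, which uses only $d$-rigidity, I would induct on $d$. Up to rotation an object of $X\ast X[1]$ is $\cone(\xi)$ for some $\xi\colon V\to W$ with $V,W\in\add X$. Decomposing $\xi$ along $\add X=\add T\vee\add(T[-1]\oplus\cdots\oplus T[-(d-2)])$, its component from $\add(T[-1]\oplus\cdots\oplus T[-(d-2)])$ into $\add T$ lies in a sum of spaces $\T(T,T[j])$ with $0<j\leq d-2$, hence vanishes; so $\xi$ is ``triangular'' and $\cone\xi$ fits in a triangle $\cone\xi''\to\cone\xi\to\cone\xi_0\to\cone\xi''[1]$, where $\xi_0$ is the part of $\xi$ on the $T$-summands and $\xi''$ the part on the remaining summands. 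Now $\cone\xi_0\in\add T\ast\add T[1]$ while, by the inductive hypothesis applied to $T[-1]$ in place of $T$, $\cone\xi''\in T[-(d-2)]\ast\cdots\ast T[-1]\ast T$; splicing these and absorbing the repeated $\add T$ via $\add T\ast\add T=\add T$ (again $\T(T,T[1])=0$) yields the inclusion.

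The reverse inclusion $T[-(d-2)]\ast\cdots\ast T\ast T[1]\subseteq X\ast X[1]$ is where the hereditariness of $H'=\End_\T(Y)$ is essential, and is the step I expect to be hardest. Here I would again induct on $d$, peeling off the top layer: an object $A$ of the left-hand side lies in a triangle $A'\to A\to L\to A'[1]$ with $L\in\add T[1]$ and $A'\in T[-(d-2)]\ast\cdots\ast T$, so $A'[1]\in T[-(d-3)]\ast\cdots\ast T\ast T[1]$, to which the inductive hypothesis applies --- the relevant smaller corner of $H'$ is again hereditary, as follows from a short $\Hom$-computation using rigidity: no arrow of the quiver of $H'$ goes from a summand of $T[-i]$ to a summand of $T[-j]$ with $j<i$, so omitting the deepest copy $T[-(d-3)]$ leaves a subquiver closed under predecessors. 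This presents $A'$ as the cone of a morphism between objects of $\add(T[-1]\oplus\cdots\oplus T[-(d-2)])\subseteq\add X$, and an octahedral diagram built from that presentation and the triangle above rewrites $A$ as a filtered object with three successive pieces, coming from the ``$\add X$-sub'' and the ``$\add X[1]$-quotient'' of the resolution of $A'$ and from $L$. The obstacle is to reorganise these into a genuine two-term resolution $V_A\to W_A\to A\to V_A[1]$ with $V_A,W_A\in\add X$: the naive regrouping leaves a connecting morphism between shifted copies of $\add T$ that rigidity alone does not kill, and to annihilate it one must choose the resolution of $A'$ compatibly with the sink map at the deepest indecomposable summand, invoking that sink maps in $\proj H'=\add Y$ are monomorphisms because $H'$ is hereditary --- exactly the mechanism behind Propositions~\ref{induction} and~\ref{tech}. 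With a two-term $\add X$-resolution available for every object, the ``in particular'' follows by combining the equality with the Iyama--Yoshino description of $\T$ recalled above.
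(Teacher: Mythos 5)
Your outline follows the same strategy as the paper: the easy inclusion from $d$-rigidity, induction on $d$ for the hard one by peeling off the $\add T[1]$-layer, applying the inductive hypothesis to the remaining piece (your application to $T$ at $d-1$ after a shift is interchangeable with the paper's application to the $(d-1)$-rigid object $T[-1]$; your observation that the relevant corner of $\End_\T(Y)$ stays hereditary is correct and needed), and then using hereditariness to manufacture a two-term $\add X$-resolution. For the easy inclusion the paper is actually shorter than your induction: it just writes $X\ast X[1]\subset(T[-(d-2)]\ast\cdots\ast T)\ast(T[-(d-3)]\ast\cdots\ast T[1])$ and absorbs the duplicated shifts via $d$-rigidity, but your version works too.

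Where your write-up is vague is precisely the step you flag as hardest, and the mechanism is a bit different from what you gesture at. The paper first splits $B=B'\oplus T_1$ with $T_1\in\add T$ and $\add B'\cap\add T=0$ (this step, which you omit, is needed for the next one to go through), and then constructs, summandwise, a triangle $Y_0[-1]\to B'\to Y_1\to Y_0$ with $Y_0,Y_1\in\add Y$ and with $\T(Y,Y_0[-1])\to\T(Y,B')$ surjective. The hereditariness fact invoked is not that sink maps in $\add Y$ are monomorphisms but that any morphism of projectives over a hereditary ring decomposes as a split monomorphism plus a map $Y_1''\to 0$; the piece $Y_1''$ is then forced to vanish because it would have to be a direct summand of the indecomposable object $B_0\not\in\add Y$ while lying in $\add Y$. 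The resulting surjectivity is exactly what lets one lift $T_0\to B'$ along $Y_0[-1]\to B'$, and two octahedra then reorganize the filtration so that $T_0$ sits next to $Y_0[-1]$ (giving the $\add X$-part) and $T_0[1]$ sits between $Y_0[-1]$ and $Y_1$, where it merges with $Y_1$ into $\add X[1]$ via $T[1]\ast Y=T[1]\vee Y$. Your attribution of the mechanism to Propositions \ref{induction} and \ref{tech} is a small misdirection: the proof of \ref{last} is self-contained and never invokes them, although the hereditariness argument has a similar flavour.
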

\begin{proof}
	The inclusion $\supset$ is easy. Indeed, we have $X\ast X[1]\subset(T[-(d-2)]\ast\cdots\ast T)\ast(T[-(d-3)]\ast\cdots\ast T[1])$, which equals $T[-(d-2)]\ast\cdots\ast T[1]$ by $d$-rigidity of $T$.	We prove the converse inclusion by induction on $d$. The case $d=2$ is clear (where we understand $Y=0$ for $d=2$), so let $d\geq3$.
	
	Let $A\in T[-(d-2)]\ast\cdots\ast T\ast T[1]$. Then there is a triangle
	\[ \xymatrix{ T_0\ar[r]& B\ar[r]& A\ar[r]& T_0[1]} \]
	in $\T$ with $T_0\in\add T$ and $B\in T[-(d-2)]\ast\cdots\ast T$. By induction hypothesis applied to a $(d-1)$-rigid object $T[-1]$, we see that $B\in Y[-1]\ast Y$, and the same is true for any direct summand of $B$ since $T[-(d-2)]\ast\cdots\ast T$ is closed under direct summands (\cite[2.1]{IYo}).
	
	Now write $B=B^\prime\oplus T_1$ with $T_1\in\add T$ and $\add B^\prime\cap\add T=0$. We know that $B^\prime\in Y[-1]\ast Y$. We claim that there exists a triangle
	\begin{equation*}
		\xymatrix{ Y_0[-1]\ar[r]& B^\prime\ar[r]& Y_1\ar[r]& Y_0 }
	\end{equation*}
	with $Y_0,Y_1\in\add Y$ which induces a surjection $\T(Y,Y_0[-1])\twoheadrightarrow\T(Y,B^\prime)$. Since we can discuss summandwise, it is enough to consider each indecomposable direct summand $B_0$ of $B^\prime$. If $B_0\in\add Y$, then we have $B_0\in\add Y[-1]$ since $B_0\not\in\add T$, so we can take $Y_0[-1]=B_0$ and $Y_1=0$, which gives a desired triangle. If $B_0\not\in\add Y$, we show that any triangle as above has the desired surjectivity. Indeed, since $\End_\T(Y)$ is hereditary, the morphism $Y_1\to Y_0$, which becomes under the equivalence $\T(Y,-)\colon\add Y\to\proj\End_\T(Y)$ the morphism between projective $\End_\T(Y)$-modules, is isomorphic to the direct sum of $Y_1^\prime\to Y_0$ inducing an injection $\T(Y,Y_1^\prime)\hookrightarrow\T(Y,Y_0)$, and $Y_1^{\prime\prime}\to0$. This forces $Y_1^{\prime\prime}\in\add B_0$, so $Y_1^{\prime\prime}$ has to be $0$ since $B_0$ is an indecomposable $\not\in\add Y$. It follows that $\T(Y,Y_1)\to\T(Y,Y_0)$ is injective, hence $\T(Y,Y_0[-1])\to\T(Y,B_0)$ is surjective, which finishes the proof of the claim.
	
	By the triangle $T_0\to B\to A\to T_0[1]$ with $B=B^\prime\oplus T_1$, we can form an octahedral on the left below.
	\[ 
	\xymatrix@!R=7mm@!C=7mm{
		&A^\prime[-1]\ar[d]\ar@{=}[r]&A^\prime[-1]\ar[d]&\\
		\bullet\ar[r]\ar@{=}[d]& T_0\ar[r]\ar[d]&T_1\ar[r]\ar[d]&\bullet\ar@{=}[d] \\
		\bullet\ar[r]& B^\prime\ar[r]\ar[d]&A\ar[r]\ar[d]&\bullet\\
		&A^\prime\ar@{=}[r]&A^\prime& }\qquad\qquad
	\xymatrix@!R=7mm@!C=7mm{
		&T_0\ar[d]\ar@{=}[r]&T_0\ar[d]&\\
		Y_1[-1]\ar[r]\ar@{=}[d]&Y_0[-1]\ar[r]\ar[d]&B^\prime\ar[r]\ar[d]&Y_1\ar@{=}[d]\\
		Y_1[-1]\ar[r]&C\ar[r]\ar[d]&A^\prime\ar[r]\ar[d]&Y_1\\
		&T_0[1]\ar@{=}[r]&T_0[1]& } \]	
	On the other hand, taking a triangle for $B^\prime$ in the claim above, the morphism $T_0\to B^\prime$ can be lifted to $T_0\to Y_0[-1]$ since $T_0\in\add Y$. This gives another octahedral as in the right diagram above.
	
	We can now reach the conclusion using these diagrams. Looking at the second octahedral, we have $C\in Y[-1]\ast T[1]$ by the left vertical triangle, so using the lower horizontal one, we see $A^\prime\in Y[-1]\ast T[1]\ast Y$, which equals $Y[-1]\ast X[1]$ by $T[1]\ast Y=T[1]\vee Y=\add X[1]$. Now we move to the left octahedral. By the right vertical triangle, we see $A\in T\ast Y[-1]\ast X[1]$, which is $X\ast X[1]$ by $T\ast Y[-1]=T\vee Y[-1]=\add X$.
\end{proof}

Now we place ourselves in the setup as in \ref{root}: $\T$ is a $d$-CY triangulated category and $T\in\T$ is $d$-cluster tilting. Put $X=T\oplus\cdots\oplus T[-(d-2)]$ and $H=\End_\T(X)$, which we assume to be hereditary. For the moment we do not need that $H$ is $1$-representation infinite. Let us note a complementary observation on the $2$-term resolution, although it will not be used later in this paper.
\begin{Rem}
	Let $A\in\T$ and let $X_0\to A$ be any right $(\add X)$-approximation. Then its mapping cocone is in $\add X$.
\end{Rem}
\begin{proof}
	We may assume that the approximation is minimal and $A$ is indecomposable. If $A\in\add T[1]$, then the minimal right $(\add X)$-approximation is $0\to A$, thus its mapping cocone is $A[-1]\in\add X$. If $A\in\add X$, then the minimal approximation is the identity, thus its mapping cocone is $0$. Finally suppose that $A\not\in\add(X\oplus T[1])$. Then by \ref{ses}(\ref{F}) below, in any triangle $X_1\to X_0\xrightarrow{f} A\to X_1[1]$ in \ref{last}, the map $f$ is a right $(\add X)$-approximation. In particular, there exists a right $(\add X)$-approximation of $A$ whose mapping cocone is in $\add X$. It follows that the minimal right $(\add X)$-approximation has the same property.
\end{proof}

Now we consider the functor
\[ \xymatrix{ F=\T(X,-)\colon\T\ar[r]& \md H}. \]
Let us give some easy observations which we will often use.
\begin{Lem}\label{easy}
	\begin{enumerate}
		\item\label{proj} $FX$ and $FX[1]$ are projective $H$-modules.
		\item\label{inj} $FX[d]$ and $FX[d-1]$ are injective $H$-modules.
		\item\label{van} For $A\in\T$, we have $FA=0$ if and only if $A\in\add T[1]$. 
		\item\label{nak} We have a commutative diagram of equivalences
		\[ \xymatrix@!R=3mm{
			\add X\ar[r]^-F\ar[d]_-{[d]}&\proj H\ar[d]^-{-\otimes_HDH}\\
			\add X[d]\ar[r]^-F&\inj H. } \]
	\end{enumerate}
\end{Lem}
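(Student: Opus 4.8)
The plan is to unwind the functor $F=\T(X,-)$ on the relevant objects by combining $d$-rigidity (indeed $d$-cluster tilting) of $T$ with the Calabi--Yau duality $\T(A,B)\simeq D\T(B,A[d])$ and the Yoneda equivalence $F\colon\add X\xrightarrow{\ \simeq\ }\proj H$. The guiding observation is that the shifts of $X$ by $1$ and by $d,d-1$ differ from honest summands of $X$ only by copies of $T[1]$, which $F$ annihilates.

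For (\ref{proj}): $FX=\T(X,X)=H$ is free of rank one, hence projective. Writing $X[1]=T[1]\oplus X'$ with $X'=T\oplus T[-1]\oplus\cdots\oplus T[-(d-3)]$ a direct summand of $X$, we have $\T(X,T[1])=\bigoplus_{i=0}^{d-2}\T(T,T[i+1])=0$ since each exponent $i+1$ lies strictly between $0$ and $d$ and $T$ is $d$-rigid; hence $FX[1]=\T(X,X')$ is a direct summand of $FX=H$, so it is projective. For (\ref{van}): if $A=A_0[1]$ with $A_0\in\add T$, the same rigidity computation gives $FA=\bigoplus_{i=0}^{d-2}\T(T,A_0[i+1])=0$. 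Conversely $FA=0$ says $\T(T,A[i])=0$ for $0\le i\le d-2$, which is the same as $\T(T,(A[-1])[j])=0$ for $0<j<d$; since $T$ is $d$-cluster tilting this forces $A[-1]\in\add T$, i.e. $A\in\add T[1]$.

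For (\ref{nak}): the top row $F\colon\add X\to\proj H$ is the Yoneda equivalence and $[d]$, $-\otimes_H DH$ are equivalences, so it suffices to produce a natural isomorphism $F(X_0[d])\cong FX_0\otimes_H DH$ for $X_0\in\add X$. This is assembled from three standard ingredients: Calabi--Yau duality plus a shift give $\T(X,X_0[d])\cong D\,\T(X_0[d],X[d])\cong D\,\T(X_0,X)$; the Yoneda equivalence identifies $\T(X_0,X)\cong\Hom_H(FX_0,FX)=\Hom_H(FX_0,H)$ compatibly with the $H$-module structures; and the standard formula $P\otimes_H DH\cong D\Hom_H(P,H)$ for $P\in\proj H$ finishes it. Commutativity of the square then forces the bottom row $F\colon\add X[d]\to\inj H$ to be an equivalence as well. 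Part (\ref{inj}) is then immediate: $FX[d]=F(X[d])$ with $X[d]\in\add X[d]$, so $FX[d]\in\inj H$; and writing $X[d-1]=X''[d]\oplus T[1]$ with $X''=T[-1]\oplus\cdots\oplus T[-(d-2)]$ a summand of $X$ (whence $X''[d]\in\add X[d]$) and using $FT[1]=0$ from (\ref{van}), we get $FX[d-1]=F(X''[d])\in\inj H$.

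The only genuinely delicate point is the bookkeeping of the left and right $H$-module structures in (\ref{nak}): one must check that the right $H$-action on $\T(X,X_0[d])$ by precomposition corresponds, under Calabi--Yau duality, to the $k$-dual of the left $H\cong\End_\T(X[d])$-action by postcomposition, that this matches the natural left $H$-module structure on $\Hom_H(FX_0,H)$, and that the resulting isomorphism is natural in $X_0$ so that the square really commutes. I expect this to be routine once the module conventions are pinned down, so there is no serious obstacle here.
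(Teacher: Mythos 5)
Your proof is correct and follows essentially the same approach as the paper: kill the $T[1]$-summand by $d$-rigidity for (1) and (3), and invoke Serre/Calabi--Yau duality for (2) and (4). The only cosmetic difference is that you establish the naturality square (4) first and then read off (2) as a corollary, whereas the paper proves (2) directly by the dual of the argument for (1) and treats (4) as clear; both orderings are fine.
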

\begin{proof}
	We have $FX=H\in\proj H$. Also all the direct summands of $X[1]$ except $T[1]$ are in $\add X$, and $FT[1]=0$ since $T$ is $d$-rigid, so $FX[1]\in\proj H$. This proves (\ref{proj}). Using Serre duality we similarly have (\ref{inj}). We see (\ref{van}) since $T\in\T$ is $d$-cluster tilting. Finally (\ref{nak}) is clear.
\end{proof}

We next discuss more essential properties of the functor $F$.
\begin{Lem}\label{full}
	The functor $F=\T(X,-)\colon \T\to\md H$ is full.
\end{Lem}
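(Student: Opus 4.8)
The plan is to leverage \ref{last}: every object of $\T$ sits in a triangle $X_1\xrightarrow{g}X_0\xrightarrow{f}A\xrightarrow{w}X_1[1]$ with $X_0,X_1\in\add X$. The point is that, although $F$ need not be faithful, it is ``half an equivalence'': since $\End_\T(X)=H$, the restriction $F|_{\add X}\colon\add X\to\proj H$ is an equivalence, and more generally, for every $P\in\add X$ and every $M\in\T$ the map $F\colon\T(P,M)\to\Hom_H(FP,FM)$ is bijective (reduce to $P=X$, where $\Hom_H(H,FM)\cong FM$ by evaluation at $1_X$, and pass to direct summands). Granting this, fullness of $F$ follows by a diagram chase on a $2$-term $(\add X)$-resolution.

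Concretely, let $A,B\in\T$ and $\phi\colon FA\to FB$. Fix a triangle as above and apply $F$ to obtain an exact sequence $FX_1\xrightarrow{Fg}FX_0\xrightarrow{Ff}FA\xrightarrow{Fw}FX_1[1]$. First I would treat the ``easy half'': by the bijection above applied to $P=X_0$, the composite $\phi\circ Ff\colon FX_0\to FB$ equals $F(m)$ for some $m\colon X_0\to B$; since $F(mg)=\phi\circ F(fg)=0$ and $F$ is injective on $\T(X_1,B)$, we get $mg=0$, hence $m=\bar\phi\circ f$ for some $\bar\phi\colon A\to B$, using exactness of $\T(A,B)\to\T(X_0,B)\to\T(X_1,B)$. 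Thus $F\bar\phi$ and $\phi$ agree on $\image(Ff)$, and it remains to correct $\bar\phi$ by a morphism $FA\to FB$ killing $\image(Ff)$, i.e.\ factoring through $FA/\image(Ff)=\image(Fw)\subseteq FX_1[1]$.

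Here is where the real work lies. Since $X_1\in\add X$ we have a decomposition $X_1[1]\cong T[1]^{\oplus a}\oplus X_1''$ in $\T$ with $X_1''\in\add X$, and $FT[1]=0$ as $T$ is $d$-rigid (\ref{easy}); hence $FX_1[1]\cong FX_1''$ is a \emph{projective} $H$-module. As $H$ is hereditary, the submodule $\image(Fw)$ is then projective, so a direct summand of $FX_1[1]$; write $\image(Fw)\cong FX_c$ with $X_c\in\add X$. Transporting the retraction $FX_1[1]\twoheadrightarrow FX_c$ across the isomorphism $FX_1[1]\cong FX_1''$ (the image under $F$ of the projection $X_1[1]\to X_1''$ from the above direct sum decomposition), it becomes $F(q)$ for some $q\colon X_1''\to X_c$, so the corestriction $\bar w\colon FA\to FX_c$ of $Fw$ is realized as $F(\pi)$ for the genuine morphism $\pi\colon A\xrightarrow{w}X_1[1]\to X_1''\xrightarrow{q}X_c$. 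Finally, $F\bar\phi-\phi$ kills $\Ker\bar w=\image(Ff)$, so $F\bar\phi-\phi=\delta\circ\bar w$ for some $\delta\colon FX_c\to FB$; by the bijection above $\delta=F(\bar\delta)$ with $\bar\delta\colon X_c\to B$, whence $\phi=F(\bar\phi-\bar\delta\pi)$ and $F$ is full.

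The main obstacle is precisely this correction step: the resolution supplied by \ref{last} need not have $\T(X,f)$ surjective (equivalently $f$ need not be an $(\add X)$-approximation), so one cannot simply chase a projective presentation of $FA$ in $\md H$. The way around it is the observation above that $FX_1[1]$ is projective — this is exactly where $d$-rigidity of $T$, which kills the $T[1]$-summand under $F$, enters — together with the fact that the relevant cokernel, being a submodule of a projective over the hereditary algebra $H$, splits off as a projective summand visibly of the form $FX_c$ with $X_c\in\add X$; once the ``half-fullness'' of $F$ on $\add X$ is in hand, everything else is formal.
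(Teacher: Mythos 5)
Your strategy is sound — half-fullness of $F$ on $\add X$ plus the $2$-term $\add X$-resolution of \ref{last} are exactly the right ingredients — but there is one false step in the construction of the correction term. You deduce that $\Im(Fw)\subseteq FX_1[1]$ is a direct summand because it is a projective submodule of a projective over the hereditary algebra $H$. That implication is not valid: over the path algebra of the quiver $1\to 2$, the projective submodule $P_2\subset P_1$ is not a summand, since the quotient is the non-projective simple. Your retraction $r\colon FX_1[1]\to FX_c$, and hence the morphism $\pi$ realizing $\bar w$, depends on this splitting, so the gap is genuine.

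The fix is precisely the extra term the paper's proof keeps around. Extending the exact sequence one more step to $FA\xrightarrow{Fw}FX_1[1]\xrightarrow{v}FX_0[1]$, the cokernel $FX_1[1]/\Im(Fw)\cong\Im(v)$ is a submodule of the projective $FX_0[1]$ (projective again by \ref{easy}(\ref{proj})), hence projective by hereditariness, so the inclusion $\Im(Fw)\hookrightarrow FX_1[1]$ does split. With this one change your argument goes through; it uses the same facts as the paper's, packaged as an explicit preimage construction rather than a diagram chase. The paper applies $\Hom_H(-,FB)$ to the five-term sequence $FX_1\to FX_0\to FA\to FX_1[1]\to FX_0[1]$, uses that $v$ is a split epimorphism onto its image to get a complex exact at $\Hom_H(FA,FB)$, and then compares with the exact row from $\T(-,B)$ via a five-lemma-style chase, never naming the correction term. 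Your version is longer but makes it more visible where $d$-rigidity (killing $FT[1]$) and hereditariness of $H$ each enter.
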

\begin{proof}
	We have to show that for each $A,B\in\T$, the functor $F$ induces surjections $F_{A,B}\colon\T(A,B)\to{\Hom}_H(FA,FB)$.
	First $F_{A,B}$ is clearly bijective if $A\in\add X$. Note also that it is surjective if $A\in\add X[1]$. Indeed, all the direct summands of $X[1]$ except $T[1]$ lies in $\add X$, so we may assume $A=T[1]$. But we have $FT[1]=0$ by \ref{easy}(\ref{van}), hence $F_{T[1],B}$ is surjective.
	
	Now we prove $F_{A,B}$ is surjective for all $A\in \T$. By \ref{last}, we have a triangle
	\[ \xymatrix{X_1\ar[r]&X_0\ar[r]&A\ar[r]&X_1[1] }\]
	with $X_0,X_1\in\add X$. Applying $\T(-,B)$ yields the exact sequence in the first row of the diagram below. On the other hand, applying $F=\T(X,-)$ gives an exact sequence $FX_1\to FX_0\to FA\xrightarrow{u} FX_1[1]\xrightarrow{v} FX_0[1]$ in $\md H$. Now since $FX_0[1]$ is projective by \ref{easy}(\ref{proj}) the morphism $v$ is a split epimorphism to its image, hence applying $\Hom_H(-,FB)$ yields a complex in the second row, which is acyclic at the middle term.
	\[ \xymatrix{
		\T(X_0[1],B)\ar[r]\ar@{->>}[d]&\T(X_1[1],B)\ar[r]\ar@{->>}[d]& \T(A,B)\ar[r]\ar[d]& \T(X_0,B)\ar[r]\ar[d]^\rsimeq&\T(X_1,B)\ar[d]^\rsimeq \\ {}_H(FX_0[1],FB)\ar[r]&{}_H(FX_1[1],FB)\ar[r]&{}_H(FA,FB)\ar[r]&{}_H(FX_0,FB)\ar[r]&{}_H(FX_1,FB) } \]
	Now the right two vertical maps are isomorphisms and the left two are surjective by the starting remark. It easily follows that the middle map is surjective.
\end{proof}

Moreover this functor gives equivalences when passing to ideal quotients.
\begin{Prop}\label{st}
	\begin{enumerate}
		\item The functor $F=\T(X,-)\colon\T\to\md H$ induces stable equivalences
		\[ \xymatrix{\underline{F}\colon\T/[X\oplus T[1]]\ar[r]^-\simeq& \smd H } \text{ and } \xymatrix{\overline{F}\colon\T/[T[1]\oplus X[d]]\ar[r]^-\simeq& \injsmd H.} \]
		\item We have an isomorphism of functors $F\circ[d-1]\simeq\tau\circ F\colon \T\to\md H$. Consequently there exists a commutative diagram of equivalences
		\[ \xymatrix{
			\T/[X\oplus T[1]]\ar[d]_-{[d-1]}\ar[r]^-{\underline{F}}&\smd H\ar[d]^-\tau\\
			\T/[T[1]\oplus X[d]]\ar[r]^-{\overline{F}}&\injsmd H. } \]
	\end{enumerate}
\end{Prop}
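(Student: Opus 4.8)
The plan is to obtain (1) from the $2$-term resolution of \ref{last} together with the fullness of $F$ proved in \ref{full}, and then to obtain (2) from (1) and the Nakayama identification of \ref{easy}(\ref{nak}). First, $F=\T(X,-)$ carries morphisms that factor through $\add X$ to morphisms that factor through $\proj H$ (by \ref{easy}(\ref{proj})) and annihilates $\add T[1]$ (by \ref{easy}(\ref{van})), so it descends to $\underline F\colon\T/[X\oplus T[1]]\to\smd H$; dually, \ref{easy}(\ref{inj}) lets it descend to $\overline F\colon\T/[T[1]\oplus X[d]]\to\injsmd H$.

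I would then verify that $\underline F$ is an equivalence in the usual three steps. \emph{Fullness} is immediate from \ref{full}. For \emph{faithfulness}: if $Ff$ factors through a projective, write that projective as $FZ$ with $Z\in\add X$ using the equivalence $F\colon\add X\xrightarrow{\sim}\proj H$, and lift both legs to $\T$ by fullness; this reduces the claim to showing that $F\varphi=0$ forces $\varphi$ to factor through $\add(X\oplus T[1])$. For that, apply $F$ to a resolution $X_1\to X_0\to A\to X_1[1]$ from \ref{last} of the source of $\varphi$: the composite $X_0\to A\xrightarrow{\varphi}B$ is killed by $F$, hence is $0$ since $F$ is bijective on $\T(X_0,-)$, so $\varphi$ factors through $X_1[1]\in\add X[1]\subseteq\add(X\oplus T[1])$. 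For \emph{density}: given $M\in\md H$, lift a projective presentation $P_1\to P_0$ of $M$ to a map $X_1\to X_0$ in $\add X$, complete to a triangle $X_1\to X_0\to A\to X_1[1]$, and apply $F$; the long exact sequence gives $0\to M\to FA\to N\to 0$ with $N$ a submodule of the projective $FX_1[1]$ (\ref{easy}(\ref{proj})), so $N$ is projective because $H$ is hereditary, the sequence splits, and $FA\simeq M$ in $\smd H$. The assertion for $\overline F$ is proved by the order-reversed argument: $F$ restricts to an equivalence $\add X[d]\xrightarrow{\sim}\inj H$ by \ref{easy}(\ref{nak}), every object of $\T$ has a $2$-term coresolution by $\add X[d]$ (shift by $[d-1]$ the $\add X[1]$-coresolution coming from \ref{last}), and over the hereditary algebra $H$ quotients of injectives are injective.

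For (2), note first that $X\oplus T[1]=\bigoplus_{j=-(d-2)}^{1}T[j]$, so the autoequivalence $[d-1]$ of $\T$ sends $\add(X\oplus T[1])$ onto $\add\bigl(\bigoplus_{j=1}^{d}T[j]\bigr)=\add(T[1]\oplus X[d])$ and hence descends to an equivalence $\T/[X\oplus T[1]]\to\T/[T[1]\oplus X[d]]$, the left vertical of the square. It then remains to produce a natural isomorphism $F(A[d-1])\simeq\tau(FA)$ in $\injsmd H$. The plan: for $A$ with no summand in $\add(X\oplus T[1])$, take the \emph{minimal} resolution $X_1\xrightarrow{f}X_0\to A\to X_1[1]$ of \ref{last}, so that $FX_1\xrightarrow{Ff}FX_0\to FA\to 0$ is the minimal projective presentation of $FA$; then shift the triangle by $[d-1]$ and apply $F$. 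By \ref{easy}(\ref{nak}) the group $\T(X,X_1[d])$ is identified, naturally and compatibly with $f$, with the Nakayama dual $\nu(FX_1)=FX_1\otimes_H DH$, so that $F(f[d])$ becomes $\nu(Ff)$; by \ref{easy}(\ref{inj}) the groups $\T(X,X_i[d-1])$ are injective. Reading the long exact sequence off the shifted triangle, one gets $\ker\nu(Ff)=\tau(FA)$ and that $F(A[d-1])$ is an extension of $\tau(FA)$ by $\Coker F(f[d-1])$; this cokernel is a quotient of the injective module $\T(X,X_0[d-1])$, hence injective since $H$ is hereditary, hence zero in $\injsmd H$. This produces the isomorphism $F(A[d-1])\simeq\tau(FA)$ in $\injsmd H$, whence the square.

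I expect the main obstacle to be the bookkeeping in (2): one must check that the identification $\T(X,X_1[d])\simeq\nu(FX_1)$ of \ref{easy}(\ref{nak}) is exactly the one that makes $F(f[d])$ correspond to $\nu(Ff)$ in the shifted long exact sequence, keep the auxiliary terms $\T(X,X_i[d-1])$ under control (they are injective by \ref{easy}(\ref{inj}), hence harmless in $\injsmd H$), and assemble the pointwise isomorphisms into a \emph{natural} transformation, which requires lifting an arbitrary morphism of $\T$ to a morphism of chosen resolutions and chasing the induced square. One should also keep in mind that $F\circ[d-1]$ and $\tau\circ F$ become isomorphic only after passing to $\injsmd H$ --- for instance $F(X[d-1])$ is a nonzero injective by \ref{easy}(\ref{inj}), while $\tau(FX)=\tau H=0$ --- so the ``isomorphism of functors $\T\to\md H$'' is to be read in the costable category. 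By contrast part (1) should go through routinely once \ref{last} and \ref{full} are available, the only non-formal ingredient being the hereditariness of $H$.
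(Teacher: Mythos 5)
Your proof of (1) is essentially the paper's: fullness from \ref{full}, faithfulness by reducing (via the two-term resolution of \ref{last}) to the observation that a morphism killed by $F$ must factor through $X_1[1]$, and density by lifting a projective presentation, completing to a triangle, and using heredity of $H$ to split off the projective residue. The paper's faithfulness argument is phrased via a right $(\add X)$-approximation of the target $B$ rather than your direct lift of the factorization, but both hinge on the same use of the triangle $X_1\to X_0\to A\to X_1[1]$, so the difference is cosmetic.

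For (2) your route genuinely diverges. The paper proves more: using Lemma \ref{ses}(2), it shows that for $A$ with no summand in $\add(X\oplus T[1])$ the sequence $0\to\T(X,A[d-1])\to\T(X,X_1[d])\to\T(X,X_0[d])\to 0$ is exact, so $\T(X,A[d-1])$ \emph{equals} $\ker\nu(Ff)=\tau(FA)$ in $\md H$ on the nose. In particular the term you call $\Coker F(f[d-1])$ is not merely injective but actually zero (the injectivity of the connecting map $F(A[d-1])\to F(X_1[d])$ forces the preceding map $F(X_0[d-1])\to F(A[d-1])$ to vanish, hence the cokernel vanishes). Your argument only uses that this term is injective, so it delivers the isomorphism only after passing to $\injsmd H$; this is weaker than the paper's pointwise identification but still suffices for the commutative square, which is the operational content. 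One nice byproduct of your approach is the explicit counterexample $F(X[d-1])\neq 0 = \tau(FX)$: you are right that the displayed statement ``isomorphism of functors $\T\to\md H$'' is slightly too strong as written, and is only an isomorphism on the full subcategory of objects without summands in $\add(X\oplus T[1])$ (equivalently, after descending to the stable categories). The paper's proof silently restricts to that subcategory, so there is no gap in the argument, but your reading is the more careful one and your proof makes the restriction explicit.

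One small caution: when you call $FX_1\to FX_0\to FA\to 0$ the ``minimal projective presentation,'' you are using exactness at $FX_0$, which is not automatic from the triangle; it is precisely Lemma \ref{ses}(1) of the paper (right-minimality of $f$ gives injectivity of $Ff$, and $v=0$ gives surjectivity of $FX_0\to FA$). Be sure to invoke or reprove that, as otherwise your identification $\ker\nu(Ff)=\tau(FA)$ is unfounded.
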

We first prove (1).
\begin{proof}[Proof of \ref{st}(1)]
	Since $FX=H$, $FT[1]=0$, and $FX[d]=DH$ (see \ref{easy}), the functor $F$ induces the functors $\underline{F}$ and $\overline{F}$ on stable categories.
	
	We only prove that $\underline{F}$ is an equivalence. The statement for $\overline{F}$ is proved dually. We immediately see that this is full by \ref{full}. We show that this is faithful. Let $f\colon A\to B$ be a morphism in $\T$ such that $Ff$ in $\md H$ factors through a projective $H$-module. Taking a right $(\add X)$-approximation $X^\prime\to B$, the morphism $Ff$ factors through a surjection $FX^\prime\twoheadrightarrow FB$ as in the left diagram below. 
	\[ 
	\xymatrix{
		&& FA\ar[d]^-{Ff}\ar[dl]&\\
		&FX^\prime\ar@{->>}[r]& FB, }\qquad
	\xymatrix{
		X_1\ar[r]&X_0\ar[r]& A\ar[r]\ar[d]^-f\ar[dl]& X_1[1]\ar@{-->}[dl]\\
		&X^\prime\ar[r]& B&}
	\]
	Since $F$ is full, the lift $FA\to FX^\prime$ comes from a morphism $A\to X^\prime$ in $\T$. Now take a triangle $X_1\to X_0\to A\to X_1[1]$ in \ref{last} and consider the right diagram above. Since the triangle formed by $A$, $X^\prime$, and $B$ becomes commutative under $F=\T(X,-)$, the maps $f\colon A\to B$ and $A\to X^\prime\to B$ coincide when precomposing $X_0\to A$. Therefore the difference of these two maps factors through $X_1[1]$, and $f$, being the sum of $A\to X^\prime\to B$ and $A\to X_1[1]\to B$, factors through an object in $\add(X\oplus X[1])=\add(X\oplus T[1])$. This proves faithfulness.
	
	Finally we show that $\underline{F}$ is dense. Let $M\in\md H$ and consider the projective resolution $0\to P_1\to P_0\to M\to0$. Let $X_1\to X_0$ be a morphism in $\add X\subset \T$ corresponding to $P_1\to P_0$ under the equivalence $\proj H\simeq\add X$, and complete it to a triangle $X_1\to X_0\to A\to FX_1[1]$. Applying $F$ we have an exact sequence below, with $M$ (resp. $P$) the cokernel of $FX_1\to FX_0$ (resp. $FX_0\to FA$).
	\[ \xymatrix@R=1mm@!C=3mm{
		FX_1\ar@{=}[dd]\ar[rr]&&FX_0\ar@{=}[dd]\ar[rr]\ar[dr]&&FA\ar[rr]\ar[dr]&&FX_1[1]\\
		&&&M\ar[ur]&&P\ar[ur]&\\
		P_1\ar[rr]&&P_0\ar[ur]&&&&} \]
	Note that $P$ is a submodule a projective $H$-module $FX_1[1]$ (\ref{easy}(\ref{proj})), hence is projective. Then the short exact sequence $0\to M\to FA\to P\to 0$ splits, thus $FA\simeq M$ in $\smd H$.
\end{proof}

This yields a certain desirable behavior of objects under $F$.
\begin{Lem}\label{inddet}
	Let $A\in\T$ be an indecomposable object.
	\begin{enumerate}
		\item\label{ind} If $A\not\simeq T[1]$ then $FA$ is indecomposable.
		\item\label{pj} $FA\in\md H$ is projective if and only if $A\in\add(X\oplus T[1])$.
		\item $FA\in\md H$ is injective if and only if $A\in\add(T[1]\oplus X[d])$.
	\end{enumerate}
\end{Lem}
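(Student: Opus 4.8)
The goal is to prove Lemma \ref{inddet}, splitting into the three assertions. The plan is to leverage the stable equivalences from \ref{st}(1), together with the elementary facts collected in \ref{easy}, so that the only real work is bookkeeping about which indecomposables are killed or sent to projective/injective modules.

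\medskip
\emph{Proof of \ref{inddet}(\ref{ind}).} Suppose $A\not\simeq T[1]$ is indecomposable. If $A\in\add X$ then $FA$ is an indecomposable projective since $F\colon\add X\xrightarrow{\simeq}\proj H$ is an equivalence (\ref{easy}(\ref{nak})), and we are done. Otherwise $A$ is not in $\add(X\oplus T[1])$, hence its image is nonzero in the stable category $\T/[X\oplus T[1]]$; moreover $A$ remains indecomposable there, as the quotient functor does not split indecomposables. Since $\underline{F}\colon\T/[X\oplus T[1]]\xrightarrow{\simeq}\smd H$ is an equivalence, $\underline{F}A$ is indecomposable in $\smd H$, i.e.\ $FA$ has no nonzero projective summands and its non-projective part is indecomposable. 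Thus $FA$ is indecomposable (a nonzero indecomposable-in-$\smd H$ module, being a module over a hereditary algebra, is either indecomposable projective-free or could a priori split off a projective — here I would note that since $A\notin\add X$, by \ref{st}(1) already $\underline{F}A\ne0$, and one checks $FA$ itself is indecomposable because any projective summand of $FA$ would, by fullness \ref{full} and the idempotent-lifting along $F$ restricted to $\add X$, lift to a summand of $A$ in $\add X$, contradicting indecomposability of $A$ with $A\notin\add X$). Hence $FA$ is indecomposable.

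\medskip
\emph{Proof of \ref{inddet}(\ref{pj}).} The ``if'' direction is \ref{easy}(\ref{proj}): $FX$ and $FX[1]$ are projective, so $FA$ is projective for $A\in\add(X\oplus T[1])$. Conversely suppose $FA$ is projective and $A$ is indecomposable. If $A\simeq T[1]$ there is nothing to prove, so assume $A\not\simeq T[1]$; then by part (\ref{ind}) $FA$ is indecomposable, hence an indecomposable projective $H$-module. Under the equivalence $\proj H\simeq\add X$ it corresponds to an indecomposable $X_0\in\add X$ with $FX_0\simeq FA$. Now the image of $A$ in $\smd H$ is $\underline{F}A=0$, so by the equivalence $\underline{F}$ the image of $A$ in $\T/[X\oplus T[1]]$ is zero, i.e.\ the identity $1_A$ factors through $\add(X\oplus T[1])$; since $A$ is indecomposable this forces $A$ to be a summand of an object of $\add(X\oplus T[1])$, hence $A\in\add(X\oplus T[1])$. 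This gives the ``only if'' direction (and covers the decomposable case by applying it to each summand).

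\medskip
\emph{Proof of \ref{inddet}(3).} This is entirely dual to (\ref{pj}): the ``if'' direction is \ref{easy}(\ref{inj}), giving that $FX[d]$ and $FX[d-1]$ are injective, so $FA$ is injective for $A\in\add(T[1]\oplus X[d])$; note $X[d]=T[d]\oplus\cdots\oplus T[2]$ while $X[d-1]=T[d-1]\oplus\cdots\oplus T[1]$, so all summands of $T[1]\oplus X[d]$ lie in $\add(X[d]\oplus X[d-1])$ modulo the vanishing $FT[1]=0$, hence land in $\inj H$. For the converse, one runs the argument of (\ref{pj}) using instead the stable equivalence $\overline{F}\colon\T/[T[1]\oplus X[d]]\xrightarrow{\simeq}\injsmd H$ from \ref{st}(1): if $FA$ is injective and $A\not\simeq T[1]$ is indecomposable then $FA$ is indecomposable injective by (\ref{ind}), so $\overline{F}A=0$ in $\injsmd H$, whence the image of $A$ in $\T/[T[1]\oplus X[d]]$ is zero and $A\in\add(T[1]\oplus X[d])$.

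\medskip
The step I expect to need the most care is isolating, in part (\ref{ind}), that $FA$ is genuinely indecomposable rather than merely ``indecomposable up to a projective (resp.\ injective) summand'' — i.e.\ ruling out that $FA$ decomposes as (indecomposable non-projective) $\oplus$ (projective). The clean way to handle this is to observe that, by fullness of $F$ together with the fact that $F$ is an equivalence on $\add X$, any idempotent of $\End_H(FA)$ cutting out a projective summand lifts along $F$ to an idempotent of $\End_\T(A)$ whose image lies in $\add X$; indecomposability of $A$ together with $A\notin\add X$ then forces that idempotent to be $0$. Once this is pinned down, parts (\ref{pj}) and (3) are formal consequences of the stable equivalences in \ref{st}(1).
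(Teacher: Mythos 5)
Your proof is correct and follows the same route as the paper, whose proof is simply "Immediate by \ref{st}(1)." You've also rightly flagged the one non-trivial point that this terse justification glosses over: deducing that $FA$ is indecomposable in $\md H$ (not merely in $\smd H$) requires ruling out projective direct summands of $FA$, which you do via fullness of $F$ (\ref{full}) — equivalently, $\End_\T(A)\twoheadrightarrow\End_H(FA)$ is a surjection from a local ring, so $\End_H(FA)$ is local whenever $FA\neq0$.
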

\begin{proof}
	Immediate by \ref{st}(1).
\end{proof}

We next discuss a relationship between triangles in $\T$ and short exact sequences in $\md H$.
\begin{Lem}\label{ses}
	Let $A\in\T$ be an indecomposable object and let
	\[ \xymatrix{X_1\ar[r]^-f& X_0\ar[r]& A\ar[r]&X_1[1] } \]
	be a triangle with $X_0, X_1\in\add X$. Suppose that $A\not\in\add(X\oplus T[1])$.
	\begin{enumerate}
		\item\label{F} The functor $F=\T(X,-)$ induces a short exact sequence
		\[ \xymatrix{0\ar[r]& FX_1\ar[r]& FX_0\ar[r]& FA\ar[r]& 0 }. \]
		\item\label{G} The functor $G=\T(X,-[d])$ induces a short exact sequence
		\[ \xymatrix{0\ar[r]& GA[-1]\ar[r]& GX_1\ar[r]& GX_0\ar[r]& 0 }. \]
	\end{enumerate}
\end{Lem}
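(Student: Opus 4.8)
The plan is to apply $F=\T(X,-)$ (to prove (\ref{F})) and $G=\T(X,-[d])$ (to prove (\ref{G})) to the triangle $X_1\xrightarrow{f}X_0\xrightarrow{g}A\xrightarrow{h}X_1[1]$ \emph{and also} to its shift by $[-1]$, and then to read off the short exact sequences from the resulting long exact sequences. The three ingredients are: (i) by \ref{easy}, $F$ sends $X_1\in\add X$ and $X_1[1]\in\add X[1]\subset\add(X\oplus T[1])$ to projective $H$-modules, while $G$ sends $X_0$ and $X_0[-1]$ to injective ones; (ii) $H$ is hereditary, so a submodule of a projective $H$-module is projective and a quotient of an injective one is injective; (iii) by \ref{inddet}, $FA$ is indecomposable and not projective, and dually $GA=F(A[d])$ is indecomposable and not injective (or zero), where we use $A\notin\add(X\oplus T[1])$.

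First I would establish surjectivity of $Fg\colon FX_0\to FA$. From the long exact sequence of $F$, the module $\Im(Fh)\cong FA/\Im(Fg)$ is a submodule of the projective module $F(X_1[1])$ by \ref{easy}(\ref{proj}), hence is projective, so the short exact sequence $0\to\Im(Fg)\to FA\to\Im(Fh)\to0$ splits; since $FA$ is indecomposable and nonzero, one of the two summands vanishes, and $\Im(Fg)=0$ would force $FA$ to be projective, contradicting (iii). Running the identical argument for the triangle shifted by $[-1]$ shows that $F(X_0[-1])\to F(A[-1])$ is also surjective: here $F(A[-1])$ is either $0$ — precisely when $A\in\add T[2]$, since $F$ kills $\add T[1]$ by \ref{easy}(\ref{van}) — in which case there is nothing to check, or it is again indecomposable and non-projective and the argument applies verbatim.

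The crux is then the injectivity of $Ff$, and this is exactly where the shifted sequence is indispensable. Rotating the triangle to $A[-1]\xrightarrow{-h[-1]}X_1\xrightarrow{f}X_0\xrightarrow{g}A$ and applying $F$, the long exact sequence identifies $\Ker(Ff)$ with $\Im\bigl(F(-h[-1])\bigr)$, which in turn is the cokernel of the map $F(X_0[-1])\to F(A[-1])$ — a surjection by the previous step. Hence $\Ker(Ff)=0$, and combining this with exactness at $FX_0$ yields the short exact sequence $0\to FX_1\to FX_0\to FA\to0$ of (\ref{F}).

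Finally, (\ref{G}) is proved dually: with $GA=F(A[d])$, $GX_i=F(X_i[d])$ and $G(X_i[-1])=F(X_i[d-1])$ all injective by \ref{easy}(\ref{inj}), one repeats the two steps above with ``submodule of a projective is projective'' replaced by ``quotient of an injective is injective'' and ``$FA$ not projective'' by ``$GA$ not injective'', obtaining $0\to GA[-1]\to GX_1\to GX_0\to0$. Equivalently, (\ref{G}) is (\ref{F}) read in $\T^{\op}$ — which is again $d$-Calabi--Yau after reversing the grading, with $T$ still $d$-cluster tilting and with endomorphism algebra $H^{\op}$, still hereditary — combined with the Serre duality isomorphism $\T(X,A[d])\cong D\T(A,X)$. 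The main obstacle I anticipate is precisely the injectivity of the left-hand maps: it is invisible to a single long exact sequence and genuinely requires first proving the surjectivity statement for the $[-1]$-shifted triangle and then feeding it back.
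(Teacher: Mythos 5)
Your proof of (\ref{F}) is correct, but it proves the injectivity of $Ff$ by a genuinely different route than the paper. For the surjectivity of $FX_0\to FA$, your argument (the image of $FA\to F(X_1[1])$ is a projective submodule of a projective, hence a split summand of the indecomposable non-projective $FA$, forcing $FA\to F(X_1[1])$ to vanish) coincides with the paper's. For the injectivity, however, the paper is shorter: it notes that $f$ is \emph{right minimal}, since a nonzero summand of $X_1$ killed by $f$ would give a common summand of $X_1$ and $A[-1]$, impossible because $A[-1]\notin\add X$; then $Ff$ is right minimal, and as it splits onto its projective image, its kernel is a direct summand, which must therefore be $0$. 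You instead extend the long exact sequence one step to $F(X_0[-1])\to F(A[-1])\to FX_1\to FX_0$ and rerun the surjectivity argument on $F(A[-1])$, which requires the additional case analysis for $A\in\add T[2]$ (where $F(A[-1])=0$) and the check that otherwise $F(A[-1])$ is indecomposable non-projective; this case distinction is necessary here precisely because Lemma \ref{non} (non-overlap of the shifts $T[-l]$) is not yet available at this point, and you handle it correctly. Your approach has the merit of deriving both halves from a single mechanism, at the cost of being slightly longer; the paper's right-minimality observation is more economical. The treatment of (\ref{G}) by duality matches the paper, which also states ``we only prove (\ref{F})''.
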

\begin{proof}
	We only prove (\ref{F}). We have an exact sequence
	\[ \xymatrix{FA[-1]\ar[r]&FX_1\ar[r]^-u& FX_0\ar[r]& FA\ar[r]^-v&FX_1[1] }. \]
	Since $H$ is hereditary, each of the morphisms $u$ and $v$ is a split epimorphism to its image by \ref{easy}(\ref{proj}). 
	
	We first show that $u$ is injective. Note that $f$ is right minimal. Indeed, if $f$ is not right minimal, then $X_1$ and $A[-1]$ share a direct summand, which is impossible by our assumption $A[-1]\not\in\add X$. It follows that $u=Ff$ is also right minimal, hence injective.
	
	We next show that $v=0$. Since $A\not\in\add(X\oplus T[1])$, we see $FA$ is indecomposable non-projective $H$-module by \ref{inddet}(\ref{ind})(\ref{pj}), thus $v$ has to be $0$.
\end{proof}
Now we can prove the second part of \ref{st}.
\begin{proof}[Proof of \ref{st}(2)]
	Let $A, B\in\T$ without a direct summand in $\add(X\oplus T[1])$, and let $f\colon A\to B$ be a morphism in $\T$. We compute the AR translation of $Ff\colon FA\to FB$ in $\md H$. Let $X_1\to X_0\to A\to X_1[1]$ and $Y_1\to Y_0\to B\to Y_1[1]$ be triangles with $X_0, X_1, Y_0, Y_1\in \add X$ in \ref{last}. By \ref{ses}(\ref{F}), $f$ can be lifted to a morphism of triangles
	\[ \xymatrix{
		A[-1]\ar[d]_-{f[-1]}\ar[r]& X_1\ar[r]\ar@{-->}[d]& X_0\ar[r]\ar@{-->}[d]& A\ar[d]^-f \\
		B[-1]\ar[r]& Y_1\ar[r]& Y_0\ar[r]& B. } \]
	Applying $F=\T(X,-)$ it	induces a commutative diagram of short exact sequence
	\[ \xymatrix{
		0\ar[r]& \T(X,X_1)\ar[r]\ar[d]& \T(X,X_0)\ar[r]\ar[d]& \T(X,A)\ar[r]\ar[d]^-{Ff}& 0\\
		0\ar[r]& \T(X,Y_1)\ar[r]& \T(X,Y_0)\ar[r]& \T(X,B)\ar[r]& 0 } \]
	again by \ref{ses}(\ref{F}). In view of \ref{easy}(\ref{nak}), applying $-\otimes_HDH$ to these sequences gives complexes
	\[ \xymatrix{
		0\ar[r]& \T(X,A[d-1])\ar[r]\ar[d]& \T(X,X_1[d])\ar[r]\ar[d]& \T(X,X_0[d])\ar[r]\ar[d]& 0\\
		0\ar[r]& \T(X,B[d-1])\ar[r]& \T(X,Y_1[d])\ar[r]& \T(X,Y_0[d])\ar[r]& 0,} \]
	which are exact by \ref{ses}(\ref{G}). This shows $\T(X,A[d-1])\simeq\tau \T(X,A)$ and $\T(X,B[d-1])\simeq\tau\T(X,B)$, and $Ff[d-1]=\tau(Ff)$ under these isomorphisms, that is, $F\circ[d-1]\simeq\tau\circ F$ as functors.
\end{proof}

From now on we assume that $H$ is $1$-representation infinite. The reason we (have to) assume this is the following nice behavior of objects under suspension of $\T$, which allows us to use \ref{ses} quite freely.
\begin{Lem}\label{non}
	The objects $\{ T[-l]\mid l\in\Z\}$ do not mutually share a direct summand, that is, $\add T[-i]\cap\add T[-j]=0$ whenever $i\neq j$. In particular, $T[-l]$ does not share a direct summand with $X\oplus T[1]$ whenever $l\geq d-1$.
\end{Lem}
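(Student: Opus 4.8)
\emph{Proof strategy.}
The plan is to argue by contradiction. Suppose some indecomposable summand $T_a$ of $T$ is isomorphic to $T_b[-m]$ for an indecomposable summand $T_b$ of $T$ and some $m\geq1$, and choose $m$ minimal over all such pairs of summands. I will rule out separately the ranges $m<d$, $m=d$ and $m\geq d+1$, and then deduce the ``in particular'' clause formally. The case $m<d$ is immediate: shifting the inverse of an isomorphism $T_a\xrightarrow{\sim}T_b[-m]$ by $[m]$ produces a nonzero element of $\Hom_\T(T_b,T_a[m])\subseteq\Hom_\T(T,T[m])$, which is impossible since $T$, being $d$-cluster tilting, is $d$-rigid. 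Hence $m\geq d$.

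\emph{The case $m=d$.} This is where the hypothesis that $H$ is $1$-representation infinite is used. Here $T_a[d]\cong T_b$, so applying $F=\Hom_\T(X,-)$ and using \ref{easy}(\ref{nak}) --- which identifies $F\circ[d]$ on $\add X$ with the Nakayama functor $-\otimes_HDH\colon\proj H\to\inj H$ --- gives $P_b=F(T_b)=F(T_a[d])\cong(-\otimes_HDH)(P_a)$, an indecomposable injective $H$-module (here $P_a,P_b$ denote the indecomposable projectives $F(T_a),F(T_b)$). Thus $H$ would admit a nonzero indecomposable projective-injective module. But every ring-indecomposable factor of a $1$-representation infinite hereditary algebra is a connected representation-infinite hereditary algebra, whose preprojective and preinjective components are disjoint, so no such module exists. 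Therefore $m\geq d+1$.

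\emph{The case $m\geq d+1$.} Here I plan to exhibit a single shift of $T$ to which the translation formula of \ref{st}(2) legitimately applies. Minimality of $m$ forces $T[-(d-1)]$ to have no direct summand in $\add(X\oplus T[1])=\add\bigl(\bigoplus_{i=-1}^{d-2}T[-i]\bigr)$: an isomorphism $T_c[-(d-1)]\cong T_{c'}[-i]$ with $-1\leq i\leq d-2$ would give $T_{c'}\cong T_c[-(d-1-i)]$ with $1\leq d-1-i\leq d<m$, a smaller coincidence. Hence \ref{st}(2) yields $F(T)=F\bigl(T[-(d-1)][d-1]\bigr)\cong\tau\,F\bigl(T[-(d-1)]\bigr)$. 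This is a contradiction, because $F(T)=\Hom_\T(X,T)$ is a nonzero projective $H$-module (the image of $T\in\add X$ under the equivalence $F\colon\add X\xrightarrow{\sim}\proj H$), whereas $\tau=D\Tr$ always returns a module with no nonzero projective summand.

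\emph{Conclusion.} Having excluded all $m\geq1$, we get $\add T[-i]\cap\add T[-j]=0$ whenever $i\neq j$. The final assertion is then formal: a common indecomposable summand of $T[-l]$ and $X\oplus T[1]$ with $l\geq d-1$ would be isomorphic to $T_c[-l]$ and to $T_{c'}[-i]$ for some $-1\leq i\leq d-2$, whence $\add T\cap\add T[-(l-i)]\neq0$ with $l-i\geq l-(d-2)\geq1$, contradicting what was just established. I expect the case $m\geq d+1$ to be the delicate point, since the argument hinges on producing an object --- namely $T[-(d-1)]$ --- lying outside $\add(X\oplus T[1])$ so that \ref{st}(2) is available, and it is exactly the minimality of $m$ that guarantees this; the remaining ranges use, respectively, only $d$-rigidity and the basic structure theory of hereditary algebras.
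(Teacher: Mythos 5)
Your case split is a genuinely different route from the paper's proof, which simply observes via \ref{st}(2) that $F$ sends $T[\leq 0]$ to preprojective modules, $T[1]$ to $0$, and $T[\geq 2]$ to preinjective modules, and then invokes $1$-representation-infiniteness (preprojective and preinjective components are disjoint and $\tau^{-1}$ acts freely on them) together with \ref{inddet}. Your cases $m<d$ (via $d$-rigidity) and $m=d$ (via \ref{easy}(\ref{nak}) and the non-existence of projective-injectives over a $1$-representation infinite algebra) are both correct, and the reduction of the ``in particular'' clause to the main statement is fine.

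The gap is in the case $m\geq d+1$. You argue that $F(T)\cong\tau F\bigl(T[-(d-1)]\bigr)$ is already a contradiction because ``$\tau=D\Tr$ always returns a module with no nonzero projective summand.'' This is false: for any non-injective indecomposable projective $P$, the module $\tau^{-1}P$ is a well-defined non-projective indecomposable, and $\tau(\tau^{-1}P)\cong P$ is projective. (Concretely, for $H=k(1\to 2)$ the almost split sequence $0\to P_2\to P_1\to S_1\to 0$ gives $\tau S_1=P_2$.) In fact $F(T)\cong\tau F\bigl(T[-(d-1)]\bigr)$ is exactly the (true and harmless) content of \ref{st}(2) applied to $A=T[-(d-1)]$; it merely records that $F\bigl(T[-(d-1)]\bigr)$ is, up to projective summands, $\tau^{-1}H$. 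Since this identity holds whenever \ref{st}(2) applies, regardless of whether a coincidence $T_a\cong T_b[-m]$ exists, it cannot by itself yield a contradiction, and your argument never actually uses the hypothesis that $T_a\cong T_b[-m]$ for some $m\geq d+1$. To salvage this case you would need to chase the purported isomorphism $F(T_a)\cong F(T_b[-m])$ itself: iterate \ref{st}(2) (using minimality of $m$ to justify each application) to express $F(T_b[-m])$ as $\tau^{-q}$ of a projective with $q\geq 1$, which for $1$-representation infinite $H$ is indecomposable non-projective, contradicting the projectivity of $F(T_a)$ --- essentially reconstructing the paper's one-line argument.
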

\begin{proof}
	By \ref{st}(2), the functor $F$ takes $T[\leq\!0]$ to preprojective $H$-modules, $T[1]$ to $0$, and $T[\geq\!2]$ to preinjective $H$-modules. Therefore the $H$-modules $FT[-l]$ mutually do not share a summand for all $l\in\Z$ since $H$ is $1$-representation infinite. Then the statements follow from \ref{inddet}.
\end{proof}

Now consider the algebra
\[ S=\discoprod_{i\geq0}\T(X,X[-i]). \]
Also let $U:=\T(X,X[-1])$, which we view as a bimodule over $H$.
\begin{Prop}\label{alg}
	We have an isomorphism $T_HU\simeq S$ of graded algebras.
\end{Prop}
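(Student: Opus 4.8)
The plan is to show that the canonical graded algebra homomorphism $\phi\colon T_HU\to S$ — the one determined by the universal property of the tensor algebra from the inclusions $H=S_0\hookrightarrow S$ and $U=S_1\hookrightarrow S$ — is bijective in every degree. Writing $m_{i,j}\colon S_i\otimes_HS_j\to S_{i+j}$ for the maps induced by the multiplication of $S$, one has $\phi_{i+1}=m_{i,1}\circ(\phi_i\otimes_H\mathrm{id}_U)$ as maps $U^{\otimes_Hi}\otimes_HU\to S_i\otimes_HS_1\to S_{i+1}$; since $\phi_0=\mathrm{id}_H$ and $\phi_1=\mathrm{id}_U$ and since $-\otimes_HU$ carries isomorphisms of (right) $H$-modules to isomorphisms, an induction reduces everything to proving that $m_{i,1}\colon S_i\otimes_HS_1\to S_{i+1}$ is an isomorphism for every $i\geq1$. (For $i=0$ it is just $H\otimes_HU=U$.) Note that once this is done we obtain, in one stroke, both that $S$ is generated in degree $1$ over $H$ and that there are no further relations.

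The input is the $2$-term resolution of \ref{last}. Fix $i\geq1$. Every indecomposable summand of $X[-i]$ is of the form $T_a[-j]$ with $j\geq i\geq1$; those with $j\geq d-1$ do not lie in $\add(X\oplus T[1])$ by \ref{non}, and those with $j\leq d-2$ lie in $\add X$. Applying \ref{last} together with \ref{ses}(\ref{F}) to the former summands, and using the trivial resolution for the latter, one gets a triangle
\[ \xymatrix{ P_1\ar[r]^-\alpha& P_0\ar[r]^-\pi& X[-i]\ar[r]^-\delta& P_1[1] } \]
with $P_0,P_1\in\add X$ on which $F=\T(X,-)$ induces a short exact sequence $0\to FP_1\xrightarrow{\alpha_*}FP_0\xrightarrow{\pi_*}FX[-i]\to0$; in particular $\alpha_*$ is injective and $S_i=\Coker\alpha_*$.

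Now shift this triangle by $[-1]$ to get $P_1[-1]\xrightarrow{\alpha[-1]}P_0[-1]\xrightarrow{\pi[-1]}X[-(i+1)]\xrightarrow{\delta[-1]}P_1$ and apply $\T(X,-)$. Because the map following $\delta[-1]$ in the rotated triangle is (up to sign) $\alpha$, whose image under $F$ is injective, exactness forces the connecting map $\T(X,X[-(i+1)])\to\T(X,P_1)$ to vanish; hence $(\pi[-1])_*$ is surjective and $S_{i+1}=\T(X,X[-(i+1)])=\Coker\big((\alpha[-1])_*\big)$. On the other hand there is a natural isomorphism $\T(X,P)\otimes_HU\xrightarrow{\ \sim\ }\T(X,P[-1])$ for $P\in\add X$, sending $f\otimes g$ to $f[-1]\circ g$: both sides are additive functors of $P$ on $\add X$, and for $P=X$ it is the standard isomorphism $H\otimes_HU=U$. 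Under this isomorphism $\alpha_*\otimes\mathrm{id}_U$ corresponds to $(\alpha[-1])_*$ (since $(\alpha\circ f)[-1]\circ g=\alpha[-1]\circ(f[-1]\circ g)$), so $S_i\otimes_HS_1=\Coker(\alpha_*\otimes\mathrm{id}_U)\simeq\Coker\big((\alpha[-1])_*\big)=S_{i+1}$; chasing the identifications (the class of $f\colon X\to P_0$ in $S_i$ is $\pi\circ f$, and it goes to $(\pi\circ f)[-1]\circ g$ in $S_{i+1}$) shows this composite isomorphism is exactly $m_{i,1}$. This finishes the proof.

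The geometric content is carried entirely by \ref{last} and \ref{ses}, already available; what remains is bookkeeping. I expect the two slightly delicate points to be: first, the observation that applying $\T(X,-)$ to the \emph{$[-1]$-shifted} resolution again computes the next graded piece $S_{i+1}$ as the same cokernel (this is where the interplay between the suspension of $\T$ and the functor $F$ is used, via injectivity of $\alpha_*$); and second, keeping the left/right $H$-module structures straight in the natural isomorphism $\T(X,P)\otimes_HU\simeq\T(X,P[-1])$ — the relevant action on $\T(X,P)$ is precomposition by $\End_\T(X)$, which is functorial in $P$ through postcomposition, so both sides are honest additive functors on $\add X$ and agreement on $X$ suffices. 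Handling the summands of $X[-i]$ lying in $\add X$ when $i<d-1$ is a minor point dealt with by the trivial resolutions above.
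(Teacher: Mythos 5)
Your proof is correct, and it takes a genuinely different route from the paper's. Both arguments begin with the same reduction — it suffices to show that the multiplication $m_{i,1}\colon S_i\otimes_HU\to S_{i+1}$ is an isomorphism for each $i\geq1$ — but from there the methods diverge. The paper dualizes: it rewrites $D\big(\T(X[-1],X[-l])\otimes_HU\big)$ using Serre duality ($D\T(X,X[-1])\simeq\T(X[-1],X[d])$, etc.), identifies the resulting map with $F$ applied to $\T(T[-l],X[d+1])$, and then splits bijectivity into surjectivity (from \ref{full}) and injectivity (from \ref{ses} and \ref{non}). Your argument never dualizes and never invokes Serre duality or \ref{full}. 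Instead you realize $S_i$ as the cokernel of $\alpha_*\colon FP_1\to FP_0$ for a $2$-term $\add X$-resolution $P_1\to P_0\to X[-i]$ obtained from \ref{last} and \ref{ses}(\ref{F}) (with \ref{non} guaranteeing the relevant summands of $X[-i]$ avoid $\add(X\oplus T[1])$); you then observe that shifting the resolution by $[-1]$ still computes the next graded piece — the connecting map vanishes precisely because $\alpha_*$ is injective — so that $S_{i+1}=\Coker\bigl((\alpha[-1])_*\bigr)$; and finally you transport one cokernel to the other via the natural isomorphism $\T(X,-)\otimes_HU\simeq\T(X,(-)[-1])$ of additive functors on $\add X$, checking on $P=X$ and invoking naturality. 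The bookkeeping at the end, identifying the composite with $m_{i,1}$, is also right. The trade-off is clean: the paper's proof is shorter once Serre duality is in hand and leans on the $d$-CY structure, while yours is more self-contained and computes directly with resolutions, which makes the mechanism (generation in degree $1$, no further relations) more transparent. Both rely on \ref{last}, \ref{ses}, and \ref{non} (hence on $H$ hereditary and $1$-representation infinite), so no hypotheses are gained or lost.
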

\begin{proof}
	We have to show that $U\otimes_H U\otimes_H\cdots\otimes_HU\simeq\T(X,X[-l])$ for all $l\geq0$. By induction it is enough to show that the natural map
	\[ \xymatrix{\T(X[-1],X[-l])\otimes_H\T(X,X[-1])\ar[r]& \T(X,X[-l]) } \]
	is an isomorphism for all $l\geq1$. One can check that its dual is isomorphic to the composite of the following natural maps.
	\[	\begin{aligned}
		D(\T(X[-1],X[-l])\otimes_H\T(X,X[-1]))&=\Hom_H(\T(X[-1],X[-l]),D\T(X,X[-1]))\\
		&=\Hom_H(\T(X[-1],X[-l]),\T(X[-1],X[d]))\\
		&=\Hom_H(\T(X,X[-l+1]),\T(X,X[d+1]))\\
		&\xleftarrow{F}\T(X[-l+1],X[d+1])=\T(X[-l],X[d])\\
		&=D\T(X,X[-l])
	\end{aligned}	\]
	Therefore it remains to show that $F=\T(X,-)$ induces bijections
	\[ \xymatrix{ \T(T[-l],X[d+1])\ar[r]& \Hom_H(FT[-l],FX[d+1]) } \]
	for all $l\geq0$. By \ref{full}, we know that this is surjective. We claim that this is injective. When $0\leq l\leq d-2$ then $T[-l]\in\add X$ and the assertion is clear. We assume $l\geq d-1$, so that $T[-l]$ do not share a direct summand with $X\oplus T[1]$ by \ref{non} and we can apply \ref{ses}. Let $f\colon T[-l]\to X[d+1]$ be a morphism which is $0$ under $F$. Taking a triangle $X_1\to X_0\to T[-l]\to X_1[1]$ in \ref{last}, we see that $f$ is mapped to $0$ under the last map in the exact sequence below.
	\[ \xymatrix@R=1mm{ 
		\T(X_0,X[d])\ar[r]^-a&\T(X_1,X[d])\ar[r]&\T(T[-l],X[d+1])\ar[r]& \T(X_0,X[d+1]) \\
		&&\hspace{15mm}f\hspace{15mm}\ar@{|->}[r]&\hspace{15mm}0\hspace{15mm} } \]
	Also the first map $a$ is dual to $\T(X,X_1)\to\T(X,X_0)$, which is injective by \ref{ses}(\ref{F}). We conclude that $a$ is surjective, hence $f=0$ as desired. 
\end{proof}

We next give the following property of the bimodule $U$, which shows that the bimodule $U$ gives a $(d-1)$-st root of the AR translation.
\begin{Prop}\label{AR}
	We have isomorphisms
	\[ U\lotimes_H\cdots\lotimes_HU\simeq U\otimes_H\cdots\otimes_HU\simeq\Ext_H^1(DH,H) \]
	in the derived category of $(H,H)$-bimodules, where the tensor factor is $(d-1)$-times. Therefore the functor $-\lotimes_HU$ gives an autoequivalence on $\D^b(\md H)$ whose $(d-1)$-st power is the AR translation.
\end{Prop}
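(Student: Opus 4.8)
The plan is to reduce both isomorphisms to a study of the spaces $\T(X,X[-l])$ via the functor $F=\T(X,-)$ of \ref{st}, using that $U^{\otimes l}\simeq\T(X,X[-l])$ for all $l\geq0$ by \ref{alg}. For the identification with $\Ext^1_H(DH,H)$, the key observation is that \ref{st}(2) applies to the object $X[-(d-1)]$: every indecomposable summand of $X[-(d-1)]$ is of the form $T[-j]$ with $j\geq d-1$, hence does not lie in $\add(X\oplus T[1])$ by \ref{non}, and $\T(X,X[-(d-1)])$ has no projective summand by \ref{inddet}(\ref{pj}). So \ref{st}(2) yields an isomorphism $\tau\,\T(X,X[-(d-1)])\simeq\T(X,X[-(d-1)][d-1])=\T(X,X)=H$, and, since $\T(X,X[-(d-1)])$ has no projective summand, applying $\tau^{-1}$ gives $\T(X,X[-(d-1)])\simeq\tau^{-1}H$. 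It then remains to recall the standard identification $\tau^{-1}H\simeq\Ext^1_H(DH,H)$: as $H$ is hereditary one has $\nu^{-1}=\RHom_H(DH,-)\simeq-\lotimes_H\RHom_H(DH,H)$ with $\RHom_H(DH,H)\simeq\Hom_H(DH,H)\oplus\Ext^1_H(DH,H)[-1]$, and $1$-representation infiniteness of $H$ forces $\Hom_H(DH,H)=0$ (no non-zero maps from preinjective to preprojective modules), whence $\tau^{-1}=\nu^{-1}[1]=-\lotimes_H\Ext^1_H(DH,H)$ and in particular $\tau^{-1}H\simeq\Ext^1_H(DH,H)$ as $(H,H)$-bimodules. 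Granting the first isomorphism, the last assertion is then formal: $(-\lotimes_HU)^{d-1}=-\lotimes_HU^{\lotimes(d-1)}=-\lotimes_H\Ext^1_H(DH,H)=\tau^{-1}$ is an autoequivalence of $\D^b(\md H)$, hence so is $-\lotimes_HU$, with $(d-1)$-st power the (inverse) AR translation.

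The remaining, and main, point is the first isomorphism $U^{\lotimes(d-1)}\simeq U^{\otimes(d-1)}$, i.e. that no higher $\Tor$ appears. Since $H$ is hereditary, every object of $\D^b(\md H)$ is the direct sum of (shifts of) its cohomology modules, so it suffices to prove by induction on $l\geq1$ that $U^{\lotimes l}$ is concentrated in degree $0$ and equals $\T(X,X[-l])$. The core technical step is: if $A\in\T$ admits a $2$-term resolution $X_1\to X_0\to A\to X_1[1]$ with $X_0,X_1\in\add X$, has no direct summand in $\add(X\oplus T[1])$, and $FA$ is a preprojective $H$-module, then $FA\lotimes_HU\simeq F(A[-1])$, concentrated in degree $0$. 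To prove this I would note that $FX_i$ is projective and $FX_i\otimes_HU\simeq\T(X,X_i[-1])$ for $X_i\in\add X$ (the case $X_i=X$ being tautological, and then by additivity, as in \ref{alg}), so that $FA\lotimes_HU$ is represented by the two-term complex $[\,\T(X,X_1[-1])\to\T(X,X_0[-1])\,]$, whose cohomology is computed by applying $\T(X,-)$ to the triangle $X_1[-1]\to X_0[-1]\to A[-1]\to X_1$: its $H^0$ equals $F(A[-1])$ as soon as $X_0\to A$ is a right $(\add X[1])$-approximation, and its $H^{-1}$ vanishes as soon as $X_0\to A$ is a right $(\add X[2])$-approximation. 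The first holds because $\T(X,X_1)\hookrightarrow\T(X,X_0)$ by \ref{ses}(\ref{F}), which forces every map $X[1]\to A$ to factor through $X_0$; the second holds because every map $X[2]\to A$ factors through $\add X$: writing $X[2]=T[2]\oplus T[1]\oplus W$ with $W\in\add X$, the restriction to $W$ is immediate, the restriction to $T[1]$ factors through $\add X$ by the previous sentence, and the restriction to $T[2]$ is annihilated by $F$ (since $\Hom_H(FT[2],FA)=0$, as $FT[2]$ is preinjective and $FA$ preprojective by \ref{st}(2)) and hence factors through $\add(X\oplus T[1])$ by faithfulness of $\underline{F}\colon\T/[X\oplus T[1]]\xrightarrow{\sim}\smd H$ in \ref{st}(1).

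With this step in hand the induction is routine: $U^{\lotimes l}=\T(X,X[-(l-1)])\lotimes_HU$, and decomposing $X[-(l-1)]=X^\prime\oplus Z$ into the summands lying in $\add X$ and the summands $T[-j]$ with $j\geq d-1$, the first part contributes $\T(X,X^\prime[-1])$ directly by projectivity, while each $T[-j]$ with $j\geq d-1$ satisfies the hypotheses of the core step ($T[-j]\notin\add(X\oplus T[1])$ by \ref{non}, and $F(T[-j])$ is preprojective by \ref{st}(2)), contributing $\T(X,T[-(j+1)])$; summing gives $\T(X,X[-l])$, which equals $U^{\otimes l}$ by \ref{alg}. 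The delicate point throughout is precisely the compatibility of $(\add X)$-approximations with the shift functor, which is where $1$-representation infiniteness is genuinely used (through the preprojective/preinjective dichotomy for $F$ coming from \ref{st}(2) and \ref{non}); this is the part I expect to require the most care, including the bookkeeping needed to verify that every isomorphism produced above is one of $(H,H)$-bimodules rather than of one-sided modules only.
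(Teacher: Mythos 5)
Your proof is correct, but it takes a genuinely different route from the paper for the first isomorphism $U^{\lotimes(d-1)}\simeq U^{\otimes(d-1)}$. The paper's entire argument for this is the one sentence ``Since $U$ is a preprojective $H$-module by \ref{st}(2) we have the first isomorphism,'' which leans on the standard fact that $\Tor_1^H(M,N)$ vanishes when $M$ and $N$ are preprojective modules over a representation-infinite hereditary algebra (via $D\Tor_1^H(M,N)\simeq\Ext^1_H(N,DM)$ and the absence of extensions from preprojectives by preinjectives), combined with \ref{alg} to see all tensor powers $U^{\otimes l}=\T(X,X[-l])$ are preprojective. You instead represent $U^{\lotimes l}$ explicitly by the two-term complex obtained by tensoring the $2$-term $\add X$-resolution from \ref{last} with $U$, and verify the vanishing of $H^{-1}$ by a careful factorization argument ($X_0\to A$ is a right approximation relative to $\add X[1]$ and $\add X[2]$), using the preprojective/preinjective dichotomy of \ref{st}(2) only at the step ruling out nonzero maps $T[2]\to A$ modulo $\add(X\oplus T[1])$. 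Both approaches are sound; the paper's is shorter because it outsources the $\Tor$ vanishing to folklore on hereditary algebras, whereas yours is more self-contained and makes the dependence on $1$-representation infiniteness and on \ref{ses}, \ref{non}, \ref{inddet} visible at the exact points where they are used. For the second isomorphism, you and the paper follow essentially the same path (via \ref{alg} and the dual of \ref{st}(2)); your derivation of $\tau^{-1}H\simeq\Ext^1_H(DH,H)$ through $\nu^{-1}=\RHom_H(DH,-)$ is correct but heavier than necessary, since for hereditary $H$ this is the usual identity $\tau^{-1}=\Ext^1_H(DH,-)$ on modules without injective summands. Finally, your closing concern about promoting one-sided isomorphisms to bimodule isomorphisms is legitimate but harmless here: once the derived tensor is shown to be concentrated in degree $0$, the natural map $U^{\lotimes(d-1)}\to U^{\otimes(d-1)}$ is a bimodule map which is a quasi-isomorphism, and the identification of $\T(X,X[-(d-1)])$ with $\tau^{-1}H$ is functorial in $X$ (hence respects the left action), exactly the naturality the paper invokes.
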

\begin{proof}
	Since $U$ is a preprojective $H$-module by \ref{st}(2) we have the first isomorphism. We prove the second one. By \ref{alg} we have to show $\T(X,X[-d+1])\simeq\Ext_H^1(DH,H)$. It follows from the dual of \ref{st}(2) that $\T(X,X[-d+1])\simeq\tau^{-1}H=\Ext^1_H(DH,H)$ in $\md H$. 
	Naturality of this isomorphism shows that it is compatible with left $H$-actions, that is, $\T(X,X[-d+1])\simeq\Ext^1_H(DH,H)$ as $(H,H)$-bimodules.
\end{proof}

Now assume that the algebra $H/J_H$ is separable over $k$, which is the case if $H$ is the path algebra of an acyclic quiver.
\begin{Lem}\label{dim2}
The graded algebra $S$ is homologically smooth of dimension $\leq2$, that is, $\pd_{S^e}S\leq2$.
\end{Lem}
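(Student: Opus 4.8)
The plan is to exploit the description $S\cong T_HU$ of Proposition \ref{alg}, where $U=\T(X,X[-1])$, together with the standard short exact sequence of $S$-bimodules attached to a tensor algebra $\Pi=T_RM$,
\[ 0\to \Pi\otimes_R M\otimes_R\Pi\xrightarrow{\ a\otimes m\otimes b\mapsto am\otimes b-a\otimes mb\ }\Pi\otimes_R\Pi\xrightarrow{\ \mathrm{mult}\ }\Pi\to0, \]
applied with $R=H$ and $M=U$. This reduces the claim $\pd_{S^e}S\le2$ to bounding the projective $S^e$-dimensions of $S\otimes_HS$ and $S\otimes_HU\otimes_HS$, which via base change along $H\to S$ (resolving the $H$-bimodules $H$ and $U$ over $H^e$ and transporting along $S\otimes_H(-)\otimes_HS$) is governed by the two statements $\pd_{H^e}H\le1$ and $\pd_{H^e}U\le1$. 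One caveat must be handled here: $S$ is not flat over $H$ (for instance $U=FX[-1]$ has the non-projective summand $FT[-(d-1)]$ by \ref{inddet}), so this transport must be done in the derived category, keeping track of the finite-dimensional $\Tor^H$-terms that appear; these are themselves controlled by the same mechanism, so the total length stays $\le2$.

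For the first input, $\pd_{H^e}H\le1$ is exactly homological smoothness of $H$, and it holds because $H$ is hereditary with $H/J_H$ separable over $k$: writing $E=H/J_H$ and using the Wedderburn--Malcev splitting $H=E\oplus J_H$, the hereditary hypothesis yields the bimodule resolution $0\to H\otimes_E(J_H/J_H^2)\otimes_E H\to H\otimes_E H\to H\to0$, whose two left-hand terms are $H^e$-projective since separability of $E$ makes $E$ projective over $E^e$, hence $E$ and $J_H/J_H^2$ summands of free $E^e$-modules.

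The key step — and the main obstacle — is $\pd_{H^e}U\le1$, which I would derive from Proposition \ref{AR}. By \ref{AR} the functor $-\lotimes_HU$ is an autoequivalence of $\Db(\md H)$, i.e.\ $U$ is invertible in $\D(H^e)$, and $U^{\lotimes(d-1)}\simeq\Ext^1_H(DH,H)$. Applying the autoequivalence $-\lotimes_HU^{\lotimes(d-2)}$ of $\D(H^e)$ gives, for every $H$-bimodule $N$,
\[ \Ext^2_{H^e}(U,N)\simeq\Hom_{\D(H^e)}\bigl(\Ext^1_H(DH,H),\ (N\lotimes_HU^{\lotimes(d-2)})[2]\bigr). \]
Now $\Ext^1_H(DH,H)=\tau^{-1}H$ (in the derived sense) is the once-shifted inverse dualizing bimodule $\RHom_{H^e}(H,H^e)[1]$ of $H$; by homological smoothness of $H$ this is represented by a two-term complex of projective $H^e$-modules concentrated in degrees $-1,0$, and since $\Ext^1_H(DH,H)$ is a module (degree $0$) the differential is injective, so it has a projective $H^e$-resolution of length $\le1$. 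On the other hand $N\lotimes_HU^{\lotimes(d-2)}$ has cohomology only in degrees $\le0$ (a derived tensor product of modules over $H$), so $(N\lotimes_HU^{\lotimes(d-2)})[2]$ lives in degrees $\le-2$; since $\Ext^1_H(DH,H)$ is a degree-$0$ module with $\pd_{H^e}\le1$, every contribution to the above $\Hom$-group is an $\Ext^{\ge2}_{H^e}$ out of it, hence vanishes. Thus $\Ext^2_{H^e}(U,-)\equiv0$, that is $\pd_{H^e}U\le1$ (for $d=2$ the argument degenerates to $U=\Ext^1_H(DH,H)$ resolved directly). Combining this with $\pd_{H^e}H\le1$ and the reduction above gives $\pd_{S^e}S\le2$. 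I expect the fiddliest part to be making the base-change reduction from $S^e$ to $H^e$ fully rigorous despite the non-flatness of $S$ over $H$; the genuinely homological content is the invertibility-of-$U$ computation just described.
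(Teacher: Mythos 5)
Your overall architecture matches the paper's: use the tensor-algebra short exact sequence for $S=T_HU$, reduce to $\pd_{H^e}H\le1$ and $\pd_{H^e}U\le1$ by base change along $H\to S$, and derive the latter from the invertibility of $U$ in $\D(H^e)$ together with $U^{\lotimes(d-1)}\simeq\Ext^1_H(DH,H)$. Your proof of $\pd_{H^e}H\le1$ via the separability of $H/J_H$ is fine, and your argument that $\Ext^2_{H^e}(U,-)\equiv0$ (twist by the autoequivalence $-\lotimes_HU^{\lotimes(d-2)}$, then land in degrees $\le-2$ against a two-term projective $H^e$-resolution of $\Ext^1_H(DH,H)$) is a correct variant of the paper's direct computation of $\RHom_{H^e}(U,H^e)$.

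The gap is exactly at the base-change step you flag as the ``fiddliest part.'' From a length-one $H^e$-projective resolution $0\to P_1\to P_0\to U\to0$ you want $0\to S\otimes_HP_1\otimes_HS\to S\otimes_HP_0\otimes_HS\to S\otimes_HU\otimes_HS\to0$ to be a length-one $S^e$-projective resolution, and similarly for $H$. This is exact on the left if and only if $\Tor_1^{H^e}(S^e,U)=H^{-1}(S\lotimes_HU\lotimes_HS)=0$, i.e., if and only if $\Tor_i^H(U^{\otimes a},U^{\otimes b})=0$ for $i>0$ and all $a,b$. You explicitly note $S$ is not flat over $H$ and then assert that the resulting Tor-terms are ``controlled by the same mechanism, so the total length stays $\le2$'' — but nothing you have written makes those Tor-terms vanish, and if they do not vanish the length simply does not stay $\le2$. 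The paper makes this the opening move of the proof: because $H$ is $1$-representation infinite, $U$ is preprojective on both sides, and $-\lotimes_HU$ is an autoequivalence, the derived tensor algebra $T_H^{\mathrm L}U$ has cohomology concentrated in degree $0$, which is precisely the statement that all these $\Tor_i^H(U^{\otimes a},U^{\otimes b})$ vanish. Without this step, the triangle $S\lotimes_HU\lotimes_HS\to S\lotimes_HS\to S\to$ in $\D(S^e)$ — equivalently, the compatibility between the module-level short exact sequence and the derived base change — is not established, and the reduction from $S^e$ to $H^e$ does not go through.
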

\begin{proof}
	Since $H$ is $1$-representation infinite and $U$ is a preprojective module such that $-\lotimes_HU$ gives an autoequivalence on $\D^b(\md H)$, we see that the derived tensor algebra $T^{\mathrm{L}}_HU:=T_H\Psi$, where $\Psi\to U$ is a bimodule projective resolution, has its cohomology concentrated in degree $0$, where it is $S$. This shows that there is a triangle
	\[ \xymatrix{ S\lotimes_HU\lotimes_HS\ar[r]& S\lotimes_HS\ar[r]& S\ar[r]& } \]
	in $\D(\Md S^e)$. Applying $\RHom_{S^e}(-,S^e)$	we obtain a triangle
	\[ \xymatrix{ S^e\lotimes_{H^e}\RHom_{H^e}(H,H^e)[1]\ar[r]& S^e\lotimes_{H^e}\RHom_{H^e}(U,H^e)[1]\ar[r]&\RHom_{S^e}(S,S^e)[2]\ar[r]& }, \]
	cf. \cite[4.8]{Ke11}. Now the first term is concentrated in degree $\leq0$ by $\pd_{H^e}H\leq1$, and so is the second term since $\RHom_{H^e}(U,H^e)[1]=\RHom_{H^\op\otimes H}(U,\RHom_k(DH,H))=\RHom_{H^\op}(U\lotimes_HDH,H)$, which equals $(U\lotimes_HU^{\lotimes_H-(d-1)})^{-1}=U^{\lotimes_H(d-2)}$ by \ref{AR}. We conclude that the third term also lies in degree $\leq0$, hence $\pd_{S^e}S\leq2$.
\end{proof}

Recall that a dg algebra is {\it formal} if it is isomorphic to its cohomology in the homotopy category of dg categories. A graded algebra $\L$ is {\it intrinsically formal} if any dg algebra with cohomology $\L$ is formal. We use the following criterion for intrinsic formality using Hochschild cohomology. We denote by $[1]$ the degree shift of graded vector spaces. For a graded bimodule $M$ over $\L$, the shifted bimodule $M[1]$ has $\L$-actions $a\cdot x\cdot b=(-1)^{\deg a}axb$.
\begin{Prop}[{\cite{Kad}, see also \cite[4.7]{ST}\cite[1.7]{RW}}]\label{formal}
Let $\L$ be a graded $k$-algebra such that $\Ext^i_{\L^e}(\L,\L[2-i])=0$ for all $i>2$. Then $\L$ is intrinsically formal. In particular if $\pd_{\L^e}\L\leq2$ then $\L$ is intrinsically formal.
\end{Prop}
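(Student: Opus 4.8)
The plan is to reduce intrinsic formality of $\L$ to a sequence of Hochschild‑cohomology obstruction vanishings via the $A_\infty$ minimal model, in the spirit of Kadeishvili. Let $A$ be an arbitrary dg algebra with $H^\bullet(A)\simeq\L$ as graded algebras. First I would invoke the homotopy transfer theorem to equip $\L=H^\bullet(A)$ with a minimal $A_\infty$-structure $\{m_n\colon\L^{\otimes n}\to\L\}_{n\geq2}$ (so $m_1=0$, $m_2$ is the multiplication of $\L$, and $m_n$ has degree $2-n$) together with an $A_\infty$-quasi-isomorphism $(\L,\{m_n\})\to A$. Since every $A_\infty$-quasi-isomorphism is invertible and an $A_\infty$-algebra with $m_{\geq3}=0$ is exactly a dg (here graded) algebra, $A$ is formal if and only if the transported structure $\{m_n\}$ is $A_\infty$-isomorphic, by a morphism whose first component is $\mathrm{id}_\L$, to the strict one $\{m_2,0,0,\dots\}$. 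So the whole task is to gauge away the higher products.

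Next I would do this by induction on $n\geq3$, keeping the hypothesis $m_3=\cdots=m_{n-1}=0$ (vacuous for $n=3$). Writing the $A_\infty$-structure as a square‑zero coderivation $m=\sum_{k\geq2}m_k$ of the bar coalgebra, the arity‑$(n+1)$ component of $m\circ m=0$ — in which, by $m_1=0$ and $m_3=\cdots=m_{n-1}=0$, every term except $[m_2,m_n]$ drops out — reads $b(m_n)=0$, where $b=[m_2,-]$ is the Hochschild differential; that is, $m_n$ is a Hochschild $n$-cocycle of internal degree $2-n$. Its class therefore lies in $\Ext^n_{\L^e}(\L,\L[2-n])$ (equivalently, the internal‑degree‑$(2-n)$ part of $\Ext^n_{\L^e}(\L,\L)$), which vanishes by the hypothesis of the proposition since $n\geq3>2$. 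Hence $m_n=b(\varphi)$ for some Hochschild $(n-1)$-cochain $\varphi$ of internal degree $2-n$, and the $A_\infty$-isomorphism of $\L$ with components $f_1=\mathrm{id}$, $f_{n-1}=\varphi$, and $f_k=0$ for $k\notin\{1,n-1\}$ transports $\{m_n\}$ to a new $A_\infty$-structure that still has $m_2$ as multiplication, still has $m_3=\cdots=m_{n-1}=0$, and now also has $m_n=0$ (the products of arity $>n$ may change, which is irrelevant). This advances the induction.

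Finally I would assemble the steps: the isomorphism built at stage $n$ is the identity in all arities $<n-1$ and equals $\mathrm{id}+\varphi$ in arity $n-1$, so the stagewise morphisms compose to a single $A_\infty$-morphism with first component $\mathrm{id}_\L$ — hence an $A_\infty$-isomorphism — carrying $\{m_n\}$ to the strict structure; when $\L$ is concentrated in finitely many degrees the process even terminates, since $m_k$ vanishes for $k\gg0$ by degree reasons, and in general one appeals to completeness of the arity filtration on the bar construction. This proves $A$ formal, hence $\L$ intrinsically formal. The ``in particular'' clause is then immediate, since $\pd_{\L^e}\L\leq2$ forces $\Ext^i_{\L^e}(\L,-)=0$ for all $i>2$. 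I expect the real work to be in the second step: isolating the cocycle condition from $m\circ m=0$ with the correct signs and internal degrees and checking that the prescribed gauge transformation kills $m_n$ without disturbing $m_2,\dots,m_{n-1}$; the hypothesis itself enters in a single line, and the only delicate point beyond this bookkeeping is making the limiting argument of the third step precise.
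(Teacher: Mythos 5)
Your argument is correct, and it is precisely the standard Kadeishvili-style obstruction argument that the paper is pointing to via its citations (the paper does not prove this proposition itself; it is invoked as a known result, with the cited sources \cite[4.7]{ST} and \cite[1.7]{RW} carrying out essentially the induction you describe). Two small points worth being explicit about if this were written out in full: the arity-$(n+1)$ component of $m\circ m=0$ reduces, under $m_1=0$ and $m_3=\cdots=m_{n-1}=0$, to $[m_2,m_n]=0$ precisely because every other term contains a factor $m_s$ with $s\in\{1,3,\dots,n-1\}$ or an outer $m_{r+1+t}$ in that forbidden range, which is a one-line check; and the convergence of the infinite composition of gauge transformations is automatic arity by arity, since the $k$-th component of the composite $\cdots\circ f^{(4)}\circ f^{(3)}$ receives contributions only from the finitely many $f^{(n)}$ with $n-1\le k$, so no completion argument is strictly needed beyond this observation. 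Otherwise the internal-degree bookkeeping ($m_n$ has degree $2-n$, hence defines a class in $\Ext^n_{\L^e}(\L,\L[2-n])$, and a bounding $(n-1)$-cochain $\varphi$ of the same internal degree gives exactly the degree $2-(n-1)$ expected for the $(n-1)$-st component of an $A_\infty$-morphism) is handled correctly, and the "in particular" clause is indeed immediate.
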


We are now ready to prove our main theorem of this paper which gives a Morita-type result for cluster categories arising from hereditary algebras. 
\begin{Thm}\label{root}
Let $\T$ be an algebraic $d$-CY triangulated category with a $d$-cluster tilting object $T$. We put $X=T\oplus T[-1]\oplus\cdots\oplus T[-(d-2)]$. Suppose that $H=\End_\T(X)$ is $1$-representation infinite and $H/J_H$ is separable over $k$. Set $U=\T(X,X[-1])$, which we view as an $(H,H)$-bimodule.
\begin{enumerate}
	\item\label{U} There exists an isomorphism $U^{\lotimes_H(d-1)}\simeq\Ext^1_H(DH,H)$ of $(H,H)$-bimodules.
	\item\label{T} There exists a triangle equivalence $\T\simeq\D^b(\md H)/-\lotimes_HU[1]$.
\end{enumerate}
Therefore we can write $\T\simeq\D^b(\md H)/\tau^{-1/(d-1)}[1]$ for $\tau^{-1/(d-1)}:=-\lotimes_HU$.
\end{Thm}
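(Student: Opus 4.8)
The plan is to deduce \ref{root} by realising $\T$ as a cluster category of an explicitly identified dg algebra, combining \ref{str} with \ref{cl}; the assertion \ref{U} is nothing but \ref{AR}, so I would focus on \ref{T}. Since $\T$ is algebraic, fix an enhancement $\A$, put $\G=\RHom_\A(X,X)$ so that $\per\G\simeq\T$ and $H^n(\G)=\T(X,X[n])$, and let $\Pi=\G^{\leq0}$. Its cohomology is the graded algebra
\[ S=\discoprod_{i\geq0}\T(X,X[-i]), \]
with $U=\T(X,X[-1])$ sitting in degree $-1$. The first task is to verify the three hypotheses of \ref{str} for $\G$. Condition (a) is immediate since $\T$ is $\Hom$-finite (for $n>0$ one can alternatively use $d$-Calabi--Yau duality to reduce to $\T(X,X[d-n])$). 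For \ref{ct}, the object $T$ is a direct summand of $X$, hence is cut out by a degree-$0$ idempotent $e$ of $\G$; the corresponding $e\G\in\per\G$ is identified with $T$ under $\per\G\simeq\T$, and $T$ is $d$-cluster tilting, so \ref{mot} applies.

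The crucial point is \ref{sm}, the homological smoothness of $\Pi$. By \ref{alg} the graded algebra $S$ is the tensor algebra $T_HU$; by \ref{dim2} --- which is exactly where the hypotheses that $H$ is $1$-representation infinite and $H/J_H$ is separable over $k$ enter --- one has $\pd_{S^e}S\leq2$, so $S$ is homologically smooth, and hence $S$ is intrinsically formal by \ref{formal}. As $\Pi$ is a dg algebra with cohomology $S$, it follows that $\Pi$ is quasi-isomorphic to $S$ regarded as a dg algebra with zero differential; in particular $\Pi$ is homologically smooth, so \ref{str} yields a triangle equivalence $\T=\per\G\simeq\C(\Pi)=\C(S)$.

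It then remains to compute $\C(S)$ by means of \ref{cl}. Viewing $U$ as a complex of $(H,H)$-bimodules placed in cohomological degree $-1$, we have $S=T_H(U[1])$; here $H$ is hereditary, hence of finite global dimension, $U[1]$ is concentrated in degree $\leq0$, and $-\lotimes_H(U[1])=(-\lotimes_HU)[1]$ is an autoequivalence of $\D^b(\md H)$, since by \ref{AR} its $(d-1)$-st power is $-\lotimes_H\Ext^1_H(DH,H)=\tau^{-1}$. The finiteness condition of \ref{cl}, namely that $\Hom_{\D(H)}(L,(-\lotimes_HU)^iM[i])=0$ for all but finitely many $i$ whenever $L,M\in\D^b(\md H)$, holds because $H$ is $1$-representation infinite --- this is precisely the $\Hom$-finiteness that underlies cluster categories of hereditary algebras, visible on the Auslander--Reiten quiver of $\D^b(\md H)$. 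Hence \ref{cl} identifies $\C(S)$ with the canonical triangulated hull of the orbit category $\D^b(\md H)/(-\lotimes_HU)[1]$. Since $H$ is hereditary, this orbit category is itself triangulated --- the same fact that underlies the $d$-cluster category $\D^b(\md H)/\tau^{-1}[d-1]$, via Keller's theorem on triangulated orbit categories \cite{Ke05} --- so it coincides with its hull. Chaining the equivalences gives $\T\simeq\D^b(\md H)/(-\lotimes_HU)[1]$, which is the claim with $\tau^{-1/(d-1)}:=-\lotimes_HU$.

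I expect the genuine obstacle --- as opposed to the steps that merely invoke earlier results --- to be the formality of $\Pi$. Everything rests on $S=T_HU$ being intrinsically formal, and this is exactly the role of the two standing hypotheses on $H$ through \ref{dim2}: without $1$-representation infiniteness the bound $\pd_{S^e}S\leq2$ typically fails and $\Pi$ is no longer formal, so the argument would break down. A more routine but error-prone point is keeping the cohomological grading consistent, so that $U$ lies in degree $-1$, $S=T_H(U[1])$, and \ref{cl} outputs the functor with the correct suspension $[1]$; and, as noted, the final identification of the triangulated hull of the orbit category with the orbit category itself.
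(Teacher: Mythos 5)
Your proposal is correct and follows essentially the same route as the paper: fix an enhancement, pass to $\G=\RHom_\A(X,X)$ and its truncation $\Pi=\G^{\leq0}$, use \ref{alg} to identify $H^*(\Pi)$ with $S=T_HU$, use \ref{dim2} and \ref{formal} to obtain intrinsic formality and hence $\Pi\simeq T_H(U[1])$, verify the three hypotheses of \ref{str} (with \ref{mot} supplying (b)), and finally invoke \ref{cl} to identify $\C(\Pi)$ with (the triangulated hull of) the orbit category $\D^b(\md H)/(-\lotimes_HU)[1]$. The paper leaves the verification of the finiteness hypothesis of \ref{cl} and the identification of the hull with the orbit category implicit, whereas you spell these out via $1$-representation infiniteness and Keller's theorem on triangulated orbit categories; these are correct elaborations rather than a different argument.
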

\begin{proof}
	We have seen (\ref{U}) in \ref{AR}, so we prove (\ref{T}). Let $\G$ be a derived endomorphism ring of $X$ in some dg enhancement of $\T$ and set $\Pi=\G^{\leq0}$. By \ref{alg} we know the cohomology of $\Pi$ is isomorphic to the tensor algebra $S=T_HU$. By \ref{dim2} and \ref{formal}, it is intrinsically formal, thus $\Pi$ is quasi-isomorphic to $S$ viewed as a dg algebra with $\deg U=-1$ and zero differential; $\Pi=T_H(U[1])$. It follows from the first triangle in the proof of \ref{dim2} that $\Pi$ is homologically smooth. Now, $X$ has a $d$-cluster tilting object $T$ as a direct summand so we can apply \ref{str}, and therefore there exists a triangle equivalence $\T=\per\G\simeq\C(\Pi)$. By \ref{cl} the cluster category $\C(\Pi)$ is equivalent to $\D^b(\md H)/-\lotimes_HU[1]$.
\end{proof}

\subsection{Corollaries}\label{Cor}
Some of the $(d-1)$-st root $\tau^{1/(d-1)}$ stated \ref{root} arises in a somewhat absurd way, for example, $H$ is a direct product of $(d-1)$ copies of an algebra $H^\prime$, and $\tau^{1/(d-1)}$ on $\D^b(\md H)=\D^b(\md H^\prime)\times\cdots\times\D^b(\md H^\prime)$ is given by $(L_1,\ldots,L_{d-1})\mapsto(L_2,\ldots,L_{d-1},\tau^{\prime}L_1)$ using the AR translation $\tau^{\prime}$ for $H^\prime$.
The proof of corollaries consists of such an interpretation of $\tau^{1/(d-1)}$. Together with an observation in \ref{adj} below allows us to rewrite the orbit category $\D^b(\md H)/\tau^{-1/(d-1)}[1]$ in terms of $\D^b(\md H^\prime)$ and ${\tau^\prime}^{-1}$.

Now we look at some consequences of our main theorem \ref{root}. When $d=2$, this immediately reduces to Keller--Reiten's recognition theorem for non-Dynkin quivers. More generally, using an interpretation of the $(d-1)$-st root $\tau^{-1/(d-1)}$ as above, we can recover its generalization to higher dimension.
\begin{Cor}[cf. \cite{KRac}]\label{kr}
Let $\T$ be an algebraic $d$-CY triangulated category with a $d$-cluster tilting object $T$. Suppose that $H^\prime=\End_\T(T)$ is $1$-representation infinite, $H^\prime/J_{H^\prime}$ is separable over $k$, and that $\Hom_\T(T,T[-i])=0$ for $0<i<d/2$. Then there exists a triangle equivalence $\T\simeq\D^b(\md H^\prime)/\tau^{-1}[d-1]$.
\end{Cor}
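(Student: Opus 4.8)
The plan is to reduce \ref{kr} to the main theorem \ref{root}, using \ref{half} to produce its hypotheses and the orbit-category identification \ref{adj} to rewrite its conclusion. Since $H^\prime$ is $1$-representation infinite it is in particular hereditary, so the assumption $\Hom_\T(T,T[-i])=0$ for $0<i<d/2$ together with \ref{half} gives $\Hom_\T(T,T[-i])=0$ for all $0<i<d-1$. Combined with the $d$-rigidity of $T$ this forces every off-diagonal block of $\End_\T(X)$ to vanish, where $X=T\oplus T[-1]\oplus\cdots\oplus T[-(d-2)]$; hence $H:=\End_\T(X)\simeq(H^\prime)^{d-1}$, a direct product of $d-1$ copies of $H^\prime$. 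Then $H$ is again $1$-representation infinite and $H/J_H\simeq(H^\prime/J_{H^\prime})^{d-1}$ is separable over $k$, so \ref{root} applies: there is a triangle equivalence $\T\simeq\D^b(\md H)/-\lotimes_HU[1]$ with $U=\T(X,X[-1])$ regarded as an $(H,H)$-bimodule.

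Next I identify $U$ under $\D^b(\md H)=\D^b(\md H^\prime)\times\cdots\times\D^b(\md H^\prime)$ ($d-1$ factors). Writing $X_j=T[-(j-1)]$ and $X[-1]=\bigoplus_iX_i[-1]$, the $(i,j)$-block of $U$ is $\T(X_j,X_i[-1])=\T(T,T[-(i-j+1)])$. By the vanishing just obtained for degrees in $(0,d-1)$, together with the $d$-rigidity of $T$ for the remaining intermediate degrees, this block is zero unless $i=j-1$, in which case $\T(X_j,X_{j-1}[-1])=\End_\T(X_j)\simeq H^\prime$ (the ``identity'' bimodule), or $(i,j)=(d-1,1)$, in which case it is $\T(T,T[-(d-1)])$. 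Thus the nonzero blocks of $U$ run along a single oriented $(d-1)$-cycle, so $-\lotimes_HU$ cyclically permutes the $d-1$ copies of $\D^b(\md H^\prime)$, twisting only on the wrap-around by the functor induced by the corner block $\T(T,T[-(d-1)])$. Moreover $\Phi:=-\lotimes_HU[1]$ satisfies $\Phi^{d-1}=(-\lotimes_HU^{\lotimes_H(d-1)})[d-1]=\tau_H^{-1}[d-1]$ by \ref{AR}, which acts componentwise as $\tau^{\prime-1}[d-1]$.

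Now \ref{adj} applies: the functor $L\mapsto(L,0,\dots,0)$ induces an equivalence $\D^b(\md H^\prime)/\tau^{\prime-1}[d-1]\xrightarrow{\sim}\D^b(\md H)/\Phi$. Essential surjectivity holds because in the orbit category $(L_1,\dots,L_{d-1})\simeq\bigoplus_i\Phi^{i-1}(L_i[-(i-1)],0,\dots,0)\simeq(\bigoplus_iL_i[-(i-1)],0,\dots,0)$, and full faithfulness because $\Phi^k(L,0,\dots,0)$ is supported on the first factor precisely when $(d-1)\mid k$, where it equals $((\tau^{\prime-1}[d-1])^{k/(d-1)}L,0,\dots,0)$, so the sum defining morphisms collapses to $\bigoplus_m\Hom_{\D^b(\md H^\prime)}(L,(\tau^{\prime-1}[d-1])^mL^\prime)$. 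This equivalence is moreover a triangle equivalence because, via \ref{cl}, both orbit categories arise as cluster categories of derived tensor algebras. Composing with the equivalence of \ref{root} yields $\T\simeq\D^b(\md H^\prime)/\tau^{-1}[d-1]$.

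I expect the main obstacle to be the bimodule bookkeeping in the middle step: verifying carefully that all the intermediate blocks of $U$ genuinely vanish, that the surviving blocks are as claimed, and hence that $-\lotimes_HU$ is exactly the cyclic shift governing \ref{adj}. The passage from the purely $k$-linear orbit equivalence to a triangle equivalence is then routine given \ref{cl}.
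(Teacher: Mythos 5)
Your proof is correct and follows essentially the same route as the paper: apply \ref{half} to obtain vanishing of $\T(T,T[-i])$ for all $0<i<d-1$, identify $H\simeq(H')^{d-1}$ and $U$ as the cyclic-shift bimodule with corner block $V=\T(T,T[-(d-1)])$, invoke \ref{root}, and rewrite the resulting orbit category via \ref{adj}. The only divergence is presentational --- you re-derive the content of \ref{adj} inline and identify $V\simeq\Ext^1_{H'}(DH',H')$ indirectly from $\Phi^{d-1}=\tau_H^{-1}[d-1]$, whereas the paper computes $U^{\otimes_H(d-1)}=V\times\cdots\times V$ directly from the matrix form and then applies \ref{AR}.
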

\begin{proof}
	By \ref{half} we have $\T(T,T[-i])=0$ for all $1\leq i\leq d-2$. Then we have the following forms of $H=\End_\T(X)$ and the bimodule $U=\T(X,X[-1])$:
	\[ H=H^\prime\times\cdots\times H^\prime,\qquad U=
	\left( 
	\begin{array}{cccc}
		0&H^\prime&\cdots&0\\
		\vdots&\vdots&\ddots&\vdots\\
		0&0&\cdots&H^\prime\\
		V&0&\cdots&0
	\end{array}
	\right), \]
	with $V=\T(T,T[-(d-1)])$. Note that by the above descriptions we have $U^{\otimes_{H}(d-1)}=V\times\cdots\times V$, hence using \ref{AR}, we see $V\simeq\Ext_{H^\prime}^1(DH^\prime,H^\prime)$ as $(H^\prime,H^\prime)$-bimodules.
	Now since $H=H^\prime\times\cdots\times H^\prime$ is $1$-representation infinite and $H/J_H$ is separable over $k$, we can apply \ref{root}, so our triangulated category $\T$ is triangle equivalent to $\D^b(\md H)/-\lotimes_HU[1]$. By \ref{adj} this is triangle equivalent to $\D^b(\md H^\prime)/-\lotimes_{H^\prime}V[d-1]$, and we deduce by $V\simeq\Ext^1_{H^\prime}(DH^\prime,H^\prime)$ that this is precisely the $d$-cluster category of $H^\prime$.
\end{proof}

The next case $d=3$ generalizes a theorem of Keller--Murfet--Van den Bergh as well as giving a $3$-CY version of Keller--Reiten's theorem above. Again with a suitable vanishing conditions and an interpretation of $(d-1)$-st root, we have the following structure theorem for CY categories of dimension $d=2n+1$. In this case the $2n$-th root $\tau^{1/2n}$ can generally be reduced to a square root.
\begin{Cor}\label{kmv}
Let $\T$ be an algebraic $(2n+1)$-CY triangulated category with a $(2n+1)$-cluster tilting object $T$ such that $\End_\T(T)$ is hereditary, and $\T(T,T[-i])=0$ for $0<i<n$. Then, the algebra $H^\prime=\End_\T(T\oplus T[-n])$ is hereditary, and there exists a triangle equivalence $\T\simeq\D^b(\md H^\prime)/\tau^{-1/2}[n]$ if $H^\prime$ is $1$-representation infinite and $H^\prime/J_{H^\prime}$ is separable over $k$.
\end{Cor}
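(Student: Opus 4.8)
The plan is to deduce this from the structural result \ref{oddcy}, the main theorem \ref{root}, and the orbit-category identity \ref{adj}, the only genuinely new work being an explicit description of the bimodule occurring in \ref{root}. First, \ref{oddcy} already gives, with no extra hypotheses, that $H^\prime=\End_\T(T\oplus T[-n])$ is hereditary and that $H=\End_\T(X)$ with $X=T\oplus T[-1]\oplus\cdots\oplus T[-(2n-1)]$ is the direct product of $n$ copies of $H^\prime$. Consequently $H$ is $1$-representation infinite (resp. $H/J_H$ is separable over $k$) precisely when $H^\prime$ is, so under the stated extra hypotheses \ref{root} applies and yields a triangle equivalence $\T\simeq\D^b(\md H)/-\lotimes_HU[1]$ with $U=\T(X,X[-1])$. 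Everything then comes down to identifying this $2n$-th root under the identification $\D^b(\md H)=\D^b(\md H^\prime)^{\times n}$.

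To do so I would write $X=X_0\oplus\cdots\oplus X_{n-1}$ with $X_k=T[-k]\oplus T[-k-n]$, so that the $k$-th factor of $H$ is $\End_\T(X_k)\cong H^\prime$, and observe that $X_k[-1]=X_{k+1}$ for $0\le k\le n-2$ while $X_{n-1}[-1]=X_0[-n]$. Using the vanishing $\T(T,T[-i])=0$ for $0<i<n$ and (by \ref{nex}) for $n<i<2n$, together with the product decomposition of $H$ from \ref{oddcy}, one checks that $\T(X_j,X_k[-1])=0$ unless $j\equiv k+1\pmod n$; hence the only nonzero blocks of $U$ are $\T(X_{k+1},X_k[-1])=\End_\T(X_{k+1})$ for $k\le n-2$ and the corner block $W:=\T(X_0,X_{n-1}[-1])=\T(X_0,X_0[-n])$. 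In other words $-\lotimes_HU$ cyclically permutes the $n$ copies of $\D^b(\md H^\prime)$, acting through (a shift of) the regular bimodule on all transitions but one and through $\rho:=-\lotimes_{H^\prime}W$ on the remaining one. In particular $(-\lotimes_HU)^n$ fixes the $0$-th factor, where it is $\rho$, which is an autoequivalence of $\D^b(\md H^\prime)$ since it is the restriction of the global autoequivalence $(-\lotimes_HU)^n$ to an invariant factor.

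It then remains to recognize $\rho$ as a square root of $\tau^{-1}$. By \ref{AR} we have $U^{\lotimes_H 2n}\simeq U^{\otimes_H 2n}\simeq\Ext^1_H(DH,H)$; since $H=(H^\prime)^{\times n}$, all of $D$, $\Ext$ and $\lotimes$ here decompose over the factors, and because the intermediate transition blocks $\End_\T(X_k)$ are projective as bimodules (contributing no higher $\Tor$), passing to the $(0,0)$-block identifies $W^{\lotimes_{H^\prime}2}$ with $\Ext^1_{H^\prime}(DH^\prime,H^\prime)$. Hence $\rho^2=-\lotimes_{H^\prime}\Ext^1_{H^\prime}(DH^\prime,H^\prime)=\tau^{-1}$ on $\D^b(\md H^\prime)$, and we set $\tau^{-1/2}:=\rho$. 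Finally, since $-\lotimes_HU[1]$ cyclically permutes the $n$ factors, accumulating the twist $\rho$ and the suspension $[n]$ over one full lap, \ref{adj} collapses the orbit category onto a single factor:
\[ \T\ \simeq\ \D^b(\md H)/-\lotimes_HU[1]\ \simeq\ \D^b(\md H^\prime)/\rho[n]\ =\ \D^b(\md H^\prime)/\tau^{-1/2}[n]. \]

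The step I expect to be the main obstacle is the bookkeeping underlying the previous two paragraphs: pinning down the $(H,H)$-bimodule structure of $U$ over the product algebra $H=(H^\prime)^{\times n}$ — in particular the several shift identifications concealed by the phrase ``cyclic permutation'' — and verifying that the corner bimodule $W=\T(X_0,X_0[-n])$ squares exactly to $\Ext^1_{H^\prime}(DH^\prime,H^\prime)$, rather than to a twist of it by some algebra automorphism of $H^\prime$, since it is this precise form that legitimizes the application of \ref{adj}. By contrast, checking that $-\lotimes_{H^\prime}W$ is an autoequivalence is immediate, as noted above, and the reduction in the first paragraph is routine.
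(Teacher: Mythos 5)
Your proposal is correct and follows essentially the same route as the paper: both use \ref{oddcy} and \ref{nex} to decompose $X$ over shifts of $Y=T\oplus T[-n]$ (your $X_k$ is the paper's $Y[-k]$), both identify $U$ as the $n\times n$ cyclic bimodule matrix with $V=W=\T(Y,Y[-n])$ in the corner, both extract $V^{\otimes_{H'}2}\simeq\Ext^1_{H'}(DH',H')$ from \ref{AR}, and both invoke \ref{root} and \ref{adj} to collapse the orbit category. The caveat you flag about a potential automorphism twist is handled exactly as you suspect — \ref{AR} is an isomorphism of $(H,H)$-bimodules, so restricting to a diagonal block gives an honest $(H',H')$-bimodule isomorphism with no hidden twist.
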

\begin{proof}
	Recall that $X=T\oplus\cdots\oplus T[-(2n-1)])$ and put $Y=T\oplus T[-n]$. Then by \ref{oddcy} the algebra $H^\prime=\End_\T(Y)$ is hereditary, and by \ref{nex} the algebra $H=\End_\T(X)$ and the bimodule $U=\T(X,X[-1])$ has the following form along the decomposition $X=Y\oplus\cdots\oplus Y[-(n-1)]$.
	\[ H=H^\prime\times\cdots\times H^\prime,\qquad U=
	\left( 
	\begin{array}{cccc}
		0&H^\prime&\cdots&0\\
		\vdots&\vdots&\ddots&\vdots\\
		0&0&\cdots&H^\prime\\
		V&0&\cdots&0
	\end{array}
	\right), \]
	where the product has $n$ factors, the matrix is $n\times n$, and $V=\T(Y,Y[-n])$. Now the above descriptions show $U^{\otimes_H(2n)}=V^{\otimes_{H^\prime}2}\times\cdots\times V^{\otimes_{H^\prime}2}$, hence by \ref{AR} we deduce $V\otimes_{H^\prime}V\simeq\Ext^1_{H^\prime}(DH^\prime,H^\prime)$ as $(H^\prime,H^\prime)$-bimodules so that $-\lotimes_{H^\prime}V$ can be regarded as a square root $\tau^{-1/2}$ of the AR translation on $\D^b(\md H^\prime)$.
	When $H^\prime$ is $1$-representation infinite and $H^\prime/J_{H^\prime}$ is separable over $k$, so is $H$, so we can apply \ref{root} to obtain a triangle equivalence $\T\simeq\D^b(\md H)/-\lotimes_HU[1]$. By \ref{adj} this is equivalent to $\D^b(\md H^\prime)/-\lotimes_{H^\prime}V[n]$.
\end{proof}
The final case is $d=4$. Yet again, we state this more generally for even dimensional CY categories.
\begin{Cor}\label{new}
	Let $\T$ be an algebraic $(2n+2)$-CY triangulated category with a $(2n+2)$-cluster tilting object $T$ such that $\Hom_\T(T,T[-i])=0$ for $0<i<n$ and $\End_\T(T)=k\times\cdots\times k$. Then the algebra $H=\End_\T(T\oplus\cdots\oplus T[-2n])$ is hereditary and there exists a triangle equivalence $\T\simeq\D^b(\md H)/\tau^{-1/(2n+1)}[1]$.
\end{Cor}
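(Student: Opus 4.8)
The plan is to derive this from \ref{evency} and \ref{root}; the one point that needs a genuine argument is that, under the assumption $\End_\T(T)=k\times\cdots\times k$, the algebra $\End_\T(T\oplus T[-n])$ is hereditary, which is the input required by \ref{evency}. To see this I would first note that, as $T$ is $(2n+2)$-cluster tilting and $0<n<2n+2$, one has $\T(T[-n],T)=\T(T,T[n])=0$. Writing $A=\End_\T(T)=\End_\T(T[-n])$ and $W=\T(T,T[-n])$ (an $(A,A)$-bimodule), it follows that $\End_\T(T\oplus T[-n])$ is the triangular matrix algebra $\left(\begin{smallmatrix}A&0\\ W&A\end{smallmatrix}\right)$; since $A=k\times\cdots\times k$ is semisimple, this is the path algebra of a finite acyclic quiver, hence hereditary. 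As $\T(T,T[-i])=0$ for $0<i<n$ by hypothesis and $\End_\T(T\oplus T[-n])$ is hereditary, \ref{evency} then applies and gives that $H=\End_\T(T\oplus\cdots\oplus T[-2n])$ is hereditary.

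It remains to feed this into \ref{root} with $d=2n+2$, so I would verify its residual hypotheses. The indecomposable summands $T_a[-i]$ (with $T_a$ an indecomposable summand of $T$ and $0\leq i\leq 2n$) of $X=T\oplus\cdots\oplus T[-2n]$ are pairwise non-isomorphic: an isomorphism $T_a[-i]\simeq T_b[-i^\prime]$ with $0\leq i<i^\prime\leq 2n$ would produce a nonzero element of $\T(T,T[i^\prime-i])=0$ by $(2n+2)$-cluster tilting, while $i=i^\prime$ is just basicness of $T$. Since $\End_\T(T_a[-i])\cong\End_\T(T_a)=k$, it follows that $H/J_H\cong k\times\cdots\times k$ is separable over $k$. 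Granting that $H$ is $1$-representation infinite — the hypothesis, as in \ref{maincor}(3), under which the $(2n+1)$-st root and the orbit-category description are meaningful — \ref{root} then yields a triangle equivalence $\T\simeq\D^b(\md H)/\tau^{-1/(2n+1)}[1]$ with $\tau^{-1/(2n+1)}=-\lotimes_H\T(X,X[-1])$, exactly as produced in \ref{AR}.

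I expect the main obstacle to be the first step — the heredity of $\End_\T(T\oplus T[-n])$ — since everything after it is a direct invocation of \ref{evency} and \ref{root}. Here that step is easy precisely because $\End_\T(T)$ is semisimple, so the endomorphism algebra of $T\oplus T[-n]$ collapses to a triangular matrix algebra over semisimple rings and carries no relations; for a general hereditary $\End_\T(T)$ this is the whole content of Section \ref{4cy4ct}, and heredity of $\End_\T(T)$ by itself does not suffice (cf.\ \ref{nonex}).
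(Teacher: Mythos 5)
Your proof is correct and matches the route the paper intends: the sole new content is that $\End_\T(T\oplus T[-n])$ collapses to a triangular matrix ring over the semisimple $A=k\times\cdots\times k$, hence a path algebra of a finite acyclic quiver; the rest is \ref{evency} followed by \ref{root}, with separability of $H/J_H$ coming from the pairwise non-isomorphism of the summands $T_a[-i]$ (via $(2n+2)$-rigidity of $T$) together with $\End_\T(T_a)=k$. You are also right to flag that the body statement of \ref{new} silently drops the hypothesis that $H$ is $1$-representation infinite; the introductory version \ref{maincor}(3) includes it, and it is genuinely needed to invoke \ref{root}, so your ``granting'' is the honest reading of the corollary.
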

We remark that there is no analogue of interpreting the $(d-1)$-st root in this situation; in general $\tau^{1/(d-1)}$ cannot be made easier. This is because $H$ is in general connected in this case, see \ref{star}.

\subsection{Uniqueness of enhancements}
We note that the proof of our main theorem \ref{root} actually gives the uniqueness of enhancements for such triangulated categories (\ref{enh}). Before that we give the following remark on connectivity of our triangulated category. Although it may happen that $H$ is not connected even if the triangulated category $\T$ is connected, we show that every connected component of the (valued) quiver of $H$ as the same underlying graph as long as $\T$ is connected.
\begin{Prop}\label{conn}
Let $\T$ be a $d$-CY triangulated category with a $d$-cluster tilting object $T$ such that $\End_\T(T\oplus T[-1]\oplus\cdots\oplus T[-(d-2)])=H$ is hereditary. Let $Q$ be the valued quiver of $H$.
\begin{enumerate}
	\item There is a free action of the cyclic group $G$ of order $d-1$ on the underlying valued graph $\underline{Q}$ of $Q$.
	\item The triangulated category $\T$ is connected if and only if the orbit graph $\underline{Q}/G$ is connected.
\end{enumerate}
\end{Prop}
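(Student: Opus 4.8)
Write $X=T\oplus T[-1]\oplus\cdots\oplus T[-(d-2)]$, so that $F=\T(X,-)$ restricts to an equivalence $\add X\xrightarrow{\sim}\proj H$ and the vertices of $Q$ are identified with the indecomposable summands $T_a[-i]$ of $X$, where $T_a$ runs over the indecomposable summands of $T$ and $0\le i\le d-2$. First I would record that these $(d-1)\cdot\#\{T_a\}$ objects are pairwise non-isomorphic, with no representation-type hypothesis on $H$: if $T_a[-i]\simeq T_b[-j]$ with $i>j$ then $T_a\simeq T_b[-(i-j)]$ with $0<i-j<d$, so the non-zero space $\T(T,T_a)$ is a direct summand of $\T(T,T[-(i-j)])=0$, which is absurd; and $T_a[-i]\simeq T_b[-i]$ forces $a=b$. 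Hence I may define $\sigma$ on vertices by cyclically rotating the shift, $\sigma(T_a[-i])=T_a[-(i+1)]$ for $0\le i\le d-3$ and $\sigma(T_a[-(d-2)])=T_a$; it is a permutation of order $d-1$, and it is free since $\sigma^k(T_a)=T_a[-k]\not\simeq T_a$ for $0<k<d-1$. Thus (1) reduces to showing that $\sigma$ preserves the valued arrows of $Q$.

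The core of (1)---and what I expect to be the main obstacle---is that the valuation of the pair $\{T_a[-i],T_b[-j]\}$ depends only on $a$, $b$ and $i-j\bmod(d-1)$. The valued arrows ending at $T_b[-j]$ are encoded by the sink map at $T_b[-j]$ in $\add X$, which by \ref{induction} is assembled inductively from the AR $(d+2)$-angle at $T_b$ in $\add T$ (Section \ref{ARseq}) together with shifts of its truncations. Since $\T$ is $d$-Calabi--Yau this AR $(d+2)$-angle has coinciding end terms (it is ``cyclic''), and \ref{st}(2), $F\circ[d-1]\simeq\tau\circ F$, identifies the passage from layer $i$ to layer $i+(d-1)$ with the AR translation $\tau$ of $H$, which preserves all valued multiplicities. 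Threading these two facts through the inductive sink-map construction, while carrying along the division rings $\End_\T(T_a)$ attached to the vertices, should yield the asserted invariance; the delicate point is to combine correctly the cyclic rotation of the $(d+2)$-angle with the $\tau$-twist across the ``seam'' between layers $d-2$ and $0$. (Alternatively the periodicity can be read off the graded algebra $\Pi$ of \ref{str}, whose cohomology $\discoprod_{i\ge0}\T(X,X[-i])$ is periodic of period $d-1$ up to a $\tau$-twist by \ref{AR}.)

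For (2), decompose $\T$ into its indecomposable triangulated summands $\prod_\ell\T_\ell$, which induces $H=\prod_\ell H_\ell$ and a partition $Q=\bigsqcup_\ell Q_\ell$; since each $\T_\ell$ is closed under $[\pm1]$, each $Q_\ell$ is $\sigma$-stable and $\underline Q/G=\bigsqcup_\ell(\underline{Q_\ell}/G)$. Hence it suffices to prove that $\underline Q/G$ is connected exactly when $\T$ is connected. The crucial lemma is: if $T_a,T_b$ are indecomposable summands of $T$ and $\T(T_a,T_b[n])\ne0$ for some $n\in\Z$, then the connected components of $Q$ containing $T_a$ and $T_b$ lie in one $G$-orbit. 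Granting it: if $\T$ is connected then $\T=\thick X=\thick T$ (\ref{last}) is indecomposable, so $T$ has no decomposition into non-zero summands $T'\oplus T''$ with $\T(T',T''[n])=0=\T(T'',T'[n])$ for all $n$; therefore the graph on summands of $T$ with an edge between $T_a$ and $T_b$ whenever $\T(T_a,T_b[n])\ne0$ or $\T(T_b,T_a[n])\ne0$ for some $n$ is connected, and by the lemma all components of $Q$ lie in a single $G$-orbit, i.e.\ $\underline Q/G$ is connected. Conversely, a disconnection of $\underline Q/G$ lifts to a $\sigma$-stable, arrow-free partition $Q=Q^{(1)}\sqcup Q^{(2)}$, and with $T_\ell=\bigoplus\{T_a\mid T_a\in Q^{(\ell)}\}$ the contrapositive of the lemma gives $\T(T_1,T_2[n])=0=\T(T_2,T_1[n])$ for all $n$, so $\T\simeq\thick T_1\times\thick T_2$ is disconnected.

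Finally, the lemma should be proved by induction on $|n|$: for $0<n<d$ it is vacuous by $d$-rigidity; the $d$-Calabi--Yau duality $\T(T_a,T_b[n])\simeq D\T(T_b,T_a[d-n])$ reduces the case $n\ge d$ to a non-positive shift for $(b,a)$; for $-(d-2)\le n\le0$ one has $T_b[n]\in\add X$ and $\T(T_a,T_b[n])\simeq\Hom_H(FT_a,F(T_b[n]))$ with $F(T_b[n])$ the indecomposable projective at the vertex $\sigma^{-n}(T_b)=T_b[n]$, so non-vanishing produces a path in $Q$, placing $T_a$ and $\sigma^{-n}(T_b)$ in one component and hence $T_a$ and $T_b$ in $G$-related components; and for $n<-(d-2)$ one resolves $T_b[n]$ by a $2$-term $\add X$-complex via \ref{last}, applies $\T(T_a,-)$, and observes that $\T(T_a,T_b[n])\ne0$ forces a non-zero map into some $\add X$-summand appearing in that resolution, each of which is linked back to $T_b$ by a morphism of smaller degree, to which the inductive hypothesis applies. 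Checking that such a resolution does not leave the connected component of $T_b$---in particular handling summands that appear at the same homological degree---is the point of (2) requiring real care.
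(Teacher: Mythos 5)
Your sketch correctly identifies the permutation $\sigma$ and the two main obstacles (the ``seam'' for part~(1) and the all-degrees vanishing for part~(2)), but it does not actually resolve either, and the route you propose for part~(2) has a genuine gap. Here is how the paper does it and where your plan breaks down.

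For part~(1), the paper does not invoke \ref{st}(2) or any $\tau$-twist; instead it computes the valuation of the arrow $T_a\to T_b[-i]$ in two ways. By \ref{induction} (applied to the sink map at $T_b[-i]$) its right $D_a$-dimension equals the multiplicity of $T_a$ in the $i$-th middle term $T_b^{(i)}$ of the AR $(d+2)$-angle at $T_b$, while by \ref{induction2} (applied to the source map at $T_a$) its left $D_b$-dimension equals the multiplicity of $T_b$ in $T_a^{(d-1-i)}$. This is exactly the symmetry $\{T_a\to T_b[-i]\}\leftrightarrow\{T_b\to T_a[-(d-1-i)]\}$ needed to extend $\sigma$ across the seam, and it works for arbitrary hereditary $H$. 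Your parenthetical alternative via the graded algebra $\Pi$ and \ref{AR} is not available here: those results are proved under the standing hypotheses that $H$ is $1$-representation infinite and $\T$ is algebraic, neither of which is assumed in \ref{conn}. (Also, a small sign slip in your freeness argument: from $T_a[-i]\simeq T_b[-j]$ with $i>j$ you get $T_b\simeq T_a[i-j]$, so the contradiction comes from $\T(T,T[i-j])=0$ for $0<i-j<d$ — a \emph{positive} shift, not $\T(T,T[-(i-j)])$.)

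For part~(2), your plan is to prove a vertex-by-vertex lemma by induction on $|n|$. The inductive step for $n<-(d-2)$ does not go through as written: in a two-term resolution $X_1\to X_0\to T_b[n]\to X_1[1]$ from \ref{last}, a summand of $X_0$ of the form $T_c$ (shift $j=0$) gives a nonzero map $T_c\to T_b[n]$ at the \emph{same} degree $n$, so $|n|$ does not decrease and the induction stalls; there is also no a priori reason the summands of $X_0,X_1$ live in the component of $T_b$. The paper avoids this by working globally with the partition $T=T_1\oplus T_2$ induced by a disconnection of $\underline{Q}/G$. In Step~1 it pushes the sink map one further step via \ref{induction} with $m=d-1$ to get $\T(T_1,T_2[-(d-1)])=0$ (and dually), using that all middle terms of the AR $(d+2)$-angle at $T_2$ lie in $\add T_2$. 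Step~2 is the key idea your sketch is missing: it replaces $T_1\oplus T_2$ by the mutated $d$-cluster tilting object $T_1\oplus T_2[-1]$, checks that the same orthogonality hypotheses persist (this is where Step~1 is used), and re-applies Step~1 to obtain $\T(T_1,T_2[-d])=0$; iterating gives vanishing in all degrees. Step~3 then uses the two-term $\add X$-resolutions as in \ref{ses} to split $\T$ as $\thick T_1\times\thick T_2$. The mutation trick is what makes the degree descend, and without something playing its role your induction does not close.
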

\begin{proof}
	(1)  Without loss of generality we assume $T$ is basic, and identify each of the indecomposable direct summands of $X=T\oplus T[-1]\oplus\cdots\oplus T[-(d-2)]$ with the corresponding vertex of $Q$. Clearly $Q$ has $d-1$ copies of the quiver of $\End_\T(T)$ as a full subquiver. We consider the remaining arrows.
	
	We draw the AR $(d+2)$-angles in $\add T$ at each direct summand $T_a$ of $T$ as below.
	\[ \xymatrix@!C=1mm@!R=2mm{
		&T_a^{(d-1)}\ar[dr]\ar[rr]&&T_a^{(d-2)}\ar[dr]\ar[rr]&&\cdots\ar[dr]\ar[rr]&&T_a^{(1)}\ar[dr]\ar[rr]&&T_a^{(0)}\ar[dr]&\\
		T_a\ar[ur]&&\ar[ur]&&\ar[ur]&&\ar[ur]&&\ar[ur]&&T_a } \]
	Put $D_a=\End_\T(T_a)$, which is a division algebra. The valuation of the arrow $T_a\to T_b[-i]$ in $Q$ is given by the $(D_b,D_a)$-bimodule $J_X/J_X^2(T_a,T_b[-i])$, where $J_X$ is the Jacobson radical of the category $\add X$. Applying \ref{induction} to $m=i$, we see that this bimodule is isomorphic as a right $D_a$-vector space to $\T/J_\T(T_a,T_b^{(i)})$, whose dimension is nothing but the number of summands $T_a$ in $T_b^{(i)}$. Dually by \ref{induction2}, we have an isomorphism $J_X/J_X^2(T_a,T_b[-i])\simeq\T/J_\T(T_a^{(d-1-i)},T_b)$ of left $D_b$-vector spaces, whose dimension is the number of summands $T_b$ in $T_a^{(d-1-i)}$. We deduce that the valuation of the arrow ${T_a\rightarrow T_b[-i]}$ is equal to that of ${T_a[-(d-1-i)]\leftarrow T_b}$ for $1\leq i\leq d-2$. It easily follows that the map on the vertices of $Q$ given by $T_a[-i]\mapsto T_a[-(i+1)]$ and $T_a[-(d-2)]\mapsto T_a$ extends to an automorphism of $\underline{Q}$.
	
	(2)  It is clear that $\underline{Q}/G$ is not connected when $\T$ is not connected. Suppose conversely that $\underline{Q}/G$ is not connected. This amounts to saying that there exists a decomposition $T=T_1\oplus T_2$ such that $\T(X_1,X_2)=0=\T(X_2,X_1)$, where $X_i=T_i\oplus T_i[-1]\oplus\cdots\oplus T_i[-(d-2)]$ for $i=1,2$. We claim $\T=\thick T_1\times\thick T_2$. 
	
	{\it Step 1: $\T(T_1,T_2[-(d-1)])=0=\T(T_2,T_1[-(d-1)])$.}
	
	We prove the first equality. Consider the AR $(d+2)$-angle in $\add T$ at $T_2$. Applying \ref{induction} to $m=d-2$, we have that $\coprod_{i=0}^{d-2}T_2^{(i)}[-(d-2-i)]\to T_2[-(d-2)]$ is the sink map in $\add(T\oplus T[-1]\oplus\cdots\oplus T[-(d-2)])$. It follows from $\T(T_1,X_2)=0$ that $T_2^{(i)}\in\add T_2$ for $0\leq i\leq d-2$. On the other hand, the leftmost map $T_2\to T_2^{(d-1)}$ is the source map in $\add T$, so we also have $T_2^{(d-1)}\in\add T_2$. Then applying \ref{induction} to $m=d-1$, we obtain the sink map $\coprod_{i=0}^{d-1}T_2^{(i)}[-(d-1-i)]\to T_2[-(d-1)]$ in $\add(T\oplus T[-1]\oplus\cdots\oplus T[-(d-1)])$. Now there are no non-zero homomorphism from $T_1$ to all the summands in the left-hand-side except $\T(T_1,T_2^{(0)}[-(d-1)])$, so we have a surjection $\T(T_1,T_2^{(0)}[-(d-1)])\to\T(T_1,T_2[-(d-1)])$. It follows by induction on the vertices of the quiver of $T_2$ that $\T(T_1,T_2[-(d-1)])=0$.
	
	{\it Step 2: $\T(T_1,T_2[l])=\T(T_2,T_1[l])=0$ for all $l\in\Z$.}
	
	Since $T_1\oplus T_2\in\T$ is $d$-cluster tilting, we have the assertion for $0<l<d$. By the $d$-CY property of $\T$ it is enough to prove the claim for $l\leq0$. We show $\T(T_1,T_2[-i])=0$ for $i\geq0$ by induction on $i$. By assumption we know this for $0\leq i\leq d-2$, and for $i=d-1$ by the previous step. 
	Now observe that {\it $T_1\oplus T_2[-1]$ is a $d$-cluster tilting object with the same properties}, that is, $\T(X_1,X_2[-1])=0=\T(X_2[-1],X_1)$. Indeed, this is a $d$-cluster tilting object since it is the right mutation of $T_1\oplus T_2$ at $T_2$ (\cite[5.1]{IYo}). Also, the first vanishing amounts to saying $\T(T_1,T_2[i])=0$ for $-(d-1)\leq i\leq d-3$, which follows from Step 1, and the second one to $\T(T_2,T_1[i])=0$ for $-(d-3)\leq i\leq d-1$, which is clear. Then applying Step 1 to the $d$-cluster tilting object $T_1\oplus T_2[-1]$ we deduce $\T(T_1,T_2[-d])=0$. Similarly applying it to $T_1[-1]\oplus T_2$ yields $\T(T_2,T_1[-d])=0$. We will inductively have the assertion.
	
	{\it Step 3: The conclusion.}
	
	By Step 2, we have $\T\supset\thick T_1\times\thick T_2$. It remains to prove that any indecomposable object lies either in $\thick T_1$ or $\thick T_2$. Let $A\in\T$ be an indecomposable object. If $A\in\add(X\oplus T[1])$ then the assertion is clear. If $A\not\in\add(X\oplus T[1])$, then by \ref{ses} there is a triangle $X^{-1}\to X^{0}\to A\to X^{-1}[1]$ in $\T$ with $X^{-1},X^{0}\in\add X$ such that $0\to\T(X,X^{-1})\to\T(X,X^0)\to\T(X,A)\to0$ is a minimal projective resolution in $\md H$. Since $H=\End_\T(X_1)\times\End_\T(X_2)$, both $X^{-1}$ and $X^0$ has to be in $\add X_i$ for the same $i$. We deduce that $A\in X_i\ast X_i[1]\subset \thick T_i$.
\end{proof}
Recall that an {\it enhancement} of a triangulated category $\T$ is a dg category $\A$ together with a triangle equivalence $\per\A\simeq\T$. We say two enhancements $\A$ and $\B$ are {\it equivalent} if there is $X\in\D(\A^\op\otimes\B)$ such that $-\lotimes_\A X\colon\per\A\to\per\B$ is an equivalence. A triangulated category {\it has a unique enhancement} if any of its two enhancements are equivalent.
\begin{Thm}\label{enh}
Let $k$ be a perfect field, $d\geq2$ and $\T$ an algebraic $d$-CY triangulated category with a $d$-cluster tilting object $T$ such that $H=\End_\T(T\oplus T[-1]\oplus\cdots\oplus T[-(d-2)])$ is hereditary. Then $\T$ has a unique enhancement.
\end{Thm}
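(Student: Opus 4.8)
The plan is to revisit the proof of \ref{root} and observe that the homologically smooth dg algebra it produces is, up to quasi-isomorphism, an invariant of $\T$ rather than of the chosen enhancement; the uniqueness of enhancements then follows because the cluster category of a dg algebra carries a canonical enhancement. First I would reduce to the case that $\T$ is connected: by \ref{conn} a disconnected $\T$ of the stated type is a finite product of connected triangulated categories each again of that type, and an enhancement of a finite product of triangulated categories is, up to equivalence, the product of enhancements of the factors, so it suffices to treat each connected factor. For connected $\T$, \ref{conn} moreover forces the valued quiver of $H$ to have all its connected components mutually isomorphic, so that $H$ is either $1$-representation infinite or of Dynkin type, and I treat the two cases separately.

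Suppose $H$ is $1$-representation infinite. Since $k$ is perfect, $H/J_H$ is separable over $k$, so the hypotheses of \ref{root} are in force. Fix any enhancement $\A$, put $X=T\oplus T[-1]\oplus\cdots\oplus T[-(d-2)]$, and form the dg algebras $\G=\RHom_\A(X,X)$ and $\Pi=\G^{\leq0}$; since $T$ is $d$-cluster tilting we have $\thick_\T X=\T$, so the natural enhancement $\per_{\dg}\G$ of $\per\G$ is equivalent to $\A$ as an enhancement of $\T$. Running the proof of \ref{root}: by \ref{alg} the graded algebra $H^*\Pi$ is isomorphic to $S:=\bigoplus_{i\geq0}\T(X,X[-i])\cong T_HU$ with $U=\T(X,X[-1])$; by \ref{dim2} one has $\pd_{S^e}S\leq2$, so that $S$, regarded as a dg algebra with $U$ placed in degree $-1$ and zero differential, is homologically smooth; and by \ref{formal} the algebra $S$ is intrinsically formal. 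Hence $\Pi$ is quasi-isomorphic to $S$ for every choice of $\A$, and in particular $\Pi$ is homologically smooth. The conditions (a), (b), (c) preceding \ref{str} now hold for $\G$ — (a) by $\Hom$-finiteness, (b) by \ref{mot} since $T$ is a $d$-cluster tilting summand of $X$, (c) as just established — so, reading the proof of \ref{str} together with \ref{loc} as a statement about dg quotients, the functor $-\lotimes_\Pi\G$ upgrades to a quasi-equivalence identifying $\per_{\dg}\G\simeq\A$ with the dg quotient $\per_{\dg}\Pi/\Db_{\dg}(\Pi)\simeq\per_{\dg}S/\Db_{\dg}(S)$, which does not involve $\A$. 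Thus all enhancements of $\T$ are equivalent.

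When $H$ is of Dynkin type, \ref{st} identifies $\ind\T$ with the finite set $\ind(\md H)\sqcup\ind(\add T[1])$, so $\T$ has only finitely many indecomposable objects. The structure of such triangulated categories is completely understood (see the references in the introduction), and uniqueness of enhancements for them over a perfect field is available from the literature. Alternatively, one can run the scheme of the previous paragraph with $\Pi$ replaced by an explicit intrinsic homologically smooth dg algebra realizing $\T$: for $d=2$, the derived preprojective (Ginzburg) dg algebra $T_H(\Ext^1_H(DH,H)[1])$ of the Dynkin quiver underlying $H$, whose formality follows from the $\pd$-bound exactly as in \ref{dim2} and \ref{formal}, and whose cluster category is triangle equivalent to $\T$ by Keller--Reiten's theorem (valid for all acyclic quivers), so that $\T$ inherits a canonical enhancement.

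The main obstacle is the passage from ``$\Pi$ is an invariant of $\T$'' to ``$\T$ has a unique enhancement'': one must verify that the constructions underlying \ref{str} — the homotopy-colimit manipulations of \ref{hc} and \ref{below}, the equivalence $\C(\Pi)\simeq\per\G$, and the localization $\Pi\to\G$ of \ref{loc} — all lift to honest dg functors and dg quotients, so that the triangle equivalence $\T\simeq\C(\Pi)$ is realized by an equivalence of the relevant enhancements. This is essentially bookkeeping but must be carried out carefully. A secondary difficulty is that in the Dynkin case the identification $S\cong T_HU$ of \ref{alg}, and hence the intrinsic-formality argument, uses \ref{non} and \ref{ses}, which rely on $1$-representation infiniteness; it is precisely this dependence that forces the separate treatment of the Dynkin case.
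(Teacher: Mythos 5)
Your proof is correct and its overall architecture coincides with the paper's: reduce to the connected case using \ref{conn}, split according to whether $H$ is Dynkin or $1$-representation infinite, cite the established uniqueness result (the paper cites \cite{Mu20}) for the finite type case, and for the infinite case exploit the intrinsic formality of $\Pi=\G^{\leq0}$ (\ref{alg}, \ref{dim2}, \ref{formal}) to conclude that $\Pi$ does not depend on the chosen enhancement. The one place you genuinely diverge is the final step of the $1$-representation-infinite case. You propose to lift \ref{str} to a quasi-equivalence of dg categories, so that $\A\simeq\per_{\dg}\G$ is identified with the dg quotient $\per_{\dg}\Pi/\rD{b}_{\dg}(\Pi)$, an object built purely from $\Pi$. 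The paper instead invokes \ref{loc}(2): since $\D(\G)=\{X\in\D(\Pi)\mid\RHom_\Pi(X,\Pi)=0\}$ gives a purely categorical description of the image of the fully faithful restriction $\D(\G)\hookrightarrow\D(\Pi)$, the localization $\Pi\to\G$ and hence $\G$ itself (up to derived Morita equivalence) is already determined by $\Pi$, with no need to upgrade the Verdier quotient of \ref{str} to a dg quotient. Both routes work; the paper's use of \ref{loc}(2) trades the ``bookkeeping'' you flag (verifying that \ref{hc}, \ref{below}, and the equivalence of \ref{str} all lift to honest dg functors) for the more elementary fact that a dg localization is determined up to quasi-isomorphism by the essential image of its restriction functor. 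Your auxiliary remark in the Dynkin case --- realizing $\T$ via the Ginzburg-type dg algebra $T_H(\Ext^1_H(DH,H)[1])$ for $d=2$ --- is not needed (the citation already covers all $d$) and does not obviously generalize to $d>2$, so it should be dropped.
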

\begin{proof}
	We may assume that $\T$ is connected as a triangulated category. Then by \ref{conn} above, the hereditary algebra $H$ is either representation-finite or $1$-representation infinite. If $H$ is representation-finite, then $\T$ has finitely many indecomposable objects by \ref{st}. The uniqueness of enhancements for such triangulated categories is established in \cite{Mu20}. Now assume that $H$ is $1$-representation infinite. Let $\A$ be an enhancement of $\T$, and $\G=\A(X,X)$ with $X=T\oplus T[-1]\oplus\cdots\oplus T[-(d-2)]$. Clearly $\A$ and $\G$ are derived Morita equivalent. By the proof of \ref{root} the dg algebra $\Pi=\G^{\leq0}$ is independent of the enhancement $\A$ (by the intrinsic formality of its cohomology), so it is enough to prove that $\Pi$ determines $\G$ up to derived Morita equivalence. But this follows from \ref{loc}(2), as it says $\D(\G)$ is categorically characterized in $\D(\Pi)$.
\end{proof}
	
\section{Adjusting orbits}
We observe that certain interpretation of $(d-1)$-st root of the AR translation as in Section \ref{Cor} gives an equivalence of orbit categories, leading to the proof of corollaries. The aim of this section is to prove such equivalences in the level of dg enhancements. The content here is in this way a refinement of \cite[Appendix A]{ha3}.

Let $\A$ be a dg category and let $F\colon\A\to\A$ be a dg functor. Then we can define the dg orbit category $\A/F$ \cite{Ke05}; it has the same objects as $\A$ and the morphism complex
\[ \A/F(L,M)=\colim\left( \xymatrix{\discoprod_{k\geq0}\A(F^kL,M)\ar[r]&\discoprod_{k\geq0}\A(F^kL,FM)\ar[r]&\cdots} \right).  \]
\begin{Lem}\label{ff}
	Let $\A$ be a dg category and $\B=\A\times\cdots\times\A$ the $n$-fold product of $\A$. Suppose that $F\colon\A\to\A$ and $G\colon\B\to\B$ are dg functors such that $G^n=F\times\cdots\times F$ on $\B$ and $\B(G^i(L,0,\ldots,0),G^j(M,0,\ldots,0))=0$ unless $n\mid i-j$. Then the functor $\A\to\B$ given by $L\mapsto(L,0,\ldots,0)$ induces a fully faithful functor $\A/F\to\B/G$.
\end{Lem}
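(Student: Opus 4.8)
The plan is to exhibit the induced dg functor $\Phi\colon\A/F\to\B/G$ explicitly on morphism complexes and then recognise it as an isomorphism there via a cofinality argument. Write $\iota\colon\A\to\B$ for the dg functor $L\mapsto\widehat{L}:=(L,0,\dots,0)$, with $0$ a zero object of $\A$. Since $\B$ is a finite product and $\A(0,0)=0$, the functor $\iota$ identifies $\A(L,M)$ with $\B(\widehat{L},\widehat{M})$; and since a dg functor carries a zero object to a zero object, the hypothesis $G^{n}=F\times\cdots\times F$ yields $G^{nk}\widehat{L}=\widehat{F^{k}L}$ for all $k\ge 0$. Consequently any $f\in\A(F^{k}L,F^{p}M)$ gives $\iota f\in\B\bigl(G^{nk}\widehat{L},G^{np}\widehat{M}\bigr)$, which represents a morphism $\widehat{L}\to\widehat{M}$ in $\B/G$. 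Recalling that an element of $\A/F(L,M)$ is the class of some $f\in\bigoplus_{k\ge 0}\A(F^{k}L,F^{p}M)$ for some $p\ge 0$, I would set $\Phi(L)=\widehat{L}$ on objects and send the class of $f$ to the class of $\iota f$ on morphisms.

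First I would verify that $\Phi$ is a well-defined dg functor. Well-definedness reduces to $G^{n}(\iota f)=\iota(Ff)$, which is immediate from $G^{n}=F\times\cdots\times F$; that $\Phi$ is a chain map is clear because $\iota$ is one; and compatibility with composition follows because composition in $\A/F$ (resp.\ $\B/G$) is defined by shifting representatives by powers of $F$ (resp.\ $G$), and $\iota F^{j}=G^{nj}\iota$. Then, for full faithfulness, I would unwind the two defining colimits. The one computing $\A/F(L,M)$ has $p$-th term $\bigoplus_{k\ge 0}\A(F^{k}L,F^{p}M)$ with transition maps given by applying $F$; the one computing $\B/G(\widehat{L},\widehat{M})$ has $q$-th term $\bigoplus_{j\ge 0}\B(G^{j}\widehat{L},G^{q}\widehat{M})$ with transition maps given by applying $G$. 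By the vanishing hypothesis $\B\bigl(G^{i}(L,0,\dots,0),G^{j}(M,0,\dots,0)\bigr)=0$ unless $n\mid i-j$, only the summands with $n\mid j-q$ survive in the $q$-th term; restricting to the cofinal subsequence $q=np$, the $np$-th term becomes $\bigoplus_{k\ge 0}\B\bigl(G^{nk}\widehat{L},G^{np}\widehat{M}\bigr)$, which $\iota$ identifies with $\bigoplus_{k\ge 0}\A(F^{k}L,F^{p}M)$, i.e.\ with the $p$-th term of the system computing $\A/F(L,M)$. It then remains to check that the $n$-fold transition map $G^{n}$ of the $\B/G$-system, restricted to these surviving summands, matches the single transition map $F$ of the $\A/F$-system under the identifications $G^{nk}\widehat{L}=\widehat{F^{k}L}$. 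Since a cofinal subsequence computes the same colimit, this makes $\Phi$ an isomorphism on each morphism complex, hence fully faithful.

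The colimit bookkeeping is routine, but the step I would write out carefully is exactly this last matching of transition maps, since it also involves the reindexing of summands built into the orbit-category construction (the transition of the defining system shifts the summand index $k\mapsto k+1$ while applying the endofunctor, so one must line up ``apply $F$ and increase $k$ by $1$'' on the $\A/F$ side with ``apply $G^{n}$ and increase $j$ by $n$'' on the $\B/G$ side). Keeping the index conventions of the dg orbit category consistent there is the only genuine subtlety, and it is also the place where both hypotheses --- the multiplicativity $G^{n}=F\times\cdots\times F$ and the off-diagonal vanishing of morphism complexes --- are actually used.
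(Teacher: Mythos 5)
Your proof is correct and follows essentially the same route as the paper: unwind the colimit defining $\B/G(\widehat{L},\widehat{M})$, pass to the cofinal subsequence of every $n$-th term (transition map $G^n$), invoke the vanishing hypothesis to kill all summands whose index is not a multiple of $n$, and identify what remains with the colimit defining $\A/F(L,M)$ via $G^{nk}\widehat{L}=\widehat{F^kL}$. Your extra care in spelling out that $\iota$ induces a well-defined dg functor and in matching the transition maps is a sound elaboration rather than a different argument.
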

\begin{proof}
	Writing $\widetilde{L}=(L,0,\ldots,0)$, we have to show $\B/G(\widetilde{L},\widetilde{M})=\A/F(L,M)$ for each $L,M\in\A$. The left-hand-side is
	\[ \colim\left( \xymatrix{\discoprod_{k\geq0}\B(G^k\widetilde{L},\widetilde{M})\ar[r]^-G&\discoprod_{k\geq0}\B(G^k\widetilde{L},G\widetilde{M})\ar[r]^-G&\discoprod_{k\geq0}\B(G^k\widetilde{L},G^2\widetilde{M})\ar[r]&\cdots} \right), \]
	which equals
	\[ \colim\left( \xymatrix{\discoprod_{k\geq0}\B(G^k\widetilde{L},\widetilde{M})\ar[r]^-{G^n}&\discoprod_{k\geq0}\B(G^k\widetilde{L},G^n\widetilde{M})\ar[r]^-{G^n}&\discoprod_{k\geq0}\B(G^k\widetilde{L},G^{2n}\widetilde{M})\ar[r]&\cdots} \right) \]
	by cofinality. Moreover, by vanishing of $\B(G^i\widetilde{L},G^j\widetilde{M})$ for $n\!\mathrel{\not|} i-j$, this is equal to
	\[ \colim\left( \xymatrix{\discoprod_{k\geq0}\B(G^{nk}\widetilde{L},\widetilde{M})\ar[r]^-{G^n}&\discoprod_{k\geq0}\B(G^{nk}\widetilde{L},G^n\widetilde{M})\ar[r]^-{G^n}&\discoprod_{k\geq0}\B(G^{nk}\widetilde{L},G^{2n}\widetilde{M})\ar[r]&\cdots} \right), \]
	hence to
	\[ \colim\left( \xymatrix{\discoprod_{k\geq0}\A(F^k{L},{M})\ar[r]^-{F}&\discoprod_{k\geq0}\A(F^k{L},F{M})\ar[r]^-{F}&\discoprod_{k\geq0}\A(F^k{L},F^{2}{M})\ar[r]&\cdots} \right), \]
	which is nothing but $\A/F(L,M)$.
\end{proof}

Let us note the following consequence in which form we use.
\begin{Prop}\label{adj}
	Let $\A$ be a pretriangulated dg category and $F\colon\A\to\A$ a dg functor inducing an equivalence on $H^0\A$. Let $\B=\A\times\cdots\times\A$ the $n$-fold product and define $G\colon\B\to\B$ by $(L_1,\ldots,L_n)\mapsto(FL_n,L_1,\ldots,L_{n-1})$. Then $L\mapsto(L,0,\ldots,0)$ gives a quasi-equivalence $\A/F[n]\to\B/G[1]$.
\end{Prop}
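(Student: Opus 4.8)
The plan is to deduce full faithfulness from Lemma~\ref{ff} and then upgrade it to a quasi-equivalence by checking essential surjectivity by hand. For the first step I would invoke Lemma~\ref{ff} with its ``$F$'' replaced by $F[n]:=F\circ[n]=[n]\circ F$ and its ``$G$'' replaced by $G[1]:=G\circ[1]$ (both $F[n]$ and $G[1]$ induce equivalences on $H^0\A$ and $H^0\B$, since $F$ does). Two hypotheses must be verified. First, $F$ commutes with the shift — strictly after replacing $F$ by a dg-isomorphic functor, or at least up to a coherent natural quasi-isomorphism, which is all one needs for a quasi-equivalence statement — and hence so does $G$, so $(G[1])^i=[i]\circ G^i$. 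A direct computation gives $G^n=F\times\cdots\times F$ (each coordinate makes one full cycle and picks up exactly one copy of $F$), whence $(G[1])^n=[n]\circ(F\times\cdots\times F)=(F[n])\times\cdots\times(F[n])$; this is the first hypothesis. Second, writing $i=qn+r$ with $0\le r<n$, one computes $(G[1])^i(L,0,\ldots,0)=(0,\ldots,0,F^qL[i],0,\ldots,0)$ with the single nonzero entry in slot $r+1$. Since the morphism complex of a product dg category is the product of those of the factors, $\B\big((G[1])^i(L,0,\ldots,0),(G[1])^j(M,0,\ldots,0)\big)$ is supported in slot $r+1$ on the source and in slot $r'+1$ on the target (with $j\equiv r'\pmod n$), so it vanishes unless $r=r'$, i.e.\ unless $n\mid i-j$; this is the second hypothesis. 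Hence Lemma~\ref{ff} produces a fully faithful dg functor $\Phi\colon\A/F[n]\to\B/G[1]$ with $\Phi(L)=(L,0,\ldots,0)$.

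It remains to see that $\Phi$ is essentially surjective on $H^0$. In any dg orbit category there is a canonical isomorphism $X\cong(G[1])X$; iterating it $r$ times on $X=\Phi(M)=(M,0,\ldots,0)$ and using the slot computation above, the tuple with $M[r]$ in slot $r+1$ and $0$ elsewhere is isomorphic to $\Phi(M)$ in $\B/G[1]$. Equivalently, for $0\le r<n$ the tuple $(0,\ldots,0,N,0,\ldots,0)$ with $N$ in slot $r+1$ is isomorphic to $\Phi\big(N[-r]\big)$, which is legitimate since $\A$ is pretriangulated. Now the canonical additive functor $\B\to\B/G[1]$ preserves finite biproducts and $(L_1,\ldots,L_n)=\bigoplus_{s=1}^n(0,\ldots,0,L_s,0,\ldots,0)$ in $\B$, so in $\B/G[1]$ one obtains
\[ (L_1,\ldots,L_n)\;\cong\;\bigoplus_{s=1}^n\Phi\big(L_s[-(s-1)]\big)\;=\;\Phi\big({\textstyle\bigoplus_{s=1}^n L_s[-(s-1)]}\big), \]
the last equality because $\Phi$ takes values in objects supported on the first copy of $\A$, where biproducts are computed as in $\A$. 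Thus every object of $\B/G[1]$ is isomorphic to one in the image of $\Phi$, and $\Phi$ is a quasi-equivalence.

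The argument is elementary; the points demanding care are the index bookkeeping in the powers of $G[1]$ and the ``slot'' manipulations, together with the one genuine technical nuisance — the strict commutation of $F$ with the shift — which I would dispose of at the outset by replacing $F$ with a dg-isomorphic functor (or by carrying out everything up to natural quasi-isomorphism throughout, which is harmless for a quasi-equivalence statement).
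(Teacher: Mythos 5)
Your proof follows the same route as the paper: apply Lemma~\ref{ff} with $F$ replaced by $F[n]$ and $G$ by $G[1]$ to get full faithfulness, then verify density by iterating the canonical isomorphism $X\cong(G[1])X$ in the orbit category to reach every one-slot tuple, and conclude by additivity. You are more explicit than the paper about the two hypotheses of Lemma~\ref{ff} (in particular the slot-vanishing condition and the strict commutation of $F$ with the shift, which the paper leaves tacit and which is indeed satisfied strictly in the paper's intended application $\A=\Cb(\proj A)$, $F=-\otimes_A\theta_d$), but the substance and the decomposition of the argument are identical.
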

\begin{proof}
	Since $(G[1])^n=F[n]\times\cdots\times F[n]$, we can apply \ref{ff} so that we have a fully faithful functor $\A/F[n]\to\B/G[1]$. It remains to show that the induced functor $H^0(\A/F[n])\to H^0(\B/G[1])$ is dense. This follows from the fact that for each $L\in\A$ and $1\leq i\leq n$, the object $L[-i+1]\in H^0(\A/F[n])$ is mapped to $(L[-i+1],0,\ldots,0)$, which is isomorphic in $H^0(\B/G[1])$ to $(G[1])^{i-1}(L[-i+1],0,\ldots,0)=(0,\ldots,L,\ldots,0)$ with $L$ at the $i$-th factor.
\end{proof}

Applying \ref{adj} to finite dimensional algebras, we can realize the $(d+n)$-cluster category of a finite dimensional algebra $A$ of global dimension $\leq d$ as the canonical triangulated hull of the orbit category of $n$-fold product algebra.
\begin{Ex}
Let $A$ be a finite dimensional algebra of global dimension $\leq d$, and $\theta_d\to \RHom_A(DA,A)[d]$ the bimodule projective resolution. Consider the pretriangulated dg category and its dg endofunctor given by
\[ \A=\C^b(\proj A),\quad F=-\otimes_A\theta_d\colon\A\to\A. \]
Then, letting $\mathbf{\Pi}_{d+n+1}(A):=T_A(\theta_d[n])$ the $(d+n+1)$-CY completion of $A$, we have an equivalence
\[ \xymatrix{ \per(\A/F[n])\ar[r]^-\simeq& \C(\mathbf{\Pi}_{d+n+1}(A)) }, \]
cf. \ref{cl}. Let us now consider the dg orbit category $\B/G[1]$ in \ref{adj}. Putting $B=A\times\cdots\times A$, we have
\[ \B=\A\times\cdots\times\A\simeq\C^b(\proj B), \quad G=-\otimes_BU\colon\B\to\B \text{ with } U=\begin{pmatrix}0&A&\cdots&0\\\vdots&\vdots&\ddots&\vdots\\0&0&\cdots&A\\\theta_d&0&\cdots&0\end{pmatrix}. \]
Then, define $\mathbf{S}=T_B(U[1])$ so that \ref{cl} gives an equivalence
\[ \xymatrix{ \per(\B/G[1])\ar[r]^-\simeq& \C(\mathbf{S}). } \]
We conclude by \ref{adj} that there exists an equivalence
\[ \xymatrix{ \C(\mathbf{S})\ar[r]^-\simeq&\C(\mathbf{\Pi}_{d+n+1}).} \]
\end{Ex}

\section{Combinatorial roots of $\tau$}\label{comb}
We give a description of quivers whose derived categories have some roots of the AR translation. Observing that a root of $\tau$ in the derived category of a quiver $Q$ gives rise to a root of $\tau$ as an automorphism of the infinite translation quiver $\Z Q$, we give a necessarily and sufficient condition for $Q$ to have such a root on $\Z Q$.
 
Throughout this section let $Q$ be a finite acyclic quiver. Note that we do not assume it is connected. Suppose that there exists an autoequivalence $F$ of $\D^b(\md kQ)$ such that $F^l\simeq\tau^{-1}$ for some $l\geq1$. 
We first translate this categorical autoequivalence into a combinatorial one on the infinite translation quiver $\Z Q$ \cite{ASS,Hap}. We say that two quivers are {\it derived equivalent} if their path algebras are derived equivalent. This is the case if and only if their infinite translation quivers are isomorphic.
\begin{Lem}
Let $F$ be a triangle autoequivalence of $\D^b(\md kQ)$ such that $F^l\simeq\tau^{-1}$. Then $F$ induces an automorphism $F^\prime$ of the translation quiver $\Z Q$ such that $(F^\prime)^l=\tau^{-1}$.
\end{Lem}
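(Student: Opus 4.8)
The plan is to route everything through the Auslander--Reiten theory of $\D^b(\md kQ)$, as described by Happel \cite{Hap}. Recall that the AR quiver $\Gamma$ of $\D^b(\md kQ)$ has as vertices the isomorphism classes of indecomposable objects, as arrows a basis of each space $\rad(A,B)/\rad^2(A,B)$ between indecomposables, and as translation the AR translation $\tau=\nu\circ[-1]$, where $\nu$ denotes the Serre functor of $\D^b(\md kQ)$. For $Q$ of Dynkin type $\Gamma$ is the single component $\Z Q$; in general $\Gamma$ consists of a distinguished \emph{transjective} part isomorphic to $\Z Q$ (a disjoint union of copies of $\Z Q_i$ when $Q=\bigsqcup Q_i$ is disconnected) together with regular components, namely tubes in the tame case and components of type $\Z A_\infty$ in the wild case. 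The feature I want to exploit is that the transjective part is characterized intrinsically inside $\Gamma$: it is the union of those components having only finitely many $\tau$-orbits, every regular component having infinitely many.

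First I would observe that every triangle autoequivalence $G$ of $\D^b(\md kQ)$ acts on $\Gamma$: it permutes the isomorphism classes of indecomposables bijectively, and, being an equivalence, it carries $\rad(A,B)$ onto $\rad(GA,GB)$ and $\rad^2$ onto $\rad^2$, hence induces a bijection on arrows respecting sources and targets. Furthermore $G$ commutes with the Serre functor up to natural isomorphism, by uniqueness of Serre functors, hence with $\tau=\nu\circ[-1]$; so the action of $G$ on $\Gamma$ commutes with the translation and, in particular, carries a component with finitely many $\tau$-orbits to one of the same kind. By the intrinsic characterization above, $G$ therefore maps the transjective part onto itself, and restricting yields an automorphism $G^\prime$ of the translation quiver $\Z Q$. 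The assignment $G\mapsto G^\prime$ is a group homomorphism from the group of isomorphism classes of triangle autoequivalences of $\D^b(\md kQ)$ to $\Aut(\Z Q)$: naturally isomorphic functors induce the same permutation of indecomposables and the same map on radical quotients, composition is clearly respected, and the categorical $\tau$ goes to the combinatorial translation of $\Z Q$ — which, abusing notation, we continue to denote $\tau$.

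Granting this, the lemma follows at once: applying $G\mapsto G^\prime$ to the isomorphism $F^l\simeq\tau^{-1}$ gives $(F^\prime)^l=(F^l)^\prime=(\tau^{-1})^\prime=\tau^{-1}$ in $\Aut(\Z Q)$, as asserted. The step that really needs justification is the claim that $F$ preserves the transjective component; this rests on Happel's classification of the components of $\Gamma$ \cite{Hap} (together with the structure of the regular components of hereditary algebras) and on the elementary but essential fact that an autoequivalence commutes with the Serre functor, hence with $\tau$. The remaining points — that $F$ acts on the AR quiver and that $G\mapsto G^\prime$ is a group homomorphism — are routine once one recalls that equivalences induce isomorphisms on $\rad/\rad^2$; see \cite{ASS} for the needed generalities on translation quivers.
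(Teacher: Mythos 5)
Your intrinsic characterization of the transjective components (finitely many $\tau$-orbits per component) is correct and is a nice way to see that an autoequivalence permutes them, but your identification of the transjective part is wrong, and this is where the proof breaks. When $Q$ is connected non-Dynkin, the AR quiver of $\D^b(\md kQ)$ does \emph{not} have a single transjective component isomorphic to $\Z Q$; it has one such component $\C_p$ for every $p\in\Z$, arising from gluing the preinjective part of $\md kQ[p]$ to the postprojective part of $\md kQ[p+1]$ via $\tau$. (Regular components and the translation quiver $\Z Q$ of $\md kQ$ are glued differently in the derived category than your description allows.) Consequently, restricting $F$ to the transjective part gives an automorphism of $\bigsqcup_{p\in\Z}\Z Q_i$, not of a single copy $\Z Q$, and the assignment ``$G\mapsto G'$ is a homomorphism to $\Aut(\Z Q)$'' is not defined as stated: $F$ may well send the distinguished copy $\Z Q=\C_0(Q_1)\sqcup\cdots\sqcup\C_0(Q_n)$ into shifted copies $\C_{p_i}(Q_{\sigma(i)})$.

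This is exactly the point the paper's proof addresses: after reducing to the case where the components $Q_i$ of $Q$ are mutually derived equivalent, one records the permutation $\sigma$ and the shifts $p_i$ with $F(\C_0(Q_i))=\C_{p_i}(Q_{\sigma(i)})$ and composes $F$ with a corrective ``componentwise shift'' autoequivalence $G$ so that $F'=G\circ F$ does preserve $\Z Q$; one then argues that $(F')^l$ still acts as $\tau^{-1}$ on $\Z Q$. That shift correction, and the preliminary reduction to mutually derived-equivalent components (so that the permutation $\sigma$ and $G$ make sense), are the genuinely missing ingredients in your argument. In the Dynkin case your proof is fine, since there the AR quiver really is $\Z Q$; but the lemma is used in the paper precisely in non-Dynkin situations.
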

\begin{proof}
	The proof depends on the description of AR components of $\D^b(\md kQ)$ (see \cite[I.5.6]{Hap}).
	Divide the components of $Q$ into the derived equivalence classes $Q_1\sqcup\cdots\sqcup Q_n$. Then $F$ acts on each derived category $\D^b(\md kQ_i)$ so we may assume that the components of $Q$ are mutually derived equivalent. Suppose first that each component of $Q$ is Dynkin. Then the AR quiver of $\D^b(\md kQ)$ is $\Z Q$, thus $F$ gives a desired autoequivalence. We now suppose that $Q$ is non-Dynkin and decompose it into the components $Q_1\sqcup\cdots\sqcup Q_n$. Then the AR quiver of $\D^b(\md kQ)$ contains the components $\C_p(Q_i)\simeq\Z Q_i$ each of which is characterized as the component containing $kQ_i[p]\in\D^b(\md kQ)$. In view of the shape of the AR quiver of $\D^b(\md kQ)$, the triangle autoequivalence $F$ induces an automorphism of the translation quiver $\bigcup_{p\in\Z, 1\leq i\leq n}\C_p(Q_i)$. Then there exists a permutation $\sigma$ on $\{1,\ldots,n\}$ and $p_i\in\Z$ such that the component $\C_0(Q_i)$ is mapped to $\C_{p_i}(Q_{\sigma(i)})$. Let $G$ be the triangle automorphism of $\D^b(\md kQ)=\D^b(\md kQ_1)\times\cdots\times\D^b(\md kQ_n)$ given by $(L_1,\ldots,L_n)\mapsto(L_1[-p_{\s^{-1}(1)}],\ldots,L_n[-p_{\s^{-1}(n)}])$. Then the composite $F^\prime:=G\circ F$ preserves $\C_0(Q_1)\sqcup\cdots\sqcup\C_0(Q_n)\simeq\Z Q$, whose $l$-th power equals $\tau^{-1}$.
\end{proof}
 
Recall that a {\it section} of $\Z Q$ \cite[VIII.1.2]{ASS}, (also \cite[7]{ABS2}) is a full subquiver $\Sigma$ of $\Z Q$ such that 
\begin{itemize}
	\item each $\tau$-orbit of $\Z Q$ intersects $\Sigma$ exactly once.
	\item If $x\to y$ is an arrow in $\Z Q$ and $x\in\Sigma$, then $y\in\Sigma$ or $\tau y\in\Sigma$.
\end{itemize}
If $\Sigma$ is a section of $\Z Q$ then there is an isomorphism $\Z\Sigma\simeq\Z Q$ of translation quivers \cite[VIII.1.6]{ASS}. The following main result in this section shows that we can take a section as an orbit under the root of $\tau$.
\begin{Thm}\label{sec}
Let $Q$ be a finite acyclic quiver and $F$ an automorphism of $\Z Q$ such that $F^l=\tau^{-1}$. Then there exists a full subquiver $T$ of $\Z Q$ such that the full subquiver consisting of $T\cup FT\cup\cdots\cup F^{l-1}T$ forms a section of $\Z Q$. Moreover, for any points $s,t\in T$ and $a>0$, there is no arrow from $F^as$ to $t$.
\end{Thm}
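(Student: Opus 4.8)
The plan is to reduce Theorem~\ref{sec} to producing a single $\Z$-valued ``level function'' on $\Z Q$ adapted to $F$, and then to construct that function from the level function of an ordinary section.

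\emph{Step 1: structure of $F$ and reduction.} First I would record that $F$ acts freely on the vertices of $\Z Q$ (if $F^k$ fixed a vertex so would $F^{kl}=\tau^{-k}$, forcing $k=0$), and, since $F$ commutes with $\tau$, that $F$ permutes the $|Q_0|$ many $\tau$-orbits via a permutation $\pi$ with $\pi^l=\mathrm{id}$. Every $\pi$-orbit has size exactly $l$: if one had size $d\mid l$ with $d<l$, then $F^d$ would preserve a $\tau$-orbit $O\cong\Z$ and act on it as translation by some $c\in\Z$, while $(F^d)^{l/d}=F^l=\tau^{-1}$ acts by $\pm1$, forcing $cl/d=\pm1$, impossible. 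Hence $l\mid|Q_0|$ and $F$ has exactly $|Q_0|/l$ orbits of vertices. Consequently, if $T$ is any full subquiver meeting each $F$-orbit exactly once, then $T\cup FT\cup\cdots\cup F^{l-1}T$ automatically meets each $\tau$-orbit exactly once (each $F$-orbit is cut by $\langle\tau\rangle$ into $l$ ``cosets'', and $F^rT$ hits the $r$-th one). So it suffices to find such a $T$ with the extra properties that $\Sigma:=T\cup\cdots\cup F^{l-1}T$ satisfies the closure axiom of a section and that there is no arrow $F^as\to t$ with $s,t\in T$, $a\ge1$.

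\emph{Step 2: the level function device.} It is enough to produce $g\colon(\Z Q)_0\to\Z$ with $g(Fx)=g(x)+1$ for all $x$ and with $0\le g(y)-g(x)\le l$ for every arrow $x\to y$ of $\Z Q$. Indeed, put $T:=g^{-1}(0)$; then each $F$-orbit meets $T$ exactly once, $F^rT=g^{-1}(r)$, and $\Sigma=g^{-1}(\{0,\dots,l-1\})$. For closure: if $x\to y$ with $x\in\Sigma$ then $0\le g(y)\le g(x)+l\le 2l-1$, so either $y\in\Sigma$, or $g(\tau y)=g(F^{-l}y)=g(y)-l\in\{0,\dots,l-1\}$, i.e.\ $\tau y\in\Sigma$; with Step~1 this makes $\Sigma$ a section. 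For the ``moreover'': an arrow $F^as\to t$ with $s,t\in T$ would give $g(t)\ge g(F^as)=g(s)+a=a\ge1$, contradicting $g(t)=0$.

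\emph{Step 3: construction of $g$ (the main obstacle).} I would start from an arbitrary section $\Sigma_0$ with level function $m$ normalised by $m(\tau^{-1}x)=m(x)+1$ and $m(y)-m(x)\in\{0,1\}$ along arrows; the quantity $e(i):=m(Fx)-m(x)$ ($x$ in the $\tau$-orbit indexed by $i$) is well defined and satisfies $\sum_{j=0}^{l-1}e(\pi^ji)=1$. The crucial point is that $\Sigma_0$ can be chosen so that $F$ is $m$-monotone, i.e.\ $e\ge0$; this uses that $F$ is an $l$-th root of $\tau^{-1}$ (not of $\tau$), and then $j\mapsto m(F^jx)-m(x)$ is nondecreasing and periodic-up-by-$1$ over $l$ steps, whence
\[ |\,m(F^jx)-m(x)-j/l\,|<1 \quad\text{for all }j,x. \]
Given this, $g$ is built by refining $m$ along each $F$-orbit so that it increases by exactly $1$ per $F$-step; the remaining freedom (one integer constant per $F$-orbit) is pinned by a difference-constraint system whose inequalities come from the arrows of $\Z Q$ (an arrow $v\to F^cw$ between chosen $F$-orbit representatives forces $g(v)-g(w)\in\{c-l,\dots,c\}$). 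I expect the delicate part to be proving this system has no negative cycle: a putative bad cycle realises a closed walk in $\Z Q$ from some $u_0$ to $F^Du_0$, and one controls $D$ against the walk length using $m$ together with the displayed estimate; making these bounds sharp enough to rule out a negative cycle is exactly the place where the monotone choice of $\Sigma_0$ is essential, and this is the technical heart of the argument.
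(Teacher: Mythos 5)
Your Steps 1 and 2 are correct and give a clean reduction: it suffices to construct an integer function $g$ on $(\Z Q)_0$ with $g(Fx)=g(x)+1$ and $0\le g(y)-g(x)\le l$ along arrows, and then take $T=g^{-1}(0)$. The observation that every $\pi$-orbit has length exactly $l$ is a nice byproduct. However, Step 3 — which you yourself flag as ``the main obstacle'' and ``the technical heart'' — is not carried out, and that is where the entire content of the theorem lives. Two unproven assertions remain: (i) that a section $\Sigma_0$ can be chosen so that $F$ is $m$-monotone (you write ``this uses that $F$ is an $l$-th root of $\tau^{-1}$'' but give no argument), and (ii) that the resulting difference-constraint system on $F$-orbit representatives has no negative cycle. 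Without these, no function $g$ has been produced and no $F$-section exists, so the theorem is not proved. I am also not convinced (ii) would go through as sketched: the bound $|m(F^jx)-m(x)-j/l|<1$ you extract from monotonicity is coarse, and when you chain it around a cycle the errors accumulate additively, so it is not obvious that you can beat the length of the cycle without a much more careful bookkeeping of how $m$ changes along the walk.

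The paper avoids the whole negative-cycle machinery by constructing $T$ directly inside the standard section $Q\subset\Z Q$: an $F$-orbit $O$ is a $\Z$-torsor under $F$, the intersection $O\cap Q$ is finite (of size $l$), and one takes $t\in O\cap Q$ to be the $F$-minimum of that finite set, so that every point of $O\cap Q$ has the form $F^at$ with $a\ge 0$. The key elementary fact, extracted into Lemmas \ref{pred} and \ref{succ}, is that for an arrow $x\to t$ or $t\to x$ in $Q$ with $t\in T$ one can compare $F$-exponents directly by pushing the arrow back into $Q$ with $F^{-a}$ and using only that $Q$ is a section and that $T$ consists of $F$-minima. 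This gives the $F$-section axioms and the ``moreover'' statement in a few lines, with no global optimisation argument. If you want to pursue your level-function route, I would suggest first proving the existence of a monotone $\Sigma_0$ (which is genuinely nontrivial and not obviously easier than the theorem itself), or else observing that the function $g$ your Step 2 asks for can be read off \emph{a posteriori} from the paper's $T$ — at which point the direct construction is doing all the work anyway.
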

Consequently, we can characterize the quivers whose infinite translation quiver $\Z Q$ has an $l$-th root of $\tau$. 
\begin{Cor}\label{class}
	There exists an $l$-th root of $\tau^{-1}$ on $\Z Q$ if and only if $Q$ is derived equivalent to the quiver $Q^\prime$ satisfying the following (a), (b) and (c).
	\begin{enumerate}
		\renewcommand{\labelenumi}{(\alph{enumi})}
		\item $Q^\prime$ has $l$ copies $T^{(0)}, T^{(1)},\ldots,T^{(l-1)}$ of a quiver $T$ as a full subquiver.
		\suspend{enumerate}
		We denote by $F$ the permutation of the vertices of $Q^\prime$ taking $T^{(i)}$ to $T^{(i+1)}$, where $T^{(l)}:=T^{(0)}$.
		\resume{enumerate}
		\renewcommand{\labelenumi}{(\alph{enumi})}
		\item There are additional arrows $x\to y$ in $Q^\prime$ with $x\in T^{(i)}$, $y\in T^{(j)}$ only if $i<j$.
		\item $F$ extends to an automorphism of the underlying graph of $Q^\prime$.
	\end{enumerate}
\end{Cor}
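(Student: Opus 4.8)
The plan is to derive Corollary \ref{class} from Theorem \ref{sec} (for the ``only if'' implication) together with an explicit construction of the root (for the ``if'' implication), using throughout that two finite acyclic quivers are derived equivalent exactly when their infinite translation quivers are isomorphic, and that $\Z\Sigma\simeq\Z Q$ for any section $\Sigma$ of $\Z Q$. Thus both implications reduce to statements about translation quivers: ``there is an automorphism $F$ of $\Z Q$ with $F^l=\tau^{-1}$'' should match ``$\Z Q\simeq\Z Q^\prime$ for some $Q^\prime$ satisfying (a)--(c)''.

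For the ``only if'' part I would apply Theorem \ref{sec} to the given $F$, obtaining a full subquiver $T\subseteq\Z Q$ with $\Sigma:=T\cup FT\cup\cdots\cup F^{l-1}T$ a section, and set $Q^\prime:=\Sigma$; then $\Z Q^\prime=\Z\Sigma\simeq\Z Q$, so $Q^\prime$ is derived equivalent to $Q$. Writing $T^{(i)}:=F^iT$, each $T^{(i)}$ is a full subquiver of $Q^\prime$ isomorphic to $T$ via $F^i$ (so its arrows in $Q^\prime$ are exactly the copied arrows of $T$), and the $T^{(i)}$ are pairwise disjoint: if $F^is=F^jt$ for $s,t\in T$ and $0\le i<j\le l-1$, then applying $F^{l-j}$ gives $F^{l-j+i}s=F^lt=\tau^{-1}t$ with $1\le l-j+i\le l-1$, so $\tau^{-1}t$ and $t$ both lie in $\Sigma$, contradicting that a section meets each $\tau$-orbit once; this gives (a). An additional arrow $x\to y$ (necessarily between distinct copies) with $x\in T^{(i)}$, $y\in T^{(j)}$ cannot have $i>j$, since applying $F^{-j}$ would yield an arrow $F^{i-j}s\to t$ with $s,t\in T$ and $i-j>0$, contradicting the last assertion of Theorem \ref{sec}; this gives (b). For (c), let $\phi$ be the permutation of $Q^\prime_0$ with $\phi(F^is)=F^{i+1}s$ for $i<l-1$ and $\phi(F^{l-1}s)=s$ (this is the permutation denoted $F$ in the statement), and check $\phi$ sends edges of $Q^\prime$ to edges: inside a copy this is clear since $\phi$ restricts to an isomorphism of successive copies, while for an additional arrow the only delicate case is $x\in T^{(i)}$, $y=F^{l-1}t\in T^{(l-1)}$ with $i<l-1$, where applying $F$ gives an arrow $Fx\to F^lt=\tau^{-1}t$ in $\Z Q$, whose mesh counterpart is an arrow $t\to Fx$; both ends lie in $\Sigma$, so $\{\phi x,\phi y\}=\{Fx,t\}$ is an edge of $Q^\prime$. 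As $\phi$ has finite order, it is then a graph automorphism.

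For the ``if'' part, from $\Z Q\simeq\Z Q^\prime$ it suffices to build an automorphism $\widehat F$ of $\Z Q^\prime$ with $\widehat F^l=\tau^{-1}$. Writing vertices of $\Z Q^\prime$ as $(n,v)$ with $\tau(n,v)=(n-1,v)$, I would take $\varepsilon\colon Q^\prime_0\to\Z$ equal to $0$ on $T^{(0)},\dots,T^{(l-2)}$ and to $1$ on $T^{(l-1)}$, and define $\widehat F(n,v)=(n+\varepsilon(v),\phi v)$ on vertices, where $\phi$ is the permutation from (c). Then $\widehat F$ commutes with $\tau$, and $\widehat F^l(n,v)=(n+\sum_{k=0}^{l-1}\varepsilon(\phi^kv),v)=(n+1,v)=\tau^{-1}(n,v)$, since $\phi^l=\mathrm{id}$ and the $\phi$-orbit of $v$ meets each copy $T^{(i)}$ once. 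The real point is extending $\widehat F$ to arrows, which amounts to verifying, for each arrow $v\to w$ of $Q^\prime$, that $\varepsilon(w)-\varepsilon(v)$ equals $0$ when $\phi$ preserves its orientation and $1$ when $\phi$ reverses it. For an arrow inside a copy this holds because $\phi$ maps successive copies isomorphically and orientation-preservingly; for an additional arrow $x\to y$ with $x\in T^{(i)}$, $y\in T^{(j)}$, $i<j$ (by (b)), one has $\varepsilon(w)-\varepsilon(v)=0$ unless $j=l-1$, in which case it is $1$, and correspondingly (c) together with (b) force the image edge of $Q^\prime$ to be oriented $\phi x\to\phi y$ (orientation preserved) when $j\le l-2$ and $\phi y\to\phi x$ (reversed) when $j=l-1$, because then $\phi y\in T^{(0)}$ has the smallest copy-index. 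This compatibility makes the obvious assignment on arrows well defined, and the inverse is built symmetrically, so $\widehat F$ is the required $l$-th root of $\tau^{-1}$.

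I expect the main obstacle to be the orientation bookkeeping around the ``wrap-around'' $T^{(l-1)}\to T^{(0)}$, which occurs in both directions: identifying the edge $\{\phi x,\phi y\}$ and its orientation requires the precise interaction between the arrows of $\Z Q$ and the translation $\tau$ (the mesh relation), and in the ``if'' direction it is what forces the asymmetric choice of $\varepsilon$. The remaining verifications (commutation with $\tau$, the orbit-sum computation, bijectivity) are routine once this is set up correctly.
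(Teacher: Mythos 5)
Your proposal is correct and follows essentially the same route as the paper's proof: apply Theorem \ref{sec} to obtain the $F$-section $Q'=T\cup FT\cup\cdots\cup F^{l-1}T$ and verify (a)–(c) from its properties for the ``only if'' part, and for the ``if'' part build the root on $\Z Q'$ by sending $T^{(i)}$ to $T^{(i+1)}$ for $i<l-1$ and $T^{(l-1)}$ to $\tau^{-1}T^{(0)}$. The paper compresses the verification that this extends to arrows (and the wrap-around case of (c)) into a short edge-count; your more explicit coordinate computation with $\varepsilon$ and the mesh-relation argument fills in exactly the bookkeeping that the paper declares "easily seen."
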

For the proof let us introduce the notion of ``a section with respect to a root of $\tau$''.
\begin{Def}
Let $F$ be an automorphism of $\Z Q$ such that $F^l=\tau^{-1}$. An {\it $F$-section} of $\Z Q$ is a full subquiver $T$ of $\Z Q$ satisfying the following.
\begin{enumerate}
\renewcommand{\labelenumi}{(\alph{enumi})}
	\item Each $F$-orbit of $\Z Q$ intersects $T$ exactly once.
	\item If $t\to x$ is an arrow in $\Z Q$ and $t\in T$ then $x\in T\cup FT\cup\cdots\cup F^{l-1}T$ or $\tau x\in T$.
\end{enumerate}
\end{Def}
Clearly the condition (b) is equivalent to its dual, which have the following form.
\begin{itemize}
	\item[(b')] If $x\to t$ is an arrow in $\Z Q$ and $t\in T$ then $x\in T$ or $\tau^{-1}x\in T\cup FT\cup\cdots\cup F^{l-1}T$.
\end{itemize}

The following observation shows that an $F$-section gives a desired ($\tau$-)section of $\Z Q$.
\begin{Lem}\label{Fsec}
Let $T$ be an $F$-section of $\Z Q$. Then $\Sigma=T\cup FT\cup\cdots\cup F^{l-1}T$ is a section of $\Z Q$.
\end{Lem}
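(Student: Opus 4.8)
The plan is to verify directly the two defining properties of a section for $\Sigma=T\cup FT\cup\cdots\cup F^{l-1}T$, using the relation $F^l=\tau^{-1}$ as the bridge between $F$-orbits and $\tau$-orbits. First I would record two elementary consequences of $F^l=\tau^{-1}$. The $F$-action on $\Z Q$ is free: a relation $F^kx=x$ would give $\tau^{-k}x=F^{lk}x=x$, hence $k=0$ since $\tau$ acts freely on $\Z Q$. And since $\tau^m=F^{-lm}$, each $F$-orbit in $\Z Q$ is the disjoint union of the $l$ pairwise distinct $\tau$-orbits through $x,Fx,\dots,F^{l-1}x$.

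Next I would show that every $\tau$-orbit of $\Z Q$ meets $\Sigma$ in exactly one point. For existence, given $z\in\Z Q$, condition (a) of an $F$-section yields a unique $t\in T$ with $z=F^mt$ for some $m\in\Z$; writing $m=lq+i$ with $0\le i\le l-1$ gives $z=\tau^{-q}(F^it)$, so the $\tau$-orbit of $z$ meets $F^iT\subseteq\Sigma$. For uniqueness, suppose $F^it$ and $F^jt'$ with $t,t'\in T$ and $0\le i,j\le l-1$ lie in one $\tau$-orbit, say $F^it=\tau^{-m}F^jt'=F^{j+lm}t'$; then $t$ and $t'$ lie in the same $F$-orbit, so $t=t'$ by (a), after which freeness of the $F$-action forces $i=j$.

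Then I would verify the arrow axiom: if $x\to y$ is an arrow of $\Z Q$ with $x\in\Sigma$, then $y\in\Sigma$ or $\tau y\in\Sigma$. Write $x=F^it$ with $t\in T$ and $0\le i\le l-1$; applying the quiver automorphism $F^{-i}$ produces an arrow $t\to F^{-i}y$, to which condition (b) of an $F$-section applies, giving either $F^{-i}y\in\Sigma$ or $\tau(F^{-i}y)\in T$. In the first case $F^{-i}y=F^jt'$ for some $t'\in T$ and $0\le j\le l-1$, so $y=F^{i+j}t'$; if $i+j\le l-1$ then $y\in\Sigma$, while if $i+j\ge l$, writing $i+j=l+k$ with $0\le k\le l-2$ and using $F^l=\tau^{-1}$, we get $\tau y=F^kt'\in\Sigma$. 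In the second case $\tau(F^{-i}y)=F^{-l-i}y\in T$, so $y=F^{l+i}t''=\tau^{-1}(F^it'')$ for some $t''\in T$, whence $\tau y=F^it''\in\Sigma$. In every case the axiom holds, and since $\Sigma$ is by construction a full subquiver of $\Z Q$, it is a section.

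I do not expect a genuine obstacle here. The only points requiring care are the bookkeeping of exponents modulo $l$ in the arrow verification and the explicit observation that the $F$-action is free, which is not part of the definition of an $F$-section but is needed for the ``exactly once'' in the $\tau$-orbit count. One could equally run the arrow verification through the dual form (b$'$) of condition (b), arguing with arrows into $T$ instead of out of it.
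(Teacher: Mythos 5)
Your proof is correct and follows essentially the same strategy as the paper's: apply a suitable power of the automorphism $F$ to bring the source of an arrow into $T$, invoke the $F$-section axiom, and translate back, using $F^l=\tau^{-1}$ to reinterpret the result in terms of $\tau$-orbits. You spell out the freeness of the $F$-action and the $\tau$-orbit count in more detail than the paper (which dispatches them as "easily seen"), but the argument is the same.
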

\begin{proof}
	It is easily seen that $\Sigma$ forms a complete set of representatives of the $\tau$-orbits of $\Z Q$.	Let $x\in\Sigma$ and $x\to y$ an arrow in $\Z Q$. We have to show $y\in\Sigma$ or $\tau y\in\Sigma$, that is, $y\in\bigcup_{i=0}^{2l-1}F^iT$. Since $x\in\Sigma$ we have $x\in F^aT$ for some $0\leq a\leq l-1$. Then $y\in F^a\Sigma$ or $\tau y\in F^aT$ since $T$ is an $F$-section. It follows that $y\in\bigcup_{i=a}^{a+l}F^iT\subset\bigcup_{i=0}^{2l-1}F^iT$.
\end{proof}

\begin{Con}\label{constr}
We construct an $F$-section as a subquiver of the standard section $Q$ of $\Z Q$. Let $T$ be the subquiver of $Q$ such that each vertex $x$ of $Q$ can be written uniquely as $x=F^at$ for some $t\in T$ and $a\geq0$. Thus $T$ is obtained by dividing the vertices of $Q$ into $F$-orbits, and taking from each orbit the vertices which are written as the smallest powers of $F$.
\end{Con}

\begin{Ex}\label{upsidedown}
Let $Q$ be the quiver of linearly oriented type $A_4$. Then the infinite translation quiver $\Z Q$ has a square root of $\tau^{-1}$, indeed, consider the automorphism $F$ which ``turns up side down'', moving slightly to the right.
\[ \xymatrix@!R=1mm@!C=1mm{
	&\circ\ar[dr]&&Fx\ar[dr]&\ar@{--}`u[r]`[rr]`_dl[dddl][dddl]&F^3x\ar[dr]&&\circ\\
	\circ\ar[dr]\ar[ur]&&\circ\ar[dr]\ar[ur]&&Fy\ar[dr]\ar[ur]&&\circ\ar[dr]\ar[ur]&\\
	&\circ\ar[dr]\ar[ur]&&y\ar[dr]\ar[ur]&&\circ\ar[dr]\ar[ur]&&\circ\\
	\circ\ar[ur]&&x\ar[ur]&\ar@{--}`d[l]`[ll]`_ur[uuur][uuur]&F^2x\ar[ur]&&\circ\ar[ur]& } \]
The standard section $Q$ which is depicted in the dotted line consists of two $F$-orbits; letting $x$ its source and $y$ the adjacent vertex, the other two vertices are $Fy$ and $F^3x$. Then the construction says $T=\{x,y\}$.
\end{Ex}
\begin{Prop}\label{TF}
The quiver $T$ given in \ref{constr} is an $F$-section of $\Z Q$.
\end{Prop}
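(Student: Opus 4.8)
The plan is to verify the two defining conditions of an $F$-section for the quiver $T$ of \ref{constr}, after first checking that that construction is well posed. Throughout I regard $Q$ as the standard section of $\Z Q$, so $\tau^{-1}$ shifts to the next copy and $F^l=\tau^{-1}$, and I use freely that $Q$ is a section and that $\Z Q$ is a stable translation quiver (every arrow $u\to v$ has a companion arrow $v\to\tau^{-1}u$).

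First I would establish soundness of \ref{constr}, which already yields condition (a). Introduce the ``copy level'' $\phi$ on the vertices of $\Z Q$ with $\phi|_{Q_0}=0$ and $\phi(\tau^{-1}v)=\phi(v)+1$; then $\phi(F^lv)=\phi(v)+1$, so on any $F$-orbit the equation $\phi(F^av)=0$ has exactly one solution $a$ in each residue class modulo $l$. Combined with the fact that every $F$-orbit of $\Z Q$ meets $Q_0$ (because $\tau=F^{-l}$ and each $\tau$-orbit meets the section $Q$), this shows that every $F$-orbit of $\Z Q$ intersects $Q_0$ in a nonempty finite set, hence has a well-defined smallest-$F$-power representative, and distinct orbits give distinct representatives. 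That collection is exactly the $T$ of \ref{constr}, and condition (a) of an $F$-section is then immediate.

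Granting (a), every vertex $v$ of $\Z Q$ is uniquely $v=F^{\alpha(v)}\pi(v)$ with $\pi(v)\in T$ and $\alpha(v)\in\Z$, and I record the easy facts $\alpha(Fv)=\alpha(v)+1$, $\alpha(\tau^{-1}v)=\alpha(v)+l$, $T=\{v:\alpha(v)=0\}$, $\bigcup_{i=0}^{l-1}F^iT=\{v:0\le\alpha(v)\le l-1\}$, and — directly from the construction — $\alpha(v)\ge0$ for all $v\in Q_0$. The core of the argument is the single inequality
\[ 0\le\alpha(v)-\alpha(u)\le l\qquad\text{for every arrow }u\to v\text{ in }\Z Q. \]
From it condition (b) drops out: for an arrow $t\to x$ with $t\in T$ we have $\alpha(t)=0$, hence $0\le\alpha(x)\le l$, and $\alpha(x)\le l-1$ gives $x\in\bigcup_{i=0}^{l-1}F^iT$ while $\alpha(x)=l$ gives $\alpha(\tau x)=0$, i.e. $\tau x\in T$.

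For the left-hand inequality I use $F$-equivariance of $\alpha$ to reduce to $\alpha(u)=0$, i.e. $u\in T\subseteq Q_0$; since $Q$ is a section, an arrow out of $u$ lands in $Q_0$ or in $\tau^{-1}Q_0=F^lQ_0$, and in either case $\alpha(v)\ge0$ because $\alpha\ge0$ on $Q_0$. The right-hand inequality then follows by applying the left-hand one to the companion arrow $v\to\tau^{-1}u$: it gives $\alpha(\tau^{-1}u)\ge\alpha(v)$, i.e. $\alpha(v)-\alpha(u)\le l$. I expect the only genuinely fiddly part to be the bookkeeping of the first step — soundness of \ref{constr} and the count that each $F$-orbit hits $Q_0$ exactly $l$ times; once the function $\alpha$ is in hand, conditions (a) and (b) reduce cleanly to the section property of the standard section $Q$ together with $\alpha\ge 0$ on $Q_0$.
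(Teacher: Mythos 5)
Your proof is correct, and it reaches the conclusion by a slightly different decomposition than the paper's. The paper works through two auxiliary containment lemmas --- \ref{pred} (predecessors of $T$ inside $Q$ stay in $T$) and \ref{succ} (successors of $T$ inside $Q$ land in $T\cup FT\cup\cdots\cup F^{l-1}T$) --- and then assembles these with the section property of $Q$ and a companion arrow in the proof of \ref{TF}. You consolidate both into a single quantitative statement, the inequality $0\le\alpha(v)-\alpha(u)\le l$ valid for \emph{every} arrow $u\to v$ of $\Z Q$, with $\alpha$ the $F$-exponent function; conditions (a) and (b) then fall out by inspection. The underlying ingredients are the same (the section property of the standard copy $Q$, non-negativity of $\alpha$ on $Q_0$ forced by \ref{constr}, and the companion arrow $v\to\tau^{-1}u$), but your packaging is cleaner: the left half of the inequality is essentially \ref{pred} after $F$-normalization, the right half replaces the case analysis in \ref{succ} by dualizing via the companion arrow, and the threshold $\alpha(x)=l$ handles both branches of condition (b) at once. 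You also spell out something the paper leaves implicit, namely that each $F$-orbit meets $Q_0$ in exactly $l$ points (via the copy-level function $\phi$), so the ``smallest-power'' choice defining $T$ is well posed. One small caveat worth keeping in mind if you write this up: the equalities $T=\{v:\alpha(v)=0\}$ and $\bigcup_{i=0}^{l-1}F^iT=\{v:0\le\alpha(v)\le l-1\}$ depend on (a) already being established, which your exposition does respect but should be stated explicitly so the logic isn't circular.
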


To prove this we need some observations on the arrows in $Q$ with sources or targets in $T$.
\begin{Lem}\label{pred}
Let $x\to t$ be an arrow in $Q$ with $t\in T$. Then $x\in T$.
\end{Lem}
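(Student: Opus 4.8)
The plan is to run the argument inside $\Z Q$, using the explicit description of $T$ in \ref{constr} together with the section axioms quoted above. Two inputs are needed. From \ref{constr}: every vertex of the standard section $Q$ of $\Z Q$ is written uniquely as $F^{a}t$ with $t\in T$ and $a\ge 0$; I will use the consequence that for $t\in T$ and $a\ge 1$ the vertex $F^{-a}t$ of $\Z Q$ does \emph{not} lie in $Q$ --- otherwise $F^{-a}t=F^{a'}t'$ with $t'\in T,\ a'\ge 0$ would give $t=F^{a+a'}t'$, and uniqueness of the expression for the vertex $t$ forces $a+a'=0$, a contradiction. Secondly, the standard copy of $Q$ in $\Z Q$ is a section, so any arrow $u\to v$ of $\Z Q$ with $u\in Q$ has $v\in Q$ or $\tau v\in Q$; since $F^{l}=\tau^{-1}$, the latter says $v=F^{l}w$ for some vertex $w$ of $Q$.

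Now suppose, for contradiction, that $x\notin T$. Writing $x=F^{a}s$ with $s\in T$, $a\ge 0$, the case $a=0$ would give $x=s\in T$, so $a\ge 1$. Applying the automorphism $F^{-a}$ of $\Z Q$ to the given arrow $x\to t$ of $Q$ yields an arrow $s=F^{-a}x\to F^{-a}t$ in $\Z Q$ with source $s\in Q$. By the section property just recalled, either $F^{-a}t\in Q$, or $F^{-a}t=F^{l}w$ for some vertex $w$ of $Q$. The first alternative is impossible, since $t\in T$ and $a\ge 1$ give $F^{-a}t\notin Q$. In the second, $w=F^{-(a+l)}t$; but $a+l\ge 1$ and $t\in T$, so $F^{-(a+l)}t\notin Q$, contradicting $w\in Q$. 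Hence $x\in T$, as claimed.

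There is no real obstacle here; the one point to keep straight is the orientation convention, that is, that arrows leaving a section $\Sigma$ land in $\Sigma\cup\tau^{-1}\Sigma$ (as recorded in the definition of section above), which is exactly what makes the identity $\tau^{-1}=F^{l}$ applicable in the final step. The dual statement --- arrows into $\Sigma$ originate in $\Sigma\cup\tau\Sigma$ --- will handle the companion lemma treating arrows $t\to y$ with $t\in T$.
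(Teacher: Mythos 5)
Your proof is correct and follows essentially the same line as the paper: apply $F^{-a}$ to the arrow $x\to t$ to get an arrow out of $s\in Q$, invoke the section property of $Q$, and use the minimality built into \ref{constr} (namely, $F^{p}t\in Q$ with $t\in T$ forces $p\ge 0$) to rule out both alternatives unless $a=0$. The only cosmetic difference is that you phrase it as a proof by contradiction starting from $x\notin T$ and spell out the derivation of the minimality property from the uniqueness clause, whereas the paper argues directly that $a$ must be $0$; the substance is identical.
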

\begin{proof}
	Since $x\in Q$ we can write $x=F^as$ for some $s\in T$ and $a\geq0$. Then there is an arrow $s\to F^{-a}t$ with $s\in Q$. Since $Q$ is a section we have $F^{-a}t\in Q$ or $F^{-a-l}t=\tau F^{-a}t\in Q$, but by our construction of $T$, $F^pt\in Q$ for some $p\in\Z$ forces $p\geq0$, thus we must have $a=0$, hence $x=s\in T$.
\end{proof}
\begin{Lem}\label{succ}
Let $t\to x$ be an arrow in $Q$ with $t\in T$. Then $x\in T\cup FT\cup\cdots\cup F^{l-1}T$.
\end{Lem}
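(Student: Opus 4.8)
Lemma \ref{succ} states: if $t \to x$ is an arrow in $Q$ with $t \in T$, then $x \in T \cup FT \cup \cdots \cup F^{l-1}T$.

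The plan is to exploit the same idea used in proving Lemma \ref{pred}, but applied to the target of the arrow rather than the source. First I would write $x = F^b s$ for some $s \in T$ and some $b \geq 0$, which is possible because $x$ is a vertex of $Q$ and $T$ was constructed in \ref{constr} precisely so that every vertex of $Q$ has a unique such expression. The goal is then to show that $0 \leq b \leq l-1$. Suppose for contradiction that $b \geq l$; I would aim to derive a contradiction with the minimality built into the construction of $T$, namely that if $F^p t \in Q$ then $p \geq 0$, and that each vertex of $Q$ is written as the smallest nonnegative power of $F$ applied to an element of $T$.

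The key step: from the arrow $t \to x = F^b s$ in $Q$, apply $F^{-b}$ to obtain an arrow $F^{-b} t \to s$ in $\Z Q$, with $s \in Q$. Since $Q$ is a ($\tau$-)section of $\Z Q$ and $s \in Q$, the predecessor $F^{-b}t$ must satisfy $F^{-b}t \in Q$ or $\tau F^{-b}t = F^{-b-l}t \in Q$ (using $F^l = \tau^{-1}$, so $\tau = F^{-l}$). Now by the construction of $T$, whenever $F^p t$ lies in $Q$ we must have $p \geq 0$; this forces $-b \geq 0$ in the first case and $-b - l \geq 0$ in the second case. The first gives $b \leq 0$, hence $b = 0$ and $x = s \in T$. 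The second gives $b \leq -l < 0$, contradicting $b \geq 0$. Either way $b = 0$, so in fact $x \in T$ — which is even stronger than claimed.

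Wait: that conclusion ($x \in T$) would be too strong, since Example \ref{upsidedown} shows arrows can genuinely leave $T$ (e.g. $x \to y$ versus $Fy$). The resolution is that the defining condition of a section only controls predecessors up to one $\tau$-shift, and for successors the correct statement is: $s \in Q$ or $\tau^{-1} s \in Q$ need not hold — rather, given arrow $F^{-b}t \to s$, the section axiom says $s \in Q$ or $\tau s \in Q$ when $F^{-b}t \in Q$; but $F^{-b}t$ need not be in $Q$. So the honest argument is: the arrow $t \to x$ with $t \in Q$ and $Q$ a section gives $x \in Q$ or $\tau x \in Q$. If $x \in Q$, write $x = F^b s$; we need $b \leq l-1$. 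If instead $\tau x = F^{-l} x \in Q$, then $F^{-l} x = F^{b'} s'$ with $b' \geq 0$, so $x = F^{b' + l} s'$, i.e. the $F$-exponent of $x$ is $b' + l$. The main obstacle is thus pinning down exactly which of these cases can occur and bounding the exponent by $l - 1$; I expect this requires observing that if $x = F^b s$ with $b \geq l$ then $\tau^{-1} x = F^{b-l} s$ is already a vertex of $Q$ lying in the same $F$-orbit with smaller nonnegative exponent, and tracing back through the arrow to contradict that $t \in T$ sits at exponent $0$ while its successor would need exponent $\geq l$, violating that $Q$ is a section (a predecessor of a point of $Q$ can be shifted by at most one $\tau$). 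I would formalize this by applying $F^{-(b-l)}$ or $F^{-b}$ as needed and invoking \ref{constr} together with the section axiom exactly once; this bookkeeping of exponents is the only delicate part, and it should close cleanly, after which \ref{TF} follows by combining \ref{pred}, \ref{succ}, and the obvious fact that each $F$-orbit meets $T$ exactly once.
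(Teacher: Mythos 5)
The argument as written does not close, and the obstacle you diagnosed halfway through is misidentified. The first run of the argument uses the wrong direction of the translation when applying the section axiom to a predecessor: for an arrow $z\to s$ with $s$ in a section $\Sigma$, the condition is $z\in\Sigma$ or $\tau^{-1}z\in\Sigma$ (equivalently, $\tau^{-1}z\to s$ comes from $s\to\tau^{-1}z$ and then apply the axiom to the source $s$), \emph{not} $\tau z\in\Sigma$. With the corrected sign, the arrow $F^{-b}t\to s$ gives $F^{-b}t\in Q$ or $\tau^{-1}F^{-b}t=F^{l-b}t\in Q$. The first case forces $b=0$ by the construction of $T$ (and \ref{pred}); the second gives $l-b\geq0$, i.e. $b\leq l$. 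You then need one more step: ruling out $b=l$. If $b=l$ then $x=F^{l}s=\tau^{-1}s$, so both $s$ and $\tau^{-1}s$ lie in $Q$, contradicting that a section meets each $\tau$-orbit exactly once. That is the argument the paper gives, and it is exactly the observation you gesture at near the end (``$\tau^{-1}x=F^{b-l}s$ is already a vertex of $Q$''); but you never assemble it into an actual proof, ending instead with ``I expect this requires... it should close cleanly.''

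Two further points of confusion. First, your diagnosis that the initial conclusion was too strong because ``the section only controls predecessors up to one $\tau$-shift and for successors the correct statement is different'' is not the real problem: the section axiom for sources and for targets are equivalent reformulations (via the translation $z\to s\leftrightarrow s\to\tau^{-1}z$), and the issue was just a sign. Second, the ``honest argument'' you restart with introduces the case ``$\tau x\in Q$,'' which never arises here: the hypothesis is that $t\to x$ is an arrow \emph{in} $Q$, so $x\in Q$ is given from the start, and you correctly used this at the very top to write $x=F^{b}s$. That detour obscures rather than helps. Fix the sign, and explicitly exclude the boundary case $b=l$ by the once-per-$\tau$-orbit property of $Q$, and the proof coincides with the paper's.
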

\begin{proof}
	Since $x\in Q$ we can write $x=F^as$ with $s\in T$ and $a\geq0$. We have to show $a\leq l-1$. Consider the arrow $F^{-a}t\to s$. Since $s\in Q$ and $Q$ is a section, we have $F^{-a}t\in Q$ or $\tau^{-1}F^{-a}t\in Q$. If $F^{-a}t\in Q$ then $F^{-a}t\in T$ by \ref{pred}, hence $a=0$. If $\tau^{-1}F^{-a}t$, which equals $F^{l-a}t$, is in $Q$, we must have $l-a\geq0$ by the construction of $T$. It remains to exclude $a=l$. In this case, we have that both $s$ and $x=F^as=\tau^{-1}s$ lies in $Q$, which is absurd.
\end{proof}

\begin{proof}[Proof of \ref{TF}]
	It is clear from the construction that each $F$-orbits in $\Z Q$ intersects $T$ exactly once. Let $t\to x$ be an arrow in $\Z Q$ with $t\in T$. Since $t\in Q$ and $Q$ is a section, we have $x\in Q$ or $\tau x\in Q$. If $x\in Q$, then $x\in T\cup FT\cup\cdots\cup F^{l-1}T$ by \ref{succ} . If $\tau x\in Q$, then $\tau x\in T$ by \ref{pred}.
\end{proof}
Now we are ready to prove the main results of this section.
\begin{proof}[Proof of \ref{sec}]
	The first assertion follows from \ref{TF} and \ref{Fsec}, so we prove the second one. Let $s,t\in T$ and $F^as\to t$ an arrow in $\Z Q$. We have to show $a\leq0$. Since $T$ is an $F$-section we have $F^as\in T$ or $\tau^{-1}F^as\in T\cup FT\cup\cdots\cup F^{l-1}T$. If $F^as\in T$ then $a=0$ since $T$ intersects each $F$-orbit only once. Similarly if $\tau^{-1}F^as\in T\cup FT\cup\cdots\cup F^{l-1}T$ then comparing the exponent of $F$, we must have $0\leq a+l\leq l-1$, thus $a\leq-1$.
\end{proof}

For a pair $x, y$ in a quiver we denote by $\{x\to y\}$ (resp. $\{x-y\}$) the set of arrows from $x$ to $y$ (resp. unoriented edges between $x$ and $y$).
\begin{proof}[Proof of \ref{class}]
	We first show the ``only if'' part. Suppose $\Z Q$ has an $l$-th root ${F}$ of $\tau^{-1}$. Then by \ref{sec} there exists a subquiver $T$ of $Q$ such that $Q^\prime:=T\cup FT\cup\cdots\cup F^{l-1}T$ is a section of $\Z Q$. We claim that this $Q^\prime$ has the desired properties. Letting $T^{(i)}$ be the full subquiver of $Q^\prime$ consisting of the vertices from $F^iT$ we have (a). Also the second assertion in \ref{sec} shows (b). Now we turn to (c). For each point $t\in T$ we write $t^{(i)}$ the corresponding point in $T^{(i)}$. We have to show $\sharp\{s^{(i)}-t^{(j)}\}=\sharp\{s^{(i+1)}-t^{(j+1)}\}$, where the superscripts are read modulo $l$. We may assume $i<j$, and the assertion is clear if $j<l-1$. If $j=l-1$, then we have $\sharp\{s^{(i)}-t^{(l-1)}\}=\sharp\{F^is-F^{l-1}t\}=\sharp\{F^{i+1}s-F^lt\}=\sharp\{t-F^{i+1}s\}$ since $F^l=\tau^{-1}$, and the last term equals $\sharp\{t-s^{(i+1)}\}$.
	
	We next show the ``if'' part. We may assume $Q=Q^\prime$, satisfying (a), (b), and (c). Define the automorphism on the set of vertices of $\Z Q$ by taking $T^{(i)}$ to $T^{(i+1)}$ for $0\leq i< l-1$, and $T^{(l-1)}$ to $\tau^{-1}T^{(0)}$, via $F$. It is easily seen that this extends to the automorphism of $\Z Q$ whose $l$-th power is $\tau^{-1}$.
\end{proof}

We end this section with a complete list of connected Dynkin quivers whose infinite translation quivers have roots of $\tau$.
\begin{Ex}
Let $Q$ be a connected Dynkin quiver and $l\geq2$. Then {\it $\Z Q$ has an $l$-th root of $\tau$ if and only if $l=2$ and $Q$ is of even type $A$}. Indeed, suppose that $\Z Q$ has an $l$-th root of $\tau$. We use that the underlying graph of $Q$ has a free action of $\Z/l\Z$ (\ref{class}(c)). We can exclude type $D$ and $E$ since it has only one trivalent node. Then $Q$ has to be of type $A$, which has exactly two univalent nodes. This forces $l=2$ and thus the number of vertices to be even. Conversely if $Q$ is of even type $A$, then the automorphism of $\Z Q$ which turns up-side-down as in \ref{upsidedown} gives a square root of $\tau$.
\end{Ex}

\section{Application: Calabi-Yau reduction of cluster categories}
Let us start with a reduction process, called {\it Calabi-Yau reduction}, of triangulated categories, which yields a smaller CY triangulated category from a given one.
\begin{Thm}[{\cite[Section 4]{IYo}}]\label{red}
Let $\T$ be a $d$-CY triangulated category and $\mathscr{P}\subset\T$ a functorially finite $d$-rigid subcategory. Put $\mathscr{Z}=\{X\in\T\mid \T(P,X[i])=0 \text{ for all } P\in\mathscr{P} \text{ and } 0<i<d\}$.
\begin{enumerate}
	\item The additive quotient $\U:=\mathscr{Z}/[\mathscr{P}]$ has a natural structure of a triangulated category, which is $d$-CY.
	\item The projection $\mathscr{Z}\to\U$ induces a bijection between the set of $d$-cluster tilting subcategories of $\T$ containing $\mathscr{P}$, and the set of $d$-cluster tilting subcategories of $\U$.
	\item The projection $\mathscr{Z}\to\U$ preserves AR $(d+2)$-angles, that is, if $\C\subset\T$ be a $d$-cluster tilting subcategory containing $\mathscr{P}$, then for each indecomposable $C\in\mathscr{Z}\setminus\mathscr{P}$ the image of the AR $(d+2)$-angle at $C$ in $\C$ is the AR $(d+2)$-angle at $C$ in $\U$.
\end{enumerate} 
\end{Thm}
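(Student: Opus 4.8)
The plan is to equip $\U=\mathscr{Z}/[\mathscr{P}]$ with its triangulated structure by hand, following the approximation-theoretic recipe; throughout, $[\mathscr{P}](X,Y)$ denotes the morphisms $X\to Y$ factoring through an object of $\mathscr{P}$, so $\U(X,Y)=\T(X,Y)/[\mathscr{P}](X,Y)$, and I will use freely that $\mathscr{P}\subset\mathscr{Z}$ and that $\mathscr{Z}$ is stable under $D$ by the $d$-CY property of $\T$. First I would construct the suspension: for $X\in\mathscr{Z}$ pick, using functorial finiteness of $\mathscr{P}$, a left $\mathscr{P}$-approximation $\iota_X\colon X\to P^X$ and complete it to a triangle $X\xrightarrow{\iota_X}P^X\to X\langle1\rangle\to X[1]$ in $\T$. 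One checks $X\langle1\rangle\in\mathscr{Z}$ by applying $\T(P,-)$ for $P\in\mathscr{P}$: the $d$-rigidity of $\mathscr{P}$ kills $\T(P,P^X[i])$ for $0<i<d$, membership $X\in\mathscr{Z}$ kills $\T(P,X[i+1])$ for $0<i<d-1$, and in the top degree $i=d-1$ the relevant map $\T(P,X[d])\to\T(P,P^X[d])$ is the $k$-dual of the surjection $\T(\iota_X,P)$, hence injective; the dual half of the condition follows by $D$-symmetry. Then I would check that $X\mapsto X\langle1\rangle$ descends to a well-defined autoequivalence of $\U$, independent of the choices modulo $[\mathscr{P}]$, with quasi-inverse $\langle-1\rangle$ built dually from right $\mathscr{P}$-approximations.

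The second step is to define the distinguished triangles and verify the axioms. Given $f\colon X\to Y$ in $\mathscr{Z}$, I would form the triangle $X\xrightarrow{\binom{-f}{\iota_X}}Y\oplus P^X\to Z\to X[1]$ in $\T$, check $Z\in\mathscr{Z}$ by the same computation, and use an octahedron relating it to the triangle defining $X\langle1\rangle$ to extract a sequence $X\to Y\to Z\to X\langle1\rangle$ in $\U$; the distinguished triangles are declared to be these and their isomorphs in $\U$. Verifying (TR1)--(TR3) is then a routine manipulation of triangles in $\T$ combined with the approximation property. I expect the octahedral axiom (TR4) to be the main obstacle: one has to assemble a coherent system of triangles in $\T$ out of two composable standard triangles and transport it to $\U$, correcting cones by summands in $\mathscr{P}$ each time so that all objects involved stay in $\mathscr{Z}$; essentially all the bookkeeping of the proof lives here.

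For the $d$-CY part of (1), the key lemma I would isolate is that for $X,Y\in\mathscr{Z}$ and $0<i<d$ the natural map $\U(X,Y\langle i\rangle)\to\T(X,Y[i])$ is an isomorphism: feeding the defining triangle of $Y\langle1\rangle$ against the vanishings built into $\mathscr{Z}$ shows that $\delta_*\colon\T(X,Y\langle1\rangle)\to\T(X,Y[1])$ is onto with kernel exactly $[\mathscr{P}](X,Y\langle1\rangle)$, and one iterates, using the remaining $\mathscr{Z}$-vanishings to commute the shift past the intermediate projectives. For $i=d$ the same reduction gives $\U(X,Y\langle d\rangle)\simeq\ker\!\big(\T(X,Y[d])\to\T(X,P^Y[d])\big)$, and the $d$-CY duality of $\T$ turns this into $D\,\Coker\!\big(\T(P^Y,X)\to\T(Y,X)\big)$; since $\iota_Y$ is a left $\mathscr{P}$-approximation, every morphism $Y\to X$ in $[\mathscr{P}]$ factors through it, so this cokernel is $\U(Y,X)$, giving a functorial isomorphism $\U(X,Y\langle d\rangle)\simeq D\,\U(Y,X)$; hence $\langle d\rangle$ is a Serre functor and $\U$ is $d$-CY.

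Finally, (2) and (3) should follow formally from the $\Hom$-comparison. If $\C\supseteq\mathscr{P}$ is $d$-cluster tilting in $\T$, then $\C\subseteq\mathscr{Z}$ by $d$-rigidity, and the isomorphisms $\U(\,\cdot\,,\,\cdot\,\langle i\rangle)\simeq\T(\,\cdot\,,\,\cdot\,[i])$ for $0<i<d$ translate the two vanishing characterizations of a $d$-cluster tilting subcategory of $\U$ into the ones that hold in $\T$, so $\C/[\mathscr{P}]$ is $d$-cluster tilting in $\U$; conversely the preimage in $\mathscr{Z}$ of any $d$-cluster tilting subcategory of $\U$ is $d$-cluster tilting in $\T$ and contains $\mathscr{P}$, giving the bijection in (2). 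For (3), the projection $\mathscr{Z}\to\U$ carries each triangle occurring in an AR $(d+2)$-angle in $\C$ to a triangle in $\U$ (the $\mathscr{P}$-correction being absorbed on passing to $\U$), so it remains to see that the sink (resp. source) map and the minimal $\C$-approximations of the AR $(d+2)$-angle at $C\in\mathscr{Z}\setminus\mathscr{P}$ descend to the sink (resp. source) map and minimal $(\C/[\mathscr{P}])$-approximations in $\U$; this holds because radical morphisms in $\C/[\mathscr{P}]$ lift to radical morphisms in $\C$ modulo $[\mathscr{P}]$, and one concludes by uniqueness of the AR $(d+2)$-angle.
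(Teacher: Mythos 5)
The paper does not prove this theorem: it is stated with the attribution ``\cite[Section~4]{IYo}'' and used as a black box, so there is no proof in the paper to compare your proposal against. Against the Iyama--Yoshino argument itself, your sketch is a faithful reconstruction and the ideas are all there. Two places where the hand-waving would need to be replaced by actual checks if you wrote this out: (i) in the $\Hom$-comparison $\U(X,Y\langle i\rangle)\simeq\T(X,Y[i])$, the iteration passes through the objects $P^{Y\langle j\rangle}$ for $0\le j\le i-1$, and you must verify that the boundary terms $\T(X,P^{Y\langle j\rangle}[\,\cdot\,])$ vanish at each step; they do, by $X\in\mathscr{Z}$ and $d$-CY duality, and only the last one $\T(X,P^Y[d])\simeq D\T(P^Y,X)$ survives, which is exactly what produces the Serre pairing --- but this is the content, not a triviality, so ``the same reduction'' should be spelled out; (ii) in part (3), you implicitly use that the intermediate objects $Y_i$ of the AR $(d+2)$-angle in $\C$ actually lie in $\mathscr{Z}$ (this follows inductively from $d$-rigidity of $\C$ and the fact that $f_0$, being a source map, is a left $\mathscr{P}$-approximation when tested against $\mathscr{P}\subset\C$), and that a triangle in $\T$ with all three vertices in $\mathscr{Z}$ descends to a distinguished triangle of $\U$ --- the latter is [IYo, Prop.~4.2] and is not automatic from the way the triangulation of $\U$ is defined, so it deserves to be invoked explicitly rather than dismissed as ``the $\mathscr{P}$-correction being absorbed.'' Neither point affects the validity of the approach.
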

We apply this CY reduction to an important class of CY triangulated categories with cluster tilting objects, namely the cluster categories of finite dimensional algebras. Let $A$ be a finite dimensional algebra which is {\it $\nu_d$-finite} \cite{Iy11,Am09}, that is, we have $\gd A\leq d$ and $\Hom_{\D(A)}(A,\nu_d^{-i}A)=0$ for almost all $i\in\Z$ where $\nu=-\lotimes_A DA$ and $\nu_d=\nu\circ[-d]$. Then the {\it $d$-cluster category $\C_d(A)$} of $A$ is the cluster category of the $(d+1)$-CY completion $\mathbf{\Pi}_{d+1}(A)$ of $A$, which is also the triangulated hull of the orbit category $\D^b(\md A)/\nu_d$;
\[ \D^b(\md A)/\nu_d\hookrightarrow\C_d(A)=\C(\mathbf{\Pi}_{d+1}(A)). \]
It is a $\Hom$-finite $d$-CY triangulated category with a $d$-cluster titling obejct $A\in\C_d(A)$ \cite{Am09,Guo}, whose endomorphism algebra is the $(d+1)$-preprojective algebra $\Pi_{d+1}(A)$ \cite{IO13} of $A$.

Recall that an idempotent $e$ is {\it stratifying} if the restriction functor $\D(\Md A/(e))\to\D(\Md A)$ along $A\to A/(e)$ is fully faithful. It is known \cite{IYa1,Ke11} that the CY reduction of the cluster category $\C_d(A)$ with respect to a stratifying idempotent $e\in A$ is the cluster category $\C_d(A/(e))$. In this section, we apply our main result to give a description of the CY reduction by a {\it non}-stratifying idempotent, and in particular observe that it is not necessarily the cluster category of the quotient algebra. 

Applying \ref{kmv} to a CY reduction of the $3$-cluster category gives the following result.
\begin{Prop}\label{nu3}
Let $A$ be a finite dimensional algebra which is $\nu_3$-finite and $\Pi$ its $4$-preprojective algebra. Suppose $e\in A$ is idempotent such that $\Pi/(e)$ is hereditary. Consider the CY reduction $\U$ of $\C_3(A)$ with respect to $eA$.
\begin{enumerate}
	\item The algebra $H=\End_\U(A\oplus A[-1])$ is hereditary.
	\item When $H$ is $1$-representation infinite and $H/J_H$ is separable over $k$, there is triangle equivalence $\T\simeq\D^b(\md H)/\tau^{-1/2}[1]$.
\end{enumerate} 
\end{Prop}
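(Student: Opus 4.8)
The plan is to realize $\U$ as a $3$-CY triangulated category carrying a $3$-cluster tilting object whose endomorphism algebra is the hereditary algebra $\Pi/(e)$, and then to feed this into \ref{kmv} (equivalently \ref{root}) with $n=1$. Recall that $\C_3(A)$ is a $\Hom$-finite $3$-CY triangulated category with $3$-cluster tilting object $A$ and $\End_{\C_3(A)}(A)=\Pi$. Since $eA$ is a direct summand of $A$, the subcategory $\mathscr{P}:=\add(eA)$ of $\C_3(A)$ is functorially finite and $3$-rigid, so \ref{red} applies: the CY reduction $\U=\mathscr{Z}/[\mathscr{P}]$ is $3$-CY. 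As $A\in\mathscr{Z}$ and $\add A\supseteq\mathscr{P}$ is $3$-cluster tilting in $\C_3(A)$, its image $\add\bar A$ in $\U$ is $3$-cluster tilting by \ref{red}(2), where $\bar A$ denotes the image of $A$ (equivalently of $(1-e)A$, since $eA$ maps to $0$); I henceforth identify $\bar A$ with $A$. To compute $\End_\U(\bar A)$ I would use that $\End_{\mathscr{Z}}(A)=\End_{\C_3(A)}(A)=\Pi$ and that a morphism $A\to A$ in $\C_3(A)$ factors through $\add(eA)$ precisely when it lies in the two-sided ideal $\Pi e\Pi=(e)$; hence $\End_\U(\bar A)=\Pi/(e)$, which is hereditary by hypothesis.

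Part (1) then follows immediately: applying the hereditariness assertion \ref{oddcy}(1) to the $3$-CY category $\U$ with its $3$-cluster tilting object $\bar A$ in the case $n=1$ — the vanishing hypothesis $\U(\bar A,\bar A[-i])=0$ for $0<i<n=1$ being vacuous, and $\End_\U(\bar A)=\Pi/(e)$ hereditary — yields that $H=\End_\U(\bar A\oplus\bar A[-1])$ is hereditary. No algebraicity of $\U$ is required for this part.

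For part (2) I would invoke \ref{kmv} with $n=1$, whose only remaining hypothesis (beyond what is already in hand: $\End_\U(\bar A)$ hereditary, $H$ $1$-representation infinite, $H/J_H$ separable) is that $\U$ be algebraic; granting this, \ref{kmv} produces the triangle equivalence $\U\simeq\D^b(\md H)/\tau^{-1/2}[1]$. So the crux is to show that the CY reduction $\U$ of $\C_3(A)=\C(\mathbf{\Pi}_4(A))$ with respect to $e\mathbf{\Pi}_4(A)$ admits a dg enhancement. I would obtain this from the silting/CY reduction machinery of \cite{IYa1} (see also \cite{Ke11}): $\U$ is triangle equivalent to the cluster category $\C(\Gamma')$ of a homologically smooth dg algebra $\Gamma'$ produced from $\mathbf{\Pi}_4(A)$ by a reduction procedure — when $e$ is stratifying one has $\Gamma'=\mathbf{\Pi}_4(A/(e))$, but in general $\Gamma'$ need not be the Calabi--Yau completion of a finite dimensional algebra — and in particular $\U$ is algebraic. (One also checks that $\Pi/(e)$ is a genuine finite dimensional $k$-algebra, which is immediate since it is a quotient of $\Pi=\Pi_4(A)$, so that \ref{kmv} applies verbatim.) The step I expect to be the main obstacle is precisely this one: pinning down — or locating a clean reference for — a dg enhancement of the CY reduction $\U$; once algebraicity is available, the remainder is a routine assembly of \ref{red}, the endomorphism-algebra computation, and \ref{oddcy}/\ref{kmv}.
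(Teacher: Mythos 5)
Your proposal is correct and follows the route the paper intends but leaves implicit: apply \ref{red} to get the $3$-CY category $\U$ with $3$-cluster tilting object $\bar A$ satisfying $\End_\U(\bar A)=\Pi/(e)$, then invoke \ref{oddcy} (with $n=1$) for part~(1) and \ref{kmv} for part~(2). The algebraicity of $\U$, which you rightly flag as the only non-formal step, is indeed supplied by the silting/Calabi--Yau reduction comparison of \cite{IYa1}, which realizes $\U$ as a Verdier quotient of perfect derived categories for an arbitrary (not necessarily stratifying) idempotent; your observation that part~(1) needs no algebraicity is also correct, and ``$\T$'' in the stated equivalence is simply a misprint for $\U$.
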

In the rest of this subsection let $A$ be a finite dimensional algebra of global dimension $\leq2$, and $\C$ its $3$-cluster category. Then the $4$-preprojective algebra $\C(A,A)$ of $A$ is just $A$. Let us describe the hereditary algebra $H$ explicitly in this case. For this we give a general description of AR $5$-angles at $A$ in $\C$.
We start with an easy computation.
\begin{Lem}\label{ext}
We have $\C(A,A[-1])=\Ext^2_A(DA,A)$.
\end{Lem}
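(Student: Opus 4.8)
\textbf{Proof plan for Lemma \ref{ext} ($\C(A,A[-1])=\Ext^2_A(DA,A)$).}

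The plan is to work in the cluster category $\C=\C_3(A)$ of an algebra $A$ of global dimension $\leq 2$, using the embedding $\D^b(\md A)/\nu_3\hookrightarrow\C$ together with the fact that this orbit category is already triangulated (no triangulated hull is needed when $\gd A\leq 2$ and $d=3$, by Amiot's theorem quoted before \ref{nu3}), so that $\C(A,A[-1])$ can be computed as a direct sum over the $\nu_3$-orbit. First I would write, for $M,N\in\D^b(\md A)$,
\[ \C(M,N)=\bigoplus_{i\in\Z}\Hom_{\D^b(\md A)}(M,\nu_3^i N), \]
which is a finite sum by $\nu_3$-finiteness of $A$. Applying this with $M=N=A$ and taking the shift $N=A[-1]$ gives
\[ \C(A,A[-1])=\bigoplus_{i\in\Z}\Hom_{\D^b(\md A)}(A,\nu_3^i A[-1]). \]

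Next I would analyze which summands survive. Since $\nu_3=\nu\circ[-3]$ with $\nu=-\lotimes_A DA$, we have $\nu_3^i A[-1]=\nu^i A[-3i-1]$, so the $i$-th summand is $\Hom_{\D^b(\md A)}(A,\nu^i A[-3i-1])=H^{-3i-1}(\nu^i A)$. Because $A\in\D^b(\md A)$ is projective, $\Hom_{\D^b(\md A)}(A,X)=H^0(X)$ for any $X$, so only those $i$ with $\nu^i A$ having cohomology in degree $-3i-1$ contribute. For $i=0$ this degree is $-1$ and $H^{-1}(A)=0$. For $i=1$ this degree is $-4$; since $\gd A\leq 2$, $\nu A=DA$ (a module, concentrated in degree $0$) and its higher $\nu$-powers are supported in degrees $[-2,0]$ by $\gd A\le 2$ — more precisely $\nu^i A$ has cohomology concentrated in degrees $0,-1,-2$ for $i\geq 1$ (this follows from $\nu A=DA$ being a module and each application of $\nu$ being computed by a projective resolution of length $\le 2$). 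So the only possibly-nonzero summand is the one where $-3i-1\in\{0,-1,-2\}$, i.e.\ $i=0$ gives $-1$ (already excluded since $H^{-1}A=0$), and there is no integer $i$ with $-3i-1\in\{-1,-2\}\setminus\{-1\}$... wait — I must be careful here. Let me instead identify the surviving term directly: I expect the relevant contribution to come from $i=-1$, giving degree $-3(-1)-1=2$, i.e.\ $H^2(\nu^{-1}A)=\Hom_{\D^b(\md A)}(A,\nu^{-1}A[2])=\Ext^2_A(A,\nu^{-1}A)$. Using $\nu^{-1}A=\RHom_A(DA,A)$ and $\gd A\le 2$, one has $\Ext^2_A(A,\nu^{-1}A)=H^2(\RHom_A(DA,A))=\Ext^2_A(DA,A)$ by adjunction/the standard identity $\RHom_A(A,\RHom_A(DA,A))=\RHom_A(DA,A)$.

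So the concrete steps are: (1) expand $\C(A,A[-1])$ as the orbit sum; (2) rewrite each summand as $H^{-3i-1}(\nu^i A)$; (3) determine the cohomological amplitude of $\nu^i A$ for each $i\in\Z$ using $\gd A\le 2$ (for $i\ge 0$ supported in degrees $[-2,0]$; for $i<0$ supported in degrees $[0,-2i]$, and dually via $\nu^{-1}$ in degrees $[0,2]$ for $i=-1$), and check that $-3i-1$ lands in the support only for $i=-1$, where it equals $2$; (4) conclude $\C(A,A[-1])=\Ext^2_A(A,\nu^{-1}A)=\Ext^2_A(DA,A)$. The main obstacle will be step (3): pinning down precisely the cohomological support of the complexes $\nu^i A$ for all $i\in\Z$ and verifying that the arithmetic condition $-3i-1\in[\text{support}]$ has the unique solution $i=-1$ — this is where the hypothesis $\gd A\le 2$ (equivalently $d=3$) is used in an essential way, and a careless bound would either miss a term or admit a spurious one. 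A clean way around it is to note that $\nu_3$ has a natural "$t$-structure compatibility": $\nu_3^i A$ for $i\ge 1$ lies in $\D^{\le 0}\cap\D^{\ge -3i+?}$... but the safest is the direct degree count above, which I would carry out summand by summand.
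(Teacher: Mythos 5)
Your approach matches the paper's (which simply asserts that in the orbit sum $\C(A,A[-1])=\coprod_{i\in\Z}\Hom_{\D(A)}(A,\nu_3^{-i}A[-1])$ only $i=1$ survives, i.e.\ your $i=-1$), and your identification of the surviving term as $H^2(\nu^{-1}A)=H^2\RHom_A(DA,A)=\Ext^2_A(DA,A)$ is correct. Two small points on step (3), which you rightly flagged as the delicate spot. Your claimed amplitude ``$\nu^i A$ supported in $[-2,0]$ for $i\ge 1$'' is not right: each application of $\nu=-\lotimes_A DA$ can lower the bottom by $\gd A\le 2$, so the correct bound is that $\nu^i A$ lives in $[-2(i-1),0]$ for $i\ge 1$ (and dually $\nu^{-j}A$ in $[0,2j]$ for $j\ge 1$, which you do have right). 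This slip is harmless here because $-3i-1<-2(i-1)$ for all $i\ge 1$, so the summands for $i\ge 1$ still vanish; the nontrivial constraint is on the negative side, where $3j-1\le 2j$ forces $j=1$, exactly as you conclude. Finally, you do not need the orbit category $\D^b(\md A)/\nu_3$ to be triangulated (which is not automatic even for $\gd A\le 2$, $d=3$); the only thing used is that the embedding into $\C_3(A)$ is fully faithful, so $\C(A,A[-1])$ is computed by the orbit-category Hom formula.
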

\begin{proof}
	This is easily seen by $\C(A,A[-1])=\coprod_{i\in\Z}\Hom_{\D(A)}(A,\nu_3^{-i}A[-1])$, in which only $i=1$ survives.
\end{proof}
Let $P$ be an indecomposable direct summand of $A$ and $S$ the corresponding simple $A$-module. We let
\[ \xymatrix@R=1mm{
	0\ar[r]&P_2\ar[r]&P_1\ar[r]&P_0\ar[r]&S\ar[r]&0\\
	0\ar[r]&S\ar[r]&I^0\ar[r]&I^1\ar[r]&I^2\ar[r]&0 } \]
the minimal projective and injective resolutions of $S$, thus $P_0=P$ and $I^0=\nu P_0$. Clearly the AR $5$-angle in $\add A\subset\C$ at $P$ is of the form
\[ \xymatrix@!C=3mm@!R=2mm{
	&\nu^{-1}I^1\ar[dr]\ar[rr]&&Q\ar[dr]\ar[rr]&&P_1\ar[dr]&\\
	\nu^{-1}I^0\ar[ur]&&\bullet\ar[ur]&&X\ar[ur]&&P_0 } \]
for some $Q\in\add A$ and $X\in\C$. In fact we can also determine $Q$.
\begin{Lem}\label{middle}
We have $Q=P_2\oplus \nu^{-1}I^2$.
\end{Lem}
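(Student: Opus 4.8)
The plan is to identify $Q=C_2$ by first computing the two connecting objects of the AR $5$-angle at $P=P_0$ and then invoking the uniqueness of AR $(d+2)$-angles (the $d=3$ case of the cited theorem of \cite{IYo}). Recall from the shape already recorded above that the source map $f_0$ is $\nu_3^{-1}$ applied to the differential $e^0\colon I^0\to I^1$ of the minimal injective resolution of $S$, that the sink map $g_3$ is the differential $d_1\colon P_1\to P_0$ of the minimal projective resolution of $S$, and that the two connecting objects are $\bullet=\cone(f_0)$ (sitting between $\nu^{-1}I^1$ and $Q$) and $X=\cocone(g_3)$ (sitting between $Q$ and $P_1$), with $Q$ fitting in a triangle $\bullet\to Q\to X\to\bullet[1]$. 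So everything reduces to determining these two triangles and the connecting morphism $X\to\bullet[1]$.

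First I would compute $X=\cocone(d_1)$: since $0\to P_2\xrightarrow{d_2}P_1\xrightarrow{d_1}P_0$ is exact, the two-term complex representing $\cocone(d_1)$ has cohomology $\Ker d_1=P_2$ and $\Coker d_1=S$, so there is a triangle $P_2\to X\to S[-1]\to P_2[1]$ in $\C$. Next I would compute $\bullet=\cone(f_0)$ by splicing the minimal projective and injective resolutions of $S$ into the exact sequence $0\to P_2\to P_1\xrightarrow{d_1}P_0\to I^0\xrightarrow{e^0}I^1\xrightarrow{e^1}I^2\to 0$, which is exact because the composite $P_0\twoheadrightarrow S\hookrightarrow I^0$ has image $S=\Ker e^0$. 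In $\C=\D^b(\md A)/\nu_3$ we have $I^j\cong\nu_3^{-1}I^j=\nu^{-1}(I^j)[3]$ for all $j$, hence $I^0\cong P_0[3]$, $I^1\cong(\nu^{-1}I^1)[3]$, $I^2\cong(\nu^{-1}I^2)[3]$, and under this identification $f_0$ agrees with $e^0$ up to the shift $[3]$; thus $\bullet=\cone(f_0)\cong\cone(e^0)[-3]$ in $\C$. From the exact sequence $\Ker e^0=S$ and $\Coker e^0\cong I^2$ (as $e^1$ is surjective with kernel $\operatorname{im}e^0$), so $\cone(e^0)$ sits in a triangle $S[1]\to\cone(e^0)\to I^2\to S[2]$; shifting by $[-3]$ gives a triangle $S[-2]\to\bullet\to\nu^{-1}I^2\to S[-1]$ in $\C$.

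Finally I would feed these two triangles into the octahedral axiom applied to the composite $X\to S[-1]\to\bullet[1]$, where the first arrow comes from the triangle for $X$ and the second from the rotated triangle for $\bullet$; this composite is the connecting morphism of the AR-angle triangle $\bullet\to Q\to X\to\bullet[1]$. Using $\cone(X\to S[-1])=P_2[1]$ and $\cone(S[-1]\to\bullet[1])=(\nu^{-1}I^2)[1]$, the octahedron then places $Q=\cocone(X\to\bullet[1])$ in a triangle $P_2\to Q\to\nu^{-1}I^2\xrightarrow{c}P_2[1]$. Since $P_2$ and $\nu^{-1}I^2$ lie in $\add A$, which is a $3$-cluster tilting subcategory of $\C$, we have $\Hom_\C(\nu^{-1}I^2,P_2[1])=0$, so $c=0$, the triangle splits, and $Q\cong P_2\oplus\nu^{-1}I^2$, as claimed. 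The main obstacle is the justification that the connecting morphism $X\to\bullet[1]$ is exactly the composite $X\to S[-1]\to\bullet[1]$; I expect to dispatch this not by a direct $\Hom$-computation (the relevant $\Hom$-groups need not be small) but by appealing to the inductive construction of the AR $(d+2)$-angle in \cite{IYo} together with its uniqueness — it suffices to exhibit one AR $5$-angle at $P_0$ with the middle term in $\add A$ built this way, for instance by checking that $P_2\oplus\nu^{-1}I^2\to X$ is a minimal right $(\add A)$-approximation, which again reduces to the $3$-cluster tilting vanishings $\Hom_\C(A,(\add A)[i])=0$ for $i=1,2$.
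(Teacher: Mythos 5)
Your octahedral strategy is structurally sound \emph{if} you knew the connecting morphism, and the final splitting step (using $\Hom_\C(\add A,\add A[1])=0$) is correct. But there is a genuine gap exactly where you flag one, and I do not think the fix you sketch closes it. The map $X\to\bullet[1]$ is not given to you in advance: in the Iyama--Yoshino construction, the triangle $\bullet\to Q\to X\to\bullet[1]$ is produced \emph{from} the minimal right $(\add A)$-approximation $Q\to X$ (equivalently, the minimal left approximation $\bullet\to Q$), so its third arrow is a byproduct of knowing $Q$, which is the thing you are trying to compute. You therefore cannot feed it into the octahedron. Your candidate composite $X\to S[-1]\to\bullet[1]$ is a reasonable guess, but nothing in what you wrote shows it is homotopic to the actual connecting map, and a priori it need not be.

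Your proposed repair --- exhibit the map $P_2\oplus\nu^{-1}I^2\to X$ and verify it is a minimal right $(\add A)$-approximation --- is the right idea, but it does \emph{not} ``reduce to the $3$-cluster tilting vanishings'': those vanishings give you surjectivity of some $\Hom$-maps, but you still have to show (i) a specific map from $P_2\oplus\nu^{-1}I^2$ to $X$ exists and is a right approximation, and (ii) it is right minimal, i.e.\ $\C(A,P_2\oplus\nu^{-1}I^2)\to\C(A,X)$ is a projective cover. Checking (ii) forces you to determine the module $\C(A,X)$ anyway. This is exactly what the paper does, and much more directly: it applies $\C(A,-)$ to the triangle $X\to P_1\to P_0\to X[1]$, identifies the outer maps of the resulting five-term exact sequence with $P_1\to P_0$ (kernel $P_2$) and, via Lemma~\ref{ext}, with $\Ext^2_A(DA,P_1)\to\Ext^2_A(DA,P_0)$ (cokernel $\Ext^2_A(DA,S)$, using right exactness of $\Ext^2_A(DA,-)$ since $\gd A\le2$), so that $\C(A,X)\cong P_2\oplus\Ext^2_A(DA,S)$; then the minimal injective resolution of $S$ yields the minimal projective presentation $\nu^{-1}I^1\to\nu^{-1}I^2\to\Ext^2_A(DA,S)\to0$, so the projective cover of $\C(A,X)$, and hence $Q$, is $P_2\oplus\nu^{-1}I^2$. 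I would recommend replacing the octahedron argument by this direct computation of $\C(A,X)$, since that is what any complete version of your ``fix'' would end up doing.
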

\begin{proof}
	Since $Q\to X$ is a minimal right $(\add A)$-approximation in $\C$, the morphism $\C(A,Q)\to\C(A,X)$ is a projective cover, thus we have to determine the $A$-module $\C(A,X)$. By the first triangle we have an exact sequence
	\[ \xymatrix{
		\C(A,P_1[-1])\ar[r]&\C(A,P_0[-1])\ar[r]&\C(A,X)\ar[r]&\C(A,P_1)\ar[r]&\C(A,P_0) }. \]
	The rightmost map is isomorphic to $P_1\to P_0$, thus has projective kernel $P_2$. On the other hand, the leftmost map is isomorphic by \ref{ext} to $\Ext^2_A(DA,P_1)\to\Ext^2_A(DA,P_0)$, thus its cokernel is $\Ext^2_A(DA,S)$ since $\Ext^2_A(DA,-)$ is right exact by $\gd A\leq2$. Therefore we obtain a split short exact sequence
	\[ \xymatrix{0\ar[r]&\Ext^2_A(DA,S)\ar[r]&\C(A,X)\ar[r]&P_2\ar[r]&0 }. \]
	Now by the minimal injective resolution of $S$ we have a projective presentation
	\[ \xymatrix{ \nu^{-1}I^1\ar[r]&\nu^{-1}I^2\ar[r]&\Ext^2_A(DA,S)\ar[r]&0}, \]
	which gives the projective cover $\nu^{-1}I^2$ of $\Ext^2_A(DA,S)$. We conclude that the projective cover of $\C(A,X)=P_2\oplus\Ext^2_A(DA,S)$ is $P_2\oplus \nu^{-1}I^2$.
\end{proof}

Now we describe the algebra $H$. Suppose $A$ is given by a quiver with relations; $A=kQ/I$ and let $R$ be a minimal set of relations generating $I$. Recall that the minimal projective and injective resolutions of simple modules are controlled by the arrows and the relations (\cite[Section 3]{BIRSm}), precisely, we have exact sequences
\[ \xymatrix{ 0\ar[r]&\discoprod_{\rho\colon c\to a}e_cA\ar[r]&\discoprod_{\a\colon b\to a}e_bA\ar[r]&e_aA\ar[r]&S_a\ar[r]& 0 }, \]
where the first (resp. second) sum runs over the relations $\rho$ in $R$ (resp. paths $\a$) ending at $a$, and dually
\[ \xymatrix{ 0\ar[r]&S_a\ar[r]&D(Ae_a)\ar[r]&\discoprod_{\a\colon a\to b}D(Ae_b)\ar[r]&\discoprod_{\rho\colon a\to c}D(Ae_c)\ar[r]& 0}. \]
We therefore obtain the following explicit description of $H$.
\begin{Thm}
Let $A=kQ/(R)$ be a finite dimensional algebra of global dimension $\leq2$ given by quiver $Q$ and a minimal set of relations $R$. Let $e\in A$ be an idempotent such that $A/(e)$ is hereditary, and consider the CY reduction $\U$ of the $3$-cluster category of $A$ with respect to $eA$.
\begin{enumerate}
	\item The algebra $H=\End_\U(A\oplus A[-1])$ is the path algebra of the quiver $\widetilde{Q}$ obtained as follows.
	\begin{enumerate}
	\renewcommand{\labelenumii}{(\roman{enumii})}
	\item Consider two copies of the quiver $\underline{Q}$ obtained from $Q$ by deleting the vertices corresponding to $e$. For each vertex $a\in\underline{Q}$ we denote by $a^\prime$ the corresponding vertex in the other copy.
	\item For each relation $a\to b$ in $R$ with $a,b\in \underline{Q}$, add arrows $a\to b^\prime$ and $b\to a^\prime$.
	\end{enumerate}
	\item If each connected component of $\widetilde{Q}$ is non-Dynkin, then there exists a triangle equivalence
	\[ \U\simeq\D^b(\md k\widetilde{Q})/\tau^{-1/2}[1]. \]
\end{enumerate}
\end{Thm}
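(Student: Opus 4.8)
The plan is to derive both parts from Proposition~\ref{nu3} together with an explicit computation of the Gabriel quiver of $H$. Since $\gd A\le 2$, the $4$-preprojective algebra of $A$ is $A$ itself, so $H=\End_\U(A\oplus A[-1])$ is precisely the algebra occurring in \ref{nu3} (with $\Pi=A$), and the hypothesis ``$A/(e)$ hereditary'' is exactly its hypothesis ``$\Pi/(e)$ hereditary''. Thus \ref{nu3}(1) already gives that $H$ is hereditary; and since the endomorphism ring of each indecomposable summand of $A$ in $\U$ is a quotient of $e_aAe_a=k$, the algebra $H$ is split basic, hence is the path algebra of a finite acyclic quiver $\widetilde Q$, namely its Gabriel quiver. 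Once the shape of $\widetilde Q$ asserted in (1) is established, part (2) follows at once: if every connected component of $\widetilde Q$ is non-Dynkin then every ring-indecomposable factor of $H=k\widetilde Q$ is representation-infinite, so $H$ is $1$-representation infinite, while $H/J_H\cong k\times\cdots\times k$ is separable over $k$; \ref{nu3}(2) then yields $\U\simeq\D^b(\md k\widetilde Q)/\tau^{-1/2}[1]$.

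So the substance is the computation of $\widetilde Q$. First I would determine the vertices and the arrows internal to the two halves. In $\U=\mathscr{Z}/[\mathscr{P}]$ the summands $e_aA$ of $A$ with $a$ in the support of $e$ become zero, so $A$ and $A[-1]$ together contribute two copies of the vertex set of $\underline Q$. The full subquiver of $\widetilde Q$ on the vertices coming from $A$ is the quiver of $\End_\U(A)$, and $\End_\U(A)$ is the quotient of $\End_\C(A)=A$ by the maps factoring through $\add eA$, that is $A/(e)$, which equals $k\underline Q$ by hypothesis; the same holds for the vertices coming from $A[-1]$ via the suspension autoequivalence of $\U$. Finally there is no arrow from an $A[-1]$-vertex to an $A$-vertex, since $\U(A[-1],A)\cong D\U(A,A[2])=0$, $A$ being $3$-cluster tilting in $\U$. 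Hence $\widetilde Q$ is two copies of $\underline Q$ together with some arrows directed from the first copy into the second.

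To count these connecting arrows I would invoke \ref{quiver}, which applies because $\U$ is $3$-CY with $3$-cluster tilting object $A$ and $\End_\U(A)$ hereditary: the number of arrows from $e_bA$ to $e_aA[-1]$ equals the multiplicity of $e_bA$ in the middle term $M_a$ of the AR $5$-angle at $e_aA$ in $\add A\subset\U$. By \ref{red}(3) --- applicable since $\add eA\subset\C$ is a functorially finite $3$-rigid subcategory, being a summand of the $3$-cluster tilting object $A$ --- the object $M_a$ is the image in $\U$ of the middle term of the AR $5$-angle at $e_aA$ in $\add A\subset\C$, which by \ref{middle} is $P_2\oplus\nu^{-1}I^2$, where $0\to P_2\to P_1\to P_0\to S_a\to0$ and $0\to S_a\to I^0\to I^1\to I^2\to0$ are the minimal projective and injective resolutions of the simple $A$-module $S_a$. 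Since passing to $\U$ only deletes the summands lying in $\add eA$, for $a,b\in\underline Q$ the multiplicity of $e_bA$ in $M_a$ equals its multiplicity in $P_2\oplus\nu^{-1}I^2$; and by the description of minimal resolutions of simples in terms of arrows and relations recalled just before the theorem one has $P_2=\coprod_{\rho\colon c\to a}e_cA$ and $\nu^{-1}I^2=\coprod_{\rho\colon a\to c}e_cA$ (sums over minimal relations $\rho\in R$), so this multiplicity is $\sharp\{\rho\in R:\rho\colon b\to a\}+\sharp\{\rho\in R:\rho\colon a\to b\}$. This is exactly the number of arrows $b\to a^\prime$ prescribed by rule~(ii), since a minimal relation between $a$ and $b$ in either direction contributes precisely one such arrow; hence $\widetilde Q$ has the stated form.

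The mechanical parts are the applications of \ref{nu3}, \ref{red}, \ref{middle} and \ref{quiver}. The one step that I expect to require genuine care is the last one: reading off the multiplicity of a given indecomposable projective in $P_2\oplus\nu^{-1}I^2$, checking that this multiplicity is unchanged after deleting the $\add eA$-summands upon passage to $\U$, and confirming that the orientation of the resulting arrows agrees with rule~(ii) rather than its reverse.
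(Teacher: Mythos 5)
Your proposal is correct and follows essentially the same route as the paper's proof: both rely on Proposition~\ref{nu3} for hereditariness and the triangle equivalence, then pass via Proposition~\ref{quiver}, Theorem~\ref{red}(3), and Lemma~\ref{middle} together with the description of minimal resolutions of simples to read off the connecting arrows. Your write-up is more detailed — in particular you make explicit the identification $\End_\U(A)\cong A/(e)$, the vanishing $\U(A[-1],A)=0$, and the final count $\sharp\{\rho\colon b\to a\}+\sharp\{\rho\colon a\to b\}$ matching rule~(ii) — whereas the paper records these steps more tersely, but the argument is the same.
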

\begin{proof}
	The triangle equivalence is given in \ref{nu3} so we only have to prove (1). Clearly the quiver of $H$ has two copies of $\underline{Q}$ as a subquiver, and by \ref{quiver} the arrows between these copies are given by the middle terms of the AR $5$-angles in $\U$. By \ref{red}(3) they are the image of the ones in $\C$, whose middle terms are as in \ref{middle}. We then deduce the result by the remark on resolutions of simple modules. 
\end{proof}

The following example shows that a CY reduction of a usual $3$-cluster category of a finite dimensional algebra is {\it not} a usual $3$-cluster category; it involves a square root of the AR translation.
\begin{Ex}
Let $A$ be the algebra presented by the following quiver with relations, which has global dimension $2$.
\[ \xymatrix@R=0.1mm{
	&2\ar[dr]^-b&\\
	1\ar[dr]_-c\ar[ur]^-a&&4&dc=0.\\
	&3\ar[ur]_-d&\quad, } \]
Let $e=e_3$ be the idempotent corresponding to the vertex $3$. Then the quotient $A/(e_3)$ is hereditary. Letting $\U$ be the CY reduction of the $3$-cluster category $\C_3(A)$ with respect to $e_3A$, we have a triangle equivalence
\[ \U\simeq \D^b(\md H)/\tau^{-1/2}[1], \]
where $H$ is the path algebra of the following quiver of type $\widetilde{A_5}$.
\[ \xymatrix@R=4mm{
	1\ar[r]\ar[drr]&2\ar[r]&4\ar[dll]\\
	1^\prime\ar[r]&2^\prime\ar[r]&4^\prime } \]
Note that $\D^b(\md H)/\tau^{-1/2}[1]$ has infinitely many indecomposables, so it cannot be equivalent to the cluster category of the quotient algebra $A/(e_3)$.
\end{Ex}
	
\section{Application: Singularity categories of truncated skew group algebras}
Throughout this section let $k$ be an algebraically closed field of characteristic $0$, $S=k[x_0,\ldots,x_d]$ the polynomial ring, and $G$ a finite subgroup of $\SL_{d+1}(k)$, which acts naturally on $S$. Let $R=S^G$ the invariant ring and $S\ast G$ the {skew group algebra}, that is, the vector space $S\otimes_kkG$ with multiplication $(s\otimes g)(t\otimes h)=sg(t)\otimes gh$ for $s,t\in S$ and $g,h\in G$.
The following result gives examples of CY triangulated categories with cluster tilting objects.
\begin{Thm}[{\cite[10.1]{IYo}, \cite[2.3]{AIR}}]\label{EX}
Let $e$ be an idempotent of $\G=S\ast G$ such that $\G/(e)$ is finite dimensional. Put $\L=e\G e$.
\begin{enumerate}
	\item The algebra $\L$ is a symmetric $R$-order, thus the singularity category $\sCM\L$ is $d$-CY.
	\item The $\L$-module $T=\G e$ is Cohen-Macaulay and is $d$-cluster tilting in $\CM\L$.
	\item We have $\sEnd_\L(T)=\G/(e)$.
\end{enumerate}
\end{Thm}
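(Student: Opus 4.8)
The plan is to derive all three assertions from a single structural fact: $\G=S\ast G$ is a non-singular symmetric $R$-order over the Gorenstein ring $R=S^G$, and everything else is obtained by cutting $\G$ down with the idempotent $e$. First I would analyze $R$: since $k$ has characteristic $0$ the order $|G|$ is invertible, so $R$ is a normal Cohen--Macaulay domain of Krull dimension $d+1$, and it is Gorenstein by Watanabe's theorem because $G\subseteq\SL_{d+1}(k)$ contains no non-trivial pseudo-reflections. As $S$ is module-finite and maximal Cohen--Macaulay over $R$, so is $\G$, which as an $R$-module is $\cong S^{|G|}$, and $R$ is central in $\G$; hence $\G$ is an $R$-order, non-singular because $\gd\G=\gd S=d+1=\dim R$ (here $|G|$ invertible is used again). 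The crucial point is that $\G$ is \emph{symmetric}: the $G$-invariant top form $\mathrm{d}x_0\wedge\cdots\wedge\mathrm{d}x_d$ --- $G$-invariant precisely because $\det=1$ on $G$ --- trivialises $\omega_S$ equivariantly and yields an isomorphism $\Hom_R(\G,R)\simeq\G$ of $(\G,\G)$-bimodules; this is exactly where $G\subseteq\SL_{d+1}(k)$, rather than merely $\GL_{d+1}(k)$, is essential.

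Next I would transfer these properties to $\L=e\G e$. The Peirce decomposition $\G=e\G e\oplus e\G(1-e)\oplus(1-e)\G e\oplus(1-e)\G(1-e)$ is a decomposition of $R$-modules, so $\L$ is an $R$-summand of $\G$, hence maximal Cohen--Macaulay over $R$; and applying $e(-)e$ to the bimodule isomorphism above gives $\Hom_R(\L,R)\simeq e\,\Hom_R(\G,R)\,e\simeq\L$ as $(\L,\L)$-bimodules, so $\L$ is again a symmetric $R$-order. The hypothesis $\dim_k\G/(e)<\infty$ says that $\G/\G e\G$ is supported only at the maximal ideal, so after localising at any non-maximal prime $\p$ of $R$ the idempotent $e$ becomes full in $\G_\p$; then $\L_\p$ is Morita equivalent to $\G_\p=S_\p\ast G$, which has finite global dimension, so $\L$ has an isolated singularity. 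Assertion (1) now follows from the standard fact that the stable category of maximal Cohen--Macaulay modules over a symmetric order with isolated singularities, over a Gorenstein base of Krull dimension $d+1$, is $\Hom$-finite with Serre functor $[d]$, i.e. is $d$-CY.

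For $T=\G e$, regarded as a (right) $\L=e\G e$-module: it is an $R$-summand of $\G$, hence maximal Cohen--Macaulay over $R$, so $T\in\CM\L$, which is the first half of (2). To compute its endomorphism ring I would use left multiplication $\G\to\End_\L(\G e)$; this is injective because $\G=S\ast G$ is prime ($S$ a domain, $G$ acting faithfully) and $\G e\G$ is a nonzero, hence essential, two-sided ideal, and it becomes the Morita isomorphism $\G_\p\xrightarrow{\sim}\End_{\L_\p}(\G e_\p)$ at every non-maximal prime; since $\G$ and $\End_\L(\G e)$ are both reflexive over the normal ring $R$, agreement in codimension one forces $\End_\L(T)=\G$. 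Consequently $(-)e$ induces an equivalence $\add\G\xrightarrow{\sim}\add T$ of additive categories, sending the projective right $\G$-module $e\G$ to $e\G e=\L$; hence the maps in $\End_\L(T)$ factoring through $\add\L$ correspond to maps $\G\to\G$ factoring through $\add(e\G)$, and a short computation shows these are exactly left multiplication by the two-sided ideal $\G e\G=(e)$. Therefore $\sEnd_\L(T)=\G/(e)$, which is assertion (3).

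The remaining, and hardest, part is the $d$-cluster tilting property in (2). Functorial finiteness of $\add T$ in $\CM\L$ is immediate from $\End_\L(T)=\G$. For the vanishing and the saturation I would work through $\G$: the functor $\Hom_\L(T,-)$ and its left adjoint $\G e\otimes_\G-$ restrict to the equivalence $\add T\simeq\add\G$, and since $\G$ is a non-singular order $\CM\G=\add\G$; iterating the counit $\G e\otimes_\G\Hom_\L(T,X)\to X$ then produces $\add T$-resolutions of any $X\in\CM\L$, whose length I would bound by $d$ using $\gd\G=d+1$ together with a depth count keeping every syzygy maximal Cohen--Macaulay. The isolated singularity of $\L$ makes the obstruction groups $\Ext^{>0}_\L(T,-)$ finite-dimensional, and combining this with the $d$-Calabi--Yau duality from (1) upgrades finite-dimensionality to vanishing in the range $0<i<d$, giving both $\Ext^i_\L(T,X)=0$ for $X\in\add T$ and the converse characterisation of $\add T$ inside $\CM\L$. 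I expect this last step to be the main obstacle: neither the length estimate on $\add T$-resolutions nor the vanishing of the $\Ext$-obstructions in degrees $0<i<d$ is formal, and both require playing the finite global dimension of $\G$, the Calabi--Yau duality of $\sCM\L$, and the isolated singularity coming from $\dim_k\G/(e)<\infty$ off against one another, whereas (1) and (3) become routine once the symmetric, non-singular order structure of $\G$ is in place.
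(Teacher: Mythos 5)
This theorem is stated in the paper without proof; it is imported from \cite[Thm.~10.1]{IYo} and \cite[Thm.~2.3]{AIR}, and the surrounding text only recalls from \cite{Iy07b} how the Koszul complex of $S$ yields the AR $(d+2)$-angles in $\add\G e$. There is thus no internal proof to compare against, so I judge your proposal on its own terms. Your derivation of (1), (3) and the Cohen--Macaulayness of $T$ in (2) is sound and matches the standard route: $G\subseteq\SL_{d+1}(k)$ makes $\omega_S\simeq S$ $G$-equivariantly, so $\G$ is a symmetric non-singular $R$-order with $\gd\G=d+1$; the Peirce decomposition transfers symmetry and Cohen--Macaulayness to $\L=e\G e$; finite-dimensionality of $\G/(e)$ gives an isolated singularity via Morita equivalence of $\L_\p$ with $\G_\p$ at non-maximal primes $\p$; and $\End_\L(\G e)=\G$ follows from reflexivity over the normal ring $R$ plus agreement in codimension one (primeness of $\G$ supplying injectivity of left multiplication), whence $\sEnd_\L(T)=\G/\G e\G$.

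The gap is exactly where you flag it: the $d$-cluster-tilting part of (2). Two of the steps you propose do not work as stated. First, iterating the counit $\G e\otimes_\G\Hom_\L(T,X)\to X$ to build $\add T$-resolutions presupposes that $\Hom_\L(T,X)$ lies in $\CM\G=\proj\G$; for a general $X\in\CM\L$ this is precisely the depth statement that the cluster-tilting property is designed to produce, and Ischebeck's bound only gives $\depth_R\Hom_\L(T,X)\geq2$, not $d+1$, so the argument as phrased is circular. Second, and more fundamentally, the claim that the $d$-Calabi--Yau duality ``upgrades finite-dimensionality to vanishing'' is not a valid inference: Serre duality gives $\sHom_\L(T,X[i])\simeq D\sHom_\L(X,T[d-i])$, which merely relates groups in complementary degrees and cannot force any of them to be zero; Hom-finiteness is needed to state the CY property but contributes nothing towards a vanishing. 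In the cited sources the rigidity $\Ext^i_\L(T,T)=0$ for $0<i<d$ and the saturation property are established by an explicit homological computation tying $\gd\G=d+1$ (equivalently, the length-$(d+1)$ Koszul resolution over $S$) to the behaviour of syzygies of $\CM\L$-modules under $\Hom_\L(T,-)$. Your plan names the right ingredients but does not assemble them into a proof.
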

Recall that AR sequences in $\add\G e\subset\sCM\L$ can be obtained from the Koszul complex \cite{Iy07b}. Let $V$ be the vector space with basis $\{x_0,\ldots,x_d\}$, so that we have the Koszul complex of $S$:
\[ \xymatrix{ 0\ar[r]& S\otimes_k\bigwedge^{d+1}V\ar[r]&S\otimes_k\bigwedge^{d}V\ar[r]&\cdots\ar[r]&S\otimes_kV\ar[r]&S\ar[r]&k\ar[r]& 0}. \]
Let $M=eS\in\md R$ be the direct summand of $S$ corresponding to the idempotent $e\in\G=\End_R(S)$. Applying $\Hom_R(M,-)$ yields an exact sequence
\[ \xymatrix{ 0\ar[r]& \Hom_R(M,S)\otimes_k\bigwedge^{d+1}V\ar[r]&\cdots\ar[r]&\Hom_R(M,S)\otimes_kV\ar[r]&\Hom_R(M,S)}, \]
in $\CM\L$, which is a direct sum of $d$-almost split sequences and $d$-fundamental sequences in $\add\G e$. Passing these to the stable category $\sCM\L$ gives the AR $(d+2)$-angles.

In what follows we use a standard notation for elements in $\GL_n(k)$: for $a, a_1,\ldots,a_n\in\Z$, we write $1/a(a_1,\ldots,a_n)$ for $\diag(\zeta^{a_1},\ldots,\zeta^{a_n})\in\GL_n(k)$ for a fixed primitive $a$-th root of unity $\zeta$.
\begin{Lem}\label{kakunin}
Let $G\subset\SL_{d+1}(k)$ is the cyclic subgroup generated by $1/n(a_0,\ldots,a_d)$. 
\begin{enumerate}
	\item The McKay quiver $Q$ of $G$ has vertices $\Z/n\Z=\{0,1,\ldots,n-1\}$ and arrows $x_i\colon j\to j+a_i$.
	\item The skew group algebra $\G$ is presented by $Q$ with commutativity relations.
\suspend{enumerate}
We view $Q$ as an $\{x_0,\ldots,x_d\}$-colored quiver. Let $e\in\G$ be an idempotent and let $I$ be the corresponding subset of the vertices of $Q$.
\resume{enumerate}
	\item $\G/(e)$ is presented by the quiver $Q\setminus I$ obtained from $Q$ by removing the vertices in $I$.
	\item $\G/(e)$ is finite dimensional hereditary if and only if there are no cycles consisting of arrows of a single color, nor composable arrows of different colors in $Q\setminus I$.
\end{enumerate}
\end{Lem}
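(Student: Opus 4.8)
The plan is to dispatch (1) and (2) using the standard description of the skew group algebra of a polynomial ring, deduce (3) from how an idempotent two-sided quotient acts on a quiver presentation, and then prove (4) by a direct combinatorial analysis of which relations survive the quotient. For (1), since $G=\langle g\rangle$ with $g=1/n(a_0,\dots,a_d)$ is cyclic and acts diagonally, its irreducible representations are the one-dimensional characters $\chi_j\colon g\mapsto\zeta^j$ for $j\in\Z/n\Z$, and we use these to label the vertices of the McKay quiver by $\Z/n\Z$. The natural module $V=\bigoplus_i kx_i$ decomposes as $V\simeq\bigoplus_i\chi_{a_i}$, so $\chi_j\otimes V\simeq\bigoplus_i\chi_{j+a_i}$, and the number of arrows $j\to l$ equals $\#\{i\mid a_i=l-j\}$; naming the arrow arising from the $i$-th summand $x_i$ gives arrows $x_i\colon j\to j+a_i$, which is (1). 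For (2) I would invoke the classical presentation $S\ast G\simeq T_{kG}(V\otimes_k kG)/(\bigwedge^2 V\otimes_k kG)$, whose defining relations are the image of $\bigwedge^2 V$, that is, the commutativity relations $x_ix_{i'}=x_{i'}x_i$; decomposing $kG$ along its primitive idempotents $e_0,\dots,e_{n-1}$ identifies the $kG$-bimodule $V\otimes_k kG$ with the arrow space of the quiver $Q$ of (1) and turns the relations into these commutativity relations read at each vertex (this is standard; cf.\ the references cited in the set-up).

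For (3), the two-sided ideal $(e)$ of $\G$ generated by the idempotent $e=\sum_{j\in I}e_j$ is spanned by all paths passing through a vertex of $I$, so passing to $\G/(e)$ deletes exactly the vertices in $I$ together with every arrow and every relation-path through them; hence $\G/(e)$ is the path algebra of the full subquiver $Q\setminus I$ modulo the induced commutativity relations.

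For (4), write $\G/(e)=k(Q\setminus I)/\rho$, where $\rho$ is generated by the commutativity relations that survive in $Q\setminus I$. Fix colours $i\neq i'$ and a vertex $j$: the associated relation $x_{i'}x_i-x_ix_{i'}$ is supported on the two length-two paths $j\xrightarrow{x_i}j+a_i\xrightarrow{x_{i'}}j+a_i+a_{i'}$ and $j\xrightarrow{x_{i'}}j+a_{i'}\xrightarrow{x_i}j+a_i+a_{i'}$ (genuinely distinct paths, even when $a_i=a_{i'}$, since their first arrows differ), and it is a nonzero element of $k(Q\setminus I)$ precisely when at least one of these two paths survives in $Q\setminus I$; a pair $i=i'$ contributes only the trivial relation $0$. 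Thus $\rho=0$ if and only if $Q\setminus I$ has no composable pair of arrows of distinct colours. Since every generator of $\rho$ is a difference of two length-two paths, $\rho$ lies in the square of the arrow ideal, so $\G/(e)$ is hereditary if and only if $\rho=0$; and when $\rho=0$ one has $\G/(e)=k(Q\setminus I)$, which is finite dimensional if and only if $Q\setminus I$ is acyclic. Assuming $\rho=0$, every oriented cycle in $Q\setminus I$ must use a single colour, since a cycle using two colours contains two consecutive arrows of different colours; hence acyclicity of $Q\setminus I$ is equivalent to the absence of monochromatic cycles. Conversely a monochromatic cycle of length $m$ in colour $i$ produces the infinitely many nonzero paths $x_i^m,x_i^{2m},\dots$, so $\G/(e)$ is then infinite dimensional, while any surviving different-colour relation gives $\rho\neq0$. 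Putting these equivalences together yields precisely the criterion of (4).

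The step requiring the most care is the bookkeeping in (4): one must verify that same-colour composable pairs genuinely contribute no relation while every surviving different-colour composable pair does (including the parallel case $a_i=a_{i'}$), and that, once $\rho=0$ is established, ``no monochromatic oriented cycle'' together with ``no composable arrows of different colours'' is equivalent to acyclicity of $Q\setminus I$. Parts (1) and (2), by contrast, are classical and I would essentially cite them rather than reprove them.
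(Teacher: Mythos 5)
Your proof is correct and follows essentially the same route as the paper: cite the standard McKay/skew-group presentation for (1)--(3), then for (4) show that the surviving commutativity relations lie in $J^2$ and survive exactly when a two-coloured composable pair does, so that $\rho=0$ together with acyclicity (equivalently, absence of monochromatic cycles once $\rho=0$) characterises finite-dimensional hereditaryness. The only cosmetic difference is that you phrase the converse via the ideal $\rho$ in one stroke, whereas the paper splits it into the two observations (monochromatic cycle $\Rightarrow$ infinite-dimensional; mixed-colour composable pair $\Rightarrow$ either a commutativity relation or, if the parallel midpoint is deleted, a zero relation), but the content is the same.
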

\begin{proof}
	(1) is obvious, (2) is well-known (see e.g. \cite[2.8]{CMT}\cite[4.1]{BSW}), and (3) is then clear. We verify (4). If $Q\setminus I$ has no cycles of a single color and no composable arrows of different colors, then there are no relations on $Q\setminus I$ since $\G$ has only commutativity relations. Also the assumption implies that $Q\setminus I$ is acyclic. Therefore $\G/(e)$ is finite dimensional and hereditary. Conversely, if $Q\setminus I$ has a cycle of a single color, then $\G/(e)$ is infinite dimensional. If $Q\setminus I$ has composable arrows of different colors, then it has a relation. Indeed, let $a\xrightarrow{x_i}b\xrightarrow{x_j}c$ be a subquiver of $Q\setminus I$ with $i\neq j$. Then $Q$ has a subquiver of the form
	\[ \xymatrix@R=0.1mm{
		&b\ar[dr]^-{x_j}&\\
		a\ar[dr]_-{x_j}\ar[ur]^-{x_i}&&c\\
		&b^\prime\ar[ur]_-{x_i}&\quad . } \]
	If $b^\prime\in Q\setminus I$ then $Q\setminus I$ has a commutativity relation, and if $b^\prime\not\in Q\setminus I$ then $Q\setminus I$ has a zero relation.
\end{proof}
Let $G=\left\langle 1/n(a_0,\ldots,a_d)\right\rangle \subset\SL_{d+1}(k)$ and its McKay quiver $Q$ as in \ref{kakunin} above. We naturally regard $a_i\in\Z/n\Z$. Let $M_j$ be the indecomposable direct summand of $S$ corresponding to the vertices $j$ of $Q$. Then the construction of almost split sequences and a computation of exterior powers show that it is given for $M_j$ by the exact sequence
\[ \xymatrix{ M_j\ar[r]& \discoprod_{0\leq{i_1}<\ldots<i_{d}\leq d}M_{j-a_{i_1}-\cdots-a_{i_{d}}}\ar[r]&\cdots\ar[r]&\discoprod_{0\leq{i_1}<i_{2}\leq d}M_{j-a_{i_1}-a_{i_{2}}}\ar[r]&\discoprod_{0\leq i\leq d}M_{j-a_i}\ar[r]& M_j }. \]

Now let $e\in\G$ is an idempotent as in \ref{EX}, $I$ the corresponding subset of the vertives of $Q$, and $M$ the corresponding direct summand of $S$. Putting $N_j=\Hom_R(M,M_j)\in\CM\L$, the AR sequence at $N_j$ in $\add\G e\subset\sCM\L$ is
\[ \xymatrix{ N_j\ar[r]& \discoprod_{\substack{0\leq{i_1}<\ldots<i_{d}\leq d\\j-a_{i_1}-\cdots-a_{i_{d}}\not\in I}}N_{j-a_{i_1}-\cdots-a_{i_{d}}}\ar[r]&\cdots\ar[r]&\discoprod_{\substack{0\leq{i_1}<i_{2}\leq d\\j-a_{i_1}-a_{i_{2}}\not\in I}}N_{j-a_{i_1}-a_{i_{2}}}\ar[r]&\discoprod_{\substack{0\leq i\leq d \\ j-a_i\not\in I}}N_{j-a_i}\ar[r]& N_j }. \]

Our first example is $3$-CY category with a $3$-cluster tilting object with hereditary endomorphism ring, so that we can apply \ref{kmv}.
\begin{Ex}
Let $S=k[x,y,z,w]$ and $G$ the cyclic subgroup of $\SL_4(k)$ generated by $1/n(a_0,a_1,a_2,a_3)$ with $0\leq a_i\leq n-1$. Assume that $a_0=1$, and $a_1,a_2,a_3>1$. Moreover pick an integer $l>0$ such that $l-1\leq a_1,a_2,a_3\leq n-l$ and let $e:=e_0+e_{l+1}+\cdots+e_{n-1}=1-(e_1+\cdots+e_l)$. Putting $M=M_0\oplus M_{l+1}\oplus\cdots\oplus M_{n-1}$ we have $\L=e\G e=\End_R(M)$, and by \ref{EX} the stable category $\sCM\L$ is $3$-CY and $N=N_1\oplus\cdots\oplus N_l\in\sCM\L$ with $N_i:=\Hom_R(M,M_i)$ is $3$-cluster tilting.
By \ref{EX}(4) we see that $\sEnd_\L(N)=\G/(e)$ is hereditary whose quiver as below, where the number of arrows $1\to l$ is the number of $a_1,a_2,a_3$ which equals $l-1$.
\vspace{15pt}
\[ \xymatrix{ 1\ar[r]\ar@/^15pt/@{-->}@2[rrrr]&2\ar[r]&\cdots\ar[r]&l-1\ar[r]&l} \]
The middle term of the AR $5$-angle at $N_j$ is $\coprod_{0\leq i_1<i_2\leq 3}N_{j-a_{i_1}-a_{i_2}}$, thus equals $N_l^{m_{jl}}$ with $m_{jl}$ the number of pairs $(a_{i_1},a_{i_2})$ such that $a_{i_1}+a_{i_2}=j-l$. Note that $m_{jl}=m_{lj}$ by \ref{quiver}. By \ref{oddcy} the algebra $H=\sEnd_\L(N\oplus N[-1])$ is hereditary, which by \ref{quiver} is presented by the quiver which looks as below, with $m_{jl}$ arrows from $N_j$ to $N_l[-1]$.
\vspace{15pt}
\[ \xymatrix{
	N_1\ar@/^15pt/@{-->}@2[rrrr]\ar[r]\ar@{..>}[dr]\ar@{..>}[drr]&N_2\ar[r]\ar@{..>}[dl]\ar@{..>}[dr]&\cdots\ar@{..>}[drr]\ar@{..>}[dr]\ar@{..>}[dll]\ar@{..>}[dl]\ar[r]&N_{l-1}\ar@{..>}[dr]\ar@{..>}[dl]\ar[r]&N_l\ar@{..>}[dl]\ar@{..>}[dll]\\
	N_1[-1]\ar@/_15pt/@{-->}@2[rrrr]\ar[r]&N_2[-1]\ar[r]&\cdots\ar[r]&N_{l-1}[-1]\ar[r]&N_l[-1]} \vspace{10pt}\]
We obtain by \ref{kmv} a triangle equivalence
\[ \xymatrix{\sCM\L\ar@{-}[r]^-\simeq&\D^b(\md H)/\tau^{-1/2}[1] }. \]
Let us list a specific example of the McKay quiver $Q$, and the hereditary algebra $H=\sEnd_\L(N\oplus N[-1])$ for $G=\left\langle 1/5(1,3,3,3)\right\rangle$ and $l=2$.
\[
\xymatrix@C=1mm@!R=1mm{
	&&&0\ar[drr]\ar@3[ddl]&&\\
	{Q=}&4\ar[urr]\ar@3[drrr]&&&&1\ar[dl]\ar@3[llll]\\
	&&3\ar[ul]\ar@3[urrr]&&2\ar[ll]\ar@3[uul]&, }
\xymatrix@C=3mm@!R=0.1mm{
	&N_1\ar[rr]\ar@3[ddrr]&&N_2\ar@3[ddll]\\
	\quad H=\\
	&N_1[-1]\ar[rr]&&N_2[-1] } \]
\end{Ex}
The next example is a $4$-CY category $\sCM\L$ with a semisimple $4$-cluster tilting object $N$. In this case the algebra $\sEnd_\L(N\oplus N[-1])$ is clearly hereditary, so we can apply \ref{new}.
\begin{Ex}
Let $G\subset\SL_{5}(k)$ be the cyclic subgroup generated by $1/n(a_0,a_1,a_2,a_3,a_4)$ with $1\leq a_i\leq n-1$.

(1)  Let $e_0$ a primitive idempotent of $\G=S\ast G$, $e=1-e_0$, and $M_0$ (resp. $M$) the $R$-module summand of $S$ corresponding to $e_0$ (resp. $e$). Then the almost split sequence at $M_0$ in $\CM R$ is given by
\[ \xymatrix{M_0\ar[r]&\discoprod_{0\leq i\leq4}M_{a_i}\ar[r]&\discoprod_{0\leq i_1<i_2\leq 4}M_{a_{i_1}+a_{i_2}}\ar[r]&\discoprod_{0\leq i_1<i_2\leq 4}M_{-a_{i_1}-a_{i_2}}\ar[r]&\discoprod_{0\leq i\leq4}M_{-a_i}\ar[r]&M_0}. \]
Therefore letting $m$ be the number of pairs $(a_{i_1},a_{i_2})$ with $0\leq i_1<i_2\leq 4$ such that $a_{i_1}+a_{i_2}=n$, the AR $6$-angle in $\sCM\L$ at $N_0=\Hom_R(M,M_0)$, where $\L=e\G e$, is
\[ \xymatrix@R=1mm@!C=5mm{
	&0\ar[dr]\ar[rr]&&N_0^{\oplus m}\ar[dr]\ar[rr]&&N_0^{\oplus m}\ar[dr]\ar[rr]&&0\ar[dr]& \\
	N_0\ar[ur]&&N_0[1]\ar[ur]&&\bullet\ar[ur]&&N_0[-1]\ar[ur]&&N_0. } \]
It follows from \ref{quiver2} that $H=\sEnd_\L(N_0\oplus N_0[-1]\oplus N_0[-2])$ is the path algebra of the following quiver of type $\widetilde{A_2}$ with $m$-fold arrows, and from \ref{new} that there exists a triangle equivalence below.
\[ \xymatrix@R=0.1mm@!C=7mm{
	&&N_0[-1]\ar@3[ddr]&\\
	&&&&&&\sCM\L\simeq\D^b(\md H)/\tau^{-1/3}[1].\\
	&N_0\ar@3[uur]\ar@3[rr]&&N_0[-2],} \]

(2)  More generally, let $I_0$ be a subset of $\Z/n\Z$ such that for each $j,j^\prime\in I_0$ we have $j-j^\prime\neq a_i$ for any $0\leq i\leq4$. Then letting $e$ be the idempotent of $\G$ corresponding to $J:=\Z/n\Z\setminus I_0$, the algebra $\G/(e)$ is semisimple. Also the AR $6$-angle in $\add\G e\subset\sCM\L$ is
\[ \xymatrix@R=3mm@!C=5mm{
	&0\ar[dr]\ar[rr]&&\discoprod_{l\in J}N_l^{\oplus m_{jl}}\ar[dr]\ar[rr]&&\discoprod_{l\in J}N_l^{\oplus m_{lj}}\ar[dr]\ar[rr]&&0\ar[dr]& \\
	N_j\ar[ur]&&N_j[1]\ar[ur]&&\bullet\ar[ur]&&N_j[-1]\ar[ur]&&N_j, } \]
where $m_{jl}$ is the number of pairs $(a_{i_1},a_{i_2})$ with $0\leq i_1<i_2\leq 4$ such that $a_{i_1}+a_{i_2}=l-j$ in $\Z/n\Z$. By \ref{quiver2} the algebra $H=\sEnd_\L(N\oplus N[-1]\oplus N[-2])$ is the path algebra of the quiver given as follows.
\begin{itemize}
	\item vertices are $J\times\{0,-1,-2\}$,
	\item $m_{jl}$ arrows from $(j,0)$ to $(l,-1)$ and from $(j,-1)$ to $(l,-2)$, and $m_{lj}$ arrows from $(j,0)$ to $(l,-2)$.
\end{itemize}
The quiver of $H$ looks as in the following picture, where the same kind of arrows indicates that there are same number of arrows. By \ref{new} we have a triangle equivalence below.
\[ \xymatrix@R=5mm@!C=12mm{
	\cdots\ar@{~>}[dr]&N_j\ar[dr]\ar@2[d]\ar@2@/_8mm/[dd]\ar@{-->}[ddr]\ar@{~>}[ddl]&N_l\ar@2[d]\ar@2@/^8mm/[dd]\ar[ddl]\ar@{-->}[dl]\ar@{..>}[dr]&\cdots\ar@{..>}[ddl]\\
	\cdots\ar@{~>}[dr]&N_j[-1]\ar[dr]\ar@2[d]&N_l[-1]\ar@2[d]\ar@{-->}[dl]\ar@{..>}[dr]&\cdots&&\sCM\L\simeq\D^b(\md H)/\tau^{-1/3}[1]\\
	\cdots&N_j[-2]&N_l[-2]&\cdots } \]
We end this paper with some specific examples for $G=\left\langle 1/6(1,1,1,4,5)\right\rangle$ and (i) $I_0=\{0\}$, (ii) $I_0=\{0,3\}$, which describe the McKay quiver of $G$ and the hereditary algebra $H=\sEnd_\L(N\oplus N[-1]\oplus N[-2])$ in each case.
\[ \xymatrix@R=1mm@C=4mm{
	&&0\ar@3[dr]\ar[dddl]\ar@/_8pt/[dl]&\\
	&5\ar@3[ur]\ar[dddr]\ar@/_8pt/[dd]&&1\ar@3[dd]\ar[ll]\ar@/_8pt/[ul]\\
	\\
	&4\ar@3[uu]\ar[rr]\ar@/_8pt/[dr]&&2\ar@3[dl]\ar[uuul]\ar@/_8pt/[uu]\\
	&&3\ar@3[ul]\ar[uuur]\ar@/_8pt/[ur]& }
\hspace{20mm}
{\rm (i)}
\xymatrix@R=1mm@C=3.6mm{
	&N_0\ar@3[dd]\ar@/_8mm/@3[dddd]\\ \\ H=&N_0[-1]\ar@3[dd]\\ \\&N_0[-2] }
\quad
{\rm (ii)}
\xymatrix@R=1mm@C=3.6mm{
	&N_0\ar@3[dd]\ar@/_8mm/@3[dddd]\ar[ddr]\ar[ddddr]&N_3\ar@3[dd]\ar@/^8mm/@3[dddd]\ar[ddddl]\ar[ddl]\\ \\ H=&N_0[-1]\ar@3[dd]\ar[ddr]&N_3[-1]\ar@3[dd]\ar[ddl]\\ \\&N_0[-2]&N_3[-2] } \]
\end{Ex}

\thebibliography{BMRRT}
\bibitem[AMY]{AMY} T. Adachi, Y. Mizuno, and D. Yang, {Discreteness of silting objects and $t$-structures in triangulated categories}, Proc. Lond. Math. Soc. (3) 118 (2019), no. 1, 1–42.
\bibitem[Am1]{Am07} C. Amiot, {On the structure of triangulated categories with finitely many indecomposables}, Bull. Soc. math. France 135 (3), 2007, 435-474.
\bibitem[Am2]{Am09} C. Amiot, {Cluster categories for algebras of global dimension 2 and quivers with potentional}, Ann. Inst. Fourier, Grenoble 59, no.6 (2009) 2525-2590.
\bibitem[AIR]{AIR} C. Amiot, O. Iyama, and I. Reiten, {Stable categories of Cohen-Macaulay modules and cluster categories}, Amer. J. Math, 137 (2015) no.3, 813-857.
\bibitem[AO1]{AOim} C. Amiot and S. Oppermann, {The image of the derived category in the cluster category}, Int. Math. Res. Not. (2013) no. 4, 733-760.
\bibitem[AO2]{AOac} C. Amiot and S. Oppermann, {Algebras of acyclic cluster type: Tree type and type $\tilde{A}$}, Nagoya Math. J, 211 (2013) 1-50.
\bibitem[AO3]{AOce} C. Amiot and S. Oppermann, {Cluster equivalence and graded derived equivalence}, Doc. Math. 19 (2014), 1155–1206.
\bibitem[ART]{ART} C. Amiot, I. Reiten, and G. Todorov, {The ubiquity of generalized cluster categories}, Adv. Math. 226 (2011) no. 4, 3813-3849.
\bibitem[An]{Ant} B. Antieau, {On the uniqueness of $\infty$-categorical enhancements of triangulated categories}, arXiv:1812.01526.
\bibitem[ABS]{ABS2} I. Assem, T. Brüstle, and R. Schiffler, {Cluster-tilted algebras and slices}, J. Algebra 319 (2008), no. 8, 3464–3479.
\bibitem[ASS]{ASS} I. Assem, D. Simson and A. Skowro\'nski, {Elements of the representation theory of associative algebras, vol.1}, London Mathematical Society Student Texts 65, Cambridge University Press, Cambridge, 2006.
\bibitem[BSW]{BSW} R. Bocklandt, T. Schedler, and M. Wemyss, {Superpotentials and higher order derivations}, J. Pure Appl. Algebra 214 (2010), no. 9, 1501-1522.
\bibitem[BIRS]{BIRSm} A. B. Buan, O. Iyama, I. Reiten, and D. Smith, {Mutation of cluster-tilting objects and potentials}, Amer. J. Math. 133 (2011), no. 4, 835–887.
\bibitem[BMRRT]{BMRRT} A. B. Buan, R. Marsh, M. Reineke, I. Reiten, and G. Todorov, {Tilting theory and cluster combinatorics}, Adv. Math. 204 (2006) 572-618.
\bibitem[BT]{BT} A. B. Buan and H. Thomas, {Coloured quiver mutation for higher cluster categories}, Adv. Math. 222 (2009), no. 3, 971–995.
\bibitem[CNS]{CNS} A. Canonaco, A. Neeman, and P. Stellari, {Uniqueness of enhancements for derived and geometric categories}, arXiv:2101.04404.
\bibitem[CMT]{CMT} A. Craw, D. Maclagan, and R. R. Thomas, {Moduli of McKay quiver representations II: Gröbner basis techniques}, J. Algebra 316 (2007) 514–535.
\bibitem[FZ]{CA1} S. Fomin and A. Zelevinsky, {Cluster algebras. I. Foundations}, J. Amer. Math. Soc. 15 (2002), no. 2, 497-529.
\bibitem[FM]{FM} S. Franco and G. Musiker, {Higher cluster categories and QFT dualities}, Phys. Rev. D 98 (2018), no. 4, 046021, 34 pp.
\bibitem[Ga]{Ga} P. Gabriel, {Des catégories abéliennes}, Bull. Soc. Math. France, 90 (1962), pp. 323-448.
\bibitem[Gi]{G} V. Ginzburg, {Calabi-Yau algebras}, arXiv:0612139.
\bibitem[Gu]{Guo} L. Guo, {Cluster tilting objects in generalized higher cluster categories}, J. Pure Appl. Algebra 215 (2011), no. 9, 2055–2071.
\bibitem[Han1]{ha} N. Hanihara, {Auslander correspondence for triangulated categories}, Algebra \& Number Theory 14-8 (2020), 2037--2058.
\bibitem[Han2]{ha3} N. Hanihara, {Cluster categories of formal DG algebras and singularity categories}, arXiv:2003.7858.
\bibitem[Hap]{Hap} D. Happel, {Triangulated categories in the representation theory of finite dimensional algebras}, London Mathematical Society Lecture Note Series 119, Cambridge University Press, Cambridge, 1988.
\bibitem[IQ]{IQ} A. Ikeda and Y. Qiu, {$q$-Stability conditions on Calabi-Yau-$\mathbb{X}$ categories}, arXiv:1807.00469.
\bibitem[Iy1]{Iy07b} O. Iyama, {Auslander correspondence}, Adv. Math. 210 (2007) 51-82.
\bibitem[Iy2]{Iy11} O. Iyama, {Cluster tilting for higher Auslander algebras}, Adv. Math. 226 (2011) 1-61.
\bibitem[IO]{IO13} O. Iyama and S. Oppermann, {Stable categories of higher preprojective algebras}, Adv. Math. 244 (2013), 23-68.
\bibitem[IYa]{IYa1} O. Iyama and D. Yang, {Silting reduction and Calabi-Yau reduction of triangulated categories}, Trans. Amer. Math. Soc. 370 (2018) no.11,  7861-7898.
\bibitem[IYo]{IYo} O. Iyama and Y. Yoshino, {Mutation in triangulated categories and rigid Cohen-Macaulay modules}, Invent. math. 172, 117-168 (2008).
\bibitem[Ka]{Kad} T. V. Kadeishvili, {The structure of the $A(\infty)$-algebra, and the Hochschild and Harrison cohomologies}, (Russian. English summary), Trudy Tbiliss. Mat. Inst. Razmadze Akad. Nauk Gruzin. SSR 91 (1988), 19–27.
\bibitem[KY1]{KY16} M. Kalck and D. Yang, {Relative singularity categories I: Auslander resolutions}, Adv. Math. 301 (2016) 973-1021.
\bibitem[KY2]{KY18} M. Kalck and D. Yang, {Relative singularity categories II: DG models}, arXiv:1803.08192.
\bibitem[KY3]{KY20} M. Kalck and D. Yang, {Relative singularity categories III: Cluster resolutions}, arXiv:2006.09733.
\bibitem[Ke1]{Ke94} B. Keller, {Deriving DG categories}, Ann. scient. \'Ec. Norm. Sup. (4) 27 (1) (1994) 63-102.
\bibitem[Ke2]{Ke05} B. Keller, {On triangulated orbit categories}, Doc. Math. 10 (2005), 551-581.
\bibitem[Ke3]{Ke06} B. Keller, {On differential graded categories}, Proceedings of the International Congress of Mathematicians, vol. 2, Eur. Math. Soc, 2006, 151-190.
\bibitem[Ke4]{Ke08} B. Keller, {Calabi-Yau triangulated categories}, in: {Trends in representation theory of algebras and related topics}, EMS series of congress reports, European Mathematical Society, Z\"{u}rich, 2008.
\bibitem[Ke5]{Ke11} B. Keller, {Deformed Calabi-Yau completions}, with an appendix by M. Van den Bergh, J. Reine Angew. Math. 654 (2011) 125-180.
\bibitem[Ke6]{Ke18} B. Keller, {A remark on a theorem by Claire Amiot}, C. R. Acad. Sci. Paris, Ser. I 356 (2018) 984-986.
\bibitem[KMV]{KMV} B. Keller, D. Murfet, and M. Van den Bergh, {On two examples of Iyama and Yoshino}, Compos. Math. 147 (2011) 591-612.
\bibitem[KR]{KRac} B. Keller and I. Reiten, {Acyclic Calabi-Yau categories}, with an appendix by M. Van den Bergh, Compos. Math. 144 (2008) 1332-1348.
\bibitem[Ki]{Ki2} Y. Kimura, {Tilting and cluster tilting for preprojective algebras and Coxeter groups}, Int. Math. Res. Not. IMRN 2019, no. 18, 5597-5634.
\bibitem[KQ]{KQ} A. King and Y. Qiu, {Exchange graphs and $\Ext$ quivers}, Adv. Math. 285 (2015), 1106–1154.
\bibitem[LO]{LO} V. A. Lunts and D. Orlov, {Uniqueness of enhancement for triangulated categories}, J. Amer. Math. Soc. 23 (2010), no. 3, 853–908.
\bibitem[M]{Mu20} F. Muro, {Enhanced finite triangulated categories}, Journal of the Institute of Mathematics of Jussieu (2020), 1–43. 
\bibitem[N]{Ne92} A. Neeman, {The connection between the K-theory localization theorem of Thomason, Trobaugh and Yao and the smashing subcategories of Bousfield and Ravenel}, Ann. scient. \'Ec. Norm. Sup. (4) 25 (5) (1992) 547-566.
\bibitem[Pl]{Pl} P.-G. Plamondon, {Cluster characters for cluster categories with infinite-dimensional morphism spaces},	Adv. Math. 227 (2011), no. 1, 1–39.
\bibitem[Pr]{Pr} M. Pressland, {Internally Calabi-Yau algebras and cluster-tilting objects}, Math. Z. 287 (2017), no. 1-2, 555–585.
\bibitem[RW]{RW} C. Roitzheim and S. Whitehouse, {Uniqueness of $A_\infty$-structures and Hochschild cohomology}, Algebr. Geom. Topol. 11 (2011), no. 1, 107–143.
\bibitem[ST]{ST} P. Seidel and R. Thomas, {Braid group actions on derived categories of coherent sheaves}, Duke. Math. J. (108) 2001, 37-108.
\bibitem[T]{Tab} G. Tabuada, {On the Structure of Calabi-Yau Categories	with a Cluster Tilting Subcategory}, Doc. Math. 12 (2007), 193-213.
\bibitem[TV]{TV} L. de Thanhoffer de Völcsey and M. Van den Bergh, {Explicit models for some stable categories of maximal Cohen-Macaulay modules}, Math. Res. Lett. 23 (2016), no. 5, 1507-1526.
\bibitem[XZ]{XZ} J. Xiao and B. Zhu, {Locally finite triangulated categories}, J. Algebra 290 (2005) 473-490.
\end{document}